\newtheorem{theorem}{Theorem}[section]
\newtheorem{proposition}[theorem]{Proposition}
\newtheorem{lemma}[theorem]{Lemma}
\newtheorem{corollary}[theorem]{Corollary}
\newtheorem{claim}[theorem]{Claim}
\newtheorem{conjecture}[theorem]{Conjecture}
\theoremstyle{definition}
\newtheorem{definition}[theorem]{Definition}
\newtheorem{remark}[theorem]{Remark}
\newtheorem{convention}[theorem]{Convention}
\newcommand{\F}{\mathfrak F}
\newcommand{\G}{\mathfrak G}
\newcommand{\Grz}{{\sf S4.Grz}}
\newcommand{\grz}{{\sf grz}}
\newcommand{\ccc}{{\sf cl}}
\newcommand{\ii}{{\sf int}}
\newcommand{\bd}{{\sf bd}}
\newcommand{\grzform}{\Box(\Box(p\to\Box p)\to p)\to p}
\def\Cech{\v{C}ech}
\def\CS{\Cech-Stone}
\begin{document}

\title{On Shehtman's two problems}

\author{G. Bezhanishvili}
\address{New Mexico State University}
\email{guram@nmsu.edu}

\author{N. Bezhanishvili}
\address{University of Amsterdam}
\email{N.Bezhanishvili@uva.nl}

\author{J. Lucero-Bryan}
\address{New Mexico State University}
\email{jglb@nmsu.edu}

\author{J. van Mill}
\address{University of Amsterdam}
\email{j.vanMill@uva.nl}

\begin{abstract}
We provide partial solutions to two problems  posed by 
Shehtman concerning the modal logic of the \CS\ compactification of an ordinal space. 
We use the Continuum Hypothesis to give a finite axiomatization of the modal logic of 
$\beta(\omega^2)$, thus 
resolving Shehtman's first problem for $n=2$. 
We also characterize 
modal logics arising from the \CS\ compactification of an ordinal $\gamma$ provided the Cantor normal form of $\gamma$ satisfies an additional condition. This gives a partial solution of Shehtman's second problem. 
\end{abstract}

\subjclass[2020]{03B45; 03E10; 03E50; 54D35}

\keywords{Modal logic; topological semantics; ordinal space; \CS\ compactification; Continuum Hypothesis}

\maketitle

\tableofcontents

\section{Introduction}

In 2020, during a virtual logic seminar hosted by the Steklov Math.~Institute, Valentin Shehtman posed the following two problems:
\begin{itemize}
\item[{\bf P1}:] For each nonzero $n\in\omega$, axiomatize the modal logic of the \CS\ compactification $\beta(\omega^n)$ of the ordinal space $\omega^n$.
\item[{\bf P2}:] Describe the modal logics that arise as the logic of the \CS\ compactification of some ordinal space.
\end{itemize} 
We use the Continuum Hypothesis (CH) to solve 
{\bf P1} for $n=2$ (see \cref{thm: P1 for n=2}). 
We also give a partial solution of {\bf P2} by characterizing modal logics 
arising as the logic of the \CS\ compactification of some ordinal space $\gamma$ provided the Cantor normal form of $\gamma$ satisfies an additional condition (see \cref{thm: partial soln of P2}). At the end of the paper, we conjecture full solutions for {\bf P1} and {\bf P2} (see \cref{sec: conclusions}).

To give context, we recall that topological semantics for intuitionistic and modal logics originated in the late 1930s/early 1940s in the works of Stone \cite{Sto37b}, Tarski \cite{Tar38}, Tsao-Chen \cite{TC38}, McKinsey \cite{McK41}, and McKinsey and Tarski \cite{MT44,MT46,MT48}. The key feature is that we interpret the necessity connective ($\Box$) as topological interior, and hence the possibility connective ($\Diamond$) as topological closure. Consequently, the logic of all topological spaces is Lewis' well-known modal system $\sf S4$. However, much more is true.
It is a celebrated result of McKinsey and Tarski \cite{MT44} that {\sf S4} is the logic of any separable dense-in-itself metric space. Rasiowa and Sikorski \cite{RS63} proved that the hypothesis of separability may be omitted in the presence of the Axiom of Choice. But dropping the dense-in-itself hypothesis may produce a logic different from $\sf S4$. A full description of the logics that arise as the logic of a metric space was given more recently in \cite{BGLB15b}.

A canonical example of a non-metrizable space is the \CS\ compactification $\beta(\omega)$. 
In \cite{BH09} it was shown that the logic of $\beta(\omega)$ is {\sf S4.1.2}.
We point out that the original proof utilized a set-theoretic assumption beyond ZFC which controls the cardinality of MAD families of subsets of $\omega$. Recently Alan Dow showed that this result can be proved within ZFC \cite{Dow19}. 

A natural next step is to determine the modal logic of $\beta(\omega^2)$, which is exactly the special case of Shehtman's first problem for $n=2$. Our answer relies on Parovi\v{c}enko's 
characterization of the remainder $\omega^*$ of $\beta(\omega)$ (see \cite{Par63}), and on its consequence known as the Homeomorphism Extension Theorem (see \cite[Thm.~1.1]{vDvM93}). 
Both of these results use the Continuum Hypothesis, hence our dependance on CH. 

Our strategy is to determine the logic of $\beta(\omega^2)$ by utilizing relational semantics (also known as Kripke semantics) of modal logic. 
We introduce a new class of frames, which we term `roaches' based on their shape. For each nonzero $n\in\omega$, an $n$-roach has 
a splitting point $s$ of depth at most $n$, 
where $s$ serves as the head, the downset ${\downarrow}s$ as the body, and the upset ${\uparrow}s$ as the antennae 
(see \cref{fig: n-roach}). 
\begin{figure}[h]
\begin{picture}(120,110)(-60,-60)
\setlength{\unitlength}{.25mm}
\qbezier(0,-70)(120,-20)(50,50)
\qbezier(0,-70)(-120,-20)(-50,50)
\put(-50,-20){\makebox(0,0){$\bullet$}}
\put(-50,-20){\line(1,-1){25}}
\qbezier(-50,-20)(-70,15)(-25,25)
\qbezier(-50,-20)(0,0)(-25,25)
\put(45,-30){\makebox(0,0){$\bullet$}}
\put(45,-30){\line(-4,-3){36}}
\qbezier(45,-30)(90,-5)(50,50)
\qbezier(45,-30)(0,0)(50,50)
\color{magenta}
\qbezier(0,-70)(70,-35)(0,0)
\qbezier(0,-70)(-70,-35)(0,0)
\put(-25,-45){\makebox(0,0){$\bullet$}}
\put(0,-70){\makebox(0,0){$\bullet$}}
\put(8,-58){\makebox(0,0){$\bullet$}}
\put(0,-25){\makebox(0,0){${\downarrow}s$}}
\put(0,0){\makebox(0,0){$\bullet$}}
\multiput(-25,25)(50,0){2}{\makebox(0,0){$\bullet$}}
\multiput(-50,50)(50,0){3}{\makebox(0,0){$\bullet$}}
\put(0,0){\line(-1,1){50}}
\put(0,0){\line(1,1){50}}
\put(-25,25){\line(1,1){25}}
\put(25,25){\line(-1,1){25}}
\put(0,13){\makebox(0,0){$s$}}
\put(0,65){\makebox(0,0){${\uparrow}s$}}
\multiput(-25,50)(50,0){2}{\makebox(0,0){$\cdots$}}
\put(1,25){\makebox(0,0){$\cdots$}}
\end{picture}
\caption{An $n$-roach.}
\label{fig: n-roach}
\end{figure}
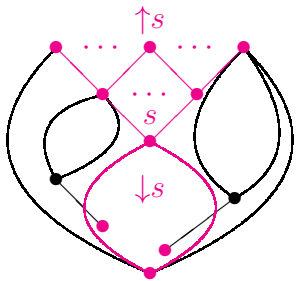
Herein, we focus on $2$-roaches (see \cref{fig2}), and show this class defines the logic of $\beta(\omega^2)$. This we do by first axiomatizing the modal logic of 2-roaches (see \cref{sec: logic of 2 roaches}) and then 
obtaining each $2$-roach as an interior image of $\beta(\omega^2)$ (see \cref{logic of beta omega squared}). 
Defining such interior maps requires the machinery of partition and mapping lemmas
(see \cref{sec: partition lemmas for omega star,sec: mappings of omega star}). These lemmas are consequences of Parovi\v{c}enko's Theorem
and hence of CH. 

The 
axiomatization of the class of 2-roaches is accomplished via 
the machinery of Fine-Jankov formulas (see, e.g., \cite[Sec.~3.4]{BRV01}) in conjunction with Zakharyaschev's sufficient condition for a logic to have the finite model property (see, e.g., \cite[Thm.~11.58]{CZ97}). 
This we do by identifying three forbidden configurations for $\beta(\omega^2)$ (see \cref{sec: beta omega squared and 3 forbid frames}) which yield our desired finite axiomatization.

The paper is organized as follows. \cref{sec: prelims} recalls some basic notions and results from both modal logic and topology. We investigate the logics arising from $\beta(\omega^n)$ in \cref{sec: P2},  which together with the Structural Theorem of \cref{sec: partial soln P2} leads to a partial solution of {\bf P2}. The remainder of the paper is dedicated to describing the logic of $\beta(\omega^2)$. 

In \cref{sec: beta omega squared and 3 forbid frames} we set the scene for the axiomatization by exhibiting the three minimal forbidden configurations.
\cref{roaches section} introduces the family of $n$-roaches for each nonzero $n\in\omega$ and presents some basic results. We study 
the class of $2$-roaches in \cref{sec: logic of 2 roaches}. 
\cref{willow trees} refines our focus by introducing the class of willow trees (see \cref{fig4}) and showing these define the same logic. 
Parovi\v{c}enko's Theorem and the Homeomorphism Extension Theorem are
employed in 
\cref{sec: partition lemmas for omega star,sec: mappings of omega star} to prove the partition and mapping lemmas for the remainder $\omega^*$
of $\beta(\omega)$. These are then used in \cref{logic of beta omega squared} 
to prove that each willow tree is an interior image of $\beta(\omega^2)$, thus completing our finite axiomatization of $\beta(\omega^2)$.
Finally, \cref{sec: conclusions} concludes with our conjectures regarding complete solutions
of Shehtman's two problems.

\section{Preliminaries}\label{sec: prelims}

In this section we recall some basic facts about the relational and topological semantics of modal logics. 
We use \cite{CZ97,BRV01} as our main references for modal logic and \cite{Eng89} for topology. 
We assume the reader is familiar with the modal logic {\sf S4}. We first recall relational semantics for {\sf S4}, which is given by pairs 
$\mathfrak F=(W,\le)$, where $W$ is a set and $\le$ is a quasi ordering of $W$ (that is, $\le$ is reflexive and transitive). Such pairs are known as {\em {\sf S4}-frames}, and it is well known that {\sf S4} is the logic of the class of all (finite) {\sf S4}-frames. 

Let $\mathfrak F=(W,\le)$ be an {\sf S4}-frame. For $A\subseteq W$, let 
\[
{\uparrow}A=\{w\in W\mid \exists a\in A \mbox{ with } a\le w\},
\] 
and define ${\downarrow}A$ dually. Then $A$ is an {\em upset} if $A={\uparrow}A$ and a {\em downset} if $A={\downarrow}A$. For $w\in W$, we write ${\uparrow}w$ for ${\uparrow}\{w\}$ and similarly for ${\downarrow}w$. Call $\mathfrak F$ {\em rooted} provided there is $r\in W$ (called a {\em root} of $\mathfrak F$) such that $W={\uparrow}r$.

We call $\G=(V,\preceq)$ a \emph{subframe} of $\F$ if $V\subseteq W$ and $\preceq$ is the restriction of $\le$ to $V$. The subframe $\G$ is \emph{generated} when $V$ is an upset, 
and \emph{point-generated} when $V={\uparrow}w$ for some $w\in W$.

The {\em cluster} of $w$ is the set $C_w={\uparrow}w\cap{\downarrow}w$. 
We recall that $w\in A$ is a {\em quasi-maximal point} of $A$ if ${\uparrow}w\cap A\subseteq C_w$, and a {\em maximal point} 
if ${\uparrow}w\cap A=\{w\}$; {\em quasi-minimal} and {\em minimal} points of $A$ are defined dually. We write $\max A$ and $\min A$ for the sets of maximal and minimal points of $A$, respectively.  

The following two extensions of {\sf S4} will be important for us.
The logic {\sf S4.1} is obtained from {\sf S4} by postulating the {\em McKinsey axiom} ${\sf ma}=\Box\Diamond p\to\Diamond \Box p$, and the logic {\sf S4.1.2} is obtained from {\sf S4.1} by postulating the {\em Geach axiom} ${\sf ga}=\Diamond\Box p\to\Box\Diamond p$. 

Let $\mathfrak F=(W,\le)$ be a finite rooted {\sf S4}-frame. Then $\mathfrak F$ is an {\sf S4.1}-frame provided for each $w\in W$ there is $m\in\mathrm{max}(\mathfrak F)$ such that $w\le m$, and 
$\mathfrak F$ is an {\sf S4.1.2}-frame provided $\mathrm{max}(\mathfrak F)$ is a singleton. The following completeness results are well known (see, e.g., \cite[Sec.~5.3]{CZ97}):
 
\begin{theorem}\label{thm: reln complnss s4 1 and 2}
\hfill
\begin{itemize}
\item {\sf S4.1} is the logic of all finite rooted {\sf S4.1}-frames.
\item {\sf S4.1.2} is the logic of all finite rooted {\sf S4.1.2}-frames.
\end{itemize}
\end{theorem}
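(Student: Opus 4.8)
The plan is to prove both statements in parallel, in each case establishing soundness and completeness separately after reading off the frame conditions corresponding to the two axioms. For soundness I would check directly that the McKinsey axiom ${\sf ma}=\Box\Diamond p\to\Diamond\Box p$ is valid on every finite {\sf S4}-frame in which each point sees a maximal point, and that the Geach axiom ${\sf ga}=\Diamond\Box p\to\Box\Diamond p$ is valid on such a frame once its set of maximal points is a singleton. These are short semantic arguments: if $w\le m$ with ${\uparrow}m=\{m\}$, then $\Box\Diamond p$ at $w$ forces $p$, hence $\Box p$, at $m$, giving $\Diamond\Box p$ at $w$; and in a rooted {\sf S4.1}-frame with a unique maximal point that point lies above every world, so the values of $\Diamond\Box p$ and $\Box\Diamond p$ at any world both reduce to the value of $p$ there.

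For completeness I must show that any non-theorem $\varphi$ is falsified on a finite rooted frame of the relevant kind. I would obtain a finite model by filtration: fix a finite set $\Sigma$ containing the subformulas of $\varphi$ and closed under single negations, filtrate the canonical model of the logic through $\Sigma$ by a transitive reflexive filtration, and pass to the submodel generated by a world refuting $\varphi$. This yields a finite rooted {\sf S4}-model on which $\varphi$ fails. For the {\sf S4.1.2} case I would additionally exploit that ${\sf ga}$ is Sahlqvist, hence canonical and corresponding to directedness of $\le$, which pushes the construction towards a single final cluster.

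The crux, and the step I expect to be the main obstacle, is that the McKinsey axiom is \emph{not} canonical, so neither the canonical model nor its naive filtration need satisfy ``every point sees a maximal point'': the final clusters of the filtrated frame may fail to be simple. To repair this I would use selective rather than blind filtration, choosing witnesses for the $\Diamond$-formulas of $\Sigma$ so that the terminal selected points are maximal with respect to $\Sigma$ (every $\Diamond\psi$ with $\psi\in\Sigma$ true there is already realized there); here ${\sf ma}\in{\sf S4.1}$ is exactly what guarantees such terminal points can be found, and collapsing each final cluster onto one of them makes all final clusters simple while leaving the truth of $\varphi$ intact, so the result is a finite rooted {\sf S4.1}-frame refuting $\varphi$. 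For the {\sf S4.1.2} refinement, directedness then forces these simple final clusters to coincide, yielding the unique maximal point. Alternatively, since ${\sf ma}$ and ${\sf ga}$ are canonical formulas of the shape to which Zakharyaschev's sufficient condition for the finite model property applies, I could bypass the hand-built surgery and deduce the finite model property, and hence completeness with respect to the two finite frame classes, from that general criterion; I would reach for whichever is cleaner, expecting the control of final clusters to be where the genuine care lies.
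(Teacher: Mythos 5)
The paper itself gives no proof of this theorem: it is stated as well known, with a pointer to \cite[Sec.~5.3]{CZ97}. So your proposal has to be judged as a reconstruction of the standard argument behind that citation, and judged that way it is correct in outline and follows essentially that standard route: soundness by checking ${\sf ma}$ and ${\sf ga}$ directly on the structurally described finite frames, and completeness by filtration of the canonical model with dedicated surgery on final clusters (equivalently, selective filtration), where ${\sf ma}$ supplies the needed $\Sigma$-maximal terminal points. Your fallback is also legitimate and is in fact the paper's own later tool: since ${\sf ma}$ and ${\sf ga}$ are formulas in one variable, \cref{thm: Zak} yields the finite model property of both logics at once; note, however, that this route additionally requires the converse of the correspondence you verify, namely that a finite rooted {\sf S4}-frame validating ${\sf ma}$ (resp.\ ${\sf ma}$ and ${\sf ga}$) must be an {\sf S4.1}-frame (resp.\ {\sf S4.1.2}-frame) — easy, but needed to convert the fmp into completeness with respect to the named frame classes, and left unstated in your sketch.

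Two inaccuracies deserve flagging, though neither breaks the argument. First, your diagnosis of the crux is off: ${\sf K}+{\sf ma}$ is indeed not canonical (Goldblatt), but {\sf S4.1} \emph{is} canonical. In the presence of transitivity one can prove $\Diamond\bigwedge_{i=1}^{n}\left(\Box\psi_i\vee\Box\lnot\psi_i\right)$ in {\sf S4.1} (induct on $n$: decisions $\Box\psi_i\vee\Box\lnot\psi_i$ persist upward by the 4 axiom, and an instance of ${\sf ma}$, applied to $\psi_{n+1}$ or its negation, yields a successor deciding $\psi_{n+1}$), and a compactness argument then places above every point of the canonical frame a point deciding $\Box\psi\vee\Box\lnot\psi$ for \emph{all} $\psi$; such a point is an endpoint. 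So the canonical model of {\sf S4.1} does satisfy ``every point sees a maximal point''; the genuine obstacle — which your repair nevertheless addresses correctly — is that filtration need not preserve this property, so final clusters of the filtrated frame can be proper. Second, for {\sf S4.1.2} the step ``directedness then forces these simple final clusters to coincide'' cannot be read off the finite frame, since filtration does not preserve directedness either; it must be executed inside the canonical model of {\sf S4.1.2}, which is directed because ${\sf ga}$ is Sahlqvist. There, directedness plus the endpoint property gives a single endpoint seen by every point in the cone above the refuting world, and selecting that endpoint as the sole terminal point of the selective filtration produces the required finite rooted {\sf S4.1.2}-frame. With these adjustments your sketch goes through and matches the literature proof the paper relies on.
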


We next recall topological semantics for {\sf S4}. Let $X$ be a topological space. For $A \subseteq X$, we write $\ii(A)$ for the interior and $\ccc(A)$ for the closure of $A$.  
Each modal formula is evaluated in $X$ by assigning a subset of $X$ to each propositional variable, interpreting the classical connectives 
as the corresponding boolean operations, 
$\Box$ as $\ii$, and hence 
$\Diamond$ as $\ccc$.
A formula $\varphi$ is {\em valid} in $X$, written $X\vDash\varphi$, provided it evaluates to $X$ for each assignment of the propositional variables. The logic ${\sf L}(X)$ of $X$ is the set $\{\varphi\mid X\vDash\varphi\}$ of modal formulas valid in $X$, and we have that ${\sf S4}\subseteq{\sf L}(X)$ for each space $X$ (see \cite{MT44}).

Topological semantics generalizes relational semantics for {\sf S4} since the family of upsets of an {\sf S4}-frame $(W,\le)$ forms a topology $\tau_\le$ on $W$, known as the {\em Alexandroff topology}, in which $\ccc(A)$ is ${\downarrow}A$ for each $A \subseteq W$. Therefore,
a modal formula is valid in an {\sf S4}-frame $(W,\le)$ iff it is valid in $(W,\tau_\le)$. Hence, relational completeness of a logic immediately yields topological completeness. But such completeness is with respect to spaces that don't satisfy high separation axioms. Thus, to obtain 
completeness with respect to widely used spaces in topology, further mapping results are required; 
early examples are presented in \cite{MT44,WP47,RS63}, whereas a contemporary 
approach can be found, e.g., in \cite[Ch.~5]{APvB07} 
(see also \cite{BBBM17a,BBBM19,BBBM21b}). 

For a topological space $X$, we have that $X\vDash{\sf ma}$ iff $\ii(\ccc A)\subseteq\ccc(\ii A)$ and $X\vDash{\sf ga}$ iff $\ccc(\ii A)\subseteq\ii(\ccc A)$ for each $A\subseteq X$. It is well known that the latter condition is equivalent to the closure of each open set being open. Such spaces are called {\em extremally disconnected} ({\sf ED} for short). Therefore, $X\vDash{\sf ga}$ iff $X$ is extremally disconnected. 

Following the terminology of \cite{BBBM21b} (which was suggested by Archangel'skii), we call $X$ {\em densely discrete} ({\sf DD} for short)
provided the set of isolated points of $X$ is dense in $X$. It is well known and easy to verify that if $X$ is {\sf DD}, then $X\vDash{\sf ma}$ (the converse however is not true). We thus obtain that if $X$ is {\sf DD}, then ${\sf S4.1}\subseteq{\sf L}(X)$ and if $X$ is {\sf ED}, then ${\sf S4.2}\subseteq{\sf L}(X)$. 
Because topological semantics generalizes relational semantics for {\sf S4}, Theorem~\ref{thm: reln complnss s4 1 and 2} yields: 

\begin{theorem}\label{thm: top complnss s4 1 and 2}
\hfill
\begin{itemize}
\item The logic of the class of all {\sf DD}-spaces is {\sf S4.1}.
\item The logic of the class of all extremally disconnected {\sf DD}-spaces is {\sf S4.1.2}.
\end{itemize}
\end{theorem}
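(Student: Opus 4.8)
The plan is to derive both statements from the relational completeness results of \cref{thm: reln complnss s4 1 and 2} by transferring everything to the Alexandroff topology, using the correspondence recalled above: a modal formula is valid in a finite {\sf S4}-frame $\F=(W,\le)$ if and only if it is valid in the space $(W,\tau_\le)$. For each bullet I would argue soundness and completeness separately. Writing ${\sf L}_{\sf DD}$ and ${\sf L}_{\sf ED}$ for the logics of the two classes, soundness amounts to ${\sf S4.1}\subseteq{\sf L}_{\sf DD}$ and ${\sf S4.1.2}\subseteq{\sf L}_{\sf ED}$, while completeness amounts to the reverse inclusions.

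Soundness is immediate from the facts stated just before the theorem. Every {\sf DD}-space validates {\sf ma}, so ${\sf S4.1}\subseteq{\sf L}(X)$ for each {\sf DD}-space $X$, and intersecting over the class gives ${\sf S4.1}\subseteq{\sf L}_{\sf DD}$. If in addition $X$ is {\sf ED}, then $X$ also validates {\sf ga}; since ${\sf S4.1.2}={\sf S4.1}+{\sf ga}$, it follows that ${\sf S4.1.2}\subseteq{\sf L}(X)$ for every extremally disconnected {\sf DD}-space $X$, whence ${\sf S4.1.2}\subseteq{\sf L}_{\sf ED}$.

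For completeness, the key observation is that the Alexandroff space of a finite rooted {\sf S4.1}-frame is a {\sf DD}-space. Indeed, in $(W,\tau_\le)$ a point $w$ is isolated exactly when $\{w\}$ is an upset, i.e.\ when ${\uparrow}w=\{w\}$, which is precisely the condition $w\in\max(\F)$; and since $\ccc(A)={\downarrow}A$ in the Alexandroff topology, the isolated points form a dense set if and only if ${\downarrow}\max(\F)=W$, i.e.\ if and only if every point lies below a maximal point. This last condition is exactly the defining property of a finite rooted {\sf S4.1}-frame. Now if $\varphi\notin{\sf S4.1}$, then by \cref{thm: reln complnss s4 1 and 2} there is a finite rooted {\sf S4.1}-frame $\F$ with $\F\not\vDash\varphi$; the associated $(W,\tau_\le)$ is then a {\sf DD}-space, and by the frame--space correspondence $(W,\tau_\le)\not\vDash\varphi$. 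Hence $\varphi\notin{\sf L}_{\sf DD}$, giving ${\sf L}_{\sf DD}\subseteq{\sf S4.1}$.

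The {\sf S4.1.2} case runs in parallel, the extra input being that the Alexandroff space of a finite rooted {\sf S4.1.2}-frame is also {\sf ED}. Such a frame is in particular an {\sf S4.1}-frame, so $(W,\tau_\le)$ is {\sf DD} by the above; moreover its unique maximal point $m$ is a greatest point (the frame validates {\sf ga} and so is directed), whence every nonempty upset contains $m$. Consequently the closure ${\downarrow}U$ of any nonempty open set $U$ equals $W$, so the closure of every open set is open and $(W,\tau_\le)$ is {\sf ED} (equivalently, invoke that the frame validates {\sf ga}, so the space validates {\sf ga} and is {\sf ED}). Refuting $\varphi\notin{\sf S4.1.2}$ on a finite rooted {\sf S4.1.2}-frame via \cref{thm: reln complnss s4 1 and 2} then yields an extremally disconnected {\sf DD}-space refuting $\varphi$, so ${\sf L}_{\sf ED}\subseteq{\sf S4.1.2}$. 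The only genuine content beyond routine bookkeeping is setting up this dictionary between the topological properties {\sf DD}, {\sf ED} and the combinatorial frame conditions (in particular noticing that, although {\sf ma} does not force dense discreteness for arbitrary spaces, it does for finite Alexandroff spaces); once the dictionary is in place, the theorem follows by combining relational completeness with the Alexandroff correspondence, and I anticipate no real obstacle.
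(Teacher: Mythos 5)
Your proof is correct and follows exactly the paper's route: the paper derives this theorem in one line from \cref{thm: reln complnss s4 1 and 2} together with the Alexandroff correspondence and the soundness facts ({\sf DD} $\Rightarrow$ {\sf ma}, {\sf ED} $\Leftrightarrow$ {\sf ga}) stated just before it. What you have done is simply spell out the details the paper leaves implicit, namely that finite rooted {\sf S4.1}-frames (resp.\ {\sf S4.1.2}-frames) become {\sf DD} (resp.\ extremally disconnected {\sf DD}) spaces under the Alexandroff topology.
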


We conclude this preliminary section by recalling the truth-preserving operations between topological spaces, which will play an important role in what follows. 
Let $f:X\to Y$ be a map between topological spaces. Then $f$ is {\em continuous} if $f^{-1}[V]$ is open in $X$ for each open $V \subseteq Y$, $f$ is {\em open} if $f[U]$ is open in $Y$ for each open $U \subseteq X$, and $f$ is {\em interior} if it is both continuous and open. It is well known (see \cite[p.~99]{RS63}) that $f$ is interior iff $\ccc f^{-1}[A]=f^{-1}[\ccc A]$ for each $A\subseteq Y$. If $f$ is an onto interior map, then we say that $Y$ is an {\em interior image} of $X$. We 
have (see, e.g., \cite[Thm.~5.49]{APvB07}):

\begin{lemma}\label{lem: truth preserving ops}
Let $X,Y,Z$ be spaces.
\begin{enumerate}
\item If $Y$ is an interior image of $X$, then ${\sf L}(X)\subseteq{\sf L}(Y)$. \label{lem item: interior image}
\item If $Y$ is an open subspace of $X$, then ${\sf L}(X)\subseteq{\sf L}(Y)$. \label{lem item: open sub}
\item If $X$ is the disjoint union $Y\oplus Z$, then ${\sf L}(X)={\sf L}(Y)\cap {\sf L}(Z)$. \label{lem item: disjoint union}
\end{enumerate}
\end{lemma}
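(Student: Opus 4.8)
The plan is to prove all three items by the same standard technique: transfer a valuation across the relevant map and argue by induction on the structure of a formula, with the entire topological content concentrated in the $\Diamond$-clause (the case of $\Box$ being dual via $\Box\chi\equiv\lnot\Diamond\lnot\chi$, so it suffices to treat the boolean connectives and $\Diamond$). Throughout I write $\nu(\psi)$ for the subset to which $\psi$ evaluates under a valuation $\nu$. For \eqref{lem item: interior image}, let $f\colon X\to Y$ be an onto interior map. Given any valuation $\nu$ on $Y$, I would define the pulled-back valuation $\mu$ on $X$ by $\mu(p)=f^{-1}[\nu(p)]$ and prove by induction that $\mu(\psi)=f^{-1}[\nu(\psi)]$ for every formula $\psi$. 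The boolean cases are immediate since $f^{-1}[-]$ commutes with complement, intersection, and union, and the only case using the hypothesis is $\psi=\Diamond\chi$, where $\mu(\Diamond\chi)=\ccc\,\mu(\chi)=\ccc\,f^{-1}[\nu(\chi)]=f^{-1}[\ccc\,\nu(\chi)]=f^{-1}[\nu(\Diamond\chi)]$, the middle equality being exactly the characterization of interior maps recalled above. Then $\varphi\in{\sf L}(X)$ gives $\mu(\varphi)=X$, so $f^{-1}[\nu(\varphi)]=X$, and since $f$ is onto this forces $\nu(\varphi)=Y$; as $\nu$ was arbitrary, $\varphi\in{\sf L}(Y)$.

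For \eqref{lem item: open sub}, let $Y$ be an open subspace of $X$. The two facts I would use are $\ccc_Y(A)=\ccc_X(A)\cap Y$ for $A\subseteq Y$, together with the elementary identity $\ccc_X(B\cap Y)\cap Y=\ccc_X(B)\cap Y$ valid for open $Y$ and arbitrary $B\subseteq X$ (if $x\in Y$ and every neighborhood of $x$ meets $B$, then intersecting with the open set $Y$ shows every neighborhood meets $B\cap Y$). Given a valuation $\nu$ on $Y$, I view each $\nu(p)\subseteq Y$ as a subset of $X$ and show by induction that the value computed in $Y$ equals the value computed in $X$ intersected with $Y$; the $\Diamond$-clause is precisely where the two displayed closure identities are invoked. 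Since $\varphi\in{\sf L}(X)$ makes the $X$-value of $\varphi$ all of $X$, its $Y$-value is $X\cap Y=Y$, giving $\varphi\in{\sf L}(Y)$.

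For \eqref{lem item: disjoint union}, write $X=Y\oplus Z$. As $Y$ and $Z$ are clopen, hence open, subspaces of $X$, item \eqref{lem item: open sub} immediately yields ${\sf L}(X)\subseteq{\sf L}(Y)\cap{\sf L}(Z)$. For the reverse inclusion I would use that closure in a disjoint union is computed componentwise, $\ccc_X(A)=\ccc_Y(A\cap Y)\cup\ccc_Z(A\cap Z)$. Given a valuation $\nu$ on $X$ and a formula $\varphi\in{\sf L}(Y)\cap{\sf L}(Z)$, the restrictions $\nu_Y(p)=\nu(p)\cap Y$ and $\nu_Z(p)=\nu(p)\cap Z$ define valuations on $Y$ and $Z$, and an induction (routine in the boolean cases and using the componentwise closure formula in the $\Diamond$-clause) shows $\nu(\psi)\cap Y=\nu_Y(\psi)$ and $\nu(\psi)\cap Z=\nu_Z(\psi)$. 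Then $\nu(\varphi)\cap Y=\nu_Y(\varphi)=Y$ and $\nu(\varphi)\cap Z=\nu_Z(\varphi)=Z$, so $\nu(\varphi)=X$ and $\varphi\in{\sf L}(X)$.

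The only genuine step in each part is the $\Diamond$-clause of the induction, and there is no real obstacle beyond the bookkeeping: everything reduces to the interior-map characterization $\ccc f^{-1}[A]=f^{-1}[\ccc A]$ and the elementary behavior of closure with respect to open subspaces and disjoint unions. Accordingly I expect the argument to be entirely routine, which is why the statement is quoted as standard; the interest of the lemma lies in its repeated use later, not in its proof.
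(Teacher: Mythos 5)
Your proposal is correct in all three parts: the pulled-back valuation argument for interior images, the two closure identities for open subspaces, and the componentwise treatment of disjoint unions are exactly the standard proofs, and each inductive step (in particular the $\Diamond$-clause) is carried out without gaps. Note that the paper itself does not prove this lemma at all — it cites \cite[Thm.~5.49]{APvB07} — and your argument is precisely the routine verification that citation stands in for, so there is nothing to compare beyond observing that you have supplied the standard proof the paper takes as known.
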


Viewing an {\sf S4}-frame $(W,\le)$ as the topological space $(W,\tau_\le)$, 
we may speak of interior mappings/images when the domain or codomain of a mapping is an {\sf S4}-frame. Indeed, an interior map between {\sf S4}-frames is known under the name of a {\em p-morphism} (see, e.g., \cite[p.~30]{CZ97}); that is, a map $f:W\to V$ 
between two {\sf S4}-frames
$\F=(W,\le)$ and $\G=(V,\preceq)$ satisfying: 
\begin{itemize}
\item $w\le u$ implies $f(w)\preceq f(u)$ ({\em forth condition});
\item $f(w)\preceq v$ implies there is $u\in{\uparrow}w$ such that $f(u)=v$ ({\em back condition}). 
\end{itemize} 

\section{The logics ${\sf L}_n$}
\label{sec: P2} 

In this section we introduce the logics ${\sf L}_n := {\sf L}(\beta(\omega^n)$ and show that they form a strictly decreasing chain of logics above {\sf S4.1}. 

We view each ordinal $\gamma$ as a topological space in the interval topology, 
so $\gamma$ is locally compact, normal, and scattered (see, e.g., \cite[p.~151]{Sem71}). 
Therefore, the \CS\ compactification $\beta(\gamma)$ 
is a zero-dimensional {\sf DD}-space and $\gamma$ is (homeomorphic to) an open subspace of $\beta(\gamma)$. 
Thus, as a consequence of Theorem~\ref{thm: top complnss s4 1 and 2}, we obtain:

\begin{lemma}\label{lem: s4.1 in L beta gamma}
For any ordinal space $\gamma$, we have that ${\sf S4.1}\subseteq {\sf L}(\beta(\gamma))$. 
\end{lemma}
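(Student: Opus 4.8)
The plan is to prove this as essentially a corollary of the topological completeness result in Theorem~\ref{thm: top complnss s4 1 and 2}, combined with the observation that $\beta(\gamma)$ is a densely discrete space. Since the statement asserts ${\sf S4.1}\subseteq{\sf L}(\beta(\gamma))$, and the excerpt has already established that the logic of the class of all {\sf DD}-spaces is exactly {\sf S4.1}, it suffices to verify that $\beta(\gamma)$ is itself a {\sf DD}-space; membership in a class whose logic contains {\sf S4.1} immediately gives ${\sf S4.1}\subseteq{\sf L}(\beta(\gamma))$.

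First I would recall, as the paragraph preceding the lemma already asserts, that each ordinal $\gamma$ viewed in its interval topology is a scattered, locally compact, normal space. The key structural fact I need is that the isolated points of $\beta(\gamma)$ are dense in $\beta(\gamma)$. Here I would argue as follows: the ordinal space $\gamma$ is scattered, so its isolated points are dense in $\gamma$; indeed the successor ordinals (together with $0$) are precisely the isolated points of $\gamma$ and they are dense. Since $\gamma$ embeds as a dense open subspace of $\beta(\gamma)$ (the remainder $\beta(\gamma)\setminus\gamma$ being nowhere dense because $\gamma$ is open and dense in its compactification), the isolated points of $\gamma$ remain isolated in $\beta(\gamma)$ — an isolated point of an open subspace is isolated in the whole space. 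Density of the isolated points of $\gamma$ in $\gamma$, together with density of $\gamma$ in $\beta(\gamma)$, yields density of these isolated points in $\beta(\gamma)$, establishing that $\beta(\gamma)$ is {\sf DD}.

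With the {\sf DD} property in hand, I would invoke the first bullet of Theorem~\ref{thm: top complnss s4 1 and 2}: the logic of the class of all {\sf DD}-spaces is {\sf S4.1}, which means in particular that every formula of {\sf S4.1} is valid in every {\sf DD}-space. Applying this to the single space $\beta(\gamma)$ gives ${\sf S4.1}\subseteq{\sf L}(\beta(\gamma))$, as required. Alternatively, and perhaps more directly, I would use the remark in the excerpt that if $X$ is {\sf DD} then $X\vDash{\sf ma}$ and hence ${\sf S4.1}\subseteq{\sf L}(X)$; applied to $X=\beta(\gamma)$ this closes the argument at once.

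I do not anticipate a genuine obstacle here, since nearly all the work has been front-loaded into the preliminaries: the truly substantive inputs — that $\beta(\gamma)$ is zero-dimensional and {\sf DD}, and that $\gamma$ is an open subspace — are already recorded in the sentence immediately preceding the lemma statement. The only point demanding a line of care is confirming that the isolated points of the open subspace $\gamma$ persist as isolated points of $\beta(\gamma)$ and that their closure is all of $\beta(\gamma)$; this follows formally from $\gamma$ being open and dense in $\beta(\gamma)$, so the proof reduces to citing Theorem~\ref{thm: top complnss s4 1 and 2} after noting $\beta(\gamma)$ is {\sf DD}.
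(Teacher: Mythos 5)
Your proposal is correct and takes essentially the same route as the paper: the paper likewise records that $\gamma$ is scattered, locally compact, and normal, concludes that $\beta(\gamma)$ is a {\sf DD}-space with $\gamma$ an open subspace, and then derives the lemma directly from Theorem~\ref{thm: top complnss s4 1 and 2} (equivalently, from the preliminary remark that {\sf DD} implies ${\sf S4.1}\subseteq{\sf L}(X)$). The only difference is that you spell out the density-of-isolated-points argument that the paper leaves implicit, which is a harmless elaboration.
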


\begin{definition}\label{def: logics Ln}
For nonzero $n\in\omega$, let ${\sf L}_n$ be the logic of $\beta(\omega^n)$. 
\end{definition}

To show that ${\sf L}_{n+1}\subseteq{\sf L}_n$, it is convenient to view each $\omega^{n+1}$
as the product space $(\omega^{n}+1)\times\omega$. To give geometric intuition, $\beta(\omega^2)$ is depicted below.

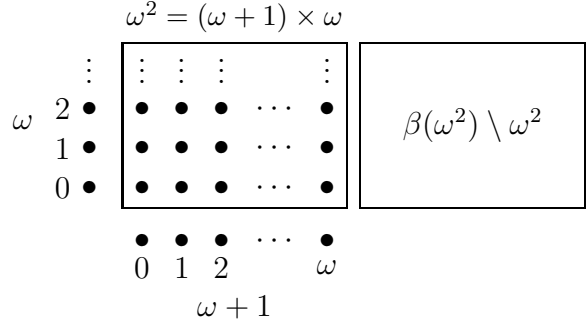
\begin{figure}[h]
\begin{center}
\begin{picture}(212.5,120)(0,-10)
\multiput(37.5,37.5)(0,62.5){2}{\multiput(0,0)(90,0){2}{\line(1,0){85}}}
\multiput(37.5,37.5)(85,0){2}{\multiput(0,0)(90,0){2}{\line(0,1){62.5}}}
\put(80,110){\makebox(0,0){\small $\omega^2=(\omega+1)\times\omega$}}
\put(170,68.75){\makebox(0,0){$\beta (\omega^2) \setminus \omega^2$}}
\put(0,70){\makebox(0,0){$\omega$}}
\put(15,45){\makebox(0,0){$0$}}
\put(15,60){\makebox(0,0){$1$}}
\put(15,75){\makebox(0,0){$2$}}
\multiput(25,45)(0,15){3}{\multiput(0,0)(90,0){2}{\makebox(0,0){$\bullet$}}}
\multiput(25,92.5)(90,0){2}{\makebox(0,0){$\vdots$}}
\put(80,0){\makebox(0,0){$\omega+1$}}
\put(45,15){\makebox(0,0){$0$}}
\put(60,15){\makebox(0,0){$1$}}
\put(75,15){\makebox(0,0){$2$}}
\put(115,15){\makebox(0,0){$\omega$}}
\multiput(45,25)(15,0){3}{\makebox(0,0){$\bullet$}}
\put(115,25){\makebox(0,0){$\bullet$}}
\put(96,25){\makebox(0,0){$\cdots$}}
\multiput(45,45)(15,0){3}{\multiput(0,0)(0,15){3}{\makebox(0,0){$\bullet$}}}
\multiput(96,45)(0,15){3}{\makebox(0,0){$\cdots$}}
\multiput(45,92.5)(15,0){3}{\makebox(0,0){$\vdots$}}
\end{picture}
\end{center}
\caption{Depiction of $\beta(\omega^2)$.}
\label{fig: omega plus one times omega}
\end{figure}

\begin{lemma}\label{lem: omega^n is clopen in omega^n+1}
For each nonzero $n\in\omega$, we have that ${\sf L}_{n+1}\subseteq{\sf L}_n$.
\end{lemma}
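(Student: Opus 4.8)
The plan is to exhibit $\beta(\omega^n)$ as an open subspace of $\beta(\omega^{n+1})$ and then apply \cref{lem: truth preserving ops}(2), which yields ${\sf L}(\beta(\omega^{n+1}))\subseteq{\sf L}(\beta(\omega^n))$, i.e.\ ${\sf L}_{n+1}\subseteq{\sf L}_n$. The route to the open subspace runs through a general principle about \CS\ compactifications: if $A$ is a clopen subspace of a Tychonoff space $X$, then the characteristic function $\chi_A$ is continuous and extends over $\beta X$, so its $1$-fiber is clopen in $\beta X$; this set is exactly $\ccc_{\beta X}(A)$, and since clopen subspaces are $C^*$-embedded, it is homeomorphic to $\beta A$. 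Thus a clopen copy of $\omega^n$ inside $\omega^{n+1}$ produces a copy of $\beta(\omega^n)$ that is clopen---in particular open---in $\beta(\omega^{n+1})$, which is all that is required.

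The crux is therefore to realize $\omega^n$ as a clopen subspace of $\omega^{n+1}$. Here I would use the decomposition flagged in the text, namely $\omega^{n+1}\cong(\omega^n+1)\times\omega$. Since $\omega$ is discrete, this product is the topological sum $\bigoplus_{k<\omega}(\omega^n+1)$ of countably many compact copies of $\omega^n+1$; reasoning one dimension down in the same way gives $\omega^n\cong\bigoplus_{k<\omega}(\omega^{n-1}+1)$. Inside the $k$-th summand $\omega^n+1=[0,\omega^n]$, the initial segment $[0,\omega^{n-1}]$ is clopen (open as an initial segment, closed because its complement $(\omega^{n-1},\omega^n]$ starts at a successor) and homeomorphic to $\omega^{n-1}+1$. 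Setting $C=\bigcup_{k<\omega}\big([0,\omega^{n-1}]\times\{k\}\big)$ then produces a clopen subspace of $\omega^{n+1}$, because in a topological sum any set meeting each summand in a clopen set is itself clopen. By construction $C\cong\bigoplus_{k<\omega}(\omega^{n-1}+1)\cong\omega^n$, as desired.

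The main obstacle to keep in view is that the naive candidate---the initial segment $[0,\omega^n)$, which is order-isomorphic to $\omega^n$---is open but \emph{not} closed in $\omega^{n+1}$: its would-be top point $\omega^n$ is a limit approached from below, so $[0,\omega^n)$ is not clopen and the clopen-implies-open-in-$\beta X$ argument fails for it. This is precisely why the summand decomposition is needed: distributing countably many compact clopen blocks across the topologically separated summands yields a \emph{non-compact} copy of $\omega^n$ with no spurious accumulation points, hence a genuinely clopen one. Once $C$ is in hand the argument concludes at once: $\beta(\omega^n)\cong\ccc_{\beta(\omega^{n+1})}(C)$ is open in $\beta(\omega^{n+1})$, and \cref{lem: truth preserving ops} delivers ${\sf L}_{n+1}\subseteq{\sf L}_n$.
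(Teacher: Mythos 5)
Your proposal is correct and takes essentially the same route as the paper: the paper likewise obtains the clopen copy of $\omega^n$ in $\omega^{n+1}$ from the observation that $(\omega^{n-1}+1)\times\omega$ is clopen in $(\omega^n+1)\times\omega$, concludes that $\ccc(\omega^n)$ is clopen in $\beta(\omega^{n+1})$ and homeomorphic to $\beta(\omega^n)$ (citing \cite[Cors.~3.6.5 and 3.6.8]{Eng89} together with normality of $\omega^{n+1}$, where you instead argue directly via the extended characteristic function and $C^*$-embeddedness of clopen sets), and then applies \cref{lem: truth preserving ops}(\ref{lem item: open sub}). You also state the resulting inclusion in the correct direction ${\sf L}_{n+1}\subseteq{\sf L}_n$, whereas the paper's concluding display inadvertently writes the chain backwards.
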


\begin{proof} 
Since $(\omega^{n-1}+1)\times\omega$ is a clopen subset of $(\omega^{n}+1)\times\omega$,
we have that $\omega^n$ is homeomorphic to a clopen subset of $\omega^{n+1}$.
Therefore, 
$\ccc(\omega^n)$ is clopen in $\beta(\omega^{n+1})$ \cite[Cor.~3.6.5]{Eng89}. 
Because $\omega^{n+1}$ is normal, $\ccc(\omega^n)$ is homeomorphic to $\beta(\omega^n)$ \cite[Cor.~3.6.8]{Eng89}.
Thus, up to homeomorphism, $\beta(\omega^n)$ is a clopen subspace of $\beta(\omega^{n+1})$, which by Lemma~\ref{lem: truth preserving ops}(\ref{lem item: open sub}) yields that 
$
{\sf L}_n={\sf L}(\beta(\omega^n))\subseteq{\sf L}(\beta(\omega^{n+1}))={\sf L}_{n+1}.
$ 
\end{proof}

We now focus on showing that the containment ${\sf L}_{n+1}\subseteq {\sf L}_n$ is strict.

\begin{definition}\label{def: tall lop-sided two fork}
For each nonzero $n\in\omega$, let $\mathfrak T_n$ be the partial ordering of $\{r,v,w_1,\dots,w_n\}$ depicted in Figure~\ref{fig: frame separating n and n+1}.
\begin{figure}[h]
\begin{picture}(70,95)(-35,-15)
\setlength{\unitlength}{.28mm}
\put(0,-10){\makebox(0,0){$r$}}
\put(0,0){\makebox(0,0){$\bullet$}}
\put(0,0){\line(1,1){25}}
\put(0,0){\line(-1,1){25}}
\multiput(-25,25)(0,25){4}{\makebox(0,0){$\bullet$}}
\put(25,25){\makebox(0,0){$\bullet$}}
\multiput(-25,25)(0,50){2}{\line(0,1){25}}
\put(-25,66.5){\makebox(0,0){$\vdots$}}
\put(35,25){\makebox(0,0){$v$}}
\put(-40,25){\makebox(0,0){$w_1$}}
\put(-40,50){\makebox(0,0){$w_2$}}
\put(-47,75){\makebox(0,0){$w_{n-1}$}}
\put(-40,100){\makebox(0,0){$w_n$}}
\end{picture}
\caption{The partial ordering of $\mathfrak T_n$.} 
\label{fig: frame separating n and n+1}
\end{figure}
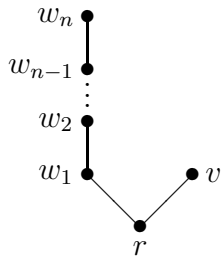
\end{definition}

We show that $\mathfrak T_n$ is an interior image of $\beta(\omega^{n+1})$, 
but not of $\beta(\omega^n)$. 
For this we require the following general
result concerning interior images of zero-dimensional spaces.

\begin{proposition}\label{lem: interior image of open sub of 0-dim is image of whole space}
Let $X$ be a zero-dimensional space and $\mathfrak F=(W,\le)$ a rooted {\sf S4}-frame containing a maximal point $m$. 
Then $\mathfrak F$ is an interior image of $X$ 
iff $\mathfrak F$ is an interior image of some open subspace of $X$.
\end{proposition}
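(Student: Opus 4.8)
The forward direction is immediate: $X$ is itself an open subspace of $X$, so if $\mathfrak F$ is an interior image of $X$ there is nothing to prove. The content is in the converse. So suppose $f\colon U\to W$ is an onto interior map with $U$ an open subspace of $X$, and let $B=X\setminus U$, a closed subset of $X$. If $B=\varnothing$ then $U=X$ and $f$ itself witnesses the claim, so assume $B\neq\varnothing$. The plan is to extend $f$ to an interior surjection $g\colon X\to W$ by collapsing the complement of a suitable clopen set onto the maximal point $m$. The reason $m$ must be maximal is that ${\uparrow}m=\{m\}$, so $\{m\}$ is open in the Alexandroff topology $\tau_\le$; consequently a constant map onto $m$ is open (the image of any nonempty open set is the open set $\{m\}$) as well as continuous. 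Thus a constant map to $m$ on a clopen piece of $X$ is interior, and it is this that lets us absorb $B$.

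The key step is to produce a clopen set $M\subseteq X$ with $M\subseteq U$ and $f[M]\supseteq W\setminus\{m\}$. Granting this, define
\[
g(x)=\begin{cases} f(x) & x\in M,\\ m & x\in X\setminus M.\end{cases}
\]
Since $M$ is clopen, $X=M\sqcup(X\setminus M)$ is a partition into clopen sets, and a map out of such a partition is interior exactly when its restriction to each piece is. On $M$ the restriction $g|_M=f|_M$ is the restriction of the interior map $f$ to the open subset $M\subseteq U$, hence interior; on $X\setminus M$ the map $g$ is constant equal to $m$, hence interior by the previous paragraph. Therefore $g$ is interior. It is onto because $g[M]=f[M]\supseteq W\setminus\{m\}$ while $g[X\setminus M]=\{m\}$ (note $X\setminus M\supseteq B\neq\varnothing$), so $\mathfrak F$ is an interior image of $X$.

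It remains to construct $M$, and this is where zero-dimensionality enters and is the main obstacle. For each $w\in W\setminus\{m\}$ choose a point $x_w\in f^{-1}[w]$, which is nonempty since $f$ is onto; because $x_w\in U$ and $X$ has a basis of clopen sets, fix a clopen $D_w$ with $x_w\in D_w\subseteq U$. Put $M=\bigcup_{w\neq m}D_w$. Then $M\subseteq U$ (so $M$ is disjoint from $B$) and $f[M]\supseteq\{f(x_w):w\neq m\}=W\setminus\{m\}$ by construction, and $M$ is clopen because it is a \emph{finite} union of clopen sets when $W$ is finite, which is the case in our applications. The delicate point, and what forces the use of clopen rather than merely open sets, is the tension between continuity and surjectivity: enlarging the region sent to $m$ threatens to swallow whole fibres of $f$ and destroy ontoness, while shrinking it lets fibres of $f$ accumulate on $B$ and breaks continuity at the frontier of $U$. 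Choosing $M$ clopen and fibre-covering resolves both at once, since clopen pieces glue with no interaction and $M$ retains a representative of every point of $W\setminus\{m\}$.
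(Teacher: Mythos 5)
Your overall architecture---find a clopen $M\subseteq U$ whose image under $f$ is large enough, send $X\setminus M$ to the maximal point $m$, and glue---is the same as the paper's, and the parts you verify are correct: constant maps onto a maximal point are interior because $\{m\}$ is an upset, restrictions of interior maps to open subsets are interior, and interior maps glue across a clopen partition. The gap is in the construction of $M$. The proposition does not assume that $W$ is finite, but your $M=\bigcup_{w\neq m}D_w$ is clopen only because it is a \emph{finite} union of clopen sets; you flag this yourself (``which is the case in our applications''). For infinite $W$ this union is open but in general not closed, and the argument then genuinely breaks: if $x\in\ccc(M)\setminus M$, then $g(x)=m$, yet $\{m\}$ is open in $\mathfrak F$ and $g^{-1}(\{m\})=(X\setminus M)\cup\left(f^{-1}(m)\cap M\right)$ need not be a neighborhood of $x$ (every neighborhood of $x$ meets $M$, possibly in points mapping outside $\{m\}$), so continuity can fail on the frontier of $M$. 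Since nothing in the statement bounds $W$, a proof that assumes finiteness does not establish the proposition as stated.

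The missing idea---the one the paper uses, and which removes any cardinality hypothesis---is rootedness, which your argument never exploits. Let $r$ be a root of $\mathfrak F$ and pick $y\in U$ with $f(y)=r$; by zero-dimensionality choose a single clopen $D$ with $y\in D\subseteq U$. Since $D$ is open in $U$ and $f$ is interior, $f[D]$ is an upset of $\mathfrak F$ containing $r$, and the only upset containing a root is $W$ itself; hence $f[D]=W$. Now your extension-and-gluing argument runs verbatim with $M=D$ (indeed $f[M]=W\supseteq W\setminus\{m\}$), with no fibre-by-fibre covering and no finiteness. So your gluing framework is a clean way to phrase the verification, but without the root observation your construction of $M$ only proves the finite case of the proposition.
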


\begin{proof}
One implication is clear. For the other implication, suppose that $\mathfrak F$ is an interior image of an open subspace $Y$ of $X$, say via $f$. Then there is $y\in Y$ such that $f(y)=r$, where $r$ is a root of $\mathfrak F$. Since $X$ is zero-dimensional, there is a clopen subset $U$ of $X$ such that $y\in U$ and $U\subseteq Y$. Because $U$ is open in $Y$ and contains a preimage of $r$, 
the restriction of $f$ to $U$ is an interior mapping onto $\mathfrak F$. 

Define $g:X\to W$ by 
\[
g(x) = \left\{
\begin{array}{ll}
f(x) & \text{if }x\in U,\\
m & \text{if }x\in X\setminus U.
\end{array}
\right.
\]
It is clear that $g$ is a well-defined surjection. To see that $g$ is continuous, let $w\in W$ and consider ${\uparrow}w$. Since $f^{-1}({\uparrow}w)$ is open in $Y$, which is open in $X$, we have that $f^{-1}({\uparrow}w)$ is open in $X$. If $m\not\in {\uparrow}w$, then 
$g^{-1}({\uparrow}w)=f^{-1}({\uparrow}w)\cap U$ is open in $X$ because $U$ is open in $X$. 
If $m\in {\uparrow}w
$, then 
$g^{-1}({\uparrow}w)=(X\setminus U)\cup f^{-1}({\uparrow}w)$ is open in $X$ since $U$ is closed in $X$. Thus, $g$ is continuous. 

To see that $g$ is open, let $V$ be open in $X$. Noting that $V\cap U$ is open in $U$ gives that $f(V\cap U)$ is an 
upset of $\mathfrak F$. Moreover, $g(V\setminus U)$ is either $\{m\}$ or $\varnothing$, both of which are 
upsets of $\mathfrak F$. Since $g(V)=g(V\cap U) \cup g(V\setminus U)=f(V\cap U)\cup g(V\setminus U)$, it follows that $g(V)$ is an 
upset of $\mathfrak F$. Thus, $g$ is open, and hence 
$\mathfrak F$ is an interior image of $X$.
\end{proof}

\begin{corollary}\label{lem: F image of beta iff F image of open sub of beta}
Let $n\in\omega$ be nonzero. A rooted {\sf S4}-frame $\mathfrak F$ is an interior image of $\beta(\omega^n)$ iff $\mathfrak F$ is an interior image of some open subspace of $\beta(\omega^n)$.
\end{corollary}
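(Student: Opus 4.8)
The plan is to deduce this corollary directly from \cref{lem: interior image of open sub of 0-dim is image of whole space} by verifying that the two hypotheses of that proposition hold in the present situation. First I would observe that $\beta(\omega^n)$ is a zero-dimensional space: this was already noted in the paragraph preceding \cref{lem: s4.1 in L beta gamma}, where it is recorded that for any ordinal $\gamma$ the compactification $\beta(\gamma)$ is a zero-dimensional {\sf DD}-space. So the first hypothesis of the proposition is met with $X=\beta(\omega^n)$.

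The second hypothesis requires that the rooted {\sf S4}-frame $\mathfrak F$ contain a maximal point. The cleanest way to secure this is to reduce to the finite case. Since $\beta(\omega^n)$ is a {\sf DD}-space, \cref{thm: top complnss s4 1 and 2} gives that ${\sf S4.1}\subseteq{\sf L}(\beta(\omega^n))$; hence any frame $\mathfrak F$ that is an interior image of $\beta(\omega^n)$ (or of an open subspace thereof, using \cref{lem: truth preserving ops}(\ref{lem item: interior image},\ref{lem item: open sub})) validates {\sf S4.1}. I would therefore state the corollary for finite rooted {\sf S4}-frames, or more precisely restrict attention to frames relevant to our application, where every point sits below a maximal point. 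In a finite rooted {\sf S4}-frame the existence of a maximal point is automatic, so the hypothesis of the proposition is satisfied and the corollary follows immediately by instantiating $X=\beta(\omega^n)$.

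With both hypotheses in hand, the proof is a one-line application: $\mathfrak F$ is an interior image of the open subspace if and only if it is an interior image of $\beta(\omega^n)$, which is exactly the biconditional asserted by the proposition. The forward direction (image of the whole space implies image of an open subspace) is trivial since $\beta(\omega^n)$ is an open subspace of itself; the reverse direction is the substantive content, and it is handed to us by the proposition.

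The only genuine subtlety, and the step I would flag as the potential obstacle, is ensuring the maximality hypothesis is legitimately available. If the corollary is meant for \emph{arbitrary} rooted {\sf S4}-frames rather than finite ones, then I would need to argue that any $\mathfrak F$ arising as an interior image of (an open subspace of) $\beta(\omega^n)$ possesses a maximal point — which follows from the {\sf S4.1} validity noted above, since {\sf S4.1}-frames in the relevant sense always admit maximal points above each element. Apart from verifying this, there is no real work: the corollary is a direct specialization of the preceding proposition to the concrete zero-dimensional space $\beta(\omega^n)$.
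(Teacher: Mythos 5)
Your proposal is correct and follows essentially the same route as the paper: the forward direction is trivial, and for the converse you use \cref{lem: truth preserving ops} together with ${\sf S4.1}\subseteq{\sf L}(\beta(\omega^n))$ (\cref{lem: s4.1 in L beta gamma}, or equivalently \cref{thm: top complnss s4 1 and 2} plus the fact that $\beta(\omega^n)$ is a {\sf DD}-space) to see that $\mathfrak F$ validates {\sf S4.1}, hence contains a maximal point, and then apply \cref{lem: interior image of open sub of 0-dim is image of whole space} using the zero-dimensionality of $\beta(\omega^n)$. This is exactly the paper's proof.

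One caveat: your intermediate claim that ``in a finite rooted {\sf S4}-frame the existence of a maximal point is automatic'' is false in this paper's terminology. Here a maximal point is a point $w$ with ${\uparrow}w=\{w\}$, and a finite rooted frame consisting of a single proper cluster (two points, each seeing the other) has no such point; finiteness only guarantees \emph{quasi-maximal} points. Consequently, the proposed detour of restating the corollary for finite frames is both unnecessary and unsound as phrased. Fortunately, your closing paragraph supplies the correct justification --- {\sf S4.1}-validity forces every point to lie below a maximal point --- and that argument, which is the one the paper uses, should be the proof itself rather than a fallback.
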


\begin{proof}
Since one implication is clear, suppose that $\mathfrak F$ is an interior image of 
an open subspace of $\beta(\omega^n)$. By Lemma~\ref{lem: s4.1 in L beta gamma}, 
$\beta(\omega^n)\vDash{\sf S4.1}$. Therefore, 
Lemma~\ref{lem: truth preserving ops} implies that 
$\mathfrak F$ is an {\sf S4.1}-frame. 
Thus, $\mathfrak F$ has a maximal point, and hence  
the result follows 
from Proposition~\ref{lem: interior image of open sub of 0-dim is image of whole space} because $\beta(\omega^n)$ is zero-dimensional.
\end{proof}

We require the concept of depth of 
a finite rooted {\sf S4}-frame $\mathfrak F=(W,\le)$. 
Recall that the {\em depth of $w\in W$}, written $d(w)$, is the greatest $n\in\omega$ such that there are $w_1,\dots,w_n\in W$ 
satisfying $w_1=w$, $w_i\le w_{i+1}$, and $w_{i+1}\not\le w_i$ for each $1\le i<n$. The {\em depth of $\mathfrak F$}, written $d(\mathfrak F)$, is the depth of a root of $\mathfrak F$. It is well known (see, e.g., \cite[Prop.~3.44]{CZ97}) that $d(\mathfrak F)\le n$ iff $\mathfrak F\vDash\bd_n$, where \begin{eqnarray*}
\bd_1 &=& \Diamond\Box p_1\to p_1, \\ 
\bd_{n+1} &=& \Diamond(\Box p_{n+1}\wedge\lnot\bd_n)\to p_{n+1}.
\end{eqnarray*}

We will repeatedly use the following well-known fact, which we state as a lemma for ease of reference.

\begin{lemma}\label{lem: closure of open intersect dense is closure of open}
Let $X$ be a topological space. If $D$ is dense 
and $U$ is open in $X$, then $\ccc (U) = \ccc(U\cap D)$.
\end{lemma}

\begin{lemma}\label{clm: frame sepating n and n=1}
Let 
$n\in\omega$ be nonzero and $\mathfrak T_n$ the 
{\sf S4}-frame depicted in Figure~\ref{fig: frame separating n and n+1}.
\begin{enumerate}
\item 
$\mathfrak T_n$ is an interior image of $\beta(\omega^{n+1})$. \label{clm. item: IS image n+1}
\item 
$\mathfrak T_n$ is not an interior image of any open subspace of $\beta(\omega^n)$. \label{clm. item: is NOT image n}
\end{enumerate}
\end{lemma}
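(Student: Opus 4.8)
My plan is to treat the two assertions separately, using \cref{lem: F image of beta iff F image of open sub of beta} to pass between $\beta(\omega^m)$ and its open subspaces in each case.

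\emph{Part (1).} By \cref{lem: F image of beta iff F image of open sub of beta} it is enough to exhibit $\mathfrak{T}_n$ as an interior image of an open subspace of $\beta(\omega^{n+1})$. I would use the initial segment $[0,\omega^n]=[0,\omega^n+1)$, which is open in $\omega^{n+1}=[0,\omega^{n+1})$ and hence in $\beta(\omega^{n+1})$. Writing $\rho(\alpha)$ for the Cantor--Bendixson rank of $\alpha$ in $[0,\omega^n]$ (so $\rho$ assumes the values $0,\dots,n$, with $\rho^{-1}(n)=\{\omega^n\}$), fix a sequence $S$ of successor ordinals with $\sup S=\omega^n$ and no other limit point, and define $g\colon[0,\omega^n]\to\mathfrak{T}_n$ by $g(\omega^n)=r$; $g(\alpha)=w_{n-\rho(\alpha)}$ if $1\le\rho(\alpha)\le n-1$; $g(\alpha)=v$ if $\alpha\in S$; and $g(\alpha)=w_n$ for the remaining isolated points. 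Continuity holds because $g^{-1}({\uparrow}w_j)=\{\alpha:\rho(\alpha)\le n-j\}\setminus S$ is open (the first set is $[0,\omega^n]\setminus([0,\omega^n])^{(n-j+1)}$, it lies in $[0,\omega^n)$, and $S$ is closed there), while $g^{-1}({\uparrow}v)=S$ is open. Openness follows from the standard fact that every neighbourhood of a rank-$k$ ordinal contains ordinals of each rank $<k$: if $\omega^n\in U$ then $g(U)=\mathfrak{T}_n$ (using $\omega^n\in\ccc(S)$), and otherwise $g(U)$ is ${\uparrow}w_{n-r_0}$, $\{v\}$, or their union, where $r_0$ is the largest rank occurring in $U$ --- in each case an upset. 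Thus $g$ is interior and onto, and \cref{lem: F image of beta iff F image of open sub of beta} gives (1).

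\emph{Part (2).} By \cref{lem: F image of beta iff F image of open sub of beta} this is equivalent to $\mathfrak{T}_n$ not being an interior image of $\beta(\omega^n)$, and I would prove the following stronger statement by induction on $n$: for every open $Y\subseteq\beta(\omega^n)$ and every interior onto $f\colon Y\to\mathfrak{F}$ with $\mathfrak{F}$ finite rooted, no point of $\mathfrak{F}$ lying below two distinct maximal points has depth exceeding $n$. As the root of $\mathfrak{T}_n$ lies below the distinct maximal points $v,w_n$ and has depth $n+1$, this yields (2). The main tool is a depth bound for scattered domains: if $Z$ is scattered and $\phi\colon Z\to\mathfrak{G}$ is interior onto a finite rooted frame, then $d(\phi(y))\le 1+\rho_Z(y)$ for every $y$, where $\rho_Z$ is the Cantor--Bendixson rank in $Z$. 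I would prove this by induction on $\rho_Z(y)$: for $\rho_Z(y)=0$ the point is isolated, so $\phi(y)$ is maximal; for $\rho_Z(y)=j>0$ (the case $d(\phi(y))=1$ being trivial) pick $u$ with $\phi(y)<u$ and $d(\phi(y))=1+d(u)$, together with a neighbourhood $U$ satisfying $U\cap Z^{(j)}=\{y\}$, so the back condition yields $z\in U$ with $\phi(z)=u$, whence $\rho_Z(z)<j$ and $d(u)\le j$ by the inductive hypothesis. Applied to a small clopen neighbourhood $N\subseteq Y$ of an ordinal $p\in\omega^n$ with $N^{(\rho(p)+1)}=\varnothing$, where $\rho(p)\le n-1$, this shows $d(f(p))\le\rho(p)+1\le n$; hence every point of $\mathfrak{F}$ of depth $>n$ has all its preimages in the remainder $\beta(\omega^n)\setminus\omega^n$.

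Now suppose $w$ lies below distinct maximal points $m_1\ne m_2$ with $d(w)>n$, and choose $x\in f^{-1}(w)$, a remainder point by the previous paragraph. Interiority gives $x\in\ccc f^{-1}(m_1)\cap\ccc f^{-1}(m_2)$, and \cref{lem: closure of open intersect dense is closure of open}, applied with the dense set $D$ of isolated points, lets me replace these closures by $\ccc A_1\cap\ccc A_2$ with $A_i=f^{-1}(m_i)\cap D$ \emph{disjoint} sets of isolated ordinals. For $n=1$ this is immediate: $Y$ is an open subspace of the extremally disconnected $\beta(\omega)$, hence extremally disconnected, so the disjoint open sets $f^{-1}(m_1),f^{-1}(m_2)$ have disjoint closures and no such $x$ exists. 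For the inductive step I would descend one Cantor--Bendixson level: the derivative $(\omega^n)'$ is a closed copy of $\omega^{n-1}$, and being closed in the normal space $\omega^n$ it is $C^*$-embedded, so its closure in $\beta(\omega^n)$ is a copy of $\beta(\omega^{n-1})$; the partition $A_1\sqcup A_2$ induces, at the level of the rank-$1$ ordinals, a configuration of the same shape in which the depth of $w$ has dropped by one, contradicting the inductive hypothesis for $\beta(\omega^{n-1})$.

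The two interior-map verifications of Part (1) and the scattered depth bound are routine. \textbf{The hard part will be the inductive step of Part (2)}: converting the informal ``descent to the first derivative'' into a genuine interior map onto an open subspace of $\beta(\omega^{n-1})$ that witnesses a branching point of depth $>n-1$. The obstruction is that the naive restriction of $f$ to the closed copy $\ccc((\omega^n)')\cong\beta(\omega^{n-1})$ need not be open, since an isolated point of the derivative may be sent to a non-maximal point of $\mathfrak{F}$; so the reduction must build an auxiliary interior map from the induced colouring of the derivative rather than restrict $f$ directly.
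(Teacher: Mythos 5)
Your Part~(1) is correct but takes a genuinely different route from the paper. The paper disposes of it in one line: $\omega^{n+1}$ is metrizable and $d(\mathfrak T_n)=n+1$, so $\mathfrak T_n$ is an interior image of $\omega^{n+1}$ by \cite[Lem.~3.5]{BGLB15b}, and then \cref{lem: F image of beta iff F image of open sub of beta} finishes. You instead build the interior map by hand on the clopen initial segment $\omega^n+1\subseteq\omega^{n+1}$ via Cantor--Bendixson ranks and the auxiliary sequence $S$ of isolated points accumulating only at $\omega^n$; your continuity and openness checks go through (the one point worth writing out is that a neighborhood of a rank-$r_0$ point disjoint from $\{\omega^n\}$ can be chosen to meet $S$ in at most finitely many points, which your choice of $S$ guarantees). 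This buys self-containedness at the cost of length; the paper's citation buys brevity.

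Part~(2) has a genuine gap, and it is not the one you flagged. The strengthened induction hypothesis is \emph{false} for $n\ge2$: consider the frame $\mathfrak H$ on $\{r,s,m_1,m_2\}$ with $r<s$ and $s<m_1$, $s<m_2$. This is a willow tree (splitting point $s$ of depth $2$, and $W\setminus{\uparrow}s=\{r\}$ is trivially a quasi-tree), so by \cref{lem: each unrav WT is image of beta omega squared} it is, under CH, an interior image of $\beta(\omega^2)$ itself --- take $Y=\beta(\omega^2)$ --- yet $r$ lies below the two distinct maximal points $m_1\neq m_2$ and $d(r)=3>2$. A statement refuted in a model of ZFC${}+{}$CH is not a theorem of ZFC, so no implementation of your ``descent to the first derivative,'' however clever, can complete the induction; the obstruction you acknowledged (the restriction to $\ccc((\omega^n)')$ need not be open) is real, but the deeper problem is that the target of the induction is unprovable. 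The diagnosis is visible in your own reduction: for an interior $f$ one has $\ccc f^{-1}(m_1)\cap\ccc f^{-1}(m_2)=f^{-1}({\downarrow}m_1\cap{\downarrow}m_2)$, which is nonempty as soon as \emph{any} point sees both maximal points, so disjointness of the closures of $A_1,A_2$ is simply not available in general (your $n=1$ case survives only because $\beta(\omega)$ is extremally disconnected). What makes $\mathfrak T_n$ forbidden is not the mere existence of a deep point below two maximal points, but that $r$ is the \emph{only} such point: $W\setminus\{r\}$ is the disjoint union of the two upsets ${\uparrow}w_1$ and $\{v\}$. Once your (correct) rank lemma puts $f(\omega^n)\subseteq W\setminus\{r\}$ --- this matches the paper's appeal to $\omega^n\vDash\bd_n$ via \cref{thm: logic of ord} --- the traces $\omega^n\cap f^{-1}({\uparrow}w_1)$ and $\omega^n\cap f^{-1}(v)$ form a clopen partition of the normal space $\omega^n$, so their closures in $\beta(\omega^n)$ are disjoint by \cite[Cor.~3.6.4]{Eng89} together with \cref{lem: closure of open intersect dense is closure of open}; yet these closures are $f^{-1}(\{r,w_1,\dots,w_n\})$ and $f^{-1}(\{r,v\})$, both containing $f^{-1}(r)\neq\varnothing$. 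This short covering/partition contradiction is the paper's entire proof of~(2) --- no induction, no descent, and no CH --- and replacing your induction by it closes Part~(2).
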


\begin{proof}
(1) Since 
$\omega^{n+1}$ is metrizable 
and $d(\mathfrak T_n) = n+1$, we have that $\mathfrak T_n$ is an interior image of $\omega^{n+1}$ by \cite[Lem.~3.5]{BGLB15b}. Because $\omega^{n+1}$ is locally compact, $\omega^{n+1}$ is open in $\beta(\omega^{n+1})$. 
Thus, $\mathfrak T_n$ is an interior image of $\beta(\omega^{n+1})$ by \cref{lem: F image of beta iff F image of open sub of beta}. 

(2) By \cref{lem: F image of beta iff F image of open sub of beta}, it is sufficient to show that $\mathfrak T_n$ is not an interior image of $\beta(\omega^n)$. Suppose that $f$ is an interior mapping of $\beta(\omega^n)$ onto $\mathfrak T_n$. Since $\omega^n$ is open in $\beta(\omega^n)$, we have that $f(\omega^n)$ is an upset of $\mathfrak T_n$. 
Because 
$\bd_n$ is  
valid in $\omega^n$ (see \cite{Aba87} or \cref{thm: logic of ord}(\ref{lem item: AB finite rank}) below), 
\cref{lem: truth preserving ops}(\ref{lem item: interior image}) implies that 
$\bd_n$ is 
valid in 
$f(\omega^n)$. 
Therefore, $f(\omega^n)$ is an upset of $\mathfrak T_n$ of depth at most $n$, and 
so $f(\omega^n)\subseteq W\setminus\{r\}$ (because $d(r)=n+1$). 
Thus, 
\[
\omega^n = \omega^n\cap f^{-1}(W\setminus\{r\}) = (\omega^n\cap f^{-1}({\uparrow}w_1))\cup(\omega^n\cap f^{-1}(v)).
\]
Since $f^{-1}({\uparrow}w_1)$ and $f^{-1}(v)=f^{-1}({\uparrow}v)$ are disjoint and open in $\beta(\omega^n)$, it follows from the above equality 
that $\{\omega^n\cap f^{-1}({\uparrow}w_1),\omega^n\cap f^{-1}(v)\}$ is 
a clopen partition of $\omega^n$. 
Hence, $\ccc\left(\omega^n\cap f^{-1}({\uparrow}w_1)\right)$ and $\ccc\left(\omega^n\cap f^{-1}(v)\right)$ are disjoint by \cite[Cor.~3.6.4]{Eng89}. Because $f^{-1}({\uparrow}w_1)$ and $f^{-1}(v)$ are open in $\beta(\omega^n)$ and $\omega^n$ is dense in $\beta(\omega^n)$, \cref{lem: closure of open intersect dense is closure of open} gives
\[
\ccc\left(\omega^n\cap f^{-1}({\uparrow}w_1)\right) = \ccc(f^{-1}({\uparrow}w_1)) = f^{-1}\left({\downarrow}{\uparrow}w_1\right) = f^{-1}\left(\left\{r,w_1,\dots,w_n\right\}\right)
\]
and
\[
\ccc\left(\omega^n\cap f^{-1}(v)\right) = \ccc(f^{-1}(v)) = f^{-1}\left({\downarrow}v\right) = f^{-1}\left(\left\{r,v\right\}\right).
\]
Therefore, since $f$ is onto, we have the following contradiction:
\begin{eqnarray*}
\varnothing &=& \ccc\left(\omega^n\cap f^{-1}({\uparrow}w_1)\right) \cap \ccc\left(\omega^n\cap f^{-1}(v)\right) \\
&=& f^{-1}\left(\left\{r,w_1,\dots,w_n\right\}\right) \cap f^{-1}\left(\left\{r,v\right\}\right) = f^{-1}(r)\neq\varnothing.
\end{eqnarray*}
Thus, $\mathfrak T_n$ is not an interior image of $\beta(\omega^n)$.
\end{proof}

It follows from Lemmas~\ref{lem: s4.1 in L beta gamma} and~\ref{lem: omega^n is clopen in omega^n+1} that ${\sf S4.1}\subseteq\bigcap_{n=1}^\infty{\sf L}_n$. We show that 
the containment is strict. 

\begin{definition}\label{def: G and F_2}
Let $\mathfrak G$ be the {\sf S4.1}-frame depicted in Figure~\ref{fig: frame separating s4.1 and n}. 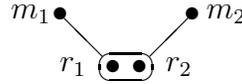
\begin{figure}[h]
\begin{picture}(50,40)(-25,-5)
\multiput(-5,5)(10,0){2}{\makebox(0,0){$\bullet$}}
\put(0,5){\oval(20,10)}
\put(-20,5){\makebox(0,0){$r_1$}}
\put(20,5){\makebox(0,0){$r_2$}}
\put(-25,25){\line(1,-1){16}}
\put(-36,25){\makebox(0,0){$m_1$}}
\put(25,25){\line(-1,-1){16}}
\put(38,25){\makebox(0,0){$m_2$}}
\multiput(-25,25)(50,0){2}{\makebox(0,0){$\bullet$}}
\end{picture}
\caption{The {\sf S4}-frame $\mathfrak G$.}
\label{fig: frame separating s4.1 and n}
\end{figure}
\end{definition}

\begin{lemma}\label{clm: NEVER an image}
$\mathfrak G$ 
is not an interior image of any open subspace of $\beta(\omega^n)$ for any $n$.
\end{lemma}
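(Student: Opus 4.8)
The plan is to invoke \cref{lem: F image of beta iff F image of open sub of beta} to reduce the statement to the single space $\beta(\omega^n)$: since $\mathfrak G$ is rooted (its root cluster is $\{r_1,r_2\}$) and has the maximal points $m_1,m_2$, being an interior image of some open subspace of $\beta(\omega^n)$ is equivalent to being an interior image of $\beta(\omega^n)$ itself. So I would suppose toward a contradiction that $f\colon\beta(\omega^n)\to\mathfrak G$ is an onto interior map and set $A_i=f^{-1}(m_i)$ and $B_i=f^{-1}(r_i)$ for $i=1,2$. All four sets are nonempty because $f$ is onto, and $A_1,A_2$ are open and disjoint because $\{m_1\},\{m_2\}$ are disjoint open upsets of $\mathfrak G$. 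Writing $F=B_1\cup B_2$, the closure identity for interior maps $\ccc f^{-1}[S]=f^{-1}[\ccc S]$ (see \cite[p.~99]{RS63}), applied to the downsets $\ccc\{m_i\}=\{m_i,r_1,r_2\}$ and $\ccc\{r_i\}=\{r_1,r_2\}$ of $\mathfrak G$, yields $\ccc A_i=A_i\cup F$ and $\ccc B_1=\ccc B_2=F$; in particular $\ccc A_1\cap\ccc A_2=F$.

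The two structural observations that drive the argument are the following. First, $F$ is \emph{crowded} (has no isolated points): if some $x\in F$ were isolated in $F$, say $x\in B_1$, then a neighborhood of $x$ meeting $F$ only in $x$ would miss $B_2$, contradicting $x\in F=\ccc B_2$. This is precisely the effect of the \emph{proper} root cluster $\{r_1,r_2\}$. Second, $F$ is disjoint from $\omega^n$. Indeed, $\omega^n$ is open in $\beta(\omega^n)$, so $F\cap\omega^n$ is open in $F$; were it nonempty it would, as a nonempty subspace of the scattered space $\omega^n$, contain a point $x$ isolated in $F\cap\omega^n$, and since $\omega^n$ is open this $x$ would be isolated in $F$, contradicting crowdedness. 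Hence $F\subseteq\beta(\omega^n)\setminus\omega^n$.

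It then remains only to close the argument using normality. Since $\ccc A_i\cap\omega^n$ is the closure of $A_i$ in the subspace $\omega^n$, the relation $\ccc A_1\cap\ccc A_2=F$ together with $F\cap\omega^n=\varnothing$ says exactly that $A_1$ and $A_2$ have disjoint closures in $\omega^n$. These two disjoint closed subsets of the normal space $\omega^n$ are completely separated, so their closures in $\beta(\omega^n)$ are disjoint by \cite[Cor.~3.6.4]{Eng89}; a fortiori $\ccc A_1\cap\ccc A_2=\varnothing$, that is, $F=\varnothing$. This contradicts $\varnothing\neq B_1\subseteq F$, completing the proof.

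I expect the main obstacle to be the second structural observation: recognizing that the proper root cluster forces $F$ to be crowded, and then exploiting the scatteredness of $\omega^n$ (and the fact that $\omega^n$ is open in its compactification) to push $F$ entirely into the remainder $\beta(\omega^n)\setminus\omega^n$. Once $F$ avoids $\omega^n$, disjointness of the closures of $A_1,A_2$ inside $\omega^n$ is automatic and normality finishes things at once; by contrast, the density of the isolated points (the {\sf DD} property of $\beta(\omega^n)$) turns out not to be needed here.
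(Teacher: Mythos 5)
Your proof is correct, and its overall skeleton is the same as the paper's: reduce to the full space $\beta(\omega^n)$ via \cref{lem: F image of beta iff F image of open sub of beta}, show that the preimage $F=f^{-1}(\{r_1,r_2\})$ of the root cluster is disjoint from $\omega^n$, and then use normality of $\omega^n$ with \cite[Cor.~3.6.4]{Eng89} and \cref{lem: closure of open intersect dense is closure of open} to conclude $\ccc f^{-1}(m_1)\cap\ccc f^{-1}(m_2)=\varnothing$, contradicting $F\neq\varnothing$. Where you genuinely diverge is in how that middle step is obtained. The paper gets it modally, ``analogously'' to \cref{clm: frame sepating n and n=1}(2): the restriction of $f$ to the open dense subspace $\omega^n$ is an interior map onto an upset of $\mathfrak G$, and since ${\sf L}(\omega^n)=\Grz_n$ by \cref{thm: logic of ord}, this upset cannot be all of $\mathfrak G$ --- though note the axiom doing the work here is $\grz$ (a frame with a proper cluster refutes it), not $\bd_n$ as in the cited analogue, since $\mathfrak G$ has depth $2$ and satisfies $\bd_n$ for $n\ge 2$; hence $f(\omega^n)\subseteq\{m_1,m_2\}$. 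You instead prove $F\cap\omega^n=\varnothing$ purely topologically: the properness of the cluster forces $F=\ccc f^{-1}(r_1)=\ccc f^{-1}(r_2)$ to be crowded, and a crowded closed set cannot meet the open scattered subspace $\omega^n$. This is, in effect, an unfolding in this special case of the fact that scattered spaces validate $\grz$; it buys you a self-contained argument that needs neither Abashidze--Blass nor the (slightly delicate) adaptation of the analogy, at the cost of redoing by hand what validity transfer gives for free and what scales to other forbidden frames. One small imprecision: since $A_i=f^{-1}(m_i)$ need not lie inside $\omega^n$, the phrase ``$\ccc A_i\cap\omega^n$ is the closure of $A_i$ in the subspace $\omega^n$'' should read ``the closure of $A_i\cap\omega^n$ in $\omega^n$''; this identification is exactly \cref{lem: closure of open intersect dense is closure of open} applied to the open set $A_i$ and the dense set $\omega^n$, and it is also what lets you pass from the disjointness given by \cite[Cor.~3.6.4]{Eng89} back to $\ccc A_1\cap\ccc A_2=\varnothing$.
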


\begin{proof}
The proof is analogous to the proof of Lemma~\ref{clm: frame sepating n and n=1}(\ref{clm. item: is NOT image n}). 
Suppose that $\mathfrak G$ 
is an interior image of $\beta(\omega^n)$ for some $n$.  
Then $\left\{\omega^n\cap f^{-1}(m_1),\omega^n\cap f^{-1}(m_2)\right\}$ is a clopen partition of $\omega^n$, and hence
\begin{eqnarray*}
\varnothing &=& \ccc(\omega^n\cap f^{-1}(m_1)) \cap \ccc(\omega^n\cap f^{-1}(m_2)) = \ccc( f^{-1}(m_1)) \cap \ccc(f^{-1}(m_2)) \\&=& f^{-1}({\downarrow}m_1)\cap f^{-1}({\downarrow}m_2) = f^{-1}\left({\downarrow}m_1\cap{\downarrow}m_2\right)
= f^{-1}(\{r_1,r_2\}) \neq \varnothing.
\end{eqnarray*}
The obtained contradiction proves that $\mathfrak G$  
is not an interior image of $\beta(\omega^n)$ for any $n$.
\end{proof}

As a consequence of Lemma~\ref{clm: NEVER an image}, we obtain that ${\sf S4.1}\subset\bigcap_{n=1}^\infty{\sf L}_n$. This motivates the following definition.

\begin{definition}
Set ${\sf L}_\infty=\bigcap\nolimits_{n=1}^\infty{\sf L}_n$.
\end{definition}

We recall 
that with each finite rooted $\sf S4$-frame $\mathfrak F$ we may associate the formula $\chi_\mathfrak F$, known as the {\em Fine-Jankov formula} of $\mathfrak F$ (see, e.g., \cite[pp.~143--144]{BRV01}). For ease of reference, we recall Fine's result \cite[Lem.~1]{Fin74} and its generalization 
to the topological setting \cite[Lem.~3.5]{BBBM17}:

\begin{lemma}\label{lem: top vers of Fine}
Let $\F$ be a finite rooted {\sf S4}-frame, $X$ a topological space, and $\G$ an {\sf S4}-frame. 
\begin{enumerate}
\item 
$\G\vDash\chi_\F$ iff $\F$ is not a p-morphic image of any 
generated subframe of $
\G$.\label{lem item: FINE}
\item $X\vDash\chi_\mathfrak F$ iff $\mathfrak F$ is not an interior image of any open subspace of $X$. \label{lem item: FINE gen Top setting}
\end{enumerate}
\end{lemma}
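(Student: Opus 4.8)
The plan is to exploit the defining property of the Fine--Jankov formula and to treat the two items by a single valuation-to-morphism dictionary, with item (1) being the classical frame statement and item (2) its topological refinement. Write $W=\{w_1,\dots,w_k\}$ with $w_1$ a root, attach a fresh variable $p_i$ to each $w_i$, and recall that $\chi_\F$ may be taken of the form $\Box\Phi\to\lnot p_1$, where $\Phi$ conjoins the ``diagram'' clauses $\bigvee_i p_i$, the exclusivity clauses $p_i\to\lnot p_j$ for $i\ne j$, the clauses $p_i\to\Diamond p_j$ for $w_i\le w_j$, and the clauses $p_i\to\lnot\Diamond p_j$ for $w_i\not\le w_j$. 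The two facts I would use repeatedly are that refuting $\chi_\F$ at a point is the same as forcing $\Box\Phi\wedge p_1$ there, and that under the valuation $V(p_i)=\{w_i\}$ the frame $\F$ refutes $\chi_\F$ at its root; in particular $\F\not\vDash\chi_\F$, which is the instance of (1) with $\G=\F$.

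For item (1) I would argue both directions through this dictionary. If $\F$ is a p-morphic image of a generated subframe of $\G$ via $g$, I pick $u$ in that subframe with $g(u)=w_1$ and pull back the canonical valuation by $V(p_i)=g^{-1}(w_i)$: the back condition yields the $\Diamond$-clauses, the forth condition yields the $\lnot\Diamond$-clauses, and $g$ being a well-defined total function into $W$ gives the partition clauses, so $\Box\Phi\wedge p_1$ holds at $u$; as the subframe is generated, this truth is inherited by $\G$, whence $\G\not\vDash\chi_\F$. Conversely, if $\G,x\not\vDash\chi_\F$ then $\Box\Phi\wedge p_1$ holds at $x$; on the point-generated subframe ${\uparrow}x$ each point satisfies exactly one $p_i$ by the partition clauses, so $g(y)=w_i$ is well defined, the $\Diamond$-clauses deliver the back condition and the $\lnot\Diamond$-clauses the forth condition, and rootedness of $\F$ together with the back condition gives surjectivity. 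This recovers \cite[Lem.~1]{Fin74}.

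For item (2) the forward direction is immediate from the truth-preserving operations. If $\F$ is an interior image of an open subspace $U\subseteq X$, then \cref{lem: truth preserving ops}(\ref{lem item: interior image}) gives ${\sf L}(U)\subseteq{\sf L}(\F)$, and since $\F\not\vDash\chi_\F$ we get $U\not\vDash\chi_\F$; as $U$ is open in $X$, \cref{lem: truth preserving ops}(\ref{lem item: open sub}) gives ${\sf L}(X)\subseteq{\sf L}(U)$, hence $X\not\vDash\chi_\F$. Contrapositively, $X\vDash\chi_\F$ forces $\F$ not to be an interior image of any open subspace of $X$.

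The substantive direction is the converse, and I expect it to carry the main difficulty. Assume $X\not\vDash\chi_\F$, so there are a valuation and a point $x$ at which $\Box\Phi\wedge p_1$ holds. Let $P_i$ be the set of points of $X$ satisfying $p_i$ and $A$ the set satisfying $\Phi$, and put $U=\ii(A)$; this is an open neighborhood of $x$ on which every conjunct of $\Phi$ holds. By the partition clauses each $z\in U$ lies in exactly one $P_i$, so setting $f(z)=w_i$ for $z\in U_i=P_i\cap U$ defines $f\colon U\to W$, and ontoness follows from the $\Diamond$-clauses at $x$ together with the rootedness of $\F$. To see that $f$ is interior I would verify $\ccc_U(U_j)=f^{-1}({\downarrow}w_j)=\bigcup_{w_i\le w_j}U_i$ for each $j$, extending to arbitrary $B\subseteq W$ by finite unions via the criterion $\ccc f^{-1}[B]=f^{-1}[\ccc B]$. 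The inclusion from left to right is exactly the content of the $\lnot\Diamond$-clauses. The reverse inclusion is the delicate point: the $\Diamond$-clauses only place $z$ in the $X$-closure of the whole set $P_j$, whereas I need $z$ in the closure of the part $U_j$ lying inside $U$. The resolution is precisely that $U$ is open: any neighborhood $N$ of $z$ may be shrunk inside $U$, and then $N\cap P_j=N\cap U_j\ne\varnothing$, so $z\in\ccc_X(U_j)$. Thus (2) rests on the same diagram clauses as (1), in line with \cite[Lem.~3.5]{BBBM17}.
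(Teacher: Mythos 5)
Your proof is correct. The paper itself offers no proof of this lemma---it is recalled as a known result, citing \cite[Lem.~1]{Fin74} for item (1) and \cite[Lem.~3.5]{BBBM17} for item (2)---and your argument is exactly the standard one behind those references: the valuation-to-morphism dictionary for the Fine--Jankov formula, with the openness of $U=\ii(A)$ supplying precisely the shrinking argument needed in the one delicate inclusion $f^{-1}({\downarrow}w_j)\subseteq\ccc_U(U_j)$.
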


\begin{theorem}\label{lem: L_n's chain}
$
{\sf S4.1}\subset 
{\sf L}_\infty \subset \cdots \subset {\sf L}_{n+1}\subset {\sf L}_n\subset \cdots\subset {\sf L_2}\subset {\sf L_1}={\sf S4.1.2}.
$ 
\end{theorem}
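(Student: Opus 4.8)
The plan is to assemble the chain entirely from results already in hand, using the Fine-Jankov formulas of the two separating frames $\mathfrak T_n$ and $\mathfrak G$ to witness each strict step. I would treat the non-strict skeleton, the two endpoints, and the strictness separately.

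First, for the non-strict inclusions I would invoke \cref{lem: omega^n is clopen in omega^n+1} to get ${\sf L}_{n+1}\subseteq{\sf L}_n$ for every nonzero $n$, so the finite logics descend below ${\sf L}_1$. Since ${\sf L}_\infty=\bigcap_{m\ge1}{\sf L}_m$, we have ${\sf L}_\infty\subseteq{\sf L}_n$ for every $n$, placing ${\sf L}_\infty$ below the whole chain; and \cref{lem: s4.1 in L beta gamma} gives ${\sf S4.1}\subseteq{\sf L}(\beta(\omega^n))={\sf L}_n$ for all $n$, hence ${\sf S4.1}\subseteq{\sf L}_\infty$. For the top endpoint, ${\sf L}_1={\sf L}(\beta(\omega))={\sf S4.1.2}$ is the theorem of \cite{BH09} (provable in ZFC by \cite{Dow19}) recalled in the introduction.

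The crux is strictness, which I would obtain uniformly through the topological Fine-Jankov lemma \cref{lem: top vers of Fine}(\ref{lem item: FINE gen Top setting}). For ${\sf L}_{n+1}\subset{\sf L}_n$ I use $\chi_{\mathfrak T_n}$: by \cref{clm: frame sepating n and n=1}(\ref{clm. item: is NOT image n}) the frame $\mathfrak T_n$ is not an interior image of any open subspace of $\beta(\omega^n)$, so $\beta(\omega^n)\vDash\chi_{\mathfrak T_n}$ and $\chi_{\mathfrak T_n}\in{\sf L}_n$; whereas by \cref{clm: frame sepating n and n=1}(\ref{clm. item: IS image n+1}) it is an interior image of $\beta(\omega^{n+1})$ (an open subspace of itself), so $\beta(\omega^{n+1})\not\vDash\chi_{\mathfrak T_n}$ and $\chi_{\mathfrak T_n}\notin{\sf L}_{n+1}$. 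Thus $\chi_{\mathfrak T_n}\in{\sf L}_n\setminus{\sf L}_{n+1}$, giving ${\sf L}_{n+1}\subsetneq{\sf L}_n$; and since ${\sf L}_\infty\subseteq{\sf L}_{n+1}$, the same witness gives ${\sf L}_\infty\subsetneq{\sf L}_n$ for every $n$.

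Finally, for ${\sf S4.1}\subset{\sf L}_\infty$ I use $\chi_{\mathfrak G}$. By \cref{clm: NEVER an image} the frame $\mathfrak G$ is not an interior image of any open subspace of $\beta(\omega^n)$ for any $n$, so $\chi_{\mathfrak G}\in{\sf L}_n$ for all $n$ and hence $\chi_{\mathfrak G}\in{\sf L}_\infty$. On the other hand $\mathfrak G$ is a finite rooted ${\sf S4.1}$-frame and is a p-morphic image of itself (as a generated subframe of itself), so \cref{lem: top vers of Fine}(\ref{lem item: FINE}) gives $\mathfrak G\not\vDash\chi_{\mathfrak G}$; by the relational completeness of ${\sf S4.1}$ (\cref{thm: reln complnss s4 1 and 2}) this forces $\chi_{\mathfrak G}\notin{\sf S4.1}$. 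Hence $\chi_{\mathfrak G}\in{\sf L}_\infty\setminus{\sf S4.1}$ and ${\sf S4.1}\subsetneq{\sf L}_\infty$, completing the chain. I do not expect a genuine obstacle here, since all the geometric content is carried by \cref{clm: frame sepating n and n=1,clm: NEVER an image}; the only points requiring care are applying \cref{lem: top vers of Fine} in the correct direction and checking that each separating frame has a maximal point above every point, so that it is indeed an ${\sf S4.1}$-frame and the completeness theorem applies cleanly.
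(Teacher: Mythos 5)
Your proposal is correct and follows essentially the same route as the paper: the non-strict inclusions come from \cref{lem: s4.1 in L beta gamma,lem: omega^n is clopen in omega^n+1} and \cite{BH09}, and strictness is witnessed by the Fine-Jankov formulas $\chi_{\mathfrak T_n}$ and $\chi_{\mathfrak G}$ via \cref{clm: frame sepating n and n=1,clm: NEVER an image} and \cref{lem: top vers of Fine}. Your argument merely spells out in more detail the final step that $\chi_{\mathfrak G}\notin{\sf S4.1}$ (via $\mathfrak G\not\vDash\chi_{\mathfrak G}$ and relational completeness), which the paper compresses into a parenthetical remark.
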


\begin{proof}
By \cite{BH09}, ${\sf L}_1={\sf S4.1.2}$. The inclusions 
follow from Lemmas~\ref{lem: s4.1 in L beta gamma} and~\ref{lem: omega^n is clopen in omega^n+1}. That the inclusions 
are strict follows from \cref{clm: frame sepating n and n=1}. Indeed, let $\chi_{\mathfrak T_n}$ be the Fine-Jankov formula of $\mathfrak T_n$. Because $\mathfrak T_n$ is an interior image of $\beta(\omega^{n+1})$, \cref{lem: top vers of Fine}(\ref{lem item: FINE gen Top setting}) yields that $\beta(\omega^{n+1})\not\vDash\chi_{\mathfrak T_n}$, so $\chi_{\mathfrak T_n}\not\in{\sf L}_{n+1}$. Since $\mathfrak T_n$ is not an interior image of $\beta(\omega^n)$, \cref{lem: F image of beta iff F image of open sub of beta} implies that $\mathfrak T_n$ is not an interior image of any open subspace of $\beta(\omega^n)$. Thus, \cref{lem: top vers of Fine}(\ref{lem item: FINE gen Top setting}) yields that $\beta(\omega^{n})\vDash\chi_{\mathfrak T_n}$, and hence $\chi_{\mathfrak T_n}\in{\sf L}_{n}$. Therefore, ${\sf L}_{n+1}\subset{\sf L}_n$. Consequently, ${\sf L}_\infty \subset {\sf L}_n$ for each $n$. Similarly, it follows from Lemmas~\ref{clm: NEVER an image} and~\ref{lem: top vers of Fine}(\ref{lem item: FINE gen Top setting}) that $\chi_{\mathfrak G
}\in{\sf L}_n$ for each $n\ge 1$, but $\chi_{\mathfrak G
}\not\in{\sf S4.1}$ (since $\mathfrak G$ is an {\sf S4.1}-frame). 
\end{proof}

\section{Logics arising as ${\sf L}(\beta(\gamma))$}
\label{sec: partial soln P2}

Having established that the logics ${\sf L}_n={\sf L}(\beta(\omega^n))$ form a decreasing chain in the interval $\left[{\sf S4.1},{\sf S4.1.2}\right]$ whose intersection ${\sf L}_\infty$ strictly contains {\sf S4.1}, we consider a more general question: what logics arise as ${\sf L}(\beta(\gamma))$ for an ordinal space $\gamma$? 
The following structural theorem is one of our main tools. Its proof utilizes the Cantor normal form of an ordinal, which we now recall (see, e.g., \cite[p.~153]{Sem71}).

\begin{definition}\label{def: CNF}
Each nonzero ordinal $\gamma$ can be uniquely written in the \emph{Cantor normal form} 
$
\gamma = \omega^{\alpha_k}n_k +\cdots +\omega^{\alpha_1}n_1,
$
where $0\ne k\in\omega$, 
each $n_i$ is nonzero and finite, and $0\le\alpha_1<\cdots<\alpha_k$ are ordinals.
\end{definition}

\begin{theorem}[Structural Theorem]\label{thm: strctr of CS noncompact ordinal}
For an infinite  
ordinal $\gamma$, we have that $\beta(\gamma)$ is homeomorphic to the disjoint union of a compact ordinal and the \CS\ compactification of a power of $\omega$.
\end{theorem}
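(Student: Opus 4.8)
The plan is to produce an explicit clopen splitting of the ordinal space $\gamma$ itself and then push it through the \CS\ compactification. Writing $\gamma$ in Cantor normal form (\cref{def: CNF}) as $\gamma=\omega^{\alpha_k}n_k+\cdots+\omega^{\alpha_1}n_1$, the relevant dichotomy is whether $\alpha_1=0$ or $\alpha_1\ge 1$. If $\alpha_1=0$, then $\gamma$ ends in the finite block $n_1$, so $\gamma$ is a successor ordinal and hence compact; in this degenerate case $\beta(\gamma)=\gamma$ is already a compact ordinal and the statement holds with an empty power-of-$\omega$ summand. The substantive case is $\alpha_1\ge1$, where $\gamma$ is a limit ordinal and hence noncompact, and this is where I would spend the effort.

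So assume $\alpha_1\ge1$, and isolate the topmost copy of $\omega^{\alpha_1}$ sitting cofinally in $\gamma$. Set $\mu_0=\omega^{\alpha_k}n_k+\cdots+\omega^{\alpha_2}n_2+\omega^{\alpha_1}(n_1-1)$, so that $\gamma=\mu_0+\omega^{\alpha_1}$, and put $\mu=\mu_0+1$. Two order-arithmetic facts drive the argument. First, $[0,\mu)=[0,\mu_0]$ has order type $\mu_0+1=\mu$, a successor ordinal, so this initial segment is a compact ordinal. Second, the tail $[\mu,\gamma)$ has order type $\theta$ determined by $\mu+\theta=\gamma$; cancelling $\mu_0$ on the left reduces this to $1+\theta=\omega^{\alpha_1}$, whose unique solution is $\theta=\omega^{\alpha_1}$ since $1+\omega^{\alpha_1}=\omega^{\alpha_1}$ for $\alpha_1\ge1$. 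Thus $[\mu,\gamma)$ is order-isomorphic to the power $\omega^{\alpha_1}$.

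Next I would verify that this really is a clopen partition. Since $\mu=\mu_0+1$ is a successor, the initial segment $[0,\mu)$ is clopen in $\gamma$ (an initial segment $[0,\mu)$ is closed precisely when $\mu$ is $0$ or a successor), and therefore so is its complement $[\mu,\gamma)$. As $[\mu,\gamma)$ is convex, its subspace topology agrees with its order topology, so the order isomorphism above is a homeomorphism onto $\omega^{\alpha_1}$. This yields a homeomorphism $\gamma\cong\mu\oplus\omega^{\alpha_1}$ with $\mu$ a compact ordinal and $\omega^{\alpha_1}$ a power of $\omega$.

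Finally I would transfer the splitting to $\beta(\gamma)$ using the same \CS\ machinery already employed for \cref{lem: omega^n is clopen in omega^n+1}. Writing $\gamma=A\oplus B$ with $A=\mu$ and $B\cong\omega^{\alpha_1}$ both clopen, the closures $\ccc(A)$ and $\ccc(B)$ in $\beta(\gamma)$ are clopen \cite[Cor.~3.6.5]{Eng89}, are disjoint \cite[Cor.~3.6.4]{Eng89}, and cover the dense set $\gamma$, so they partition $\beta(\gamma)$; moreover, since $\gamma$ is normal, $\ccc(A)\cong\beta(A)$ and $\ccc(B)\cong\beta(B)$ \cite[Cor.~3.6.8]{Eng89}. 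As $A=\mu$ is compact we have $\beta(A)=A=\mu$, and $\beta(B)\cong\beta(\omega^{\alpha_1})$, whence $\beta(\gamma)\cong\mu\oplus\beta(\omega^{\alpha_1})$, exactly the disjoint union of a compact ordinal and the \CS\ compactification of a power of $\omega$. I expect the only delicate point to be the ordinal-arithmetic identification of the tail $[\mu,\gamma)$ with $\omega^{\alpha_1}$, together with the check that the split is clopen rather than merely open; everything after that is a routine application of the cited \CS\ facts.
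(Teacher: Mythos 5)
Your main case ($\alpha_1\ge 1$) is, up to notation, exactly the paper's own proof: your $\mu_0$ is the paper's $\gamma'$ (the paper just writes it in three cases instead of uniformly), the key arithmetic step is the same absorption $\gamma=\gamma'+\omega^{\alpha_1}=\gamma'+(1+\omega^{\alpha_1})=(\gamma'+1)+\omega^{\alpha_1}$, and the passage to $\beta(\gamma)$ via the clopen partition $\{\gamma'+1,\ccc(\omega^{\alpha_1})\}$, normality of $\gamma$, and \cite[Cors.~3.6.4, 3.6.5, 3.6.8]{Eng89} is identical. Your extra care about left cancellation and about subspace versus order topology on the tail is fine; that part of the argument is correct.

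The genuine defect is your degenerate case $\alpha_1=0$. The theorem asserts that $\beta(\gamma)$ is homeomorphic to the disjoint union of a compact ordinal \emph{and} the \CS\ compactification of a power of $\omega$, and no power of $\omega$ is empty (the smallest is $\omega^0=1$, a one-point space); so declaring that ``the statement holds with an empty power-of-$\omega$ summand'' does not prove the statement when $\gamma$ is compact. The missing (easy) step, which is exactly how the paper handles this case, is that an infinite compact ordinal $\gamma$ is homeomorphic to $\gamma+1$, whence
\[
\beta(\gamma)=\gamma\;\cong\;\gamma+1\;\cong\;\gamma\oplus 1\;=\;\gamma\oplus\beta(\omega^0),
\]
which is a disjoint union of the required form (here $\gamma+1\cong\gamma\oplus 1$ because $\gamma$, being infinite and compact, is a successor ordinal, so the top point of $\gamma+1$ is isolated). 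With this one additional line your proof is complete and coincides with the paper's.
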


\begin{proof}
If $\gamma$ is compact, 
then $\beta(\gamma)$ is homeomorphic to $\gamma$, which in turn is homeomorphic to $\gamma + 1$ (since $\gamma$ is infinite). Therefore, 
$\beta(\gamma)$ is homeomorphic to the disjoint union $\gamma\oplus\beta(\omega^0)$. 

Suppose that $\gamma$ is not compact, and let $\gamma = \omega^{\alpha_k}n_k +\cdots +\omega^{\alpha_1}n_1$ be in Cantor normal form. Then 
$\alpha_1\neq0$ because $\gamma$ is not compact.
The idea is to ``tear off'' one copy of the least occurring power of $\omega$ from ``the top'' of $\gamma$. Formally, write $\gamma$ as $\gamma' + \omega^{\alpha_1}$ by taking 
\[
\gamma' = \left\{\begin{array}{ll} 
0 & \text{if }n_1=1\text{ and }k=1,\\
\omega^{\alpha_k}n_k +\cdots +\omega^{\alpha_2}n_2 & \text{if }n_1=1\text{ and }k>1,\\
\omega^{\alpha_k}n_k +\cdots +\omega^{\alpha_2}n_2 + \omega^{\alpha_1}(n_1-1) & \text{if }n_1>1.
\end{array}
\right.
\]

Basic ordinal arithmetic yields that 
\[
\gamma = \gamma' +\omega^{\alpha_1} = \gamma' + (1+\omega^{\alpha_1}) = (\gamma'+1)+\omega^{\alpha_1}.
\] 
Since 
$\{\gamma'+1,\omega^{\alpha_1}\}$ is a clopen partition of $\gamma$ and $\gamma'+1$ is compact, $\{\gamma'+1,\ccc
(\omega^{\alpha_1})\}$ is a clopen partition of $\beta(\gamma)$. 
But $\gamma$ is a normal space, giving that $\ccc
(\omega^{\alpha_1})$ is homeomorphic to $\beta(\omega^{\alpha_1})$. 
Thus, $\beta(\gamma)$ is homeomorphic to the disjoint union $(\gamma'+1) \oplus \beta(\omega^{\alpha_1})$.
\end{proof}

We next recall the well-known Abashidze-Blass theorem. For this recall that the {\em Grzegorczyk logic} $\Grz$ is obtained from {\sf S4} by postulating 
the {\em Grzegorczyk axiom}
\[
\grz = \grzform.
\]
For each nonzero $n\in\omega$, let $\Grz_n$ be obtained from $\Grz$ by postulating $\bd_n$. We have that $\Grz=\bigcap_{n=1}^\infty \Grz_n$ and these logics form the following descending chain:
\[
\Grz\subset \cdots\subset\Grz_3\subset\Grz_2\subset\Grz_1.
\]

\begin{theorem}[Abashidze \cite{Aba87}, Blass \cite{Bla90}]
\label{thm: logic of ord}
Let $\gamma$ be a nonempty ordinal and $n\in\omega$ be nonzero. 
\begin{enumerate}
\item If $\omega^{n-1}< \gamma \le \omega^n$
, then ${\sf L}(\gamma)=\Grz_n$.
\label{lem item: AB finite rank}
\item If $\omega^\omega\le \gamma$, 
then ${\sf L}(\gamma)=\Grz$. 
\label{lem item: AB infinite rank}
\end{enumerate}
\end{theorem}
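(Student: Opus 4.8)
The plan is to prove both equalities by the two standard inclusions: \emph{soundness} (the right-hand logic is contained in ${\sf L}(\gamma)$) and \emph{completeness} (${\sf L}(\gamma)$ is contained in it), the latter obtained by realizing finite posets as interior images of $\gamma$. For soundness I would use two facts about the ordinal space $\gamma$. First, $\gamma$ is scattered, and it is classical (going back to Esakia) that every scattered space validates $\grz$; hence $\grz\in{\sf L}(\gamma)$, which already gives $\Grz\subseteq{\sf L}(\gamma)$ in both cases. Second, for case (1) I would compute the Cantor--Bendixson rank of $\gamma$ via the topological derivative: the derivative of $\omega^m+1$ is homeomorphic to $\omega^{m-1}+1$, so peeling off one power of $\omega$ at each step shows that any $\gamma$ with $\omega^{n-1}<\gamma\le\omega^n$ has rank $n$. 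Since $\gamma\vDash\bd_n$ precisely when this rank is at most $n$ (in the Alexandroff picture the rank of a finite poset is its depth, and the two notions match under closure semantics), we get $\bd_n\in{\sf L}(\gamma)$ and therefore $\Grz_n\subseteq{\sf L}(\gamma)$. In case (2), where $\gamma\ge\omega^\omega$, the rank is infinite, so no $\bd_n$ is forced and soundness gives only $\Grz\subseteq{\sf L}(\gamma)$.

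For completeness I would invoke the finite model property: $\Grz_n$ is the logic of the finite rooted posets of depth at most $n$, and $\Grz$ is the logic of all finite rooted posets. Thus it suffices to show that every finite rooted poset $\F$ of depth $d$ is an interior image of $\gamma$, with $d\le n$ in case (1) and $d$ arbitrary in case (2). The crucial reduction is the following Key Lemma: every finite rooted poset of depth $d$ is an interior image of $\omega^{d-1}+1$. Granting this, observe that since $\gamma>\omega^{n-1}\ge\omega^{d-1}$ in case (1) (resp.\ $\gamma\ge\omega^\omega>\omega^{d-1}$ in case (2)), the compact ordinal $\omega^{d-1}+1=[0,\omega^{d-1}]$ is a clopen, hence open, subspace of $\gamma$. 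As $\gamma$ is zero-dimensional and $\F$ (being finite) has a maximal point, \cref{lem: interior image of open sub of 0-dim is image of whole space} upgrades the Key Lemma to the statement that $\F$ is an interior image of $\gamma$ itself. Then \cref{lem: truth preserving ops}(\ref{lem item: interior image}) yields ${\sf L}(\gamma)\subseteq{\sf L}(\F)$, so any formula outside $\Grz_n$ (resp.\ $\Grz$) is refuted on a suitable $\F$ and hence on $\gamma$, completing the desired inclusion.

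The Key Lemma is the heart of the argument and the main obstacle; I would prove it by induction on the depth $d$. The base case $d=1$ is the constant map of the discrete two-point space $\omega^0+1$ onto the single point. For the inductive step, given $\F$ of depth $d+1$ with root $r$, I would send the top point $\omega^d$ of $[0,\omega^d]$ to $r$, partition the open remainder $[0,\omega^d)$ into infinitely many clopen blocks, each homeomorphic to an ordinal of rank $d$ such as $\omega^{d-1}+1$, accumulating at $\omega^d$, and use the induction hypothesis to map these blocks onto the point-generated subframes ${\uparrow}s$ of the immediate successors $s$ of $r$ (each of which has depth $\le d$). The delicate point -- and the place where openness can fail -- is the back condition at the top point: one must arrange that each ${\uparrow}s$ is the image of a cofinal family of blocks, so that the image of every basic neighborhood $(\beta,\omega^d]$ of $\omega^d$ is the full upset $\F$. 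Away from $\omega^d$, continuity and openness reduce to the inductive hypothesis on the clopen blocks; at $\omega^d$ they reduce to this cofinality arrangement, which is exactly the combinatorial core of the Abashidze--Blass construction.
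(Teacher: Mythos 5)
The paper contains no proof of \cref{thm: logic of ord}: it is imported as a known result of Abashidze \cite{Aba87} and Blass \cite{Bla90}, so there is no in-paper argument to compare yours against, and I assess your reconstruction on its own terms. In outline it is correct, and it is essentially the classical argument: soundness comes from scatteredness (validity of $\grz$) together with the Cantor--Bendixson rank computation (validity of $\bd_n$ whenever $\gamma\le\omega^n$); completeness comes from the finite model property of $\Grz_n$ (resp.\ $\Grz$) with respect to finite rooted posets of depth at most $n$ (resp.\ arbitrary finite rooted posets), plus your Key Lemma realizing each such poset of depth $d$ as an interior image of the clopen copy of $\omega^{d-1}+1$ inside $\gamma$, upgraded to an interior image of $\gamma$ itself via \cref{lem: interior image of open sub of 0-dim is image of whole space} and \cref{lem: truth preserving ops}(\ref{lem item: interior image}). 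This upgrading move is exactly how the paper operates in the analogous situation for $\beta(\omega^{n+1})$ in \cref{clm: frame sepating n and n=1}, where the role of your Key Lemma is played by the citation to \cite[Lem.~3.5]{BGLB15b}.

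The one step that, as literally written, does not close is the inductive step of your Key Lemma. Your induction hypothesis produces ${\uparrow}s$ as an interior image of $\omega^{e-1}+1$, where $e$ is the depth of ${\uparrow}s$, and $e$ may be strictly smaller than $d$; but all of your blocks are copies of $\omega^{d-1}+1$, so the hypothesis does not directly apply to them. There are two routine repairs. Either strengthen the statement to ``every finite rooted poset of depth \emph{at most} $d$ is an interior image of $\omega^{d-1}+1$'', which is self-propagating: $\omega^{e-1}+1$ is a clopen subspace of $\omega^{d-1}+1$, finite rooted posets have maximal points, and ordinal spaces are zero-dimensional, so \cref{lem: interior image of open sub of 0-dim is image of whole space} upgrades an image of the small clopen copy to an image of the whole block. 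Or allow blocks of varying order types $\omega^{e_s-1}+1$ matched to the immediate successors $s$ of the root, with each successor represented cofinally; the concatenation still has order type exactly $\omega^d$ because some immediate successor has depth exactly $d$, so cofinally many blocks have type $\omega^{d-1}+1$. With either repair, your sketch---including the cofinality arrangement you single out as the crux for openness at the top point---is a faithful reconstruction of the Abashidze--Blass proof.
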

 
Intersecting logics appearing in \cref{thm: logic of ord} with those appearing in \cref{def: logics Ln}  
yields the family of logics depicted in Figure~\ref{fig: logics of beta gamma}. 
\begin{figure}[h]
\begin{center}
\begin{picture}(150,275)(35,-75)
\multiput(170,190)(-25,-25){5}{\multiput(0,0)(0,-25){4}{\makebox(0,0){$\bullet$}}}
\multiput(170,190)(-25,-25){5}{\line(0,-1){85}}
\multiput(170,95)(-25,-25){5}{\makebox(0,0){$\vdots$}}
\multiput(170,75)(-25,-25){5}{\makebox(0,0){$\bullet$}}
\multiput(170,190)(0,-25){4}{\line(-1,-1){110}}
\color{blue}
\put(25,-70){\makebox(0,0){$\bullet$}}
\color{black}
\multiput(50,70)(0,-25){4}{\multiput(0,0)(-3,-3){3}{\makebox(0,0){$\cdot$}}}
\multiput(50,-45)(-3,-3){3}{\makebox(0,0){$\cdot$}}
\put(170,75){\line(-1,-1){110}}
\put(10,-70){\makebox(0,0){${\sf L}_\infty$}}
\put(195,190){\makebox(0,0){$\Grz_1$}}
\put(195,165){\makebox(0,0){$\Grz_2$}}
\put(195,140){\makebox(0,0){$\Grz_3$}}
\put(195,115){\makebox(0,0){$\Grz_4$}}
\put(195,75){\makebox(0,0){$\Grz$}}
\put(145,180){\makebox(0,0){${\sf L}_1$}}
\put(120,155){\makebox(0,0){${\sf L}_2$}}
\put(95,130){\makebox(0,0){${\sf L}_3$}}
\put(70,105){\makebox(0,0){${\sf L}_4$}}
\put(175,40){\makebox(0,0){$\Grz\cap{\sf L}_1$}}
\put(150,15){\makebox(0,0){$\Grz\cap{\sf L}_2$}}
\put(125,-10){\makebox(0,0){$\Grz\cap {\sf L}_3$}}
\put(100,-35){\makebox(0,0){$\Grz\cap {\sf L}_4$}}
\put(30,170){$\Grz_3\cap{\sf L}_1$}
\put(70,165){\vector(3,-2){70}}
\color{magenta}
\put(120,115){\makebox(0,0){\textrm{$\bullet$}}}
\multiput(95,90)(0,-25){2}{\makebox(0,0){\textrm{$\bullet$}}}
\multiput(70,65)(0,-25){3}{\makebox(0,0){\textrm{$\bullet$}}}
\end{picture}
\end{center}
\caption{Logics arising as ${\sf L}(\beta(\gamma))$.}
\label{fig: logics of beta gamma}
\end{figure}
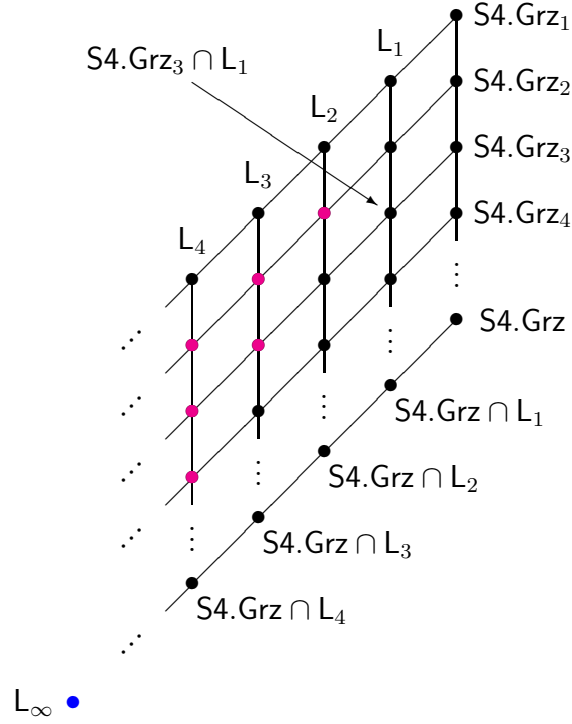

We next show that the logics corresponding to black bullets are of the form ${\sf L}(\beta(\gamma))$ for some ordinal $\gamma$, while the logics associated with magenta bullets do not arise in this fashion. Although we presently don't have a proof, we believe that 
${\sf L}_\infty$ also arises this way (see Conjecture~\ref{conj}).

\begin{theorem}\label{thm: most of our logics picked up by ordinal}
Each of the following logics arises as ${\sf L}(\beta(\gamma))$ for some ordinal $\gamma$.
\begin{enumerate}
\item ${\sf L}_m$ for each $0<m<\omega$. 
\item $\Grz$ and $\Grz_n$ for each $0<n<\omega$.
\item $\Grz\cap{\sf L}_m$ and $\Grz_n\cap {\sf L}_m$ for $0<m< n<\omega$. 
\end{enumerate}
All these logics are depicted by black bullets in Figure~\ref{fig: logics of beta gamma}. 
\end{theorem}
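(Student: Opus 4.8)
The plan is to exhibit, for each logic in the list, an explicit ordinal $\gamma$ and to compute ${\sf L}(\beta(\gamma))$ using only the Structural Theorem (\cref{thm: strctr of CS noncompact ordinal}), the Abashidze--Blass theorem (\cref{thm: logic of ord}), and the disjoint-union clause \cref{lem: truth preserving ops}(\ref{lem item: disjoint union}). Two of the three families are essentially immediate. For (1), ${\sf L}_m = {\sf L}(\beta(\omega^m))$ holds by \cref{def: logics Ln}, so $\gamma = \omega^m$ works by definition. For (2), I would take $\gamma$ to be a successor ordinal, which is compact, so that $\beta(\gamma)$ is homeomorphic to $\gamma$ and hence ${\sf L}(\beta(\gamma)) = {\sf L}(\gamma)$ is read off directly from \cref{thm: logic of ord}. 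Concretely, $\gamma = \omega^{n-1}+1$ is a successor lying in $(\omega^{n-1},\omega^n]$, which gives $\Grz_n$ by \cref{thm: logic of ord}(\ref{lem item: AB finite rank}), while $\gamma = \omega^\omega + 1 \ge \omega^\omega$ gives $\Grz$ by \cref{thm: logic of ord}(\ref{lem item: AB infinite rank}).

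The substantive case is (3), which I would handle through the decomposition supplied by the Structural Theorem. For a non-compact ordinal $\gamma$ whose Cantor normal form has least exponent $\alpha_1$, \cref{thm: strctr of CS noncompact ordinal} gives a homeomorphism $\beta(\gamma) \cong (\gamma'+1) \oplus \beta(\omega^{\alpha_1})$ in which $\gamma'+1$ is a compact ordinal. Applying \cref{lem: truth preserving ops}(\ref{lem item: disjoint union}) together with ${\sf L}(\beta(\omega^{\alpha_1})) = {\sf L}_{\alpha_1}$ then yields
\[
{\sf L}(\beta(\gamma)) = {\sf L}(\gamma'+1) \cap {\sf L}_{\alpha_1}.
\]
Hence it suffices to choose $\gamma$ so that its least Cantor exponent is exactly $m$ (forcing the second factor to be ${\sf L}_m$) while $\gamma'+1$ lands at the prescribed level of the Abashidze--Blass hierarchy (forcing the first factor to be $\Grz_n$, respectively $\Grz$). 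For $\Grz \cap {\sf L}_m$ I would take $\gamma = \omega^\omega + \omega^m$: its least exponent is $m$, the tearing-off step leaves $\gamma' = \omega^\omega$, and $\gamma'+1 \ge \omega^\omega$ gives $\Grz$.

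For $\Grz_n \cap {\sf L}_m$ with $0 < m < n$ I would split on the size of the gap $n-m$. When $n \ge m+2$, take $\gamma = \omega^{n-1} + \omega^m$; this is a genuine Cantor normal form with least exponent $m$ and $\gamma' = \omega^{n-1}$, so $\gamma'+1 = \omega^{n-1}+1 \in (\omega^{n-1},\omega^n]$ and the first factor is $\Grz_n$. The one delicate point, and the place where I expect the only real care to be required, is the edge case $n = m+1$, where the expression $\omega^{n-1}+\omega^m = \omega^m + \omega^m$ collapses and is no longer in Cantor normal form. Here I would instead take $\gamma = \omega^m\cdot 2$; the Structural Theorem peels off one copy of $\omega^m$ to leave $\gamma' = \omega^m$, so $\gamma'+1 = \omega^m+1 \in (\omega^m,\omega^{m+1}]$ and the first factor is $\Grz_{m+1} = \Grz_n$, while the second factor is again ${\sf L}_m$. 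Each verification then reduces to a short ordinal-arithmetic computation plus reading the appropriate level off \cref{thm: logic of ord}; the main thing to track carefully is how the Cantor normal form transforms under the ``tearing off'' step of \cref{thm: strctr of CS noncompact ordinal}, especially in the collapsing edge case.
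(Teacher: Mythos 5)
Your proposal is correct and follows essentially the same route as the paper's proof: the same witnesses $\omega^m$, $\omega^{n-1}+1$, $\omega^\omega+1$, $\omega^\omega+\omega^m$, and $\omega^{n-1}+\omega^m$, assembled via the Structural Theorem, the Abashidze--Blass theorem, and the disjoint-union clause of \cref{lem: truth preserving ops}. Your ``edge case'' $n=m+1$ is in fact the very same ordinal the paper uses, since $\omega^{n-1}+\omega^m=\omega^m\cdot 2$ there, and the paper needs no case split because the tearing-off step in the proof of \cref{thm: strctr of CS noncompact ordinal} (its $n_1>1$ clause) still produces $\gamma'=\omega^m$, giving $\beta(\omega^m\cdot 2)\cong(\omega^m+1)\oplus\beta(\omega^m)$ exactly as in the generic case.
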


\begin{proof}
(1) We have ${\sf L}_m={\sf L}(\beta(\omega^m))$ for each $0<m<\omega$ by Definition~\ref{def: logics Ln}. 

(2) Let 
$0<n<\omega$. Since 
both $\omega^\omega+1$ and $\omega^{n-1}+1$ are compact, Theorem~\ref{thm: logic of ord} implies that 
\[
\Grz = {\sf L}(\omega^\omega+1)={\sf L}(\beta(\omega^\omega+1))
\]
and
\[
\Grz_n = {\sf L}(\omega^{n-1}+1)={\sf L}(\beta(\omega^{n-1}+1)).
\]

(3) 
By Theroem~\ref{thm: logic of ord}, Definition~\ref{def: logics Ln}, Lemma~\ref{lem: truth preserving ops}(\ref{lem item: disjoint union}), and
Theorem~\ref{thm: strctr of CS noncompact ordinal},  
\[
\Grz\cap{\sf L}_m = {\sf L}(\omega^\omega+1)\cap{\sf L}\left(\beta(\omega^m)\right) = {\sf L}\left((\omega^\omega+1)\oplus\beta(\omega^m)\right) = {\sf L}(\beta(\omega^\omega+\omega^m))
\]
and
\[
\Grz_n\cap{\sf L}_m = {\sf L}(\omega^{n-1}+1)\cap{\sf L}\left(\beta(\omega^m)\right) = {\sf L}(\left(\omega^{n-1}+1)\oplus\beta(\omega^m)\right) = {\sf L}(\beta(\omega^{n-1}+\omega^m))
\]
for each $0<m<n<\omega$.
\end{proof}

Notice that in the proof of Theorem~\ref{thm: most of our logics picked up by ordinal}, the ordinal giving rise to the logic of interest has $\alpha_1$ finite in its Cantor normal form. In such a case, 
we can determine ${\sf L}(\beta(\gamma))$ as shown in the next theorem. 

\begin{theorem}\label{thm: other direction for finite alpha_1}
Let $\gamma$ be a nonzero ordinal. 
If $\beta(\gamma)$ is an ordinal or $\alpha_1$ in the Cantor normal form of $\gamma$ is finite, then ${\sf L}(\beta(\gamma))$ is a logic appearing as a black bullet in Figure~\ref{fig: logics of beta gamma}.
\end{theorem}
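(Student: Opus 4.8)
The plan is to branch on the two clauses of the hypothesis, reduce each branch to the Abashidze--Blass theorem (\cref{thm: logic of ord}) together with the Structural Theorem (\cref{thm: strctr of CS noncompact ordinal}), and then identify the resulting logic with one of the black bullets listed in \cref{thm: most of our logics picked up by ordinal}.

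First I would dispose of the case where $\beta(\gamma)$ is an ordinal, say $\delta$. Then ${\sf L}(\beta(\gamma)) = {\sf L}(\delta)$, and \cref{thm: logic of ord} gives ${\sf L}(\delta) = \Grz_n$ (when $\delta < \omega^\omega$) or ${\sf L}(\delta) = \Grz$ (when $\delta \ge \omega^\omega$), the sole degenerate case $\delta = 1$ also yielding $\Grz_1$. Each of these is a black bullet by item~(2) of \cref{thm: most of our logics picked up by ordinal}.

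Next I would treat the case where $\beta(\gamma)$ is not an ordinal but $\alpha_1$ is finite. Since $\beta(\gamma)$ fails to be an ordinal, $\gamma$ is not compact, hence a limit ordinal, so $\alpha_1 \ge 1$; put $m := \alpha_1$, so that $1 \le m < \omega$. The Structural Theorem then presents $\beta(\gamma)$ as the disjoint union $(\gamma'+1)\oplus\beta(\omega^m)$ with $\gamma'$ as constructed in the proof of \cref{thm: strctr of CS noncompact ordinal}, whence \cref{lem: truth preserving ops}(\ref{lem item: disjoint union}) and \cref{def: logics Ln} give
\[
{\sf L}(\beta(\gamma)) = {\sf L}(\gamma'+1)\cap{\sf L}_m .
\]
It then remains to compute ${\sf L}(\gamma'+1)$ from the Cantor normal form of $\gamma'$. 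If $\gamma' = 0$ (equivalently $\gamma = \omega^m$), the compact summand collapses to a point and ${\sf L}(\beta(\gamma)) = {\sf L}_m$, a black bullet of item~(1). Otherwise $\gamma'$ is infinite with the same leading exponent $\alpha_k$ as $\gamma$; applying \cref{thm: logic of ord} to $\gamma'+1$ gives ${\sf L}(\gamma'+1) = \Grz_{\alpha_k+1}$ when $\alpha_k$ is finite (using $\omega^{\alpha_k} < \gamma'+1 \le \omega^{\alpha_k+1}$) and ${\sf L}(\gamma'+1) = \Grz$ when $\alpha_k$ is infinite, so ${\sf L}(\beta(\gamma))$ is $\Grz_{\alpha_k+1}\cap{\sf L}_m$ or $\Grz\cap{\sf L}_m$.

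The main point to verify --- and where I expect the only real friction --- is the bookkeeping with the Cantor normal form needed to place these intersections in the admissible range $0 < m < n$ of item~(3) of \cref{thm: most of our logics picked up by ordinal}. This reduces to the inequality $m = \alpha_1 \le \alpha_k < \alpha_k + 1$, valid because $\alpha_1$ and $\alpha_k$ are the least and greatest exponents in the Cantor normal form of $\gamma$; one must also handle the borderline configuration $k = 1$, $n_1 > 1$ (where $\alpha_k = \alpha_1 = m$ and the index is forced to be $m+1$) and the collapse $\gamma' = 0$. I would cross-check all of these against the explicit ordinals $\omega^{n-1}+\omega^m$ and $\omega^\omega+\omega^m$ exhibited in the proof of \cref{thm: most of our logics picked up by ordinal}, confirming that the logics computed here coincide with the black bullets obtained there.
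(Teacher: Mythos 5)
Your proposal is correct and follows essentially the same route as the paper's own proof: split on whether $\beta(\gamma)$ is an ordinal, then use the Structural Theorem (\cref{thm: strctr of CS noncompact ordinal}) with \cref{lem: truth preserving ops}(\ref{lem item: disjoint union}) to get ${\sf L}(\beta(\gamma))={\sf L}(\gamma'+1)\cap{\sf L}_{\alpha_1}$, compute ${\sf L}(\gamma'+1)$ via the Abashidze--Blass theorem, and verify the index condition $\alpha_1\le\alpha_k<\alpha_k+1$. The only cosmetic differences are that you dispose of $\alpha_1=0$ by contraposition (noncompactness of $\gamma$) where the paper folds it back into the ordinal case, and that you explicitly flag the degenerate case $\delta=1$, which the paper glosses over.
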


\begin{proof} 
If $\beta(\gamma)$ is an ordinal, then Theorem~\ref{thm: logic of ord} yields that ${\sf L}(\beta(\gamma))$ appears as a black bullet in Figure~\ref{fig: logics of beta gamma}. Suppose that $\alpha_1$ is finite in the Cantor normal form
$
\gamma = \omega^{\alpha_k}n_k +\cdots +\omega^{\alpha_1}n_1,
$
where $0\ne k\in\omega$, each $n_i$ is nonzero and finite, and $0\le\alpha_1<\cdots<\alpha_k$ are ordinals. If $\alpha_1=0$, then $\gamma$ is compact, and hence $\beta(\gamma)=\gamma$ is an ordinal.
Thus, we may assume that $\alpha_1>0$. Write $\gamma = \gamma'+\omega^{\alpha_1}$ as in the proof of Theorem~\ref{thm: strctr of CS noncompact ordinal}. 

If $\gamma'=0$, then ${\sf L}(\beta(\gamma)) = {\sf L}(\beta(\omega^{\alpha_1}))={\sf L}_{\alpha_1}$ appears in Figure~\ref{fig: logics of beta gamma} by Definition~\ref{def: logics Ln}. Otherwise
Theorem~\ref{thm: strctr of CS noncompact ordinal}, Lemma~\ref{lem: truth preserving ops}(\ref{lem item: disjoint union}), and Definition~\ref{def: logics Ln}  imply that 
\[
{\sf L}(\beta(\gamma)) = {\sf L}\left((\gamma'+1)\oplus\beta(\omega^{\alpha_1})\right) = {\sf L}(\gamma'+1)\cap {\sf L}_{\alpha_1}.
\]
If $\alpha_k<\omega$, then ${\sf L}(\gamma'+1) = \Grz_{\alpha_k+1}$ and $\alpha_k+1>\alpha_k\ge\alpha_1$, so ${\sf L}(\beta(\gamma))=\Grz_{\alpha_k+1}\cap {\sf L}_{\alpha_1}$ appears as a black bullet in Figure~\ref{fig: logics of beta gamma}. 
If $\alpha_k\ge\omega$, then $k>1$ and $\gamma'+1>\gamma'\ge\omega^{\alpha_k}\ge \omega^\omega$, implying that ${\sf L}(\gamma'+1)=\Grz$. Therefore, ${\sf L}(\beta(\gamma))=\Grz\cap{\sf L}_{\alpha_1}$ again appears as a black bullet in Figure~\ref{fig: logics of beta gamma}.
\end{proof}

Putting Theorems~\ref{thm: most of our logics picked up by ordinal} and~\ref{thm: other direction for finite alpha_1} together yields 
the following partial solution of {\bf P2}: 

\begin{corollary}\label{thm: partial soln of P2}
Let {\sf L} be a logic above {\sf S4}. 
There exists a nonzero ordinal $\gamma$ such that ${\sf L}={\sf L}(\beta(\gamma))$ and $\alpha_1$ in the Cantor normal form of $\gamma$ is finite iff {\sf L} is one of the following logics: 
\begin{enumerate}
\item ${\sf L}_m$ for each $0<m<\omega$; 
\item $\Grz$ and $\Grz_n$ for each $0<n<\omega$;
\item $\Grz\cap{\sf L}_m$ and $\Grz_n\cap {\sf L}_m$ for $0<m< n<\omega$.
\end{enumerate}
\end{corollary}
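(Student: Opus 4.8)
The plan is to obtain the corollary by assembling the two theorems immediately preceding it, \cref{thm: most of our logics picked up by ordinal} and \cref{thm: other direction for finite alpha_1}, handling the two directions of the biconditional separately.

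For the right-to-left implication I would start from a logic $\sf L$ appearing among the families (1)--(3) and invoke \cref{thm: most of our logics picked up by ordinal}, which already exhibits, for each such logic, an explicit ordinal $\gamma$ with ${\sf L}={\sf L}(\beta(\gamma))$. The only additional obligation imposed by the corollary is that $\alpha_1$ be \emph{finite} in the Cantor normal form of $\gamma$, so the real step here is to read off the Cantor normal forms of the ordinals used in that proof: $\omega^m$ for ${\sf L}_m$, $\omega^\omega+1$ for $\Grz$, $\omega^{n-1}+1$ for $\Grz_n$, $\omega^\omega+\omega^m$ for $\Grz\cap{\sf L}_m$, and $\omega^{n-1}+\omega^m$ for $\Grz_n\cap{\sf L}_m$. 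In each case the least exponent $\alpha_1$ is $0$ or $m$, hence finite; the only subtlety to flag is the boundary case $n-1=m$ in the last family, where the two summands merge into $\omega^m\cdot 2$, but this still has $\alpha_1=m$ finite. Thus the stronger form of the statement, with the finiteness constraint, holds.

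For the left-to-right implication I would assume ${\sf L}={\sf L}(\beta(\gamma))$ for some nonzero $\gamma$ whose Cantor normal form has finite $\alpha_1$. This is exactly (one of) the hypotheses of \cref{thm: other direction for finite alpha_1}, whose conclusion is that ${\sf L}(\beta(\gamma))$ is one of the logics depicted by a black bullet in \cref{fig: logics of beta gamma}. Since the black bullets are precisely the logics enumerated in (1)--(3), the desired conclusion follows with no further argument.

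I do not expect any genuine obstacle in this corollary, as it is purely a packaging of the two theorems. The one place demanding care is the bookkeeping in the backward direction: one must confirm that the ordinals produced in \cref{thm: most of our logics picked up by ordinal} really do have finite $\alpha_1$, so that the equivalence is with the constrained existence statement (finite $\alpha_1$) rather than with the mere existence of some $\gamma$.
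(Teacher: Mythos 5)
Your proposal is correct and takes essentially the same route as the paper: the paper obtains \cref{thm: partial soln of P2} precisely by combining \cref{thm: most of our logics picked up by ordinal} (for the right-to-left direction) with \cref{thm: other direction for finite alpha_1} (for the left-to-right direction), and it likewise records, in the remark preceding \cref{thm: other direction for finite alpha_1}, the observation you verify explicitly --- that every ordinal used in the proof of \cref{thm: most of our logics picked up by ordinal} has finite $\alpha_1$ in its Cantor normal form. Your additional check of the boundary case $m=n-1$, where $\omega^{n-1}+\omega^m$ collapses to $\omega^m\cdot 2$ but still has $\alpha_1=m$ finite, is exactly the bookkeeping the paper leaves implicit.
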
 

To answer {\bf P2} in full generality, we must consider ordinals $\gamma$ in which $\alpha_1$ in the Cantor normal form of $\gamma$ is infinite. The smallest such example is $\omega^\omega$. It is likely that
the situation mirrors the Abashidze-Blass Theorem in 
that ${\sf L}_\infty={\sf L}(\beta(\gamma))$ whenever $\gamma\ge \omega^\omega$ (see \cref{conj}). 

\section{Forbidden configurations for $\beta(\omega^2)$}
\label{sec: beta omega squared and 3 forbid frames}

In order to understand relational semantics of the 
logics ${\sf L}_n={\sf L}(\beta(\omega^n))$, we need to determine which $\sf S4$-frames are permissible and which are forbidden for $\beta(\omega^n)$ in the following sense.

\begin{definition}
\label{def: forbidden frame}

Let $\F$ be a finite rooted {\sf S4}-frame.
\begin{enumerate}
\item 
Call $\mathfrak F$ a \emph{permissible configuration} for a space $X$ if 
$\F$ is an interior image of an open subspace of $X$. 
Otherwise $\F$ is a \emph{forbidden configuration} for $X$. 

\item Call $\F$ a \emph{permissible configuration} for a class $\mathcal K$ of spaces if $\F$ is permissible for some $X\in \mathcal K$. Otherwise $\F$ is a 
\emph{forbidden configuration} for $\mathcal K$.
\end{enumerate}
\end{definition}

Since each {\sf S4}-frame 
can be viewed as a topological space, we will also speak about permissible/forbidden configurations for a frame or a class of frames. 

In this section we show that the following three frames are forbidden configurations for $\beta(\omega^2)$. 

\begin{definition}\label{def: four forbidden frames}
Let $\mathfrak F_1=(W_1,\le)$, $\mathfrak F_2=(W_2,\le)$, and $\mathfrak F_3=(W_3,\le)$ be the frames depicted in Figure~\ref{fig: forbidden frames}. 
\begin{figure}[h]
\begin{center}
\begin{picture}(200,100)(0,-33)
\put(25,-23){\makebox(0,0){$\mathfrak F_1$}}
\put(100,-23){\makebox(0,0){$\mathfrak F_2$}}
\put(175,-23){\makebox(0,0){$\mathfrak F_3$}}
\put(96,-5){\makebox(0,0){$r_1$}}
\put(109,-5){\makebox(0,0){$r_2$}}
\put(100,10){\oval(20,10)}
\put(95,10){\makebox(0,0){$\bullet$}}
\put(105,10){\makebox(0,0){$\bullet$}}
\put(75,35){\line(1,-1){20}}
\put(125,35){\line(-1,-1){20}}
\put(75,35){\makebox(0,0){$\bullet$}}
\put(125,35){\makebox(0,0){$\bullet$}}
\multiput(75,45)(75,0){1}{\makebox(0,0){$m_1$}}
\put(125,45){\makebox(0,0){$m_2$}}
\multiput(25,10)(150,0){2}{\makebox(0,0){$\bullet$}}
\put(0,35){\makebox(0,0){$\bullet$}}
\multiput(150,35)(75,0){1}{\multiput(0,0)(50,0){2}{\makebox(0,0){$\bullet$}}}
\multiput(200,60)(150,0){1}{\makebox(0,0){$\bullet$}}
\put(50,35){\makebox(0,0){$\bullet$}}
\multiput(25,10)(150,0){2}{\line(-1,1){25}}
\put(25,10){\line(1,1){25}}
\multiput(175,10)(75,0){1}{\line(1,1){25}}
\multiput(200,35)(75,0){1}{\line(0,1){25}}
\multiput(0,60)(150,0){2}{\makebox(0,0){$\bullet$}}
\multiput(0,60)(150,0){2}{\line(0,-1){25}}
\put(150,35){\line(2,1){50}}
\multiput(150,60)(75,0){1}{\line(2,-1){50}}
\multiput(25,-2)(150,0){2}{\makebox(0,0){$r$}}
\multiput(202,26)(75,0){1}{\makebox(0,0){$v_2$}}
\multiput(-2,26)(150,0){1}{\makebox(0,0){$v$}}\multiput(150,26)(150,0){1}{\makebox(0,0){$v_1$}}
\multiput(50,45)(150,25){2}{\makebox(0,0){$m_2$}}
\multiput(0,70)(150,0){2}{\makebox(0,0){$m_1$}}
\end{picture}
\end{center}
\caption{The frames $\mathfrak F_1$, $\mathfrak F_2$, and $\mathfrak F_3$.}
\label{fig: forbidden frames}
\end{figure}
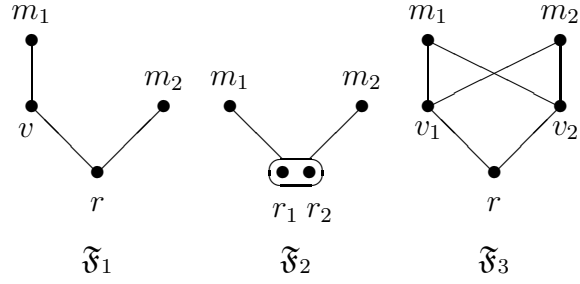
\end{definition}

\begin{remark}

Observe that $\mathfrak F_1$ is isomorphic to $\mathfrak T_2$ of \cref{def: tall lop-sided two fork} and $\mathfrak F_2$ is $\mathfrak G$ of \cref{def: G and F_2}. Despite being previously defined, it is  convenient to rename these frames to be packaged together in \cref{def: four forbidden frames}.

\end{remark}

We have that both $\mathfrak F_1$ and $\mathfrak F_2$ are forbidden configurations for $\beta(\omega^2)$ by \cref{clm: frame sepating n and n=1,clm: NEVER an image}, respectively. 
Thus, the main work is in showing that $\mathfrak F_3$ is also a forbidden configuration for $\beta(\omega^2)$. To do so, 
recall that we identify the ordinal space $\omega^2$ with the product space $(\omega+1)\times\omega$. Under this identification, the sets $A$ of isolated points and $B$ of limit points of $\omega^2$ are the products $\omega\times \omega$ and $\{\omega\}\times\omega$, respectively.
Clearly $A$ is open and $B$ is closed in $\omega^2$. 
Moreover, $B$ is homeomorphic to $\omega$, so 
the closure $\ccc(B)$ in $\beta(\omega^2)$ is homeomorphic to $\beta(\omega)$ (\cite[Cor.~3.6.8]{Eng89}) and 
$B^*:=\ccc(B)\setminus B$ 
to $\omega^*$. 
For $n\in\omega$, let $H_n=(\omega+1)\times\{n\}$ and $V_n=\{n\}\times\omega$. Then both $H_n$ and $V_n$ are clopen subspaces of $\omega^2$ such that $H_n$ is homeomorphic to $\omega+1$ and $V_n$ 
to $\omega$. 

\begin{figure}[h]
\begin{center}
\begin{picture}(127.5,107.5)(0,7.5)
\multiput(127.5,22.5)(-25,0){2}{{\line(0,1){92.5}}} 
\multiput(127.5,22.5)(0,92.5){2}{{\line(-1,0){25}}}
\put(115, 107.5){\makebox(0,0){$B$}}
\multiput(100,22.5)(-80,0){2}{{\line(0,1){92.5}}} 
\multiput(100,22.5)(0,92.5){2}{{\line(-1,0){80}}}
\put(60, 107.5){\makebox(0,0){$A$}}
\multiput(52.5,7.5)(-15,0){2}{{\line(0,1){92.5}}} 
\multiput(52.5,7.5)(0,92.5){2}{{\line(-1,0){15}}}
\put(45.5,15){\makebox(0,0){$V_1$}}
\multiput(0,52.5)(122.5,0){2}{{\line(0,1){15}}} 
\multiput(0,52.5)(0,15){2}{{\line(1,0){122.5}}}
\put(10,60){\makebox(0,0){$H_2$}}
\multiput(115,30)(0,15){4}{{\makebox(0,0){$\bullet$}}}
\put(115,92.5){\makebox(0,0){$\vdots$}}
\multiput(30,30)(15,0){4}{\multiput(0,0)(0,15){4}{\makebox(0,0){$\bullet$}}}
\put(89,60){\makebox(0,0){$\cdots$}}
\multiput(30,92.5)(15,0){4}{\makebox(0,0){$\vdots$}}
\end{picture}
\end{center}
\caption{Useful subsets of $\omega^2$.}
\label{fig: notations for subsets of omega squared}
\end{figure}
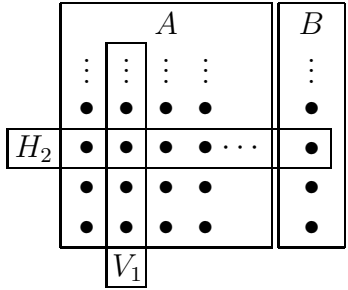

We are ready to prove the main result of this section.

\begin{theorem}\label{lem: F3 is not an interior image of beta X}
The frame $\mathfrak F_3$ is a forbidden configuration 
for $\beta(\omega^2)$. 
\end{theorem}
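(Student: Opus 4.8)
The plan is to argue by contradiction, first reducing to the whole space: by \cref{lem: F image of beta iff F image of open sub of beta} it suffices to show that $\mathfrak F_3$ is not an interior image of $\beta(\omega^2)$ itself, so suppose $f\colon\beta(\omega^2)\to\mathfrak F_3$ is an onto interior map. I would then pin $f$ down on the isolated points. Since $\omega^2$ is open in $\beta(\omega^2)$, its isolated points are exactly $A=\omega\times\omega$, and because $\{m_1\}$ and $\{m_2\}$ are the only singleton upsets of $\mathfrak F_3$, openness forces $f[A]\subseteq\{m_1,m_2\}$. Writing $A_i=A\cap f^{-1}(m_i)$, so $A=A_1\sqcup A_2$, and using density of $A$ together with \cref{lem: closure of open intersect dense is closure of open} and the interior‑map identity $\ccc f^{-1}[S]=f^{-1}[\ccc S]$, I obtain $\ccc(A_i)=\ccc(f^{-1}(m_i))=f^{-1}({\downarrow}m_i)=f^{-1}(\{r,v_1,v_2,m_i\})$. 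Intersecting gives $\ccc(A_1)\cap\ccc(A_2)=f^{-1}(\{r,v_1,v_2\})$, and since $\{r\}={\downarrow}r$ is closed while $f$ is onto, $f^{-1}(r)$ is a nonempty closed subset of the remainder $(\omega^2)^*$ contained in $\ccc(f^{-1}(v_1))\cap\ccc(f^{-1}(v_2))$. The whole proof reduces to deriving $f^{-1}(r)=\varnothing$.

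The key player is the set $B=\{\omega\}\times\omega$ of limit points, with $\ccc(B)\cong\beta(\omega)$ extremally disconnected and enjoying the fundamental property that \emph{disjoint} subsets of $B$ have \emph{disjoint} closures. I would first show $r\notin f[B]$: each $(\omega,m)$ has the clopen neighborhood $N_0=\{(\omega,m)\}\cup\{(k,m):k>K\}$ (a tail of the compact row $H_m\cong\omega+1$), and since $f$ is open and $f[N_0]\subseteq\{f((\omega,m))\}\cup\{m_1,m_2\}$, the inclusion ${\uparrow}f((\omega,m))\subseteq f[N_0]$ rules out $f((\omega,m))=r$ (as ${\uparrow}r\ni v_1,v_2$); the same computation shows that $f((\omega,m))=v_i$ forces row $m$ to meet both $A_1$ and $A_2$ infinitely. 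Setting $B_i=B\cap f^{-1}(v_i)$, the \emph{clean case} is $f^{-1}(v_1)\cup f^{-1}(v_2)\subseteq B$: then $f^{-1}(v_i)=B_i$, the $B_i$ are disjoint subsets of $B$, so $\ccc(B_1)\cap\ccc(B_2)=\varnothing$, and hence $f^{-1}(r)\subseteq\ccc(f^{-1}(v_1))\cap\ccc(f^{-1}(v_2))=\varnothing$, the desired contradiction.

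The substantial case is when $v_1$ or $v_2$ is also attained on the remainder, i.e.\ $P_i^*:=f^{-1}(v_i)\cap(\omega^2)^*\neq\varnothing$; note one cannot avoid this locally, since the two‑fork ${\uparrow}v_i$ is itself a (degenerate $2$‑roach, hence) permissible configuration realized on the open set $f^{-1}({\uparrow}v_i)$. Here I would exploit that $\omega^2$ is locally compact and $\sigma$‑compact, so that its remainder $(\omega^2)^*$ is a compact F‑space, in which disjoint cozero sets have disjoint closures. Decomposing $f^{-1}(v_i)=B_i\cup P_i^*$ and expanding $f^{-1}(r)\subseteq\ccc(f^{-1}(v_1))\cap\ccc(f^{-1}(v_2))$ into four pieces, the term $\ccc(B_1)\cap\ccc(B_2)$ already vanishes; the plan is to show the mixed terms $\ccc(B_i)\cap\ccc(P_{3-i}^*)$ vanish as well and that $P_1^*,P_2^*$ are separated by disjoint cozero sets of $(\omega^2)^*$, so that the F‑space property yields $\ccc(P_1^*)\cap\ccc(P_2^*)=\varnothing$ and therefore again $f^{-1}(r)=\varnothing$.

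The main obstacle is exactly this last step. The difficulty is that $\beta(\omega^2)$ is \emph{not} an F‑space: splitting the columns of $\omega^2$ into even‑ and odd‑indexed ones produces disjoint cozero sets whose closures both contain all of $B$, so the pathology is concentrated precisely along $B$. Thus the argument must quarantine the bad behavior at $B$ (handled through the extremal disconnectedness of $\ccc(B)\cong\beta(\omega)$ and disjointness of closures of disjoint subsets of $B$) from the good behavior ``at infinity'' across rows (handled through the F‑space property of $(\omega^2)^*$), and I expect the genuinely delicate point to be the interface $B^*\cap\ccc(P_i^*)$ between these two regions, where one must show that remainder preimages of $v_1$ and $v_2$ cannot accumulate against the closures of $B_2$ and $B_1$ respectively, nor against each other.
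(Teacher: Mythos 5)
Your setup is sound and matches the paper up to a point: the reduction via \cref{lem: F image of beta iff F image of open sub of beta}, the fact that $f[A]\subseteq\{m_1,m_2\}$, the identity $\ccc(f^{-1}(v_1))\cap\ccc(f^{-1}(v_2))=f^{-1}({\downarrow}v_1)\cap f^{-1}({\downarrow}v_2)=f^{-1}(r)$, the exclusion of $r$ from $f[\omega^2]$ (the paper gets this from $\omega^2\vDash\bd_2$ via \cref{thm: logic of ord}; your row-tail argument is a fine substitute), and the clean case are all correct. But the ``substantial case'' is where the theorem actually lives, and there your proposal is a plan, not a proof — and the plan has a structural circularity. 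By the very closure computations you use, $\ccc(B_i)\cap\ccc(P^*_{3-i})\subseteq f^{-1}(\{r,v_i\})\cap f^{-1}(\{r,v_{3-i}\})=f^{-1}(r)$, and likewise $\ccc(P_1^*)\cap\ccc(P_2^*)\subseteq f^{-1}(r)$; so proving your mixed and remainder terms empty is \emph{equivalent} to proving $f^{-1}(r)=\varnothing$, i.e., to the whole theorem. The F-space property of $(\omega^2)^*$ gives no independent leverage here, because nothing in your outline places $P_1^*$ and $P_2^*$ inside disjoint cozero sets: the natural open supersets $f^{-1}({\uparrow}v_1)$ and $f^{-1}({\uparrow}v_2)$ are far from disjoint (both contain $f^{-1}(\{m_1,m_2\})$), and the $P_i^*$ themselves are only locally closed. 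You also never establish that $P_i^*\subseteq\ccc(B)$ at all, so the ``interface'' you want to analyze is not even localized.

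What is missing is precisely the two structural claims the paper proves before its endgame. First, \cref{clm: complement of closure of limits of X is in U union V}: $\beta(\omega^2)\setminus\ccc(B)\subseteq f^{-1}(\{m_1,m_2\})$, proved by taking a clopen $C$ disjoint from $\ccc(B)$, observing $C\cap\omega^2\subseteq A$ and $C\cong\beta(C\cap\omega^2)$, splitting $C$ into the disjoint clopen sets $\ccc(C\cap U_1)$ and $\ccc(C\cap U_2)$ (normality of $C\cap\omega^2$), and using openness of $f$ to rule out values in $\{r,v_1,v_2\}$; this is what confines all preimages of non-maximal points — in particular your $P_i^*$ — to $\ccc(B)$. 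Second, \cref{clm: closure of U intersect limits of X stays in U}: $\ccc(U_i\cap B)\subseteq U_i$ (note the paper's sets here are the $B$-points sent to \emph{maximal} elements, not your $B_i$), proved via row-wise clopen sets $C_k\subseteq U_i\cap H_k$. With these in hand, the paper avoids your global disjointness program entirely and argues locally at a point $x\in f^{-1}(r)\subseteq B^*$: it shrinks to a clopen $C\ni x$ with $C\cap B\subseteq f^{-1}(\{v_1,v_2\})$, uses normality of $\omega^2$ to split $C\cap B=C_1\cup C_2$ with disjoint closures, shrinks further so that $C\cap B=C_i$ for a single $i$, and then uses openness ($C$ clopen containing a root preimage forces $f(C)=W_3$) to produce $z\in C$ with $f(z)=v_j$, $j\neq i$; \cref{clm: complement of closure of limits of X is in U union V} forces $z\in\ccc(B)\setminus B$, so any clopen $D$ with $z\in D\subseteq f^{-1}({\uparrow}v_j)\cap C$ must meet $B$, hence meet $C_i$, yielding $v_i\in{\uparrow}v_j$ — a contradiction. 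This is exactly the ``remainder preimages of $v_j$ cannot accumulate against $B$-points mapped to $v_i$'' impossibility that you correctly flag as the delicate point; as written, your proposal identifies the crux but does not prove it, so the argument is genuinely incomplete.
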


\begin{proof}
By \cref{lem: F image of beta iff F image of open sub of beta}, it is sufficient to show that $\mathfrak F_3$ is not an interior image of $\beta(\omega^2)$. 
Suppose on the contrary that $f$ is an interior mapping of $\beta(\omega^2)$ onto $\mathfrak F_3$. Let $U_i=f^{-1}(m_i)$ for $i=1,2$. 
As $f$ is interior and $\mathrm{max}(\mathfrak F_3)=\{m_1,m_2\}$, we have that $U_1$ and $U_2$ are disjoint and $U_1\cup U_2$ is a dense open subset of $\beta(\omega^2)$.

\begin{claim}\label{clm: complement of closure of limits of X is in U union V}
$\beta(\omega^2)\setminus\ccc (B)\subseteq U_1\cup U_2$.
\end{claim}

\begin{proof}
Suppose not and let $x\in\beta(\omega^2)\setminus\ccc (B)$ be such that $x\not\in U_1\cup U_2$. There is a clopen subset $C$ of $\beta(\omega^2)$ such that $x\in C\subseteq \beta(\omega^2)\setminus\ccc (B)$, yielding that $C\cap B=\varnothing$. Thus, 
\[
C\cap \omega^2 = C\cap(A \cup B) =C\cap A.
\]
Because $A$ is the set of isolated points of $\beta(\omega^2)$, we have 
\[
A\subseteq f^{-1}(\mathrm{max}(\mathfrak F_3)) = f^{-1}(\{m_1,m_2\}) = U_1\cup U_2.
\]
Therefore, 
\begin{eqnarray*}
C\cap \omega^2 &=& C\cap A = C\cap A \cap(U_1\cup U_2) = C\cap \omega^2 \cap (U_1\cup U_2) \\
&=& (C\cap \omega^2\cap U_1)\cup(C\cap \omega^2\cap U_2).
\end{eqnarray*}
Since $U_1$ and $U_2$ are disjoint open subsets of $\beta(\omega^2)$, we have that $C\cap \omega^2\cap U_1$ and $C\cap \omega^2\cap U_2$ are disjoint open subsets of $C\cap \omega^2$. Because $C\cap \omega^2 = (C\cap \omega^2\cap U_1)\cup(C\cap \omega^2\cap U_2)$, both $C\cap \omega^2\cap U_1$ and $C\cap \omega^2\cap U_2$ are clopen in $C\cap \omega^2$. Since $C$, $C\cap U_1$, and $C\cap U_2$ are open in $\beta(\omega^2)$, it follows from Lemma~\ref{lem: closure of open intersect dense is closure of open} that 
\begin{eqnarray*}
C&=&\ccc (C) = \ccc(C\cap \omega^2)=\ccc\left((C\cap \omega^2\cap U_1)\cup(C\cap \omega^2\cap U_2)\right)\\&=&\ccc(C\cap \omega^2\cap U_1)\cup\ccc(C\cap \omega^2\cap U_2)=\ccc(C\cap U_1)\cup\ccc(C\cap U_2).
\end{eqnarray*}
Because $C\cap \omega^2$ is closed in $\omega^2$, we have that $C=\ccc(C\cap \omega^2)$ is homeomorphic to $\beta(C\cap \omega^2)$ (\cite[Cor.~3.6.8]{Eng89}). Since $C\cap \omega^2$ is normal, 
$\ccc(C\cap \omega^2\cap U_1)$ and $\ccc(C\cap \omega^2\cap U_2)$ are disjoint (\cite[Cor.~3.6.4]{Eng89}), giving that $\ccc(C\cap U_1)$ and $\ccc(C\cap U_2)$ are disjoint (\cref{lem: closure of open intersect dense is closure of open}). Because $C=\ccc(C\cap U_1)\cup\ccc(C\cap U_2)$, we have that $\ccc(C\cap U_1)$ and $\ccc(C\cap U_2)$ are clopen in $C$, and hence clopen in $\beta(\omega^2)$. Since $x\in C$, either $x\in\ccc(C\cap U_1)$ or $x\in\ccc(C\cap U_2)$.

Suppose $x\in\ccc(C\cap U_i)$ for some $i\in\{1,2\}$ and let $j\in\{1,2\}\setminus\{i\}$. Since $x\not\in U_1\cup U_2$, we have that $f(x)\in\{r,v_1,v_2\}$, which implies that $m_j\in {\uparrow}f(x)$. Because $f$ is interior, $f(\ccc(C\cap U_i))$ is an upset of $\mathfrak F_3$ that contains $f(x)$, and so $m_j\in f(\ccc(C\cap U_i))$. This yields that
\[
\varnothing \neq \ccc(C\cap U_i)\cap U_j = \ccc(C\cap U_i)\cap C\cap U_j \subseteq \ccc(C\cap U_i)\cap \ccc(C\cap U_j) = \varnothing,
\]
which is a contradiction. 
\end{proof}

Set $B_i=U_i\cap B$ for $i=1,2$.

\begin{claim}\label{clm: closure of U intersect limits of X stays in U}
$\ccc(B_i)\subseteq U_i$ for each $i=1,2$.
\end{claim}

\begin{proof}
Let $x\in\ccc(B_i)$ for some $i\in\{1,2\}$ and $j\in\{1,2\}\setminus\{i\}$. Set $N=\{k\in\omega \mid \langle\omega,k\rangle\in B_i\}$. For each $k\in N$, let $C_k$ be a clopen subset of $\beta(\omega^2)$ such that $\langle\omega,k\rangle\in C_k\subseteq U_i\cap H_k$. Set $C=\bigcup\nolimits_{k\in N}C_k$. Then $C$ is an open subset of $\omega^2$ because each $C_k$ is open in $\omega^2$. Since $H_k$ is open in $\omega^2$ 
and $H_k\setminus C_k$ is open in $\omega^2$ for each $k\in N$, we have that 
\[
\omega^2\setminus C = \left(\bigcup\nolimits_{k\in\omega\setminus N}H_k\right)\cup\left(\bigcup\nolimits_{k\in N}H_k\setminus C_k\right)
\]
is open in $\omega^2$. Thus, $C$ is a clopen subset of $\omega^2$ such that $B_i\subseteq C \subseteq U_i$. By \cite[Cor.~3.6.5]{Eng89}, $\ccc (C)$ is a clopen subset of $\beta(\omega^2)$ such that $x\in\ccc(B_i)\subseteq\ccc (C)$. Because $f$ is an onto interior map, 
$f(\ccc (C))$ is a nonempty upset in $\mathfrak F_3$ such that 
\[
f(\ccc (C)) \subseteq f(\ccc (U_i)) = f(\ccc(f^{-1}(m_i))) = f(f^{-1}({\downarrow}m_i)) = {\downarrow}m_i.
\]
Since $m_j\not\in {\downarrow}
m_i$, we have
$m_j\not\in f(\ccc(C))$. 
But the only nonempty upset of $\mathfrak F_3$ not containing $m_j$ is $\{m_i\}$. Thus, 
$f(x)\in f(\ccc(C)) = \{m_i\}$, which implies that $x\in f^{-1}(m_i)=U_i$. Hence, $\ccc(B_i)\subseteq U_i$. \end{proof}

Let $x\in\beta(\omega^2)$ be such that $f(x)=r$. Restricting the domain of $f$ to 
$\omega^2$ 
and the codomain 
to 
$f(\omega^2)$ 
yields an interior mapping of $\omega^2$ onto 
$f(\omega^2)$. By \cref{thm: logic of ord}, 
$\omega^2\vDash \bd_2$, and hence $f(\omega^2)\vDash\bd_2$ by \cref{lem: truth preserving ops}(\ref{lem item: interior image}). Thus,
$f(\omega^2)$ is 
of depth $\le 2$ (see, e.g., \cite[Prop.~3.44]{CZ97}), so $f(\omega^2)\subseteq \{m_1,m_2,v_1,v_2\}$, and hence $x\not\in \omega^2$. Moreover, 
$x\not\in U_1\cup U_2$, 
so $x\in\ccc(B)$ by Claim~\ref{clm: complement of closure of limits of X is in U union V}, and thus $x\in\ccc(B)\setminus B$. 
By Claim~\ref{clm: closure of U intersect limits of X stays in U}, 
$x\not\in\ccc(B_1)\cup\ccc(B_2)=\ccc(B_1\cup B_2)$. Therefore, there is a clopen subset $C$ of $\beta(\omega^2)$ such that $x\in C\subseteq\beta(\omega^2)\setminus\ccc(B_1\cup B_2)\subseteq\beta(\omega^2)\setminus(B_1\cup B_2)$. Because $C$ is an open neighborhood of $x$, it follows 
that $C\cap B\neq\varnothing$. 
Let $y\in C\cap B$. Then $y\in \omega^2$, giving that $f(y)\neq r$. If $f(y)\in\{m_1,m_2\}$, then $y\in (U_1\cup U_2)\cap B=B_1\cup B_2$, which contradicts 
$C\subseteq\beta(\omega^2)\setminus(B_1\cup B_2)$. Therefore, $C\cap B\subseteq f^{-1}(\{v_1,v_2\})$.
Let $C_i=C\cap B\cap f^{-1}(v_i)$ for $i=1,2$. 
Then 
\begin{eqnarray*}
C\cap B&=& C\cap B\cap f^{-1}(\{v_1,v_2\}) = C\cap B\cap \left(f^{-1}(v_1)\cup f^{-1}(v_2)\right) \\
&=& \left(C\cap B\cap f^{-1}(v_1)\right)\cup \left(C\cap B\cap  f^{-1}(v_2)\right) = C_1\cup C_2.
\end{eqnarray*}
Let $i\in\{1,2\}$. Since $f^{-1}(r)$, $\omega^2$ and $f^{-1}(v_i)$, $A$ are disjoint, we have
\begin{eqnarray*}
C\cap f^{-1}({\downarrow}v_i) \cap \omega^2 &=& C \cap \left(f^{-1}(r)\cup f^{-1}(v_i)\right)\cap \omega^2 \\
&=& C \cap \left((f^{-1}(r)\cap \omega^2)\cup (f^{-1}(v_i)\cap \omega^2)\right) \\
&=& C \cap 
(f^{-1}(v_i)\cap \left[A \cup B \right])\\
&=& C\cap \left(\left[f^{-1}(v_i)\cap  A \right] \cup \left[f^{-1}(v_i)\cap B \right]\right) \\
&=& C\cap \left(
B\cap f^{-1}(v_i) 
\right) = C_i.
\end{eqnarray*}
Therefore, $C_i$ is closed in $\omega^2$ because $C\cap f^{-1}({\downarrow}v_i)$ is closed in $\beta(\omega^2)$.
Since $C_1$ and $C_2$ are disjoint closed subsets of $\omega^2$, \cite[Cor.~3.6.4]{Eng89} yields that $\ccc (C_1)\cap \ccc (C_2)=\varnothing$. 
Because 
$x\in C\cap \ccc (B) = \ccc(C\cap B)
$, we have that  
$x\in\ccc(C\cap B)
= \ccc (C_1) \cup \ccc(C_2)$. Since $\ccc (C_1)$ and $\ccc (C_2)$ are disjoint, $x$ belongs to
exactly one of $\ccc (C_1)$ or $\ccc (C_2)$.

Suppose that $x\in\ccc (C_i)$ for some $i\in\{1,2\}$ and let $j\in\{1,2\}\setminus\{i\}$. Then $x\not\in\ccc (C_j)$, so $x\in\beta(\omega^2)\setminus \ccc (C_j)$. Thus, there is a clopen neighborhood $F$ of $x$ 
such that $F\subseteq\beta(\omega^2)\setminus\ccc (C_j)$, so $F\cap \ccc (C_j) = \varnothing$. Noting that $F\cap C$ is a clopen neighborhood of $x$ such that $F\cap C \cap \ccc (C_j) = \varnothing$ and $F\cap C \cap \ccc (C_i)\neq\varnothing$, we may assume 
that $C_j=\varnothing$, and hence that $C\cap B = C_i$. 

Since $r\in f(C)$ and $C$ is clopen, we have that $f(C)=W$.
Therefore, $v_j\in f(C)$, so there is $z\in C$ such that $f(z)=v_j$. By the definition of $U_1$ and $U_2$, we have that $z\not\in U_1\cup U_2$, implying that $z\in\ccc (B)$ by Claim~\ref{clm: complement of closure of limits of X is in U union V}. Clearly $z\not\in B\setminus C$ as $z\in C$, and $z\not \in C_i$ because $f(z)=v_j\neq v_i$. Thus, $z\not\in (B\setminus C) \cup C_i = (B\setminus C)\cup (C\cap B) = B$, yielding that $z\in \ccc(B)\setminus B$. Since $f$ is interior, 
$f^{-1}({\uparrow}v_j)\cap C$ is an open neighborhood of $z$ in $\beta(\omega^2)$. Thus, there is a clopen subset $D$ of $\beta(\omega^2)$ such that $z\in D\subseteq f^{-1}({\uparrow}v_j)\cap C$. Because $z\in\ccc (B)$, we have that $D\cap C_i = D\cap C \cap B = D\cap B\neq\varnothing$. Taking $y\in D\cap C_i$ yields the contradiction that $v_i=f(y)\in {\uparrow}v_j$, completing the proof.
\end{proof}

\section{Roaches}
\label{roaches section}

In this section we describe the class of rooted {\sf S4.1}-frames that is defined by the three forbidden configurations for $\beta(\omega^2)$. 
Because of their shape, we term the frames in this class $2$-roaches. These frames readily generalize to $n$-roaches (see the Introduction). We believe that the logic of the class of $n$-roaches is exactly the logic of $\beta(\omega^n)$. While this remains a conjecture (see \cref{conj: big conj for L_n}), the main result of this paper verifies the conjecture for $n=2$ (we emphasize again that our proof uses CH). 
 
\begin{convention}
From now on, all frames are assumed to be finite rooted ${\sf S4.1}$-frames. 
\end{convention}

\begin{definition}\label{def: 3 roach}
Let $\mathfrak F=(W,\le)$ be a frame
and $n\in\omega$ nonzero. 
\begin{enumerate} 
\item Call $\mathfrak F$ an \emph{$n$-roach} if there is a \emph{splitting point} $s\in W$ 
such that
\begin{enumerate}
\item $d(s)\le n$;
\item ${\uparrow}s$ is partially ordered (by the restriction of $\le$); \item for each $w\in W$, there is $t_w\in {\uparrow} s$ such that ${\uparrow} w\cap {\uparrow} s = {\uparrow} t_w$. 
\end{enumerate}

\item Let $\mathcal R_n$ be the class of all $n$-roaches and let ${\sf L}(\mathcal R_n)$ be the logic of $\mathcal R_n$.

\item Call $\mathfrak F$ a \emph{roach} if $\mathfrak F$ is an $n$-roach for some nonzero $n\in\omega$. 

\item Let $\mathcal R_\infty$ be the the class of all roaches and let ${\sf L}(\mathcal R_\infty)$ be the logic of $\mathcal R_\infty$.
\end{enumerate}

\end{definition}

\cref{fig: n-roach} in the Introduction depicts an $n$-roach and \cref{fig2} below a 2-roach. In both cases, the head $\{s\}$, the body ${\downarrow}s\setminus\{s\}$, and the antennae ${\uparrow}s\setminus\{s\}$ are colored in magenta. 

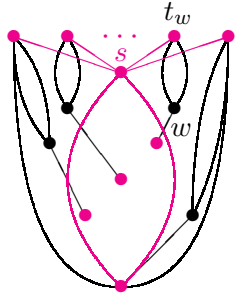
\begin{figure}[h]
\begin{center}
\begin{picture}(150,100)(-75,0)
\setlength{\unitlength}{0.19mm}
\qbezier(0,0)(-75,0)(-75,175)
\qbezier(0,0)(75,0)(75,175)
\multiput(-50,100)(100,0){1}{\makebox(0,0){$\bullet$}}
\put(50,50){\makebox(0,0){$\bullet$}}
\multiput(-37.5,125)(75,0){2}{\makebox(0,0){$\bullet$}}
\multiput(0,0)(75,0){2}{\qbezier(-37.5,125)(-55,150)(-37.5,175)}
\multiput(0,0)(75,0){2}{\qbezier(-37.5,125)(-20,150)(-37.5,175)}
\qbezier(-50,100)(-75,125)(-75,175)
\qbezier(-50,100)(-50,150)(-75,175)
\qbezier(50,50)(75,100)(75,175)
\qbezier(50,50)(50,130)(75,175)
\put(-25,50){\line(-1,2){25}}
\put(0,75){\line(-3,4){37.5}}
\put(25,100){\line(1,2){12.5}}
\put(0,0){\line(1,1){50}}
\put(42,110){\makebox(0,0){\small $w$}}
\put(40,192){\makebox(0,0){\small $t_w$}}
\color{magenta}
\multiput(-25,50)(25,25){3}{\makebox(0,0){$\bullet$}}
\multiput(0,0)(0,150){2}{\makebox(0,0){$\bullet$}}
\multiput(-75,175)(37.5,0){2}{\multiput(0,0)(112.5,0){2}{\makebox(0,0){$\bullet$}}}
\put(1,175){\makebox(0,0){$\cdots$}}
\put(0,163){\makebox(0,0){\small$s$}}
\qbezier(0,0)(-75,75)(0,150)
\qbezier(0,0)(75,75)(0,150)
\put(0,150){\line(-3,1){75}}
\put(0,150){\line(3,1){75}}
\put(0,150){\line(-3,2){37.5}}
\put(0,150){\line(3,2){37.5}}
\color{black}
\end{picture}
\end{center}
\caption{A $2$-roach.}
\label{fig2}
\end{figure}

\begin{remark} \label{remark about n-roaches}
We offer some intuition connecting the definition of an $n$-roach to the pursuit of the relational semantics for ${\sf L}_n$.  
Recall that finite interior images of $\omega^n$ are exactly posets of depth at most $n$. Thus, how the partially ordered upset ${\uparrow}s$ of a splitting point 
``sits inside'' an $n$-roach is a frame-theoretic analogue of how $\omega^n$ ``sits inside'' $\beta(\omega^n)$. 
\end{remark}

\begin{theorem}\label{n-roaches and logics}
\begin{enumerate}
\item[]

\item A frame $\F$ is a $1$-roach iff $\F$ is an {\sf S4.1.2}-frame. 
\label{item: L_1=L(R_1) sorta}

\item For $m,n\in\omega$, if $1\le n<m$ then $\mathcal R_n\subset \mathcal R_m$.
\label{R_n form strictly increasing chain}

\item ${\sf S4.1}\subset{\sf L}(\mathcal R_\infty)\subset \cdots \subset {\sf L}(\mathcal R_{n+1})\subset{\sf L}(\mathcal R_n)\subset \cdots \subset {\sf L}(\mathcal R_2)\subset{\sf L}(\mathcal R_1)={\sf S4.1.2}$.
\label{L(R_n) chain}

\end{enumerate}
\end{theorem}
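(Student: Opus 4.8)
The plan is to prove the three parts of \cref{n-roaches and logics} in order, since each builds on the previous.

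For part (1), I would argue by unwinding definitions. If $\F$ is an $\sf S4.1.2$-frame, then by the characterization recalled before \cref{thm: reln complnss s4 1 and 2}, $\max(\F)$ is a singleton, say $\{s\}$; I would verify that this $s$ is a splitting point of depth $\le 1$: condition (a) holds because $d(s)=1$ (it is maximal), condition (b) holds since ${\uparrow}s=\{s\}$ is trivially a partial order, and condition (c) holds because for each $w$ we have ${\uparrow}w\cap{\uparrow}s={\uparrow}w\cap\{s\}=\{s\}={\uparrow}s$ (using that $w\le s$ since $s$ is the unique maximal point reachable from every point in a rooted $\sf S4.1$-frame). Conversely, if $\F$ is a $1$-roach with splitting point $s$, then $d(s)\le 1$ forces $s$ to be maximal, so ${\uparrow}s=C_s$; condition (c) with $w$ ranging over maximal points of $\F$ should force $\max(\F)$ to collapse into the single cluster $C_s$, and since $\F$ is an $\sf S4.1$-frame with a unique maximal cluster, I would conclude $\max(\F)$ is a singleton, i.e.\ $\F$ is an $\sf S4.1.2$-frame. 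The one subtlety to check carefully is whether the definition permits $s$ to be a nondegenerate cluster; I would confirm the intended reading so that ``$\max(\F)$ is a singleton'' matches ``unique top cluster.''

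For part (2), the inclusion $\mathcal R_n\subseteq\mathcal R_m$ when $n<m$ is immediate: any $n$-roach with splitting point $s$ satisfies $d(s)\le n\le m$, and conditions (b), (c) are unchanged, so the same $s$ witnesses that $\F$ is an $m$-roach. For \emph{strictness} I would exhibit, for each $n$, a frame that is an $m$-roach but not an $n$-roach; the natural candidate is a frame built on a chain of depth exactly $n+1$ sitting below a partially ordered antenna (e.g.\ a variant of $\mathfrak T_{n}$ or the $n$-roach schematic of \cref{fig: n-roach} realized at depth $n+1$), where any splitting point is forced to have depth $n+1>n$. The key point to verify is that the splitting point is \emph{unique} or at least that every candidate splitting point has depth $>n$, so no witness of depth $\le n$ exists.

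For part (3), the chain of logics follows from part (2) together with \cref{lem: truth preserving ops} and the Fine-Jankov machinery of \cref{lem: top vers of Fine}(\ref{lem item: FINE}): $\mathcal R_n\subseteq\mathcal R_{n+1}$ gives ${\sf L}(\mathcal R_{n+1})\subseteq{\sf L}(\mathcal R_n)$, and strictness of the class inclusion, witnessed by a separating frame $\F$, yields a Fine-Jankov formula $\chi_\F$ lying in one logic but not the other (precisely as in the proof of \cref{lem: L_n's chain}). The equality ${\sf L}(\mathcal R_1)={\sf S4.1.2}$ is immediate from part (1) combined with \cref{thm: reln complnss s4 1 and 2}. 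For the endpoints, ${\sf L}(\mathcal R_\infty)=\bigcap_n{\sf L}(\mathcal R_n)$ by definition of $\mathcal R_\infty$ and \cref{lem: truth preserving ops}(\ref{lem item: disjoint union}); the strict containment ${\sf S4.1}\subset{\sf L}(\mathcal R_\infty)$ requires producing an $\sf S4.1$-frame that is a forbidden configuration for \emph{every} $\mathcal R_n$, for which $\mathfrak G$ of \cref{def: G and F_2} is the natural candidate (it has two distinct maximal points below a single cluster, so it can never embed as a generated subframe / p-morphic image inside a roach whose antenna is partially ordered with the unique-upper-bound property (c)). I expect the main obstacle to be this last strictness: I must show $\chi_{\mathfrak G}\in{\sf L}(\mathcal R_n)$ for all $n$, i.e.\ that $\mathfrak G$ is never a p-morphic image of a generated subframe of a roach. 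The crux is that condition (c) forces ${\uparrow}w\cap{\uparrow}s$ to be a \emph{principal} upset for every $w$, which is incompatible with the two incomparable maximal points of $\mathfrak G$ sharing a common lower cluster while having no common upper bound; making this incompatibility precise under p-morphisms (which need not respect (c) naively) is the delicate step and will require tracking how the back condition interacts with the splitting point.
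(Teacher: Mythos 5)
Your plan tracks the paper's proof essentially clause for clause. For part (1) the paper does the same unwinding; note that condition (b) of \cref{def: 3 roach} (${\uparrow}s$ is partially ordered) already forces ${\uparrow}s=\{s\}$ when $d(s)\le 1$, so the cluster subtlety you want to ``confirm'' is settled by the definition itself rather than by a reading convention. For part (2) the paper uses the same witness family, the lop-sided fork of \cref{def: tall lop-sided two fork}, whose root is the only splitting-point candidate because ${\uparrow}w_1\cap{\uparrow}v=\varnothing$; your calibration (unique candidate of depth exactly $n+1$) is the correct one, and in fact avoids a small indexing slip in the paper, which names $\mathfrak T_{n+1}$ while computing with a root of depth $n+1$ --- since the root of $\mathfrak T_k$ has depth $k+1$, the witness that works for every $m>n$, including $m=n+1$, is $\mathfrak T_n$. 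Part (3) is assembled exactly as in the paper: \cref{thm: reln complnss s4 1 and 2} plus (1) on the right, (2) plus Fine--Jankov in the middle, and $\mathfrak G$ on the left.

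The ``delicate step'' you flag is the right thing to worry about, and it is precisely the point where the paper's own two-line argument also stops short: by \cref{lem: top vers of Fine}(1), strictness requires that $\mathfrak G$ not be a p-morphic image of a generated subframe of \emph{any} roach (and that the fork not be such an image of any $n$-roach), which is strictly stronger than ``$\mathfrak G$ is not a roach''; moreover one cannot invoke closure of $\mathcal R_\infty$ under p-morphic images, since the paper only conjectures it for $n>2$. Here is how to close it. Point-generated subframes of roaches are roaches (end of \cref{roaches section}), so suppose $f$ maps a roach $\F$ with splitting point $s$ onto $\mathfrak G$. Pick $x\in f^{-1}(r_1)$ and, pushing preimages upward, find $M_1,M_2\in{\uparrow}x\cap\max(\F)$ with $f(M_i)=m_i$ (p-morphisms between finite frames send maximal points to maximal points). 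Condition (c) applied to a maximal point gives $\max(\F)\subseteq{\uparrow}s$, and applied to $x$ it gives $t\in{\uparrow}s$ with $M_1,M_2\in{\uparrow}x\cap{\uparrow}s={\uparrow}t$; hence $f(t)\le m_1,m_2$, so $f(t)\in{\downarrow}m_1\cap{\downarrow}m_2=\{r_1,r_2\}$. Now iterate the back condition inside ${\uparrow}t\subseteq{\uparrow}s$: preimages of $r_1$ and $r_2$ must alternate along a strictly ascending chain of the finite poset ${\uparrow}s$, a contradiction. So the true obstruction is the proper cluster $\{r_1,r_2\}$ --- a finite poset region cannot map p-morphically onto a proper cluster --- and not the mere absence of a common upper bound of $m_1,m_2$; the two-element fork has that feature and is itself a roach, so your stated ``crux'' needs this adjustment. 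The same scheme settles the middle inclusions: if an $n$-roach mapped onto $\mathfrak T_n$, the argument above produces $t\in{\uparrow}s$ with $f(t)$ equal to the root, which is impossible because the depth of a point dominates the depth of its image under a p-morphism, while $d(t)\le d(s)\le n<n+1=d(r)$.
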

 
\begin{proof}
(1) Suppose $\F\in\mathcal R_1$. Then there is a splitting point $s\in W$ of depth $1$, and hence ${\uparrow}s=\{s\}$ as ${\uparrow}s$ is partially ordered. For $w\in W$, we have $t_w\in{\uparrow}s$ and  
${\uparrow}w\cap {\uparrow}s={\uparrow}t_w={\uparrow}s$. Thus, $s$ is the unique maximal point of $\mathfrak F$. Consequently,  
$\F$ is an {\sf S4.1.2}-frame. 
Conversely, if $\F$ is an {\sf S4.1.2}-frame, the unique maximal point 
of $\F$ is a splitting point, yielding that $\F\in\mathcal R_1$. 

(2) Let $m,n\in\omega$ be such that $1\le n<m$. That $\mathcal R_n\subseteq\mathcal R_m$ is an immediate consequence of \cref{def: 3 roach}. To see that the containment is strict, consider 
$\mathfrak T_{n+1}$ (\cref{{def: tall lop-sided two fork}}). Notice that the root $r$ is the only candidate for a splitting point of $\mathfrak T_{n+1}$ because ${\uparrow}w_1\cap{\uparrow}v=\varnothing$. Since $n<d(r)=n+1\le m$, we have that 
$\mathfrak T_{n+1}\in\mathcal R_{m}\setminus\mathcal R_n$ as $\mathfrak T_{n+1}$ is partially ordered. 

(3) 
The right most equality follows from (\ref{item: L_1=L(R_1) sorta}) and \cref{thm: reln complnss s4 1 and 2}. Therefore, by (\ref{R_n form strictly increasing chain}), we only need to address the left most inclusion. Since each roach is an {\sf S4.1}-frame, we have ${\sf S4.1}\subseteq{\sf L}(\mathcal R_\infty)$. To see the inclusion is strict, we point out that the frame $\mathfrak G$ in \cref{def: G and F_2} 
is not a roach since the only candidates for a splitting point are $m_1$ and $m_2$, but there is no $t\in {\uparrow}m_i$ such that $m_j\le t$ for distinct $i,j\in\{1,2\}$. Because $\mathfrak G$ is an {\sf S4.1}-frame, the 
inclusion is strict.
\end{proof}

We now focus on establishing some basic facts about $2$-roaches, whose generalizations to $n\ge2$ are discussed at the end of the section. We begin with a recursive characterization of $2$-roaches which we employ frequently and without explicit reference.

\begin{lemma}\label{simple char of 2-roaches}
For a frame $\F$, 
we have that $\F\in\mathcal R_2$ iff 
\[
\exists s\in W \left(
{\uparrow}s=\{s\}\cup \mathrm{max}(\mathfrak F) 
\wedge  
\left(\forall w\in W\setminus {\downarrow}s\right)\left({\uparrow}w\in\mathcal R_1
\right)
\right).
\tag{\textrm{\dag}}\label{Cond}
\]
\end{lemma}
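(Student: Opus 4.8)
The plan is to prove the biconditional in \cref{simple char of 2-roaches} by unraveling \cref{def: 3 roach} for the case $n=2$, with the recursive condition $({\uparrow}w\in\mathcal R_1)$ handled via the characterization of $1$-roaches from \cref{n-roaches and logics}(1). Throughout I will use that we are working with finite rooted {\sf S4.1}-frames (per the standing Convention), so every point sits below a maximal point.

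For the forward direction, suppose $\F\in\mathcal R_2$ with splitting point $s$. First I would argue that $d(s)\le 2$ together with ${\uparrow}s$ being partially ordered forces ${\uparrow}s=\{s\}\cup\mathrm{max}(\F)$: since ${\uparrow}s$ is a poset of depth at most $2$, every element of ${\uparrow}s$ other than $s$ must be maximal in $\F$ (a strictly ascending chain through $s$ has length at most $2$), and conversely every maximal point of $\F$ lying above $s$ is in ${\uparrow}s$; I would also use that $s$ itself is not maximal unless ${\uparrow}s=\{s\}$, in which case the equation still holds trivially. I must check the inclusion $\mathrm{max}(\F)\subseteq {\uparrow}s$: this is where condition (c) enters, because for any maximal $m$ we have ${\uparrow}m\cap{\uparrow}s={\uparrow}t_m$ with $t_m\in{\uparrow}s$, and ${\uparrow}m=\{m\}$ forces $t_m=m\in{\uparrow}s$. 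For the second conjunct, take $w\notin{\downarrow}s$; I would show ${\uparrow}w$ is an {\sf S4.1.2}-frame, equivalently (by \cref{n-roaches and logics}(1)) a $1$-roach, by exhibiting a unique maximal point. Condition (c) gives $t_w\in{\uparrow}s$ with ${\uparrow}w\cap{\uparrow}s={\uparrow}t_w$; since $w\notin{\downarrow}s$ one shows every maximal point above $w$ already lies in ${\uparrow}s$, so $\mathrm{max}({\uparrow}w)=\mathrm{max}({\uparrow}t_w)$, and because ${\uparrow}t_w$ is a partially ordered subset of the depth-$2$ poset ${\uparrow}s$ it has a single maximal point.

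For the converse, assume $(\dag)$ and let $s$ witness it; I claim $s$ is a splitting point. Condition (a), $d(s)\le 2$, follows because ${\uparrow}s=\{s\}\cup\mathrm{max}(\F)$ is an antichain above $s$, so no chain through $s$ exceeds length $2$. Condition (b), that ${\uparrow}s$ is partially ordered, follows since distinct maximal points are incomparable and none lies below $s$ nontrivially. The substantive point is condition (c): given arbitrary $w\in W$, I must produce $t_w\in{\uparrow}s$ with ${\uparrow}w\cap{\uparrow}s={\uparrow}t_w$. If $w\in{\downarrow}s$ then $s\le$ something forces $s\in{\uparrow}w$, and I would take $t_w=s$ after checking ${\uparrow}w\cap{\uparrow}s={\uparrow}s={\uparrow}t_w$. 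If $w\notin{\downarrow}s$, then by hypothesis ${\uparrow}w\in\mathcal R_1$, so it has a unique maximal point $t_w$; since ${\uparrow}w\cap{\uparrow}s$ consists of the maximal points of $\F$ above $w$, and all of these lie above the unique maximal $t_w$ of ${\uparrow}w$, I would verify ${\uparrow}w\cap{\uparrow}s={\uparrow}t_w$ with $t_w\in\mathrm{max}(\F)\subseteq{\uparrow}s$.

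The main obstacle I anticipate is the careful bookkeeping in the case $w\notin{\downarrow}s$ of the converse: I must confirm that $t_w\in{\uparrow}s$, which requires knowing the unique maximal point of ${\uparrow}w$ is genuinely one of the maximal points of $\F$ captured by ${\uparrow}s=\{s\}\cup\mathrm{max}(\F)$, and that ${\uparrow}w\cap{\uparrow}s$ is exactly $\{m\in\mathrm{max}(\F):w\le m\}$ rather than possibly containing $s$. Ruling out $s\in{\uparrow}w$ when $w\notin{\downarrow}s$ is immediate ($s\in{\uparrow}w$ means $w\le s$, i.e.\ $w\in{\downarrow}s$), so the identification ${\uparrow}w\cap{\uparrow}s=\mathrm{max}(\F)\cap{\uparrow}w$ is clean, and the uniqueness of the maximal point then yields $({\uparrow}w\cap{\uparrow}s)={\uparrow}t_w$. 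The rest is routine verification using finiteness and the {\sf S4.1} property that guarantees maximal points exist above every element.
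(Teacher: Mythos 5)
Your proposal is correct and takes essentially the same route as the paper's proof: both directions rest on \cref{n-roaches and logics}(1) (identifying $1$-roaches with {\sf S4.1.2}-frames), on condition (c) applied to maximal points to obtain $\mathrm{max}(\F)\subseteq{\uparrow}s$, and on the observation that $s\in{\uparrow}w$ iff $w\in{\downarrow}s$; the only difference is that the paper proves the second conjunct of the forward direction by contradiction where you argue it directly. One small point to tighten there: conclude $t_w\neq s$ from $w\notin{\downarrow}s$ before asserting that ${\uparrow}t_w$ has a single maximal point, since being a partially ordered subset of the depth-$2$ poset ${\uparrow}s$ does not by itself suffice (${\uparrow}s$ itself may have many maximal points).
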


\begin{proof}
Let $\mathfrak F$ be a $2$-roach and $s\in W$ a splitting point of $\mathfrak F$. 
That ${\uparrow}s \subseteq \{s\}\cup\max(\F)$ is obvious since $d(s)\le 2$ and ${\uparrow}s$ is partially ordered. For the reverse inclusion,
let $w\in\max(\mathfrak F)$. Then
$t_w\in{\uparrow}s$ is such that ${\uparrow}w\cap{\uparrow}s = {\uparrow} t_w$, giving that $w\le t_w$. Since $w\in\max(\mathfrak F)$, we have
$w=t_w\in{\uparrow}s$. 

We next show 
that ${\uparrow}w\in\mathcal R_1$ 
for each $w\in W\setminus{\downarrow} s$. If not, then \cref{n-roaches and logics}(\ref{item: L_1=L(R_1) sorta}) implies ${\uparrow}w\not\vDash{\sf S4.1.2}$, giving that there are $w\in W\setminus{\downarrow}s$ and distinct
$m_1, m_2\in{\uparrow}w\cap\max(\mathfrak F)$.
Since $\mathfrak F\in\mathcal R_2$, 
there is $t_w\in{\uparrow}s$ such that $m_1,m_2\in{\uparrow}w\cap {\uparrow}s = {\uparrow}t_w$, implying that $d(t_w)\neq 1$ because $m_1\neq m_2$. Therefore, $t_w=s$, contradicting that $w\not\in{\downarrow} s$. 
Thus, ${\uparrow}w$ has a unique maximal point, and hence is an {\sf S4.1.2}-frame. Therefore, (\ref{Cond}) holds.

Conversely, suppose (\ref{Cond}) holds and let $s\in W$ be as in (\ref{Cond}).  
We show that $s$ is a splitting point realizing $\mathfrak F$ as a $2$-roach. Because $\mathfrak F\vDash{\sf S4.1}$ and ${\uparrow}s = \{s\}\cup\max(\mathfrak F)$, it follows that ${\uparrow}s$ is partially ordered and $d(s)\le 2$. Let $w\in W$. If $w\le s$, then taking $t_w=s\in{\uparrow}s$ yields ${\uparrow}w\cap{\uparrow}s={\uparrow}s={\uparrow}t_w$. Suppose $w\not\le s$. Then $w\in W\setminus{\downarrow}s$, which yields that ${\uparrow}w\in\mathcal R_1$ by (\ref{Cond}). Thus, ${\uparrow}w$ is an {\sf S4.1.2}-frame by \cref{n-roaches and logics}(\ref{item: L_1=L(R_1) sorta}). Let $t_w$ be the unique maximal point of ${\uparrow}w$. As ${\uparrow}w$ is a generated subframe, we have that $t_w\in\max({\uparrow}w)\subseteq\max(\F)\subseteq{\uparrow}s$. Clearly ${\uparrow}t_w=\{t_w\}\subseteq{\uparrow}w\cap{\uparrow}s$. Let $v\in{\uparrow}w\cap{\uparrow}s$. If $v=s$, then $w\le s$, a contradiction. Therefore, 
$v\in\max(\F)$. Because $t_w$ is the unique maximal point in ${\uparrow}w$, 
we conclude that $v=t_w$, and hence 
${\uparrow}w\cap {\uparrow}s={\uparrow}t_w$. Thus, 
$\mathfrak F$ is a $2$-roach. 
\end{proof}

\begin{remark}
Let $\F=(W,\le)$ be a $1$-roach. By \cref{n-roaches and logics}(\ref{item: L_1=L(R_1) sorta}), $\F$ is an {\sf S4.1.2}-frame. Thus, its unique maximal point is a splitting point realizing $\F\in\mathcal R_1$, and it is the unique splitting point of depth 1. On the other hand, we also have that $\F\in\mathcal R_2$ by \cref{n-roaches and logics}(\ref{R_n form strictly increasing chain}). Suppose there is $w\in W$ such that $d(w)=2$ and $C_w=\{w\}$. Then $w$ is also a splitting point realizing $\F\in\mathcal R_2$ (but $w$ does not realize $\F\in\mathcal R_1$.) Therefore, a splitting point of a $2$-roach need not be unique. 
\end{remark}

\begin{lemma}\label{splitting point of a 2-roach non-1-roach is unique}
If $\F\in\mathcal R_2\setminus\mathcal R_1$, then $\F$ has a unique splitting point of depth 2.
\end{lemma}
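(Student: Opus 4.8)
The plan is to prove the statement in two stages: first that every splitting point of $\F$ must have depth exactly $2$, and then that such a point is unique. Throughout I would lean on the recursive characterization \eqref{Cond} of \cref{simple char of 2-roaches}, which tells us that any splitting point $s$ realizing $\F$ as a $2$-roach satisfies ${\uparrow}s=\{s\}\cup\max(\F)$ and that ${\uparrow}w$ is a $1$-roach (equivalently, an {\sf S4.1.2}-frame with a unique maximal point) for every $w\in W\setminus{\downarrow}s$.

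For the first stage, let $s$ be any splitting point. By \cref{def: 3 roach} we have $d(s)\le 2$, and since $W$ is nonempty, $d(s)\ge 1$. I would rule out $d(s)=1$: in that case, because ${\uparrow}s$ is partially ordered with least element $s$, depth one forces ${\uparrow}s=\{s\}$; combined with ${\uparrow}s=\{s\}\cup\max(\F)$ this gives $\max(\F)=\{s\}$, so $\F$ is an {\sf S4.1.2}-frame and hence $\F\in\mathcal R_1$ by \cref{n-roaches and logics}, contradicting $\F\in\mathcal R_2\setminus\mathcal R_1$. Thus $d(s)=2$, so in particular $s\notin\max(\F)$; and since $\F\in\mathcal R_2$ provides a splitting point to begin with, existence of a depth-$2$ splitting point is secured.

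For uniqueness, suppose $s$ and $s'$ are both splitting points; by the first stage both have depth $2$, so neither lies in $\max(\F)$. Since $\F\notin\mathcal R_1$, the set $\max(\F)$ contains two distinct points $m_1,m_2$, and because ${\uparrow}s'=\{s'\}\cup\max(\F)$, the generated subframe ${\uparrow}s'$ has at least two maximal points and is therefore not a $1$-roach. Feeding $s'$ into the condition \eqref{Cond} attached to the splitting point $s$, the only way ${\uparrow}s'$ can fail to be a $1$-roach is for $s'$ to lie in ${\downarrow}s$, that is, $s'\le s$. Then $s\in{\uparrow}s'=\{s'\}\cup\max(\F)$, and since $s\notin\max(\F)$ we conclude $s=s'$.

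The argument is essentially complete once these two observations are in place, and I do not anticipate a serious obstacle beyond careful bookkeeping. The point that requires the most care is the interplay between depth and maximality: I must use that $d(s)=2$ genuinely forces $s\notin\max(\F)$, so that the final membership $s\in\{s'\}\cup\max(\F)$ pins down $s=s'$, and I must apply the characterization \eqref{Cond} to \emph{both} candidate splitting points, which is precisely what allows $s'$ to be tested against the condition belonging to $s$.
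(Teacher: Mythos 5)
Your proof is correct and takes essentially the same route as the paper's: both argue that since $\F\notin\mathcal R_1$ the set $\max(\F)$ has at least two points, so a second splitting point $s'$ has ${\uparrow}s'=\{s'\}\cup\max(\F)\notin\mathcal R_1$, whence condition (\ref{Cond}) for $s$ forces $s'\le s$ and therefore $s=s'$. Your first stage (ruling out depth-$1$ splitting points to secure existence) and your explicit justification of the final step via $s\in{\uparrow}s'$ with $s\notin\max(\F)$ merely spell out what the paper's terser ``consequently $s'\le s$, and hence $s=s'$'' leaves implicit.
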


\begin{proof}
Suppose that $\mathfrak F\in\mathcal R_2\setminus\mathcal R_1$. By \cref{n-roaches and logics}(\ref{item: L_1=L(R_1) sorta}), $\F$ is not an {\sf S4.1.2}-frame. Therefore, $\mathrm{max}(\mathfrak F)$ consists of at least 2 points. Let $s,s'\in W$ be splitting points of $\mathfrak F$ of depth $2$. Then ${\uparrow}s=\{s\}\cup\mathrm{max}(\mathfrak F)$ and ${\uparrow}s'=\{s'\}\cup\mathrm{max}(\mathfrak F)$. 
Since ${\uparrow}s'$ is not an {\sf S4.1.2}-frame, 
${\uparrow}s'\notin\mathcal R_1$ by \cref{n-roaches and logics}(\ref{item: L_1=L(R_1) sorta}). Thus, since $\F\in\mathcal R_2$, we obtain that
$s'\not\in W\setminus {\downarrow}s$. Consequently, 
$s'\le s$, and hence
$s=s'$. 
\end{proof}

\begin{proposition}\label{lem: willow trees closed under p-mor im and pt gen subs}
The class $\mathcal R_2$ is closed under point-generated subframes and p-morphic images. 
\end{proposition}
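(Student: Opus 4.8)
The plan is to prove the two closure properties separately, in each case reducing to the recursive characterization of $2$-roaches established in \cref{simple char of 2-roaches}, namely condition (\ref{Cond}). I will freely use \cref{n-roaches and logics}(\ref{item: L_1=L(R_1) sorta}), identifying $1$-roaches with {\sf S4.1.2}-frames, and the fact that being an {\sf S4.1}-frame is itself preserved under both operations (the class of {\sf S4.1}-frames is a modally definable, hence p-morphism-closed, class, and point-generated subframes of rooted {\sf S4.1}-frames are again rooted {\sf S4.1}-frames). Throughout, recall our standing convention that all frames are finite rooted {\sf S4.1}-frames.

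First I would handle \emph{point-generated subframes}, which is the easier direction. Let $\F=(W,\le)$ be a $2$-roach with splitting point $s$, and let $\G={\uparrow}u$ for some $u\in W$. There are two cases according to whether $u\le s$ or not. If $u\not\le s$, then $u\in W\setminus{\downarrow}s$, so by (\ref{Cond}) we have ${\uparrow}u\in\mathcal R_1$, and hence $\G$ is a $1$-roach, which is in particular a $2$-roach by \cref{n-roaches and logics}(\ref{R_n form strictly increasing chain}). If $u\le s$, then $s\in\G$ and I claim $s$ remains a splitting point for $\G$: since $\G$ is an upset of $\F$ containing $s$, the set ${\uparrow}s$ computed in $\G$ equals ${\uparrow}s$ computed in $\F$, so ${\uparrow}s=\{s\}\cup\max(\F)$; moreover $\max(\G)\subseteq\max(\F)$ and, because $u\le s$ forces $\max(\F)\subseteq{\uparrow}s\subseteq\G$, in fact $\max(\G)=\max(\F)$, giving ${\uparrow}s=\{s\}\cup\max(\G)$. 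For any $w\in G\setminus{\downarrow}s$ the upset ${\uparrow}w$ is the same whether computed in $\G$ or in $\F$, so it lies in $\mathcal R_1$ by (\ref{Cond}) for $\F$. Thus (\ref{Cond}) holds for $\G$, so $\G\in\mathcal R_2$.

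Next I would treat \emph{p-morphic images}, which is where the main difficulty lies. Let $f:\F\to\G$ be an onto p-morphism with $\F=(W,\le)\in\mathcal R_2$ having splitting point $s$, and write $\G=(V,\preceq)$. I would set $t:=f(s)$ as the candidate splitting point and verify (\ref{Cond}) for $\G$. The forth condition gives $f(\max(\F))\subseteq\max(\G)$, and surjectivity together with the back condition lets me show $\max(\G)=f(\max(\F))$ and that every point of $\G$ lies below some maximal point. The key computation is to show ${\uparrow}t=\{t\}\cup\max(\G)$: using that $f$ maps the upset ${\uparrow}s=\{s\}\cup\max(\F)$ onto the upset ${\uparrow}t$ of $\G$ (a standard consequence of the p-morphism conditions, since $f$ restricted to a generated subframe is a p-morphism onto the generated subframe of its image), I obtain ${\uparrow}t=f(\{s\}\cup\max(\F))=\{t\}\cup f(\max(\F))=\{t\}\cup\max(\G)$. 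The remaining and most delicate part is to verify that ${\uparrow}v\in\mathcal R_1$ for every $v\in V\setminus{\downarrow}t$: given such a $v$, I would pick a preimage $w\in f^{-1}(v)$, and the crux is to argue that $w$ can be chosen with $w\notin{\downarrow}s$ so that (\ref{Cond}) for $\F$ yields ${\uparrow}w\in\mathcal R_1$; then since ${\uparrow}v$ is a p-morphic image of the $1$-roach ${\uparrow}w$ and $\mathcal R_1$ (being {\sf S4.1.2}) is closed under p-morphic images, I conclude ${\uparrow}v\in\mathcal R_1$.

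The hard part will be precisely this last step: ruling out that every preimage of $v$ lies in ${\downarrow}s$. The obstacle is that $v\notin{\downarrow}t$ does not immediately forbid a preimage $w$ with $w\le s$. I expect to resolve it by contradiction: if $v$ had a maximal-point obstruction (two distinct points of $\max(\G)$ above $v$), then via the back condition I would lift these to two distinct maximal points of $\F$ above a common preimage $w$ of $v$; if that $w$ could be taken outside ${\downarrow}s$ this contradicts ${\uparrow}w\in\mathcal R_1$, whereas if every such witnessing preimage is forced below $s$ then $t=f(s)\preceq v$ would follow, contradicting $v\notin{\downarrow}t$ (equivalently $t\not\preceq v$). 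Carefully bookkeeping the interaction between the back condition and the position of preimages relative to ${\downarrow}s$ is the technical heart of the argument; once it is in place, (\ref{Cond}) holds for $\G$ and hence $\G\in\mathcal R_2$.
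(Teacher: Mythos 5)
Your overall route is exactly the paper's: both parts reduce to the recursive characterization (\ref{Cond}), the generated-subframe case splits on $u\le s$ versus $u\not\le s$ just as in the paper, and for p-morphic images the candidate splitting point is $f(s)$, with ${\uparrow}f(s)=\{f(s)\}\cup\max(\G)$ established the same way. The one place you go astray is the step you call ``the technical heart'': it is in fact immediate, and no lifting of maximal points or case analysis is needed. Given $v\in V\setminus{\downarrow}f(s)$, let $w$ be \emph{any} preimage of $v$. If $w\le s$, then the forth condition gives $v=f(w)\preceq f(s)$, i.e., $v\in{\downarrow}f(s)$, a contradiction. So every preimage of $v$ automatically lies in $W\setminus{\downarrow}s$; hence ${\uparrow}w\in\mathcal R_1$ by (\ref{Cond}) for $\F$, and ${\uparrow}v=f({\uparrow}w)$ is a p-morphic image of an {\sf S4.1.2}-frame, hence itself an {\sf S4.1.2}-frame, as you say. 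This is precisely how the paper concludes.

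Two smaller points in your sketched contradiction argument need correcting, although the argument becomes superfluous once the observation above is made. First, from ``every witnessing preimage is below $s$'' what follows (by the forth condition) is $v\preceq f(s)$, not $f(s)\preceq v$ as you wrote. Second, $v\notin{\downarrow}f(s)$ means $v\not\preceq f(s)$; it is not equivalent to $f(s)\not\preceq v$. With these directions fixed, your two-case contradiction does close (lifting two maximal points of $\G$ above $v$ to two maximal points of $\F$ above a common preimage is legitimate), but it is a detour around a one-line argument.
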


\begin{proof}
Let $\mathfrak F=(W,\le)$ be a $2$-roach with splitting point $s$. First suppose that $\mathfrak G=(V,\le)$ is the subframe of $\mathfrak F$, where 
$V={\uparrow}w$. If $w\not\le s$, then ${\uparrow}w\in\mathcal R_1$ because $\F\in\mathcal R_2$, yielding that $\G\in\mathcal R_2$ by \cref{n-roaches and logics}(\ref{R_n form strictly increasing chain}). Suppose $w\le s$.
Then $s\in V$ and
\begin{eqnarray*}
{\uparrow}s &=& {\uparrow}s\cap V = (\{s\}\cup\mathrm{max}(\mathfrak F)) \cap V = \{s\}\cup\mathrm{max}(\mathfrak G).
\end{eqnarray*}
Let $v\in V\setminus {\downarrow}s$. Then $v\in W\setminus {\downarrow}s$, giving that ${\uparrow}v\in\mathcal R_1$. 
Thus, $\mathfrak G$ is a $2$-roach. 

Next suppose that $\mathfrak G=(V,\le)$ is a p-morphic image of $\mathfrak F$, say via $f:W\to V$.
\begin{claim}
${\uparrow}f(s)=\{f(s)\}\cup\mathrm{max}(\mathfrak G)$.
\end{claim}

\begin{proof}
Let $v\in {\uparrow}f(s)$. Then 
there is $w\in {\uparrow}s$ such that $f(w)=v$. If $w=s$, then $v=f(s)$.
If $w\neq s$, 
then $w\in\mathrm{max}(\mathfrak F)$. 
Therefore, $v\in\mathrm{max}(\mathfrak G)$.  
Thus, ${\uparrow}f(s)\subseteq \{f(s)\}\cup\mathrm{max}(\mathfrak G)$.

Conversely, let $v\in\{f(s)\}\cup\mathrm{max}(\mathfrak G)$. If $v=f(s)$, then $v\in{\uparrow}f(s)$.
Suppose that $v\in\mathrm{max}(\mathfrak G)$. Since $\mathfrak F$ is an {\sf S4.1}-frame and $f$ is an onto p-morphism, 
$v=f(w)$ for some $w\in\mathrm{max}(\mathfrak F)\subseteq 
{\uparrow}s$. Thus $s\le w$, so 
$f(s)\le v$, and hence $\{f(s)\}\cup\mathrm{max}(\mathfrak G)\subseteq {\uparrow}f(s)$. 
\end{proof}

To finish verifying that $\G\in\mathcal R_2$, let $v\in V\setminus {\downarrow}f(s)$. As $f$ is onto, there is $w\in W$ such that $f(w)=v$. 
If $w\le s$, then $v \le f(s)$, contradicting that $v\not\in {\downarrow}f(s)$. Thus, $w\in W\setminus {\downarrow}s$. Since $\mathfrak F$ is a $2$-roach, 
${\uparrow}w\in\mathcal R_1$, so ${\uparrow}w$ is an {\sf S4.1.2}-frame by \cref{n-roaches and logics}(\ref{item: L_1=L(R_1) sorta}). Since $f$ is a p-morphism, ${\uparrow}v=f({\uparrow}w)$.
Because ${\uparrow}w$ is an {\sf S4.1.2}-frame, so is 
${\uparrow}v$. 
By \cref{n-roaches and logics}(\ref{item: L_1=L(R_1) sorta}), ${\uparrow}v$ is a $1$-roach. Thus, $\mathfrak G$ is a $2$-roach. 
\end{proof}

In the next section, we will use the basic facts about 2-roaches established in this section to axiomatize ${\sf L}(\mathcal R_2)$. Some of these results also generalize to the setting of arbitrary $n$-roaches. For example, for each nonzero $n\in\omega$, $\mathcal R_n$ is closed under point-generated subframes (and hence $\mathcal R_\infty$ is also closed under point generated subframes). We conjecture that each $\mathcal R_n$ is closed under p-morphic images as well, and hence 
so is $\mathcal R_\infty$. 
Verifying this conjecture requires a careful analysis of the combinatorics of $n$-roaches.
 
\section{The logic of $2$-roaches}\label{sec: logic of 2 roaches}

A forbidden configuration $\F$ for $\mathcal K$ is \emph{minimal} provided for each forbidden configuration $\G$ of $\mathcal K$, if $\G$ is permissible for $\F$ then $\G$ is isomorphic to $\F$. 
In this section we show that $\mathcal R_2$ is exactly the class of {\sf S4.1}-frames for which $\F_1,\F_2,\F_3$ are the minimal forbidden configurations. We then utilize the machinery of Fine-Jankov formulas (see, e.g., \cite[Sec.~3.4]{BRV01}) in conjunction with a result of Zakharyaschev (see, e.g., \cite[Thm.~11.58]{CZ97}) to conclude that the logic ${\sf L}(\mathcal R_2)$ is finitely axiomatizable and decidable. 

\begin{proposition}\label{Cor: forbid for beta are forbid for R_2}
Each of $\F_1,\F_2,\F_3$ is forbidden for $\mathcal R_2$.
\end{proposition}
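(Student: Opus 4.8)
The goal is to show that each of $\F_1, \F_2, \F_3$ is a forbidden configuration for the class $\mathcal R_2$; by \cref{def: forbidden frame}(2), this means showing that none of them is permissible for any $2$-roach, i.e., none is an interior image of an open subspace of any $2$-roach (equivalently, a p-morphic image of a generated subframe of a $2$-roach). Since $\mathcal R_2$ is closed under point-generated subframes and p-morphic images by \cref{lem: willow trees closed under p-mor im and pt gen subs}, and every generated subframe is a disjoint union of point-generated subframes, it suffices to prove that none of $\F_1, \F_2, \F_3$ is itself a $2$-roach. Thus the plan is to verify, for each $i$, that $\F_i \notin \mathcal R_2$ by locating the candidate splitting points and showing each one fails a clause of \cref{def: 3 roach} (or, more conveniently, the recursive characterization in \cref{simple char of 2-roaches}).

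First I would handle $\F_1$, which is isomorphic to $\mathfrak T_2$. Here the root $r$ has depth $3$, so it cannot be a splitting point of a $2$-roach (which requires $d(s) \le 2$); and as observed in the proof of \cref{n-roaches and logics}(2), the only candidate splitting point is $r$, since ${\uparrow}w_1 \cap {\uparrow}v = \varnothing$ forces any point whose upset meets both antennae to lie below everything. More precisely, condition (\ref{Cond}) requires a point $s$ with ${\uparrow}s = \{s\} \cup \max(\F_1)$; the two maximal points $w_2$ and $v$ satisfy ${\uparrow}w_2 \cap {\uparrow}v = \varnothing$, so the only point below both is $r$, but then ${\uparrow}r$ also contains the intermediate point $w_1 \notin \max(\F_1)$, violating ${\uparrow}s = \{s\} \cup \max(\F_1)$. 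For $\F_2 = \mathfrak G$, I would simply cite the argument already given in the proof of \cref{n-roaches and logics}(3): the only candidates for a splitting point are $m_1, m_2$, but there is no $t \in {\uparrow}m_i$ with $m_j \le t$ for $i \ne j$, so no point has all of $\max(\F_2)$ above it, and (\ref{Cond}) fails.

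The main work is $\F_3$, so I would treat it most carefully using \cref{simple char of 2-roaches}. Reading off Figure~\ref{fig: forbidden frames}, $\F_3$ has root $r$, two points $v_1, v_2$ above $r$, and maximal points $m_1, m_2$ with a crossing pattern (each $v_i$ sees an appropriate subset of $\{m_1, m_2\}$). I would enumerate the candidate splitting points: for (\ref{Cond}) we need $s$ with ${\uparrow}s = \{s\} \cup \max(\F_3) = \{s, m_1, m_2\}$. The root $r$ fails because ${\uparrow}r$ contains $v_1$ and $v_2$, which are not maximal; each $v_i$ fails because ${\uparrow}v_i$ does not contain both $m_1$ and $m_2$ (by the crossing configuration, $v_i$ sees only one of them directly, or the inclusion ${\uparrow}v_i \supseteq \{m_1, m_2\}$ fails); and the $m_i$ themselves have ${\uparrow}m_i = \{m_i\} \ne \{m_i, m_1, m_2\}$. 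Hence no $s$ satisfies the first conjunct of (\ref{Cond}), so $\F_3 \notin \mathcal R_2$.

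The only genuine obstacle is bookkeeping: I must read the precise order relation of $\F_3$ off the figure correctly — in particular which of $m_1, m_2$ lie above $v_1$ and which above $v_2$ — to be sure that no $v_i$ has both maximal points in its upset. Once the partial order is fixed, the verification that no point has upset equal to $\{s\} \cup \max(\F_3)$ is a short finite check. I would close by noting that since none of $\F_1, \F_2, \F_3$ is a $2$-roach and $\mathcal R_2$ is closed under the relevant frame operations (\cref{lem: willow trees closed under p-mor im and pt gen subs}), \cref{lem: top vers of Fine}(\ref{lem item: FINE}) gives that each $\F_i$ is forbidden for every member of $\mathcal R_2$, hence for the class $\mathcal R_2$.
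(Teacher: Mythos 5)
Your overall strategy coincides with the paper's: by \cref{lem: willow trees closed under p-mor im and pt gen subs} it suffices to show that none of $\F_1,\F_2,\F_3$ is a $2$-roach, and then check the three frames against the characterization (\ref{Cond}) of \cref{simple char of 2-roaches}. Your treatment of $\F_1$ is correct, and for $\F_2$ the argument you cite is valid, although your summarizing gloss that ``no point has all of $\max(\F_2)$ above it'' is literally false --- the roots $r_1,r_2$ do see both maximal points; what fails for them is that ${\uparrow}r_i=W_2\neq\{r_i\}\cup\max(\F_2)$ (equivalently, ${\uparrow}r_i$ contains a proper cluster, so it is not partially ordered).

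The genuine gap is in your treatment of $\F_3$, which you yourself single out as the main case. You have misread \cref{fig: forbidden frames}: $\F_3$ is the criss-cross frame, in which \emph{both} $v_1$ and $v_2$ lie below \emph{both} maximal points, so that ${\uparrow}v_i=\{v_i,m_1,m_2\}=\{v_i\}\cup\max(\F_3)$ for $i=1,2$. Hence the first conjunct of (\ref{Cond}) \emph{is} satisfied by $s=v_1$ and by $s=v_2$, and your conclusion that ``no $s$ satisfies the first conjunct of (\ref{Cond})'' is false; the hedge you build in (``to be sure that no $v_i$ has both maximal points in its upset'') resolves against you. The correct argument, which is the one the paper gives, must invoke the \emph{second} conjunct of (\ref{Cond}): taking $s=v_i$, the point $v_j$ (for $j\neq i$) lies in $W_3\setminus{\downarrow}v_i$, yet ${\uparrow}v_j=\{v_j,m_1,m_2\}$ has two maximal points, hence is not an {\sf S4.1.2}-frame and so ${\uparrow}v_j\notin\mathcal R_1$; the depth-$3$ root and the maximal points are ruled out as before. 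This is not mere bookkeeping: $\F_3$ is precisely the minimal forbidden configuration that cannot be excluded by the ``${\uparrow}s=\{s\}\cup\max$'' clause alone --- that is exactly why it appears as a third forbidden frame alongside $\F_1$ and $\F_2$ --- so a proof that never uses the second clause of (\ref{Cond}) cannot succeed for it.
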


\begin{proof}
By \cref{lem: willow trees closed under p-mor im and pt gen subs}, 
it is sufficient to show that neither of $\mathfrak F_1$, $\mathfrak F_2$, $\mathfrak F_3$ is a $2$-roach. The only point 
in $\mathfrak F_1$ that sees both maximal points is the root $r$, which is of depth 3, so $\mathfrak F_1$ is not a $2$-roach. 
The points 
in $\mathfrak F_2$ that see both maximal points are the two roots $r_1$ and $r_2$. But ${\uparrow}r_i=W_2\neq\{r_i\}\cup\mathrm{max}(\mathfrak F_2)$ for $i=1,2$, so $\mathfrak F_2$ is not a $2$-roach.  Finally, the points 
in $\mathfrak F_3$ that see both maximal points are $r$, $v_1$, and $v_2$. But $r$ is of depth 3, 
$v_i\in W_3\setminus {\downarrow}v_j$, and ${\uparrow}v_i=\{m_1,m_2,v_i\}$ is not an {\sf S4.1.2}-frame for distinct $i,j\in\{1,2\}$. 
Thus, $\mathfrak F_3\not\in\mathcal R_2$.
\end{proof} 

To prove the minimality of these three frames, we utilize the following standard tool. 

\begin{lemma}\label{lemma with parts: 2 standard tools}
Let 
$\mathfrak F=(W,\le)$ be an {\sf S4}-frame. 
\begin{enumerate}
\item If $U_1,\dots,U_n$ are disjoint upsets of $\mathfrak F$, then the frame obtained by identifying each $U_i$ to a point 
is a p-morphic image of $\mathfrak F$. \label{lem: identifying an R-upset gives p-morphic image}

\item If $w_1,\dots,w_n\in W$ satisfy $C_{w_i}=\{w_i\}$ and ${\uparrow}w_i\setminus \{w_i\}={\uparrow}w_j\setminus \{w_j\}$ for each $i,j$, then the frame obtained by identifying $\{w_1,\dots,w_n\}$ to a point is a p-morphic image of $\mathfrak F$. \label{lem: identify points with same successors}
\end{enumerate}
\end{lemma}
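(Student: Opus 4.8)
The plan for both parts is the same. In each case I exhibit the collapsing map $f$ from $\mathfrak F=(W,\le)$ onto the identified frame $\mathfrak G$, equip $\mathfrak G$ with the quasi-order induced by $\le$ along $f$ (declaring $a\preceq b$ precisely when some $x\in f^{-1}(a)$ and $y\in f^{-1}(b)$ satisfy $x\le y$), and then verify that $\preceq$ is reflexive and transitive (so that $\mathfrak G$ is a genuine {\sf S4}-frame) and that $f$ satisfies the forth and back conditions. The forth condition is immediate from the definition of $\preceq$, and reflexivity is automatic, so the real work lies in transitivity and in the back condition.

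For part (1), write $N=\bigcup_i U_i$ and let $f$ be the identity off $N$ while sending each $U_i$ to a fresh point $u_i$. The governing observation is that each $U_i$ is an upset, so every $\le$-successor of a point of $U_i$ again lies in $U_i$; consequently each $u_i$ becomes a maximal point of $\mathfrak G$ whose only successor is itself. With this in hand, transitivity is a short case analysis on whether the middle point is collapsed (if it is, the upset property forces the top point to be collapsed as well, so the endpoints coincide). The back condition splits according to whether the source point $x$ lies in some $U_i$: if not, then $f^{-1}(f(x))=\{x\}$ and the witnessing inequality transfers directly, while if $x\in U_i$ then $f(x)=u_i$ is maximal and only $u_i$ lies above it, so the required lift is $x$ itself.

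For part (2), let $f$ collapse $\{w_1,\dots,w_n\}$ to a single point $w_\ast$ and fix the common strict upset $S={\uparrow}w_i\setminus\{w_i\}$. First I record that the hypotheses force the $w_i$ to be pairwise incomparable simple points: if $w_i\le w_j$ with $i\ne j$ then $w_j\in S={\uparrow}w_j\setminus\{w_j\}$, which is absurd, and $C_{w_i}=\{w_i\}$ guarantees that a point strictly above $w_i$ genuinely lies in $S$. The crucial device throughout is that whenever a chain is routed through one representative $w_p$ but must continue above a different representative $w_q$, the common value of $S$ permits a reroute: any strict successor of $w_q$ already lies in $S={\uparrow}w_p\setminus\{w_p\}$ and is therefore a successor of $w_p$. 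This is exactly what makes $\preceq$ transitive, and it also supplies the back condition in the case $f(x)=w_\ast$, where we must reach a strict successor of $w_\ast$ starting from a prescribed $w_p$.

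The main obstacle in both parts is the back condition, and within part (2) it is the transitivity and back arguments for paths passing through the collapsed point $w_\ast$. Here one cannot simply transfer an inequality witnessed at one representative, and the shared-upset hypothesis ${\uparrow}w_i\setminus\{w_i\}={\uparrow}w_j\setminus\{w_j\}$ must be invoked to move the witness above the correct representative. Once this rerouting principle is isolated, the remaining cases (source or middle point not collapsed) are routine, since $f$ is injective there.
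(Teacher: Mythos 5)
Your proof is correct. The paper states this lemma without proof, citing it as a standard tool, and your argument---forming the quotient with the induced relation, checking that the relation is a quasi-order (transitivity via the upset property in part (1) and the rerouting through the shared strict upset ${\uparrow}w_i\setminus\{w_i\}$ in part (2)), and then verifying the forth and back conditions---is exactly the standard argument the paper implicitly invokes.
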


The next mapping lemma paves the way to realize that $\F_1$ is a permissible configuration for certain frames that fail to be a $2$-roach.

\begin{lemma}\label{lem: tall two-fork condition}
Let $\mathfrak F=(W,\le)$ be an {\sf S4.1}-frame with root $q$. If $d(q)\ge3$, $q\not\models{\sf S4.1.2}$, and ${\uparrow}w\models{\sf S4.1.2}$ for each $w\in W\setminus C_q$, then $\mathfrak F_1$ is a p-morphic image of $\mathfrak F$. 
\end{lemma}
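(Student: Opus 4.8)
The plan is to build a surjective p-morphism $f\colon\mathfrak F\to\mathfrak F_1$ directly, exploiting the hypotheses to control how the points above the root cluster $C_q$ are distributed among the maximal points of $\mathfrak F$. The key device is the \emph{top map} $\mu\colon W\setminus C_q\to\mathrm{max}(\mathfrak F)$ sending each $w\notin C_q$ to the unique maximal point lying above it. This is well defined: since $w\notin C_q$ the hypothesis gives ${\uparrow}w\models{\sf S4.1.2}$, so ${\uparrow}w$ has a unique maximal point, and in a finite {\sf S4}-frame every point lies below a maximal one. First I would record two structural facts. Because $q$ is a root, $C_q$ is a downset, so $w\notin C_q$ and $w\le w'$ force $w'\notin C_q$; and then ${\uparrow}w'\subseteq{\uparrow}w$ together with uniqueness of tops yields $\mu(w')=\mu(w)$. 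Consequently each fibre $\mu^{-1}(m)$ is an upset, and $W\setminus C_q=\bigcup_{m\in\mathrm{max}(\mathfrak F)}\mu^{-1}(m)$ is a partition into pairwise disjoint upsets. This partition is exactly the structural fact that unlocks the construction, and establishing that the fibres are upsets is the crux of the argument: without the {\sf S4.1.2} hypothesis on the ${\uparrow}w$ the two ``arms'' of $\mathfrak F_1$ could share common upper bounds and fail to be disjoint upsets.

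Next I would single out the two arms. Since $d(q)\ge3$, there is a chain $q<c_1<c_2$ of strictly increasing clusters; then $c_1\notin C_q$ and $c_1$ is not maximal, so $a:=\mu(c_1)$ is a maximal point with $c_1<a$, witnessing that the fibre $\mu^{-1}(a)$ contains a non-maximal point. Since $q\not\models{\sf S4.1.2}$, the set $\mathrm{max}(\mathfrak F)$ is not a singleton, so we may fix $b\in\mathrm{max}(\mathfrak F)\setminus\{a\}$; note $b\notin C_q$ (a cluster of depth $\ge3$ contains no maximal point), so $\mu^{-1}(b)$ is a nonempty upset disjoint from $\mu^{-1}(a)$.

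Finally I would define $f\colon W\to\{r,v,m_1,m_2\}$, the points of $\mathfrak F_1$ with $r\le v\le m_1$ and $r\le m_2$, by setting $f\equiv r$ on $C_q$; on $\mu^{-1}(a)$ putting $f=m_1$ on the cluster $C_a$ and $f=v$ on the rest; and $f\equiv m_2$ on $W\setminus(C_q\cup\mu^{-1}(a))$. Surjectivity is immediate, since $C_q\mapsto r$, $c_1\mapsto v$, $a\mapsto m_1$, and $b\mapsto m_2$. For the forth condition, any comparability $w\le w'$ with $w\notin C_q$ stays inside a single fibre (as $\mu(w)=\mu(w')$), where $f$ is visibly monotone, while $w\in C_q$ maps to the root $r\le f(w')$; the fibre partition makes the only cross-arm compatibility check automatic. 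For the back condition: from $C_q$ the whole of $W={\uparrow}q$ is available, covering all of $r,v,m_1,m_2$; from a $v$-point $w\in\mu^{-1}(a)\setminus C_a$ we have $w\le a$ (every point of an {\sf S4.1.2}-frame lies below its top) with $f(a)=m_1$; and points mapping to the maximal $m_1,m_2$ need nothing further. Hence $f$ is a p-morphism of $\mathfrak F$ onto $\mathfrak F_1$, as required. I expect the only real work to be the verification that the fibres of $\mu$ are upsets; the remaining checks are routine once the partition is in hand.
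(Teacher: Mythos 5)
Your proof is correct and takes essentially the same route as the paper's: your top map $\mu$ produces exactly the paper's partition of $W\setminus C_q$ into the disjoint upsets ${\downarrow}n_i\setminus C_q$, your chain witness $c_1$ plays the role of the paper's point $y\in W\setminus(C_q\cup\mathrm{max}(\mathfrak F))$, and your four-block map coincides with the paper's p-morphism (note $C_a=\{a\}$ automatically, since maximal points have trivial clusters). The forth/back verifications likewise match the paper's.
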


\begin{proof} 
Let $\mathrm{max}(\mathfrak F)=\{n_1,\dots,n_k\}$. Since $q\not\models{\sf S4.1.2}$, we have that $k\ge 2$. Because ${\uparrow}w\models{\sf S4.1.2}$ for each $w\in W\setminus C_q$, there is a unique $n_i\in\mathrm{max}(\mathfrak F)$ such that $w\le n_i$. Therefore, the sets ${\downarrow}n_i\setminus C_q$ for $i=1,\dots,k$ form a partition of $W\setminus C_q$. Moreover, each ${\downarrow}n_i\setminus C_q$ is an upset because ${\uparrow}w\models{\sf S4.1.2}$ for each $w\in W\setminus C_q$. 

Since $d(q)\ge 3$, there is $y\in W\setminus(C_q\cup\mathrm{max}(\mathfrak F))$. We may assume without loss of generality that $y\le n_1$. Then $y\in {\downarrow}n_1\setminus(C_q\cup\{n_1\})$; see Figure~\ref{fig: picture for tall two fork condition}.

\begin{figure}[h]
\begin{center}
\begin{picture}(200,90)(-100,-10)
\setlength{\unitlength}{.28mm}
\put(0,10){\makebox(0,0){$\bullet$}}
\put(0,10){\oval(35,12)}
\put(0,-5){\makebox(0,0){$C_q$}}
\put(-60,100){\makebox(0,0){$\bullet$}}
\put(-60,110){\makebox(0,0){$n_1$}}
\qbezier(-60,100)(-80,40)(-20,15) 
\qbezier(-60,100)(0,80)(-10,20) 
\qbezier(-20,15)(-20,20)(-10,20) 
\put(60,100){\makebox(0,0){$\bullet$}}
\put(60,110){\makebox(0,0){$n_k$}}
\qbezier(60,100)(80,40)(20,15) 
\qbezier(60,100)(0,80)(10,20) 
\qbezier(20,15)(20,20)(10,20) 
\put(1,100){\makebox(0,0){$\cdots$}}
\put(-75,7){\makebox(0,0){${\downarrow}n_1\setminus C_q$}}
\put(-70,20){\vector(2,3){25}}
\put(-30,50){\makebox(0,0){$\bullet$}}
\put(-30,40){\makebox(0,0){$y$}}
\put(73.75,7){\makebox(0,0){${\downarrow}n_k\setminus C_q$}}
\put(70,20){\vector(-2,3){25}}
\end{picture}
\end{center}
\caption{Depiction of $W=C_q\cup\bigcup\nolimits_{i=1}^k({\downarrow}n_i\setminus C_q)$.}
\label{fig: picture for tall two fork condition}
\end{figure}
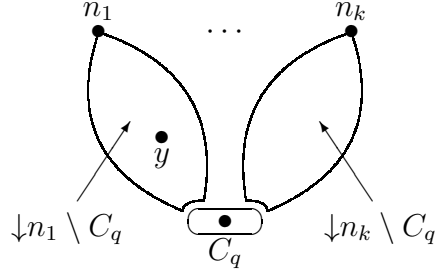

Define $f:W\to W_1$ by 
\[
f(x) =\left\{
\begin{array}{ll}
m_1& \text{if }x=n_1,\\
v
& \text{if }x\in {\downarrow}n_1\setminus (C_q\cup\{n_1\}),\\
r & \text{if }x\in C_q,\\
m_2& \text{otherwise.}
\end{array}
\right.
\]
Because $\left\{\{n_1\},{\downarrow}n_1\setminus (C_q\cup\{n_1\}),C_q,\bigcup\nolimits_{i=2}^k{\downarrow}n_i\setminus C_q\right\}$ is a partition of $W$, we have that $f$ is a well-defined mapping of $W$ onto $W_1$.

Suppose that $a,b\in W$ with $a\le b$. If $a\in C_q$, then $f(a)$ is the root of $\mathfrak F_1$, giving that $f(a)\le f(b)$. If $a\in {\downarrow}n_i\setminus C_q$ for some $i=2,\dots,k$, then $b\in {\downarrow}n_i\setminus C_q$, yielding that $f(a)=m_2=f(b)$, and so $f(a)\le f(b)$. If $a\in {\downarrow}n_1\setminus(C_q\cup\{n_1\})$, then $b\in {\downarrow}n_1\setminus C_q$ and $f(a)=v$, so $f(b)\in\{m_1,v\}$, and hence $f(a)\le f(b)$. Finally, if $a=n_1$, then $b=n_1$, implying that $f(a)\le f(b)$. Thus, $f$ satisfies the forth condition.

Suppose that $a\in W$, $u\in W_1$, and $f(a)\le u$. If $a\in C_q$, then ${\uparrow}a=W$, which implies 
that $u\in W_1=f(W) = f({\uparrow}a)$. Hence, there is $b\in {\uparrow}a$ with $f(b)=u$. Suppose that $a\in {\downarrow}n_1\setminus (C_q\cup\{n_1\})$. Then $f(a)=v$ and $u\in \{m_1,v\}$. If $u=v$, take $b=a$; and if $u=m_1$, take $b=n_1$. In either case, $a\le b$ and $f(b)=u$. Finally, if $a$ is neither in $C_q$ nor in ${\downarrow}n_1\setminus (C_q\cup\{n_1\})$, then $f(a)\in\{m_1,m_2\}=\mathrm{max}(\mathfrak F_1)$, giving that $f(a)=u$. Therefore, we may take $b=a$ to obtain that $a\le b$ and $f(b)=u$. Thus, $f$ satisfies the back condition, and hence is a p-morphism. Consequently, $\mathfrak F_1$ is a p-morphic image of $\mathfrak F$.
\end{proof}

We now prove a mapping lemma geared towards realizing that either $\F_2$ or $\F_3$ is permissible for 
the remaining frames that are not $2$-roaches.

\begin{lemma}\label{lem: crisscross and fat bottom fork condition}
Let $\mathfrak F=(W,\le)$ be a frame and $a,b\in W$ distinct points of depth 2. If both $M_a:={\uparrow}a\cap\mathrm{max}(\mathfrak F)$ and $M_b:={\uparrow}b\cap\mathrm{max}(\mathfrak F)$ consist of at least two points, then either $\mathfrak F_2$ or $\mathfrak F_3$ is a p-morphic image of a point-generated subframe of $\mathfrak F$. 
\end{lemma}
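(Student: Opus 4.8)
The plan is to split on whether $a$ and $b$ lie in a common cluster, producing $\mathfrak F_2$ in the ``fat bottom'' case and $\mathfrak F_3$ in the ``crisscross'' case. First I would record two easy structural facts. Since every maximal point $m$ satisfies ${\uparrow}m=\{m\}$, the sets $M_a$ and $M_b$ consist of pairwise incomparable simple points; and since $d(a)=d(b)=2$, every point of ${\uparrow}a$ strictly above $C_a$ is maximal, so ${\uparrow}a=C_a\cup M_a$ and ${\uparrow}b=C_b\cup M_b$ with the unions disjoint. Moreover, if $C_a\neq C_b$ then $a$ and $b$ must be incomparable: were $a\le b$ (say), then $a<b<m$ for any $m\in M_b$ would force $d(a)\ge 3$, contradicting $d(a)=2$. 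This dichotomy is exhaustive.

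In Case 1 ($C_a=C_b$) the set ${\uparrow}a=C_a\cup M_a$ is already a point-generated subframe with $|C_a|\ge 2$ and $|M_a|\ge 2$. I would map it onto $\mathfrak F_2$ by sending $C_a$ onto the bottom cluster $\{r_1,r_2\}$ (say $a\mapsto r_1$, $b\mapsto r_2$, the remaining elements to $r_1$) and splitting $M_a$ onto $\{m_1,m_2\}$ via any two-colouring, possible since $|M_a|\ge 2$. The forth condition is immediate, the back condition holds because each point of $C_a$ sees all of ${\uparrow}a$ while each maximal point sees only itself, and surjectivity is built into the definition.

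In Case 2 ($C_a\neq C_b$, hence $a,b$ incomparable) I aim for $\mathfrak F_3$, using the whole frame $\mathfrak F$, which is the subframe generated by its root $q$. First fix a two-colouring $c\colon\max(\mathfrak F)\to\{1,2\}$ for which both $M_a$ and $M_b$ receive both colours; this is always possible, since two sets of size at least two can be two-coloured so that neither is monochromatic. Writing $T_x\subseteq\{1,2\}$ for the set of colours of maximal points lying above a non-maximal $x$ (note $T_x\neq\varnothing$, as every point sees a maximal point, so $f$ is well defined), I would set $f(m)=m_{c(m)}$ on maximal points, $f(x)=m_i$ when $T_x=\{i\}$, and, when $T_x=\{1,2\}$, put $f(x)=r$ if $x$ sees both $a$ and $b$, $f(x)=v_2$ if $x$ sees $b$ but not $a$, and $f(x)=v_1$ otherwise. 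Since seeing $a$ or $b$ forces $T_x=\{1,2\}$ (as $M_a,M_b$ are bichromatic), the clauses are consistent and $f(a)=v_1$, $f(b)=v_2$.

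The main work—and the step I expect to be the crux—is checking that $f$ is a p-morphism, the delicate half being the forth condition for the clauses $f(x)\in\{v_1,v_2\}$. The key point is that reachability is antitone: if $y\ge x$ then ${\uparrow}y\subseteq{\uparrow}x$, so ``$y$ sees $a$'' implies ``$x$ sees $a$'', and likewise for $b$ and for each colour. Hence no $y\ge x$ with $f(x)=v_1$ can have $f(y)=r$ (this would make $x$ see both $a$ and $b$) or $f(y)=v_2$ (this would make $x$ see $b$), so $f(y)\in\{v_1,m_1,m_2\}={\uparrow}v_1$; the $v_2$ clause is symmetric, the $m_i$ clauses hold because $T_y\subseteq T_x=\{i\}$ along $\le$, and $f(x)=r$ is trivial. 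The back condition follows since a point mapping to $r$ sees both $a$ and $b$ and therefore, by bichromaticity, witnesses all of $v_1,v_2,m_1,m_2$ above it, while any $x$ with $T_x=\{1,2\}$ sees maximal points of both colours. Surjectivity is immediate from $f(q)=r$, $f(a)=v_1$, $f(b)=v_2$ and the presence of both colours, so $\mathfrak F_3$ is a p-morphic image of $\mathfrak F$, completing the proof.
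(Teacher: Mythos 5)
Your Case 1 ($C_a=C_b$) is correct and is essentially the paper's own argument. The gap is in Case 2, at exactly the step you flagged as the crux: your claim that ``the $v_2$ clause is symmetric'' to the $v_1$ clause is false, because your catch-all clause is asymmetric. Points $y$ with $T_y=\{1,2\}$ that see \emph{neither} $a$ nor $b$ are sent to $v_1$. Antitonicity of ${\uparrow}$ does protect the $v_1$ case (if $f(x)=v_1$ then $x$ does not see $b$, hence no $y\ge x$ sees $b$, so no $y\ge x$ lands on $r$ or $v_2$), but it does not protect the $v_2$ case: if $f(x)=v_2$ then $x$ sees $b$, yet a point $y\ge x$ need not see $b$, and if such a $y$ has $T_y=\{1,2\}$ and sees neither $a$ nor $b$, your rule puts $f(y)=v_1$, while $v_2\not\preceq v_1$ in $\mathfrak F_3$. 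Concretely, let $W=\{q,x,a,b,z,n_1,n_2\}$ be the partial order with root $q$, maximal points $n_1,n_2$, ${\uparrow}a=\{a,n_1,n_2\}$, ${\uparrow}b=\{b,n_1,n_2\}$, ${\uparrow}z=\{z,n_1,n_2\}$, and ${\uparrow}x=\{x,b,z,n_1,n_2\}$. This is a finite rooted {\sf S4.1}-frame satisfying the hypotheses with $C_a\neq C_b$; any admissible colouring has $c(n_1)\neq c(n_2)$, and your rules give $f(x)=v_2$ and $f(z)=v_1$ although $x\le z$, so the forth condition fails.

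Moreover, the defect cannot be repaired by re-aiming the catch-all: in this frame $z$ cannot go to $r$ (the only points above $z$ are $z,n_1,n_2$, so the back condition for $r\preceq v_2$ fails) nor to $m_1$ or $m_2$ (forth fails against $n_1,n_2$), and grouping the see-neither points with the $v_2$ side just transfers the same failure to the mirror-image configuration with $a$ and $b$ interchanged. Indeed, valid p-morphisms for such frames exist (they must, since the lemma is true), but they necessarily violate any rule of your local form --- for instance one valid map for the frame above sends both $b$ and $z$ to $v_1$ and $a$ to $v_2$ --- so the image of a point cannot be determined from which of $a,b$ it sees together with colours. This is precisely what the paper's extra machinery is for: before defining any map, it performs p-morphic collapses using Lemma~\ref{lemma with parts: 2 standard tools}, identifying the upsets $W\setminus{\downarrow}n_1$ and $W\setminus{\downarrow}n_2$ to points and then all depth-2 points other than $a$ to a single point, after which $a$ and $b$ are the \emph{only} points of depth $2$ and points such as $z$ no longer exist; only then, on the subframe generated by a quasi-maximal point of ${\downarrow}a\cap{\downarrow}b$ (quasi-maximality makes ${\downarrow}a$ and ${\downarrow}b$ disjoint outside the generating cluster), does a five-block map like yours become a p-morphism. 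Your bichromatic-colouring formulation is a nice repackaging of the paper's three-way case split on $M_a\cap M_b$, but it does not substitute for these collapsing reductions, so your Case 2 as written does not establish the lemma.
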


\begin{proof}
We have that $C_a=C_b$ or $C_a\ne C_b$. First suppose that 
$C_a=C_b$. In this case we show that $\mathfrak F_2$ is a p-morphic image of a generated subframe of $\mathfrak F$. Clearly 
$C_a\setminus\{a\}
\neq\varnothing$, ${\uparrow}a=C_a\cup M_a$, and $M_a$ contains at least two points. Denoting one of them by $m$,
it is straightforward to see that the following mapping realizes $\mathfrak F_2$ as a p-morphic image of the subframe ${\uparrow}a$ generated by $a$:
\[
f(x) = \left\{
\begin{array}{ll}
r_1 & \text{if }x=a,\\
r_2 & \text{if }x\in C_a\setminus\{a\},\\
m_1 & \text{if }x=m,\\
m_2 & \text{if }x\in M_a\setminus\{m\}.
\end{array}
\right.
\]

Next suppose that $C_a\neq C_b$. We show that $\mathfrak F_3$ is a p-morphic image of a point-generated subframe of $\mathfrak F$.
Since both $M_a$ and $M_b$ have more than one element, we may choose $n_1,n_2,N_1,N_2\in\mathrm{max}(\mathfrak F)$ based on the following cases:
\begin{itemize}
\item $M_a\cap M_b=\varnothing$: take distinct $n_1,n_2\in M_a$ and distinct $N_1,N_2\in M_b$.
\item $|M_a\cap M_b|=1$: take $n_1=N_1$ in $M_a\cap M_b$, $n_2\in M_a\setminus M_b$ and $N_2\in M_b\setminus M_a$.
\item $|M_a\cap M_b|\ge 2$: take $n_1=N_1$ and $n_2=N_2$ distinct in $M_a\cap M_b$.
\end{itemize}
Since each of $n_1,n_2,N_1,N_2$ is a maximal point, applying Lemma~\ref{lemma with parts: 2 standard tools}(\ref{lem: identifying an R-upset gives p-morphic image}), we may identify the set $\{n_1,N_1\}$ to a single point
and the set $\mathrm{max}(\mathfrak F)\setminus\{n_1,N_1\}$ to a single point to
obtain a p-morphic image of $\mathfrak F$ that has exactly two maximal points. Thus, without loss of generality we may assume that $\mathfrak F$ has exactly two maximal points, say $\mathrm{max}(\mathfrak F)=\{n_1,n_2\}$.

We next identify the upset $W\setminus {\downarrow}n_1$ to a point and the upset $W\setminus {\downarrow}n_2$ to a point. Applying Lemma~\ref{lemma with parts: 2 standard tools}(\ref{lem: identifying an R-upset gives p-morphic image}) again yields a p-morphic image of $\mathfrak F$ in which all points of depth 2 see both maximal points. Hence, we may assume without loss of generality that all points of $\mathfrak F$ of depth 2 
see both $n_1$ and $n_2$. Therefore, identifying the set of points of depth 2 that are distinct from $a$ to a point gives a p-morphic image of $\mathfrak F$ by Lemma~\ref{lemma with parts: 2 standard tools}(\ref{lem: identify points with same successors}). Thus, we may assume that $\mathfrak F$ has exactly two points of depth 2, namely $a$ and $b$; see Figure~\ref{fig: 2 max 2 depth 2}.
\begin{figure}[h]
\begin{center}
\begin{picture}(50,85)(-25,0)
\put(0,0){\makebox(0,0){$\bullet$}}
\multiput(-12.5,50)(25,0){2}{\multiput(0,0)(0,25){2}{\makebox(0,0){$\bullet$}}}
\multiput(-12.5,50)(25,0){2}{\line(0,1){25}}
\put(-12.5,50){\line(1,1){25}}
\put(12.5,50){\line(-1,1){25}}
\put(-12.5,85){\makebox(0,0){$n_1$}}
\put(12.5,85){\makebox(0,0){$n_2$}}
\put(-22.5,50){\makebox(0,0){$a$}}
\put(22.5,50){\makebox(0,0){$b$}}
\qbezier(0,0)(-35,35)(-12.5,50)
\qbezier(0,0)(35,35)(12.5,50)
\end{picture}
\end{center}
\caption{The frame $\mathfrak F$ with 2 maximal points and 2 points of depth 2.}
\label{fig: 2 max 2 depth 2}
\end{figure}
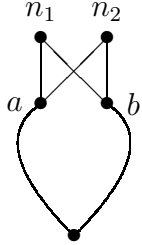

Since $\mathfrak F$ is finite and rooted, there is a quasi-maximal point 
$q$ in ${\downarrow}a\cap {\downarrow}b$. Let $\mathfrak G$ be the subframe of $\mathfrak F$ generated by $q$. Then the underlying set of $\mathfrak G$ is ${\uparrow}q$. We show that $\mathfrak F_3$ is a p-morphic image of $\mathfrak G$ via the mapping
\[
f(x) = \left\{\begin{array}{ll}
m_1 & \text{if }x=n_1, \\
m_2 & \text{if }x=n_2 ,\\
v_1 & \text{if }x\in ({\downarrow}a\cap {\uparrow}q)\setminus C_q, \\
v_2 & \text{if }x\in ({\downarrow}b\cap {\uparrow}q)\setminus C_q, \\
r & \text{if }x\in C_q.
\end{array}\right.
\]
Because $q$ is quasi-maximal in ${\downarrow}a\cap {\downarrow}b$, we have that $({\downarrow}a\cap {\uparrow}q)\setminus C_q$ and $({\downarrow}b\cap {\uparrow}q)\setminus C_q$ are disjoint. Thus, $f$ is a well-defined onto map.

Let $x,y\in {\uparrow}q$ be such that $x\le y$. If $x\in C_q$, then $f(x) = r$ is the root of $\mathfrak F_3$, and so $f(x)\le f(y)$. If $x\in {\downarrow}a \setminus C_q$, then $y\in \left( {\downarrow}a\cup\{n_1,n_2\} \right) \setminus C_q$, and hence $f(x)\le f(y)$. A symmetric argument gives that if $x\in {\downarrow}b\setminus C_q$, then $f(x)\le f(y)$. 
For $i=1,2$, if $x=n_i$, then $y=n_i$, yielding $f(x)\le 
f(y)$. Thus, $f$ satisfies the forth condition. 

To see that $f$ satisfies the back condition, let $x\in {\uparrow}q$, $u\in W_3$, and $f(x)\le u$. If $x\in C_q$, then $x$ and $q$ are in the same cluster, so ${\uparrow}x={\uparrow}q$. Therefore, $f({\uparrow}x)=W_3$, and so there is $y\in{\uparrow}x$ with $f(y)=u$. Suppose that $x\in {\downarrow}a\setminus C_q$. Then $f(x)=v_1$, giving that $u\in\{m_1,m_2,v_1\}$. Since $\{n_1,n_2,a\}\subseteq {\uparrow}x$, there is $y\in{\uparrow}x$ with 
$f(y)=u$. A symmetric argument yields a proof of the case when $x\in {\downarrow}b\setminus C_q$. Finally, if $x\in\{n_1,n_2\}$, then $$f(x)\in \{f(n_1),f(n_2)\}=\{m_1,m_2\}=\mathrm{max}(\mathfrak F_3),$$ so $f(x)=u$ and we can take $y=x$. Thus, $f$ is a p-morphism.
\end{proof}

We are ready to prove the aforementioned minimality of the forbidden configurations $\F_1,\F_2,\F_3$ with respect to $\mathcal R_2$. 

\begin{proposition}\label{lem: suf condition via F123 to be a WT}
If a frame $\mathfrak F$ is not a $2$-roach, then one of $\mathfrak F_1,\mathfrak F_2,\mathfrak F_3$ is a p-morphic image of a point-generated subframe of $\mathfrak F$, and hence is permissible for $\F$. 
\end{proposition}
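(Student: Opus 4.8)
The plan is to run a case analysis on the points of depth~$2$ that see at least two maximal points, invoking Lemma~\ref{lem: crisscross and fat bottom fork condition} to produce $\mathfrak F_2$ or $\mathfrak F_3$ and Lemma~\ref{lem: tall two-fork condition} to produce $\mathfrak F_1$. Call $w\in W$ a \emph{brancher} if $|{\uparrow}w\cap\mathrm{max}(\mathfrak F)|\ge 2$. Because $\mathfrak F$ is an {\sf S4.1}-frame, each maximal point is a singleton final cluster, so a point of depth~$1$ is maximal and sees a unique maximal point; hence every brancher has depth $\ge 2$, and a point of depth~$2$ is a brancher iff it is a quasi-maximal brancher (everything strictly above it is maximal). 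The branchers clearly form a downset. Since a frame is a $1$-roach iff it is an {\sf S4.1.2}-frame (Theorem~\ref{n-roaches and logics}(\ref{item: L_1=L(R_1) sorta})) and $1$-roaches are $2$-roaches, the non-$2$-roach $\mathfrak F$ is not {\sf S4.1.2}, so its root is a brancher and the downset of branchers is nonempty. Note also that the desired conclusion ``permissible for $\mathfrak F$'' is immediate once a forbidden frame is realized as a p-morphic image of a point-generated subframe, since such subframes are open and p-morphisms are interior maps.

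First I would dispose of the case in which $\mathfrak F$ has two distinct branchers $a,b$ of depth~$2$. Then $M_a={\uparrow}a\cap\mathrm{max}(\mathfrak F)$ and $M_b={\uparrow}b\cap\mathrm{max}(\mathfrak F)$ each have at least two elements, so Lemma~\ref{lem: crisscross and fat bottom fork condition} yields that $\mathfrak F_2$ or $\mathfrak F_3$ is a p-morphic image of a point-generated subframe of $\mathfrak F$, and we are done.

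So suppose $\mathfrak F$ has at most one brancher of depth~$2$; the goal becomes producing $\mathfrak F_1$. If $\mathfrak F$ has a quasi-maximal brancher $p$ of depth $\ge 3$, then ${\uparrow}p$ is a point-generated subframe with root $p$ satisfying $d(p)\ge 3$, $p\not\models{\sf S4.1.2}$ (as $p$ is a brancher), and ${\uparrow}w\models{\sf S4.1.2}$ for every $w\in{\uparrow}p\setminus C_p$ (nothing strictly above $p$ branches), so Lemma~\ref{lem: tall two-fork condition} gives $\mathfrak F_1$ as a p-morphic image of ${\uparrow}p$. Otherwise every quasi-maximal brancher has depth~$2$; since there is at least one brancher and at most one of depth~$2$, there is exactly one such point $q$, it is the unique quasi-maximal brancher, $C_q=\{q\}$ (a nontrivial cluster would supply a second depth-$2$ brancher), and every brancher lies in ${\downarrow}q$. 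I claim $q$ misses some maximal point: otherwise ${\uparrow}q=\{q\}\cup\mathrm{max}(\mathfrak F)$ and every $w\notin{\downarrow}q$ is a non-brancher, i.e.\ ${\uparrow}w\in\mathcal R_1$, so $q$ would realize $\mathfrak F$ as a $2$-roach by Lemma~\ref{simple char of 2-roaches}, a contradiction. Fix a maximal $m\notin{\uparrow}q$ and identify the upset $M_q={\uparrow}q\cap\mathrm{max}(\mathfrak F)$ to a single point $n^\ast$; by Lemma~\ref{lemma with parts: 2 standard tools}(\ref{lem: identifying an R-upset gives p-morphic image}) this gives a p-morphic image $\mathfrak F'$ of $\mathfrak F$. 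In $\mathfrak F'$ the point $q$ sees only $n^\ast$, so it is no longer a brancher; collapsing maximal points creates no new branchers and alters no depths of non-maximal points, so $\mathfrak F'$ has no brancher of depth~$2$. Yet the root of $\mathfrak F'$ still sees the distinct maximal points $n^\ast$ and $m$, hence is a brancher, so every quasi-maximal brancher of $\mathfrak F'$ has depth $\ge 3$ and the previous subcase applied to $\mathfrak F'$ exhibits $\mathfrak F_1$ as a p-morphic image of a point-generated subframe ${\uparrow}p'$ of $\mathfrak F'$.

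Finally I would lift this back to $\mathfrak F$. Writing $g\colon\mathfrak F\twoheadrightarrow\mathfrak F'$ for the identification p-morphism and $h\colon{\uparrow}p'\twoheadrightarrow\mathfrak F_1$ for the map just obtained, the restriction of $g$ to the generated subframe $g^{-1}({\uparrow}p')$ is a p-morphism onto ${\uparrow}p'$, so $h\circ g$ is a p-morphism of $g^{-1}({\uparrow}p')$ onto $\mathfrak F_1$; choosing $x$ with $(h\circ g)(x)$ the root of $\mathfrak F_1$ and restricting to the point-generated subframe ${\uparrow}x$ of $\mathfrak F$ realizes $\mathfrak F_1$ as a p-morphic image of a point-generated subframe of $\mathfrak F$, as required. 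I expect the main obstacle to be exactly this last configuration: a single depth-$2$ brancher that misses a maximal point is a genuine non-$2$-roach possessing \emph{no} quasi-maximal brancher of depth $\ge 3$, so Lemma~\ref{lem: tall two-fork condition} does not apply directly; one must first collapse $M_q$ to reveal the tall fork and then transport the resulting p-morphism back along the collapse.
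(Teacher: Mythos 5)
Your proposal is correct and takes essentially the same approach as the paper: split off the case of two depth-$2$ points each seeing at least two maximal points (Lemma~\ref{lem: crisscross and fat bottom fork condition}), and otherwise reduce, possibly after collapsing maximal points via Lemma~\ref{lemma with parts: 2 standard tools}(\ref{lem: identifying an R-upset gives p-morphic image}), to an application of Lemma~\ref{lem: tall two-fork condition} to a quasi-maximal branching point of depth at least $3$. The differences are only organizational: the paper collapses $\mathrm{max}(\mathfrak F)$ into the two groups $\mathrm{max}(\mathfrak F)\cap{\uparrow}s$ and $\mathrm{max}(\mathfrak F)\setminus{\uparrow}s$ rather than collapsing only $M_q$ to a single point, and it leaves implicit (via ``without loss of generality'') the transport of the resulting p-morphism back along the collapse, which you spell out explicitly.
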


\begin{proof}
Since $\mathfrak F$ is not a $2$-roach, $\mathfrak F$ is not an {\sf S4.1.2}-frame by \cref{n-roaches and logics}(\ref{R_n form strictly increasing chain}). Therefore, $\mathfrak F$ has at least 2 maximal points. Because $\mathfrak F$ is rooted, the depth of $\mathfrak F$ is at least 2. Since $\mathfrak F$ is finite, it contains a point of depth 2. If $\mathfrak F$ has at least two points of depth 2 such that each sees at least two maximal points, 
then Lemma~\ref{lem: crisscross and fat bottom fork condition} yields that either $\mathfrak F_2$ or $\mathfrak F_3$ is a p-morphic image of a point-generated subframe of $\mathfrak F$, and hence is permissible for $\F$. Thus, we may assume that there is at most one point $s$ in $\mathfrak F$ of depth 2 that can see more than one maximal point. 

First assume that $s$ sees all maximal points of $\mathfrak F$. Then we have that ${\uparrow}s=\{s\}\cup\mathrm{max}(\mathfrak F)$. Since $\mathfrak F$ is not a $2$-roach,  
there is $w\in W\setminus {\downarrow}s$ such that ${\uparrow}w\not\models{\sf S4.1.2}$. Because $\mathfrak F$ is finite, we may let $w$ be a quasi-maximal point satisfying this property. 
Since $w\neq s$ and every point of depth 2 other than $s$ sees exactly one maximal point, 
we have that $d(w)\ge 3$. Therefore, the generated subframe ${\uparrow}w$ of $\mathfrak F$ satisfies the hypothesis of Lemma~\ref{lem: tall two-fork condition}. Thus, $\mathfrak F_1$ is a p-morphic image of ${\uparrow}w$, and hence
$\mathfrak F_1$ is a p-morphic image of a point-generated subframe of $\mathfrak F$. 
Consequently, $\F_1$ is permissible for $\F$.

Next assume that $s$ does not see all maximal points of $\mathfrak F$. Since we obtain a p-morphic image of $\mathfrak F$ by identifying the nonempty sets $\mathrm{max}(\mathfrak F)\cap {\uparrow}s$ to a point and $\mathrm{max}(\mathfrak F)\setminus {\uparrow}s$ to a point, without loss of generality we may assume that $\mathfrak F$ has exactly two maximal points, say $n_1$ and $n_2$, with $s\le n_1$ and $s
\not\le n_2$. Therefore,
every point of depth 2 sees exactly one maximal point. Let $q$ be a quasi-maximal point in ${\downarrow}n_1\cap {\downarrow}n_2$. Since no point of depth $\le 2$ sees both $n_1$ and $n_2$, it follows that $d(q)\ge 3$. The maximality of $q$ implies that the generated subframe ${\uparrow}q$ satisfies the hypothesis of Lemma~\ref{lem: tall two-fork condition}. Thus, $\mathfrak F_1$ is a p-morphic image of ${\uparrow}q$, and hence $\mathfrak F_1$ is 
permissible for $\mathfrak F$.
\end{proof}

As an immediate consequence of \cref{Cor: forbid for beta are forbid for R_2,lem: suf condition via F123 to be a WT} we obtain:

\begin{theorem}\label{thm: char R_2 via F's}
A finite rooted {\sf S4.1}-frame $\mathfrak F$ is a $2$-roach iff each of $\F_1,\F_2,\F_3$ is forbidden for $\F$.
\end{theorem}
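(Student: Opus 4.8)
The plan is to prove the biconditional in \cref{thm: char R_2 via F's} by combining the two propositions that precede it, so the real work has already been distributed into \cref{Cor: forbid for beta are forbid for R_2} and \cref{lem: suf condition via F123 to be a WT}. For the forward direction, suppose $\mathfrak F$ is a $2$-roach. Then $\mathfrak F\in\mathcal R_2$, so by \cref{Cor: forbid for beta are forbid for R_2} each of $\mathfrak F_1,\mathfrak F_2,\mathfrak F_3$ is forbidden for the class $\mathcal R_2$. The subtlety is that I need each $\mathfrak F_i$ to be forbidden for the single frame $\mathfrak F$, not merely for the class. I would bridge this gap by invoking \cref{lem: willow trees closed under p-mor im and pt gen subs}: if some $\mathfrak F_i$ were permissible for $\mathfrak F$, i.e.\ a p-morphic image of a point-generated subframe of $\mathfrak F$, then since $\mathcal R_2$ is closed under point-generated subframes and p-morphic images, $\mathfrak F_i$ would itself lie in $\mathcal R_2$. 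But \cref{Cor: forbid for beta are forbid for R_2} (really its proof) shows none of the $\mathfrak F_i$ is a $2$-roach, a contradiction. Hence each $\mathfrak F_i$ is forbidden for $\mathfrak F$.

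For the converse, I argue the contrapositive. Suppose $\mathfrak F$ is \emph{not} a $2$-roach. Then \cref{lem: suf condition via F123 to be a WT} applies directly and yields that at least one of $\mathfrak F_1,\mathfrak F_2,\mathfrak F_3$ is a p-morphic image of a point-generated subframe of $\mathfrak F$, hence is permissible for $\mathfrak F$. Thus it is not the case that all three are forbidden for $\mathfrak F$, completing the contrapositive.

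The two directions together give the stated equivalence. I expect the only genuine friction to be the bookkeeping around the quantifier "forbidden for the class $\mathcal R_2$" versus "forbidden for the frame $\mathfrak F$" in the forward direction; this is precisely where the closure properties from \cref{lem: willow trees closed under p-mor im and pt gen subs} do the work, ensuring that permissibility for a member of $\mathcal R_2$ cannot introduce a configuration that is itself excluded from $\mathcal R_2$. Everything else is an immediate citation of the two preceding propositions, so the proof is short:

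\begin{proof}
If $\mathfrak F$ is a $2$-roach, then $\mathfrak F\in\mathcal R_2$. Suppose for contradiction that some $\mathfrak F_i$ (for $i\in\{1,2,3\}$) is permissible for $\mathfrak F$; that is, $\mathfrak F_i$ is a p-morphic image of a point-generated subframe of $\mathfrak F$. By \cref{lem: willow trees closed under p-mor im and pt gen subs}, $\mathcal R_2$ is closed under both point-generated subframes and p-morphic images, so $\mathfrak F_i\in\mathcal R_2$. This contradicts \cref{Cor: forbid for beta are forbid for R_2}, which establishes that none of $\mathfrak F_1,\mathfrak F_2,\mathfrak F_3$ is a $2$-roach. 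Therefore each of $\mathfrak F_1,\mathfrak F_2,\mathfrak F_3$ is forbidden for $\mathfrak F$.

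Conversely, suppose $\mathfrak F$ is not a $2$-roach. By \cref{lem: suf condition via F123 to be a WT}, one of $\mathfrak F_1,\mathfrak F_2,\mathfrak F_3$ is a p-morphic image of a point-generated subframe of $\mathfrak F$, and hence is permissible for $\mathfrak F$. Thus not all of $\mathfrak F_1,\mathfrak F_2,\mathfrak F_3$ are forbidden for $\mathfrak F$. Taking the contrapositive yields that if each of $\mathfrak F_1,\mathfrak F_2,\mathfrak F_3$ is forbidden for $\mathfrak F$, then $\mathfrak F$ is a $2$-roach.
\end{proof}
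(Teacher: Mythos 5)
Your proof is correct and takes essentially the same route as the paper, which obtains the theorem as an immediate consequence of \cref{Cor: forbid for beta are forbid for R_2} and \cref{lem: suf condition via F123 to be a WT}. The only difference is that the ``subtlety'' you bridge in the forward direction is not actually there: by \cref{def: forbidden frame}(2), being forbidden for the class $\mathcal R_2$ already means being forbidden for every member of $\mathcal R_2$ (permissibility for a class is an existential condition), so your detour through the closure properties of \cref{lem: willow trees closed under p-mor im and pt gen subs} merely re-runs, inside your proof, the argument the paper already used to prove \cref{Cor: forbid for beta are forbid for R_2}.
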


Having identified $\F_1,\F_2,\F_3$ as the minimal forbidden configurations for $\mathcal R_2$, 
we move towards axiomatizing the logic ${\sf L}(\mathcal R_2)$ of $2$-roaches via the Fine-Jankov formulas of $\F_1,\F_2,\F_3$. 

\begin{definition}\label{def: L(Phi)}
For $i\in\{1,2,3\}$, 
let $\chi_i$ be the Fine-Jankov formula of $\mathfrak F_i$. 
\end{definition}

We recall that a normal modal logic {\sf L} 
has the \emph{finite model property} (fmp) if {\sf L} is the logic of a class of finite frames. 
To see that ${\sf L}(\mathcal R_2)$ 
has the fmp, we utilize the following result of Zakharyaschev.

\begin{theorem} [{\cite[Thm.~11.58]{CZ97}}] \label{thm: Zak} 
Every normal extension of $\sf S4$ axiomatizable by a finite number
of formulas in one variable has the fmp.
\end{theorem}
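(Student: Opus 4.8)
The plan is to route the proof through Zakharyaschev's theory of \emph{canonical formulas} together with the fact that cofinal subframe logics enjoy the finite model property. Recall that every modal formula is, over {\sf S4}, equivalent to a finite conjunction of canonical formulas $\alpha(\mathfrak{F},\mathfrak{D})$, where $\mathfrak{F}$ is a finite rooted {\sf S4}-frame and $\mathfrak{D}$ is a system of \emph{closed domains} (antichains of $\mathfrak{F}$); semantically, $\alpha(\mathfrak{F},\mathfrak{D})$ is refuted on a frame $\mathfrak{G}$ exactly when $\mathfrak{F}$ is a subreduction (partial p-morphism) of $\mathfrak{G}$ satisfying the closed domain condition for each $\mathfrak{d}\in\mathfrak{D}$. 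When $\mathfrak{D}$ imposes nothing beyond the \emph{cofinality} condition, the formula is a cofinal subframe formula and a logic axiomatized by such formulas is a cofinal subframe logic. The first ingredient I would invoke is the theorem of \cite{CZ97}, proved by a filtration argument tailored to preserve the cofinal subreduction condition, that every cofinal subframe logic has the finite model property.

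The heart of the argument is then the reduction lemma: \emph{every formula in one variable is equivalent over {\sf S4} to a finite conjunction of cofinal subframe formulas}. To prove it I would take a one-variable formula $\varphi(p)\notin{\sf S4}$, write it via the general theory as a finite conjunction of canonical formulas $\alpha(\mathfrak{F}_i,\mathfrak{D}_i)$, and argue that, because $\varphi$ has a single variable, each $\mathfrak{D}_i$ imposes nothing beyond cofinality, so that every $\alpha(\mathfrak{F}_i,\mathfrak{D}_i)$ is in fact a cofinal subframe formula. This yields $\varphi\equiv\bigwedge_j\beta_j$ with each $\beta_j$ a cofinal subframe formula.

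Granting the reduction, the theorem follows quickly. If {\sf L} is axiomatized over {\sf S4} by one-variable formulas $\varphi_1,\dots,\varphi_k$, then replacing each $\varphi_i$ by its equivalent conjunction of cofinal subframe formulas exhibits {\sf L} as a cofinal subframe logic, whence {\sf L} has the finite model property by the first ingredient.

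I expect the main obstacle to be the reduction lemma, specifically the verification that the closed domains produced by a one-variable refutation never force more than cofinality. The intuition I would exploit is that a single variable partitions the points of a refuting frame into only two classes — where $p$ holds and where $p$ fails — so the internal constraints that a refuting valuation of several variables can pin onto an antichain (the genuine closed domain conditions) collapse: on each antichain the relevant behaviour of $p$ can be recovered cofinally, which is precisely the cofinal subframe condition. Making this precise, namely controlling the subalgebra generated by the single set $V(p)$ under $\cup,\cap,\lnot,\Box$ on the refutation frame and matching it against Zakharyaschev's closed domain bookkeeping, is where the real work lies; the surrounding steps (the canonical formula equivalence and the passage from the reduction to the finite model property) are essentially formal.
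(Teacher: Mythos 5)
A preliminary remark: the paper does not prove this statement at all --- it is Zakharyaschev's theorem, quoted with a citation to \cite[Thm.~11.58]{CZ97} --- so your proposal can only be judged against the proof in that book and against correctness. Your steps (1) and (3), namely the representation of formulas over ${\sf S4}$ by canonical formulas and the formal passage from your reduction lemma to the theorem via the fmp of cofinal subframe logics, are fine. The gap is that the reduction lemma at the heart of the proposal is false: it is not true that every one-variable formula is ${\sf S4}$-equivalent to a conjunction of cofinal subframe formulas. That claim would imply that every extension of ${\sf S4}$ axiomatized by one-variable formulas is a cofinal subframe logic, and the logic axiomatized in this very paper is a counterexample. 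By \cref{thm: the logic of 2-roaches}, ${\sf L}(\mathcal R_2)$ is obtained from ${\sf S4}$ by postulating ${\sf ma},\chi_1,\chi_2,\chi_3$, each of which can be written in one variable (which is exactly why the paper may invoke \cref{thm: Zak}). Consider the poset $\mathfrak W$ with root $r$, further points $s,a$, and maximal points $m_1,m_2$, where $r<s$, $r<a$, $s<m_1$, $s<m_2$, and $a<m_1$. Then $\mathfrak W$ is a $2$-roach with splitting point $s$, so $\mathfrak W$ validates ${\sf ma}\wedge\chi_1\wedge\chi_2\wedge\chi_3$ by \cref{Cor: forbid for beta are forbid for R_2} and \cref{lem: top vers of Fine}(\ref{lem item: FINE}). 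Deleting $s$ leaves the subframe $\mathfrak V$ on $\{r,a,m_1,m_2\}$, which is isomorphic to $\mathfrak F_1$ (that is, to $\mathfrak T_2$) and hence refutes $\chi_1$; and $\mathfrak V$ is a \emph{cofinal} subframe of $\mathfrak W$, since the unique deleted point $s$ lies below $m_1\in\mathfrak V$. If $\chi_1$ were ${\sf S4}$-equivalent to a conjunction of cofinal subframe formulas $\alpha(\mathfrak G_j,\bot)$, then $\mathfrak V\not\vDash\chi_1$ would yield a cofinal subreduction of $\mathfrak V$ onto some $\mathfrak G_j$; but a cofinal subreduction defined on a cofinal subframe of $\mathfrak W$ is already a cofinal subreduction of $\mathfrak W$ (its domain remains cofinal in $\mathfrak W$, and the partial p-morphism conditions refer only to the induced order), so $\mathfrak W\not\vDash\chi_1$, a contradiction.

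This also locates precisely where your heuristic breaks, at the step you yourself flagged as ``the real work'': a single variable does not force the closed domain conditions to collapse to cofinality. One-variable formulas are expressive enough to encode Fine--Jankov formulas of one-generated frames --- the very feature this paper exploits --- and such formulas impose constraints that are manifestly not preserved under passing to cofinal subframes, as $\mathfrak W$ versus $\mathfrak V$ shows. Consequently, the proof of \cite[Thm.~11.58]{CZ97} cannot, and does not, proceed by reducing one-variable axioms to cofinal subframe formulas: it retains canonical formulas $\alpha(\mathfrak F,\mathfrak D,\bot)$ with genuinely nontrivial closed domains $\mathfrak D$ and derives the fmp from finer sufficient conditions, tailored to the special shape of the frames and domains that refutations of one-variable formulas produce. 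Your outer scaffolding could be kept, but the inner lemma must be replaced by that weaker structural analysis; as written, the proposal proves a false statement.
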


We recall that an {\sf S4}-frame $(W,\le)$ is 
one-generated if there is a generator $U \subseteq W$ such that 
each subset of $W$ is obtained by applying 
the Boolean operations 
and ${\downarrow}$ 
to $U$. We are ready for the main result of this section:

\begin{theorem}\label{thm: the logic of 2-roaches}
The logic ${\sf L}(\mathcal R_2)$ of the class of $2$-roaches is obtained from ${\sf S4.1}$ by postulating $\chi_1,\chi_2,\chi_3$. 
\end{theorem}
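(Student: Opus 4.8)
The plan is to set $\mathsf L := {\sf S4.1}+\chi_1+\chi_2+\chi_3$ and prove ${\sf L}(\mathcal R_2)=\mathsf L$ by first identifying their finite rooted frames and then invoking the finite model property (fmp) of $\mathsf L$. The first step is a dictionary between the topological and relational readings already set up in the excerpt: a finite ${\sf S4}$-frame, carried with its Alexandroff topology, has as its open subspaces exactly its generated subframes and as its interior images exactly its p-morphic images. Thus, taking $X=\mathfrak F$ in \cref{lem: top vers of Fine}(\ref{lem item: FINE gen Top setting}), for a finite rooted frame $\mathfrak F$ we have $\mathfrak F\vDash\chi_i$ iff $\mathfrak F_i$ is not an interior image of any open subspace of $\mathfrak F$, i.e.\ iff $\mathfrak F_i$ is forbidden for $\mathfrak F$. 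Combining this with \cref{thm: char R_2 via F's} yields the key equivalence: a finite rooted ${\sf S4.1}$-frame $\mathfrak F$ is a $2$-roach iff $\mathfrak F\vDash\chi_1\wedge\chi_2\wedge\chi_3$. Since validating $\mathsf L$ means validating ${\sf S4.1}$ together with each $\chi_i$, this says precisely that the finite rooted $\mathsf L$-frames are exactly the members of $\mathcal R_2$.

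The substantive part is to show that $\mathsf L$ has the fmp, which is where \cref{thm: Zak} and the notion of a one-generated frame come in. As written, $\chi_i$ uses one variable per point of $\mathfrak F_i$, so \cref{thm: Zak} does not apply verbatim. The remedy is to verify that each $\mathfrak F_i$ is one-generated, i.e.\ that there is a single subset of its points from which every subset is recovered by Boolean operations and ${\downarrow}$; for example one may take $\{r_1,m_1\}$ for $\mathfrak F_2$ and $\{v_1,m_1\}$ for $\mathfrak F_3$, from which each singleton, and hence every subset, is generated. Given a one-generated $\mathfrak F_i$, each of its points is named by a one-variable modal term, and substituting these terms for the point-variables of $\chi_i$ produces a one-variable formula $\chi_i'$ with ${\sf S4}+\chi_i={\sf S4}+\chi_i'$. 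Here $\chi_i'\in{\sf S4}+\chi_i$ because $\chi_i'$ is a substitution instance of $\chi_i$, while $\chi_i\in{\sf S4}+\chi_i'$ because any refutation of $\chi_i'$ exhibits the one-generated copy of $\mathfrak F_i$ as a p-morphic image of a generated subframe of the refuting frame, hence refutes $\chi_i$ as well.

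Adjoining the one-variable McKinsey axiom presents $\mathsf L$ as a normal extension of ${\sf S4}$ by finitely many one-variable formulas, so \cref{thm: Zak} gives that $\mathsf L$ has the fmp; as $\mathsf L$ is also finitely axiomatizable, it is decidable. Being complete for its finite frames, and since validity on a frame reduces to validity on its point-generated subframes, $\mathsf L$ equals the intersection of ${\sf L}(\mathfrak F)$ taken over all finite rooted $\mathsf L$-frames $\mathfrak F$. By the key equivalence every such $\mathfrak F$ lies in $\mathcal R_2$ and, conversely, every member of $\mathcal R_2$ is a finite rooted $\mathsf L$-frame; hence this intersection is exactly $\bigcap_{\mathfrak F\in\mathcal R_2}{\sf L}(\mathfrak F)={\sf L}(\mathcal R_2)$. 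Therefore $\mathsf L={\sf L}(\mathcal R_2)$.

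The step I expect to be the main obstacle is the one-variable rewriting in the second paragraph: checking that each $\mathfrak F_i$ is one-generated and, more delicately, that the substituted formula $\chi_i'$ really axiomatizes the same extension of ${\sf S4}$ as $\chi_i$ (the reverse containment, which must turn a refutation of the one-variable formula back into the forbidden p-morphism). Once this is in place, everything else rests on \cref{thm: char R_2 via F's} and \cref{thm: Zak}.
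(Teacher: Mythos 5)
You follow the same route as the paper: identify the finite rooted frames of ${\sf L}:={\sf S4.1}+\chi_1+\chi_2+\chi_3$ with the $2$-roaches via \cref{thm: char R_2 via F's} and \cref{lem: top vers of Fine}(\ref{lem item: FINE}), and extract the fmp of ${\sf L}$ from \cref{thm: Zak} by rewriting each $\chi_i$ as a one-variable formula $\chi_i'$ using the fact that each $\mathfrak F_i$ is one-generated. Your generators $\{m_1\}$, $\{r_1,m_1\}$, $\{v_1,m_1\}$ do generate $\mathfrak F_1,\mathfrak F_2,\mathfrak F_3$, and your closing derivation of ${\sf L}(\mathcal R_2)={\sf L}$ from the frame characterization plus the fmp is exactly the paper's argument. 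The one step the paper does not prove --- that $\chi_i$ and $\chi_i'$ axiomatize the same extension of ${\sf S4}$ --- it delegates to \cite[Sec.~2]{BY23}; you attempt to prove it inline, and this is where your proposal has a genuine gap.

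Your justification of $\chi_i\in{\sf S4}+\chi_i'$ proves the wrong implication. You argue: a refutation of $\chi_i'$ on a frame $\mathfrak G$ exhibits $\mathfrak F_i$ as a p-morphic image of a generated subframe of $\mathfrak G$, hence $\mathfrak G\not\vDash\chi_i$. Every assertion in that sentence is true, but the implication it yields, $\mathfrak G\not\vDash\chi_i'\Rightarrow\mathfrak G\not\vDash\chi_i$ (equivalently, frames validating $\chi_i$ validate $\chi_i'$), is just the semantic shadow of the substitution direction you already had, since frame validity is closed under uniform substitution; it says nothing about derivability of $\chi_i$ from $\chi_i'$. What $\chi_i\in{\sf S4}+\chi_i'$ requires --- using the fmp of ${\sf S4}+\chi_i'$ supplied by \cref{thm: Zak} --- is the converse for finite rooted frames: $\mathfrak G\vDash\chi_i'\Rightarrow\mathfrak G\vDash\chi_i$, i.e., every refutation of $\chi_i$ must be converted into a refutation of $\chi_i'$. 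The correct argument runs: if $\mathfrak G\not\vDash\chi_i$, then by \cref{lem: top vers of Fine}(\ref{lem item: FINE}) $\mathfrak F_i$ is a p-morphic image of a generated subframe of $\mathfrak G$; moreover $\mathfrak F_i\not\vDash\chi_i'$, because under the valuation sending the single variable to the generator of $\mathfrak F_i$ the defining terms take exactly the values the original variables take in the canonical refutation of $\chi_i$ on $\mathfrak F_i$; since refutations lift from p-morphic images of generated subframes back to the ambient frame, $\mathfrak G\not\vDash\chi_i'$. The fact your inverted argument never supplies --- and cannot supply, since your p-morphism points the wrong way --- is precisely $\mathfrak F_i\not\vDash\chi_i'$. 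With this step repaired (or replaced by the paper's citation of \cite{BY23}), the rest of your proof goes through.
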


\begin{proof}
Let {\sf L} be the logic obtained from {\sf S4.1} by postulating $\chi_1,\chi_2,\chi_3$. 
We first show that $\sf L$ has the fmp. 
Since 
the Fine-Jankov formula of a one-generated finite rooted {\sf S4}-frame can be written in one variable (see, e.g., \cite[Sec.~2]{BY23}), 
it is enough to verify that each of $\F_1,\F_2,\F_3$ is one-generated. It is straightforward to check that
$\{m_1\}$ is a generator of $\F_1$, $\{m_1,r_1\}$ is a generator of $\F_2$, and $\{m_1,v_1,r\}$ is a generator of $\F_3$. 
As {\sf ma} also contains only one variable, 
we may apply \cref{thm: Zak} to conclude that $\sf L$ has the fmp. 

We next show that ${\sf L}(\mathcal R_2)={\sf L}$. Suppose that ${\sf L}\not\vdash\varphi$. 
Since ${\sf L}$ has the fmp, there is a finite rooted ${\sf L}$-frame $\mathfrak F$ refuting $\varphi$. Because $\mathfrak F$ is an ${\sf L}$-frame, \cref{lem: top vers of Fine}(\ref{lem item: FINE}) 
implies that none of $\mathfrak F_1,\mathfrak F_2,\mathfrak F_3$ is a p-morphic image of a generated subframe of $\mathfrak F$. 
Thus, \cref{lem: suf condition via F123 to be a WT} yields that $\mathfrak F$ is a $2$-roach. 
Consequently, ${\sf L}(\mathcal R_2)\not\vdash\varphi$, and hence ${\sf L}(\mathcal R_2)\subseteq {\sf L}$.

For the other inclusion, let $i\in\{1,2,3\}$ and $\mathfrak F\in\mathcal R_2$. By \cref{Cor: forbid for beta are forbid for R_2}, $\F_i$ is forbidden for $\F$. Therefore, it follows from \cref{lem: top vers of Fine}(\ref{lem item: FINE}) that $\F\vDash\chi_i$. Since $\mathfrak F\in\mathcal R_2$ is arbitrary, ${\sf L}(\mathcal R_2)\vdash\chi_i$, which implies ${\sf L}\subseteq{\sf L}(\mathcal R_2)$ because each $2$-roach is an {\sf S4.1}-frame. Thus, ${\sf L}(\mathcal R_2)={\sf L}$.
\end{proof}

Each finitely axiomatizable logic with the fmp is decidable, and thus \cref{thm: the logic of 2-roaches} implies:
\begin{corollary}\label{R2 is decidable}
The logic ${\sf L}(\mathcal R_2)$ is decidable.
\end{corollary}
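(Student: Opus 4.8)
The plan is to obtain the corollary directly from \cref{thm: the logic of 2-roaches} by invoking the well-known fact that a finitely axiomatizable normal modal logic with the finite model property is decidable. Two ingredients are already in hand. First, \cref{thm: the logic of 2-roaches} shows that ${\sf L}(\mathcal R_2)$ is the logic obtained from ${\sf S4.1}$ by postulating the three formulas $\chi_1,\chi_2,\chi_3$, so it is finitely axiomatizable (over {\sf S4} its axioms are ${\sf ma},\chi_1,\chi_2,\chi_3$). Second, the proof of that theorem establishes that ${\sf L}(\mathcal R_2)$ has the fmp. It therefore remains only to spell out why these two properties together yield a decision procedure.

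First I would argue that the set of theorems of ${\sf L}(\mathcal R_2)$ is recursively enumerable: since the logic arises from ${\sf S4.1}$ by adding finitely many axioms, one may mechanically enumerate all derivations in this finite axiomatic system and thereby list all its theorems. Next I would argue that the complement is also recursively enumerable, using the fmp. A formula $\varphi$ fails in ${\sf L}(\mathcal R_2)$ iff it is refuted on some finite ${\sf L}(\mathcal R_2)$-frame, and for a finite frame being an ${\sf L}(\mathcal R_2)$-frame is equivalent to validating the finitely many axioms (validity being preserved under the rules of inference). Thus one enumerates all finite {\sf S4}-frames together with all valuations on them, and for each pair checks effectively whether the frame validates ${\sf ma}$ and $\chi_1,\chi_2,\chi_3$ and whether the valuation refutes $\varphi$; equivalently, by \cref{lem: top vers of Fine}(\ref{lem item: FINE}), whether the frame is an {\sf S4.1}-frame admitting none of $\F_1,\F_2,\F_3$ as a p-morphic image of a generated subframe. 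By the fmp this search halts precisely on the non-theorems, so the complement of ${\sf L}(\mathcal R_2)$ is recursively enumerable. A set that is both recursively enumerable and co-recursively enumerable is decidable, giving the result.

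The content of this corollary lies entirely in the preceding results, so I expect no genuine obstacle here. The only point requiring a moment's care is the effectivity of the model-checking step—verifying on a finite frame both that the axioms hold and that $\varphi$ is refuted—but this is routine, since the truth value of a fixed modal formula under a fixed valuation on a finite {\sf S4}-frame is determined by a straightforward recursive evaluation of the satisfaction clauses.
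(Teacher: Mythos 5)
Your proposal is correct and takes essentially the same route as the paper: the paper's proof is a one-line appeal to the standard fact that every finitely axiomatizable logic with the fmp is decidable, combined with \cref{thm: the logic of 2-roaches}, and your argument simply unpacks the standard proof of that fact (theorems are recursively enumerable via proof search, non-theorems via finite-frame search using the fmp).
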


\section{Willow trees}\label{willow trees}
The remainder of the paper is dedicated to proving that ${\sf L}_2={\sf L}(\mathcal R_2)$, and hence solving {\bf P1} for $n=2$. One inclusion is easy: 

\begin{proposition}\label{L(R_2) contained in L(beta(omega_squared))}
${\sf L}(\mathcal R_2)\subseteq{\sf L}_2$.
\end{proposition}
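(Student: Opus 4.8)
The plan is to exploit the finite axiomatization of ${\sf L}(\mathcal R_2)$ established in \cref{thm: the logic of 2-roaches}, which presents ${\sf L}(\mathcal R_2)$ as ${\sf S4.1}$ augmented by the three Fine-Jankov formulas $\chi_1,\chi_2,\chi_3$. Since ${\sf L}_2={\sf L}(\beta(\omega^2))$ is a normal modal logic, to establish the containment ${\sf L}(\mathcal R_2)\subseteq{\sf L}_2$ it suffices to verify that every axiom of ${\sf L}(\mathcal R_2)$ belongs to ${\sf L}_2$; that is, that ${\sf S4.1}\subseteq{\sf L}_2$ and that $\chi_i\in{\sf L}_2$ for each $i\in\{1,2,3\}$.

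The inclusion ${\sf S4.1}\subseteq{\sf L}_2$ is immediate from \cref{lem: s4.1 in L beta gamma}, as $\beta(\omega^2)$ is the \CS\ compactification of an ordinal space. For the Fine-Jankov formulas, the key observation is that each $\mathfrak F_i$ has already been shown to be a forbidden configuration for $\beta(\omega^2)$: the frame $\mathfrak F_1$ (which is $\mathfrak T_2$) by \cref{clm: frame sepating n and n=1}(\ref{clm. item: is NOT image n}) with $n=2$, the frame $\mathfrak F_2$ (which is $\mathfrak G$) by \cref{clm: NEVER an image}, and $\mathfrak F_3$ by \cref{lem: F3 is not an interior image of beta X}. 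By \cref{def: forbidden frame}, this means precisely that none of the $\mathfrak F_i$ is an interior image of any open subspace of $\beta(\omega^2)$.

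I would then invoke the topological form of Fine's lemma, \cref{lem: top vers of Fine}(\ref{lem item: FINE gen Top setting}), which states that $X\vDash\chi_{\mathfrak F}$ iff $\mathfrak F$ is not an interior image of any open subspace of $X$. Applying this with $X=\beta(\omega^2)$ and $\mathfrak F=\mathfrak F_i$ yields $\beta(\omega^2)\vDash\chi_i$, i.e.\ $\chi_i\in{\sf L}_2$, for each $i\in\{1,2,3\}$. Combining the two parts gives ${\sf S4.1}+\{\chi_1,\chi_2,\chi_3\}\subseteq{\sf L}_2$, and hence ${\sf L}(\mathcal R_2)\subseteq{\sf L}_2$ by \cref{thm: the logic of 2-roaches}.

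There is essentially no obstacle in this direction, exactly as the phrase ``One inclusion is easy'' signals: all the substantive work has been front-loaded into identifying the three minimal forbidden configurations (most notably the nontrivial \cref{lem: F3 is not an interior image of beta X}) and into the axiomatization \cref{thm: the logic of 2-roaches}. The present argument is purely a matter of assembling these facts, the only new content being the soundness translation supplied by \cref{lem: top vers of Fine}(\ref{lem item: FINE gen Top setting}). By contrast, the reverse and genuinely difficult inclusion ${\sf L}_2\subseteq{\sf L}(\mathcal R_2)$---which requires realizing each willow tree as an interior image of $\beta(\omega^2)$ and thereby relies on CH---is reserved for the later sections.
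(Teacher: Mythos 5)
Your proposal is correct and follows essentially the same route as the paper's proof: establish ${\sf S4.1}\subseteq{\sf L}_2$, note that $\F_1,\F_2,\F_3$ are forbidden configurations for $\beta(\omega^2)$ (citing \cref{clm: frame sepating n and n=1}, \cref{clm: NEVER an image}, and \cref{lem: F3 is not an interior image of beta X}), deduce $\chi_1,\chi_2,\chi_3\in{\sf L}_2$ via \cref{lem: top vers of Fine}(\ref{lem item: FINE gen Top setting}), and conclude by the axiomatization in \cref{thm: the logic of 2-roaches}. The only cosmetic difference is that you cite \cref{lem: s4.1 in L beta gamma} directly where the paper cites \cref{lem: L_n's chain}, which amounts to the same thing.
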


\begin{proof}
Clearly ${\sf S4.1}\subseteq{\sf L}_2$ (see \cref{lem: L_n's chain}). 
Since $\F_1,\F_2,\F_3$ are forbidden configurations for $\beta(\omega^2)$, \cref{lem: top vers of Fine}(\ref{lem item: FINE gen Top setting}) implies that ${\sf L}_2\vdash\chi_1,\chi_2,\chi_3$, and hence ${\sf L}(\mathcal R_2)\subseteq{\sf L}_2$ by \cref{thm: the logic of 2-roaches}.
\end{proof}

To prove the reverse inclusion, by \cref{lem: truth preserving ops}(\ref{lem item: interior image}) it is enough to show that each 2-roach is an interior image of $\beta(\omega^2)$. For this purpose, we introduce a well behaved subclass of $2$-roaches, which we call willow trees because of their shape; see \cref{fig4}. The aim of this section is to show that each 2-roach can be unravelled into a willow tree. Thus, it is enough to obtain each willow tree as an interior image of $\beta(\omega^2)$. 

We recall (see, e.g., \cite[p.~71]{CZ97}) that a finite rooted {\sf S4}-frame $\F=(W,\le)$ is a \emph{quasi-tree} provided for each $w\in W$ and $u,v\in{\downarrow}w$, either $u\le v$ or $v\le u$. A partially ordered quasi-tree is simply a \emph{tree}. 

\begin{definition}\label{def: unrav WT}
Let $\mathfrak F=(W,\le)$ be a $2$-roach.  
Call $\mathfrak F$ a \emph{willow tree} provided the subframe $W\setminus {\uparrow}s$ 
is a quasi-tree for some splitting point $s\in W$. 
Let $\mathcal W$ be the class of willow trees and ${\sf L}(\mathcal W)$ the logic of $\mathcal W$. 
\end{definition}

\cref{fig4} depicts a willow tree with a splitting point $s$ in which $W\setminus{\uparrow}s$ is partitioned by the quasi-trees $T_s:={\downarrow}s\setminus\{s\}$ and $T_{i,j}$, where $i=1,\dots,n$ and $j=1,\dots,k_i$ for some $k_i\in\omega$. 

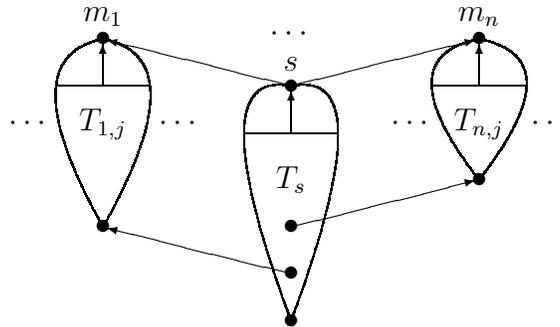
\begin{figure}[h]
\begin{center}
\begin{picture}(140,120)(-70,-5)
\setlength{\unitlength}{.25mm}
\multiput(0,0)(0,25){3}{\makebox(0,0){$\bullet$}}
\qbezier(0,0)(-50,135)(0,125)
\qbezier(0,0)(50,135)(0,125)
\qbezier(-100,50)(-49.5,140)(-100,150)
\qbezier(-100,50)(-150.5,140)(-100,150)
\qbezier(100,75)(49.5,140)(100,150)
\qbezier(100,75)(150.5,140)(100,150)
\put(0,125){\makebox(0,0){$\bullet$}}
\multiput(-100,50)(0,100){2}{\makebox(0,0){$\bullet$}}
\multiput(100,75)(0,75){2}{\makebox(0,0){$\bullet$}}
\multiput(0,25)(0,100){2}{\vector(-4,1){98}}
\multiput(0,50)(0,75){2}{\vector(4,1){98}}
\multiput(-100,125)(200,0){2}{\vector(0,1){23}}
\put(0,100){\vector(0,1){23}}
\put(-25,100){\line(1,0){50}}
\multiput(-139,105)(200,0){1}{\makebox(0,0){$\cdots$}}
\multiput(-59,105)(200,0){1}{\makebox(0,0){$\cdots$}}
\multiput(64.5,105)(200,0){1}{\makebox(0,0){$\cdots$}}
\multiput(139,105)(200,0){1}{\makebox(0,0){$\cdots$}}
\multiput(-125,125)(200,0){2}{\line(1,0){50}}
\put(-100,162){\makebox(0,0){$m_1$}}
\put(100,162){\makebox(0,0){$m_n$}}
\put(0,137){\makebox(0,0){$s$}}
\put(0,153){\makebox(0,0){$\cdots$}}
\put(0,75){\makebox(0,0){$T_s$}}
\put(-100,105){\makebox(0,0){$T_{1,j}$}}
\put(100,105){\makebox(0,0){$T_{n,j}$}}
\end{picture}
\end{center}
\caption{A willow tree.}
\label{fig4}
\end{figure}

By the classic unravelling technique in modal logic (see, e.g., \cite[p.~72]{CZ97}), every finite rooted {\sf S4}-frame is a p-morphic image of a quasi-tree. 
We will modify the unraveling technique by applying it to only a lower portion of a $2$-roach, 
thus yielding the following main result of this section.

\begin{theorem}\label{prop: each WT is p-mor image of unravelled WT} 
Every $2$-roach is a p-morphic image of a willow tree.
\end{theorem}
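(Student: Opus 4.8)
The plan is to produce, for each $2$-roach $\mathfrak F=(W,\le)$ with splitting point $s$, a willow tree $\mathfrak F'$ together with a p-morphism $f$ of $\mathfrak F'$ onto $\mathfrak F$. The naive idea of applying the classical unravelling to all of $\mathfrak F$ fails, because unravelling duplicates both the maximal points and the splitting point; this destroys the identity ${\uparrow}s=\{s\}\cup\mathrm{max}(\mathfrak F)$ and (when $\mathfrak F\in\mathcal R_2\setminus\mathcal R_1$) the uniqueness of the depth-$2$ splitting point granted by \cref{splitting point of a 2-roach non-1-roach is unique}, so the result need not even be a $2$-roach. Instead I would unravel only the lower portion $P:=W\setminus{\uparrow}s$ into a quasi-tree and then reattach a \emph{single} splitting point and a \emph{single} copy of each maximal point on top. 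Intuitively, the body $T_s={\downarrow}s\setminus\{s\}$ and the antenna part get unravelled into the quasi-tree $W'\setminus{\uparrow}s'$ of \cref{fig4}, while the side trees $T_{i,j}$ are the unravelled antennae, each reattached to the copy of its target maximal point.

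First I would record the structure of $P$. Writing $B:={\downarrow}s\setminus\{s\}$ and $U:=P\setminus B$, one checks that $B$ is a downset of $P$ while $U$ is an upset, and that every $w\in U$ satisfies $w\not\le s$, so by \cref{simple char of 2-roaches} the set ${\uparrow}w$ is a $1$-roach with a unique maximal point $m_w\in\mathrm{max}(\mathfrak F)$; moreover $w\le w'$ in $U$ forces $m_w=m_{w'}$. Let $\mathrm{max}(\mathfrak F)=\{m_1,\dots,m_n\}$. Applying the classical unravelling to the finite rooted frame $\mathfrak P$ on $P$ yields a quasi-tree $\mathfrak T_P=(T,\preceq)$ with a p-morphism $p:\mathfrak T_P\to\mathfrak P$; since p-morphic preimages of up/downsets are up/downsets, $D':=p^{-1}(B)$ is a downset and $U':=p^{-1}(U)$ an upset of $\mathfrak T_P$ with $T=D'\sqcup U'$. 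I would then form $\mathfrak F'$ by adjoining to $T$ one new point $s'$ and new points $m_1',\dots,m_n'$, with order generated by $\preceq$ together with
\[
t\le s'\ (t\in D'),\qquad s'\le m_i'\ (1\le i\le n),\qquad t\le m_i'\ \bigl(t\in U',\ p(t)\le m_i\bigr),
\]
and set $f|_T=p$, $f(s')=s$, $f(m_i')=m_i$.

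To see $\mathfrak F'$ is a willow tree I would verify the recursive characterisation of \cref{simple char of 2-roaches} for $s'$: the only points above $s'$ are the maximal $m_i'$, so ${\uparrow}s'=\{s'\}\cup\mathrm{max}(\mathfrak F')$; and for $w'\notin{\downarrow}s'$, i.e.\ $w'\in U'\cup\{m_1',\dots,m_n'\}$, the set ${\uparrow}w'$ has a unique maximal point (using that all antenna copies above a fixed $t\in U'$ share the same target), hence is a $1$-roach. Because the new relations only point into the ``sinks'' $s',m_1',\dots,m_n'$, the restriction of $\le$ to $T$ equals $\preceq$, so $W'\setminus{\uparrow}s'=\mathfrak T_P$ is a quasi-tree and $\mathfrak F'\in\mathcal W$. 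The forth condition for $f$ is immediate from the defining relations. The back condition is the only delicate point and splits by type: for $s'$ and the $m_i'$ it is trivial; for an antenna copy $t\in U'$ every $v\ge f(t)$ is reached either via the back condition of $p$ or via the reattachment edge from $t$ to the copy of its target maximal point; and for a body copy $t\in D'$—which in $\mathfrak T_P$ may lie below antenna copies as well as below $s'$—the values $s,m_1,\dots,m_n$ above $f(t)=p(t)$ are reached through $s',m_1',\dots,m_n'$, while any $v\in P$ with $p(t)\le v$ is reached by the back condition of $p$.

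The main obstacle is precisely this reattachment: I must keep the splitting point and the maximal points un-duplicated to retain the roach shape, yet still guarantee the back condition. The decomposition $P=B\sqcup U$ into a downset and an upset, together with the fact that each antenna point sees a \emph{unique} maximal point (so that the many copies of a maximal point may be safely merged into one), is exactly what reconciles these two demands. The degenerate cases are easy: if $s$ is the root then $\mathfrak F=\{s\}\cup\mathrm{max}(\mathfrak F)$ is already a willow tree, and if $\mathfrak F\in\mathcal R_1$ (so $\mathrm{max}(\mathfrak F)=\{m\}$ and $U=\varnothing$) the construction degenerates to unravelling $W\setminus\{m\}$ and replacing $m$ on top.
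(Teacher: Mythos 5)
Your proposal is correct and is essentially the paper's own proof: the paper likewise unravels only the lower portion $W\setminus{\uparrow}s$ into a quasi-tree $\mathfrak T$ via a p-morphism $f$, attaches a single copy of ${\uparrow}s$ on top by declaring $x\preceq y$ (for $x\in T$, $y\in{\uparrow}s$) exactly when $f(x)\le y$ — which coincides with your relations $t\le s'$ for $t\in p^{-1}({\downarrow}s\setminus\{s\})$ and $t\le m_i'$ for $p(t)\le m_i$ after taking the generated order — and then extends $f$ by the identity on ${\uparrow}s$, verifying the willow-tree property and the forth/back conditions just as you do. Your explicit decomposition $P=B\sqcup U$ and treatment of the degenerate cases are minor elaborations of steps the paper leaves implicit.
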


\begin{proof}
Let $\mathfrak F=(W,\le)$ be a 2-roach with splitting point $s$. We let $\mathfrak G$ be the subframe of $\mathfrak F$ whose underlying set is $W\setminus {\uparrow}s$. Then $\mathfrak G$ is a finite rooted {\sf S4}-frame, so is a p-morphic image of a finite quasi-tree 
$\mathfrak T=(T,\sqsubseteq)$ (see, e.g., \cite[Lem.~5]{BG05a}). Let $f$ be such a p-morphism. We may assume without loss of generality that $T\cap W=\varnothing$.
We construct a willow tree $\mathfrak H=(V,\preceq)$ such that $\mathfrak F$ is a p-morphic image of $\mathfrak H$. Intuitively, this is achieved by enlarging $\mathfrak T$ by appropriately attaching ${\uparrow}s$ ``above" $\mathfrak T$ and extending $f$ by the identity function $\mathrm{id}$ on~${\uparrow}s$. 

\begin{figure}[h]
\begin{center}
\begin{picture}(195,115)(-25,-5)
\multiput(25,0)(100,0){2}{\makebox(0,0){$\bullet$}}
\put(25,0){\line(-1,2){25}}
\put(25,0){\line(1,2){25}}
\put(0,50){\line(1,0){50}}
\put(25,50){\vector(0,1){13}}
\put(25,27){\makebox(0,0){\small $T_s$}}
\put(0,10){\makebox(0,0){\small $\mathfrak 
T$}}
\multiput(25,65)(100,0){2}{\makebox(0,0){$\bullet$}}
\multiput(25,65)(100,0){2}{\line(-2,1){20}}
\multiput(25,65)(100,0){2}{\line(2,1){20}}
\multiput(25,85)(100,0){2}{\makebox(0,0){$\cdots$}}
\multiput(25,75)(100,0){2}{\makebox(0,0){$s$}}
\multiput(5,75)(100,0){2}{\multiput(0,0)(40,0){2}{\makebox(0,0){$\bullet$}}}
\put(125,25){\makebox(0,0){\tiny${\downarrow}s\setminus\{s\}$}}
\put(175,50){\makebox(0,0){\small $\G=W\setminus{\uparrow}s$}}
\qbezier(125,0)(70,30)(103,70)
\qbezier(125,0)(155,20)(130,60)
\qbezier(103,70)(118,57)(130,60)
\qbezier(125,0)(95,17.5)(120,60)
\put(25,110){\makebox(0,0){$\mathfrak H=(V,\preceq)$}}
\put(125,110){\makebox(0,0){$\F=(W,\le)$}}
\multiput(25,37)(0,65){1}{\vector(1,0){95}}
\put(60,110){\vector(1,0){30}}
\put(75,30){\makebox(0,0){\small$f$}}
\put(-15,44){\vector(1,0){117.5}}
\put(-14.5,54.25){\scriptsize \makebox(0,0){$T_{1,j}$}}
\put(60,72){\vector(1,0){30}}
\put(75,63){\makebox(0,0){$\mathrm{id}$}}
\put(-15,35){\makebox(0,0){$\bullet$}}
\put(25,15){\makebox(0,0){$\bullet$}}
\put(25,15){\vector(-2,1){38}}
\put(-15,35){\line(-1,2){12.5}}
\put(-15,35){\line(1,2){12.5}}
\put(-27.5,60){\line(1,0){25}}
\put(-15,60){\vector(4,3){18}}
\multiput(5,85)(100,0){2}{\makebox(0,0){$m_1$}}
\multiput(45,85)(100,0){2}{\makebox(0,0){$m_n$}}
\end{picture}
\caption{The willow tree $\mathfrak H$ and extending $f:\mathfrak T\to\mathfrak G$ by $\mathrm{id}:{\uparrow}s\to{\uparrow}s$.}
\label{fig: constructing willow from 2-roach}
\end{center}
\end{figure}

Put $V=T\cup {\uparrow}s$ and define a binary relation $\preceq$ on $V$ by setting $x\preceq y$ provided  
\begin{enumerate}
\item $x,y\in T$ and $x\sqsubseteq y$, 
\item $x\in T$, $y\in {\uparrow}s$, and $f(x)\le y$, or 
\item $x,y\in {\uparrow}s
$ and $x\le y$. 
\end{enumerate}

\begin{claim}
$\mathfrak H=(V,\preceq)$ is a willow tree.
\end{claim}

\begin{proof}
It is immediate from the construction that ${\uparrow}s$ in $\mathfrak H$ and ${\uparrow}s$ in $\F$ coincide and $\mathrm{max}(\mathfrak H)=\mathrm{max}(\mathfrak F)$.
Therefore, ${\uparrow}s=\{s\}\cup\mathrm{max}(\mathfrak F)=\{s\}\cup\mathrm{max}(\mathfrak H)$.

Let $x\in V\setminus {\downarrow}s$ and consider ${\uparrow}x$. If $x\in\mathrm{max}(\mathfrak H)$, then ${\uparrow}x=\{x\}$, so ${\uparrow}x\models{\sf S4.1.2}$. Suppose that $x\not\in\mathrm{max}(\mathfrak H)$. As $x\neq s$, we have that $x\not\in 
{\uparrow}s$. Therefore, $x\in T$, and hence $f(x)\in W\setminus {\uparrow}s$. If $f(x)\le s$, then condition (2) yields that $x\preceq s$. Thus, 
$f(x)\in W\setminus {\downarrow}s$. Because $\mathfrak F$ is a $2$-roach, ${\uparrow}f(x)\models{\sf S4.1.2}$. 
Consequently, there is a unique $m\in\mathrm{max}(\mathfrak F)$ such that $f(x)\le m$. Since $\mathrm{max}(\mathfrak F)\subseteq {\uparrow}s$, condition (2) implies that $x\preceq m$. From $\mathrm{max}(\mathfrak F)=\mathrm{max}(\mathfrak H)$ it follows that $m$ is the unique point of $\mathrm{max}(\mathfrak H)$ such that $x\preceq m$. Therefore, ${\uparrow}x\models{\sf S4.1.2}$, and hence $\mathfrak H$ is a $2$-roach. 
Finally, since the subframe $V\setminus {\uparrow}s$ of $\mathfrak H$ is the quasi-tree $\mathfrak T$, we conclude that $\mathfrak H$ is a willow tree. 
\end{proof}

It is left to show that $\mathfrak F$ is a p-morphic image of $\mathfrak H$. Extend $f$ to $g:V\to W$ 
by setting $g(x)=x$ for each $x\in {\uparrow}s$. Then $g$ is a well-defined onto map.
Let $x,y\in V$ with $x\preceq y$. First suppose that $x,y\in T$ and $x\sqsubseteq y$. Then $f(x)\le f(y)$ since $f$ is a p-morphism. Therefore, $g(x)\le g(y)$. 
Next suppose that $x\in T$, $y\in{\uparrow} s$, and $f(x)\le y$. Because $g(x)=f(x)$ and $g(y)=y$, we have that $g(x)\le g(y)$. Finally, suppose that $x,y\in {\uparrow}s$ and $x\le
y$. Then $g(x)\le g(y)$ since $g(x)=x$ and $g(y)=y$. Thus, $g$ satisfies the forth condition.

Let $x\in V$ and $w\in W$ be such that $g(x)\le w$. 
First suppose that $w\in W\setminus {\uparrow}s$. Then $g(x)\in W\setminus {\uparrow}s$ since $W\setminus {\uparrow}s$ is a $\le$-downset. Therefore, $x\in T$ and $f(x)\le w$. Because $f$ satisfies the back condition, there is $y\in T\subseteq V$ such that $x\sqsubseteq y$ and $f(y)=w$. Thus, $x\preceq y$ by condition (1), and $g(y)=f(y)=w$. Next suppose that $w\in {\uparrow}s$. 
If $x\in T$, then $f(x)=g(x)\le w$ implies that $x\preceq w$ by condition (2). 
If $x\in {\uparrow}s$, then $x=g(x)\le w$ implies that $x\preceq w$ by condition (3). In either case, $x\preceq w$ and $g(w)=w$. Thus, $g$ satisfies the back condition, and hence $g$ is a p-morphism. 
\end{proof}

As an immediate consequence of \cref{prop: each WT is p-mor image of unravelled WT} we obtain:

\begin{theorem}\label{thm: logic of unravelled willow trees is L(W)} 
${\sf L}(\mathcal W)={\sf L}(\mathcal R_2)$.
\end{theorem}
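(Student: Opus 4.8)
The plan is to derive $\mathsf{L}(\mathcal W)=\mathsf{L}(\mathcal R_2)$ directly from the just-proved structural result \cref{prop: each WT is p-mor image of unravelled WT} together with the truth-preserving operations of \cref{lem: truth preserving ops}. Since $\mathcal W\subseteq\mathcal R_2$ by \cref{def: unrav WT}, one inclusion is immediate: the logic of a larger class is contained in the logic of a subclass, so $\mathsf{L}(\mathcal R_2)\subseteq\mathsf{L}(\mathcal W)$. The content lies entirely in the reverse inclusion $\mathsf{L}(\mathcal W)\subseteq\mathsf{L}(\mathcal R_2)$.

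For the reverse inclusion, I would argue by validity of formulas. Suppose $\varphi\in\mathsf{L}(\mathcal W)$; I want $\varphi\in\mathsf{L}(\mathcal R_2)$, i.e.\ $\mathfrak F\vDash\varphi$ for every $2$-roach $\mathfrak F$. Fix an arbitrary $\mathfrak F\in\mathcal R_2$. By \cref{prop: each WT is p-mor image of unravelled WT}, there is a willow tree $\mathfrak H\in\mathcal W$ together with a p-morphism of $\mathfrak H$ onto $\mathfrak F$; equivalently, viewing these frames as Alexandroff spaces, $\mathfrak F$ is an interior image of $\mathfrak H$. Since $\mathfrak H\in\mathcal W$ and $\varphi\in\mathsf{L}(\mathcal W)$, we have $\mathfrak H\vDash\varphi$. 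Now \cref{lem: truth preserving ops}(\ref{lem item: interior image}) gives $\mathsf{L}(\mathfrak H)\subseteq\mathsf{L}(\mathfrak F)$, so $\varphi\in\mathsf{L}(\mathfrak H)$ yields $\varphi\in\mathsf{L}(\mathfrak F)$, i.e.\ $\mathfrak F\vDash\varphi$. As $\mathfrak F\in\mathcal R_2$ was arbitrary, $\varphi\in\mathsf{L}(\mathcal R_2)$, completing the inclusion.

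There is essentially no obstacle here: the real work has already been done in proving that every $2$-roach is a p-morphic image of a willow tree. The only points to be careful about are the bookkeeping of which direction the p-morphism runs (it goes \emph{from} the willow tree \emph{to} the roach, so validity transfers in the correct direction along the interior image) and the standing translation between p-morphisms of $\mathsf{S4}$-frames and interior maps of the associated Alexandroff spaces recalled at the end of \cref{sec: prelims}. In short, I would write this as a two-line corollary-style argument: the easy inclusion from $\mathcal W\subseteq\mathcal R_2$, and the reverse inclusion from \cref{prop: each WT is p-mor image of unravelled WT} combined with \cref{lem: truth preserving ops}(\ref{lem item: interior image}).
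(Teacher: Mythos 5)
Your proposal is correct and is essentially the paper's own proof: both derive ${\sf L}(\mathcal R_2)\subseteq{\sf L}(\mathcal W)$ from $\mathcal W\subseteq\mathcal R_2$, and both obtain the reverse inclusion by combining \cref{prop: each WT is p-mor image of unravelled WT} with \cref{lem: truth preserving ops}(\ref{lem item: interior image}). The only cosmetic difference is that the paper phrases the second inclusion contrapositively (a refuted formula in some $2$-roach is refuted in the willow tree mapping onto it), whereas you argue directly by transferring validity along the p-morphism; these are the same argument.
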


\begin{proof}
Since $\mathcal W\subseteq \mathcal R_2$, we have ${\sf L}(\mathcal R_2)\subseteq {\sf L}(\mathcal W)$. Suppose 
${\sf L}(\mathcal R_2)\not\vdash\varphi$. Then there is a $2$-roach $\mathfrak F$ refuting $\varphi$. By \cref{prop: each WT is p-mor image of unravelled WT}, $\mathfrak F$ is a p-morphic image of a willow tree $\mathfrak H$. By \cref{lem: truth preserving ops}(\ref{lem item: interior image}), 
$\mathfrak H$ refutes $\varphi$, giving that ${\sf L}(\mathcal W)\not\vdash\varphi$. Thus, ${\sf L}(\mathcal W)\subseteq {\sf L}(\mathcal R_2)$, and the result follows.
\end{proof}

Partially ordered willow trees will serve as a first stepping stone in the aforementioned goal of obtaining each willow tree as an interior image of $\beta(\omega^2)$. As such, we conclude this section with the following corollary to \cref{prop: each WT is p-mor image of unravelled WT}. 

\begin{corollary}\label{cor: poset version-each WT is an image of UnRavWT}
Each partially ordered $2$-roach is a p-morphic image of a partially ordered willow tree.
\end{corollary}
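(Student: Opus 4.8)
The plan is to re-run the construction from the proof of \cref{prop: each WT is p-mor image of unravelled WT}, taking care that every ingredient can be chosen to be partially ordered. Let $\mathfrak F=(W,\le)$ be a partially ordered $2$-roach with splitting point $s$, and let $\mathfrak G$ be the subframe on $W\setminus{\uparrow}s$. Since $\mathfrak F$ is a poset, so is its subframe $\mathfrak G$, which is thus a finite rooted poset. The key point is that in this situation the unravelling of $\mathfrak G$ may be taken to be a \emph{tree} rather than merely a quasi-tree: it is well known that every finite rooted poset is a p-morphic image of a finite tree (the partial-order analogue of the unravelling in \cite[Lem.~5]{BG05a}). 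So I would fix such a tree $\mathfrak T=(T,\sqsubseteq)$ together with a p-morphism $f\colon\mathfrak T\to\mathfrak G$, arranging $T\cap W=\varnothing$ as before.

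Next I would form $\mathfrak H=(V,\preceq)$ exactly as in the proof of \cref{prop: each WT is p-mor image of unravelled WT}, with $V=T\cup{\uparrow}s$ and $\preceq$ defined by the same three clauses, and extend $f$ to $g\colon V\to W$ by the identity on ${\uparrow}s$. The verifications that $\mathfrak H$ is a willow tree and that $g$ is a p-morphism onto $\mathfrak F$ are identical to those already carried out in the theorem, since none of those arguments used antisymmetry; I would simply invoke that proof rather than repeat it.

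It then remains only to confirm that $\mathfrak H$ is partially ordered. Here the decomposition $V=T\cup{\uparrow}s$ does the work. The bottom part $T$ is a tree, hence a poset; the top part ${\uparrow}s$ coincides with ${\uparrow}s$ in the poset $\mathfrak F$, hence is also a poset. Moreover, clause~(2) is the only one relating a point of $T$ to a point of ${\uparrow}s$, and it always points \emph{from} $T$ \emph{into} ${\uparrow}s$; in particular there is no clause yielding $y\preceq x$ with $y\in{\uparrow}s$ and $x\in T$, so ${\uparrow}s$ is a $\preceq$-upset. Consequently every $\preceq$-cluster is contained entirely in $T$ or entirely in ${\uparrow}s$, and so is a singleton. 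Thus $\preceq$ is antisymmetric, and $\mathfrak H$ is a partially ordered willow tree mapping p-morphically onto $\mathfrak F$.

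I expect the only genuinely new ingredient to be the appeal to the poset unravelling (that a finite rooted poset is a p-morphic image of a finite tree); everything else is inherited verbatim from \cref{prop: each WT is p-mor image of unravelled WT}. The short but essential observation is the antisymmetry check, which is what glues the two partially ordered pieces together without creating a cluster that crosses between $T$ and ${\uparrow}s$.
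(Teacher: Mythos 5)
Your proposal is correct and follows essentially the same route as the paper: both replace the quasi-tree in the proof of \cref{prop: each WT is p-mor image of unravelled WT} by a genuine tree (using the poset version of the unravelling lemma, \cite[Lem.~4]{BG05a}) and observe that the resulting willow tree $\mathfrak H$ is then partially ordered. Your explicit antisymmetry check—that no $\preceq$-cluster can cross between $T$ and ${\uparrow}s$—is a detail the paper leaves implicit, but it is exactly the right justification.
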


\begin{proof}
If in the proof of \cref{prop: each WT is p-mor image of unravelled WT} 
the 2-roach $\mathfrak F$ is
partially ordered, then $\mathfrak G$ is a finite rooted partially ordered {\sf S4}-frame. Therefore, the quasi-tree $\mathfrak T=(T,\sqsubseteq)$ is in fact a tree (see, e.g., \cite[Lem.~4]{BG05a}). Thus, the willow tree $\mathfrak H$ is partially ordered.
\end{proof}

\section{Partition lemmas for $\omega^*$}\label{sec: partition lemmas for omega star}

Proving that each willow tree is an interior image of $\beta(\omega^2)$ requires some preparation. In this section we develop two partition 
lemmas for the remainder $\omega^*$ of $\beta(\omega)$. These then will be used in the next section to prove 
two mapping lemmas for $\omega^*$. Both partition lemmas use CH. In particular, 
we freely use Parovi\v{c}enko's 
characterization of $\omega^*$ 
and one of its consequences known as the Homeomorphism Extension Theorem (see \cref{thm: 2 main tools} below), both of which assume CH. 

Let $X$ be a topological space. As usual, we call $N\subseteq X$ a neighborhood of $A\subseteq X$ provided $A\subseteq \ii(N)$. Then $A$ is a \emph{$P$-set} in $X$ if the intersection of an arbitrary countable family of neighborhoods of $A$ is a neighborhood of $A$. 

We recall that 
{\em $F_\sigma$-subsets} of 
$X$ are countable unions of closed sets, and {\em $G_\delta$-subsets} are countable intersections of open sets. Then $X$ is a \emph{$P$-space} if each $G_\delta$-set is open, and an \emph{almost $P$-space} if each nonempty $G_\delta$-set has nonempty interior. 
The concept of an \emph{$F$-space} was introduced in \cite[p.~208]{GJ76} for completely regular spaces. Since we are interested in this concept for normal spaces, we use the following characterization: 
a normal space $X$ is an $F$-space iff disjoint open $F_\sigma$-subsets of $X$ have disjoint closures (see, e.g., \cite[Lem.~1.2.2(b)]{vMi84}). We often utilize that a closed subspace of a normal $F$-space is a normal $F$-space (see, e.g., \cite[Lem.~1.2.2(d)]{vMi84}).

Our main example of an $F$-space that is an almost $P$-space is $\omega^*$. To identify further properties of $\omega^*$, 
we recall 
that a \emph{Stone space} is a zero-dimensional  compact Hausdorff space. It is \emph{crowded} if it has no isolated points.
Clearly both $\beta(\omega)$  
and $\omega^*$ are 
Stone spaces. Moreover, $\omega^*$ is crowded and we have the following characterization of 
$\omega^*$: 

\begin{theorem}\label{thm: 2 main tools}
Assume {\em CH}.
\begin{enumerate}
\item \emph{(Parovi\v{c}enko \cite{Par63})} 
Each crowded Stone space of weight $\omega_1$ that is both an $F$-space and an almost $P$-space  
is homeomorphic to $\omega^*$. \label{thm item: PT}
\item \emph{(van Douwen and van Mill \cite[Thm.~1.1]{vDvM93})} 
Every homeomorphism between closed nowhere dense $P$-sets in $\omega^*$ can be extended to a homeomorphism of $\omega^*$ with itself. \label{thm item: HET}
\end{enumerate}
\end{theorem}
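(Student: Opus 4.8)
The plan is to recognize that both parts are classical results quoted verbatim from \cite{Par63} and \cite{vDvM93}, so that within the present paper they are invoked only as black boxes and require no proof of their own. Still, the standard route to each is a transfinite back-and-forth construction of length $\omega_1$, and it is exactly the enumeration of the relevant objects in order type $\omega_1$ that forces the appeal to CH. I would sketch this route separately for the two parts.

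For (1), I would pass to Stone duals. The clopen algebra of $\omega^*$ is $\mathcal{P}(\omega)/\mathrm{fin}$, and for any crowded Stone space $K$ of weight $\omega_1$ the algebra $B=\mathrm{Clopen}(K)$ is atomless of cardinality $\omega_1$ (crowdedness dualizes to atomlessness, weight $\omega_1$ to size $\omega_1$). The two remaining hypotheses translate into combinatorial conditions on $B$: the $F$-space property becomes a countable separation/interpolation property — disjoint countable unions of clopen sets can be separated by a single clopen set — while the almost $P$-space property guarantees that such a separating element can be chosen to remain nonzero whenever it must. Calling an atomless algebra of size $\le\omega_1$ with these two features a \emph{Parovi\v{c}enko algebra}, the statement reduces to showing that (a) $\mathcal{P}(\omega)/\mathrm{fin}$ is a Parovi\v{c}enko algebra, a direct verification, and (b) any two Parovi\v{c}enko algebras are isomorphic. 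For (b) I would enumerate both algebras in order type $\omega_1$ and build an increasing chain of partial isomorphisms between countable subalgebras; at each successor stage a new generator is absorbed on one side, and its image on the other is produced as an interpolant realizing the same cut of the countable subalgebra already matched. Taking unions along the $\omega_1$-chain yields a Boolean isomorphism, and Stone duality converts it into the desired homeomorphism.

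For (2), the Homeomorphism Extension Theorem, I would again argue by transfinite recursion. Given closed nowhere dense $P$-sets $X,Y$ and a homeomorphism $h\colon X\to Y$, the goal is to extend $h$ across the complements. Since $X$ is a nowhere dense $P$-set, $\omega^*\setminus X$ is dense, and its clopen trace on a countable network can be matched stage by stage with that of $\omega^*\setminus Y$; one simultaneously records how each newly chosen clopen set meets $X$ (respectively $Y$) and uses $h$ to keep the two bookkeepings coherent. Running the recursion for $\omega_1$ steps produces an isomorphism of clopen algebras extending the one induced by $h$, and dualizing gives a self-homeomorphism of $\omega^*$ restricting to $h$ on $X$.

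The main obstacle in both constructions is the successor step: realizing countably many prescribed constraints by a single clopen element. Concretely, given a countable subalgebra together with a cut of it that must be realized, one must produce an element lying above one countable family, disjoint from another, and — for the almost $P$-space input — nonzero when required. This is precisely where the $F$-space (interpolation) and almost $P$-space hypotheses are consumed, and where CH is indispensable: it keeps every intermediate subalgebra countable, so that the countably many constraints generated at each stage can always be met and the whole recursion closes off at $\omega_1$.
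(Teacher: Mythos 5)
Your reading matches the paper exactly: the paper states this theorem purely as a pair of cited classical results --- Parovi\v{c}enko's characterization \cite{Par63} and the Homeomorphism Extension Theorem \cite[Thm.~1.1]{vDvM93} --- and supplies no proof of its own, treating both as black boxes just as you do. Your supplementary sketches of the standard CH arguments (Stone duality plus an $\omega_1$-length back-and-forth for (1), with the $F$-space and almost $P$-space hypotheses dualizing to countable interpolation and to nonzero lower bounds for countable decreasing families of nonzero clopens, and an analogous bookkeeping recursion for (2)) are faithful in outline to the classical proofs, so there is nothing to correct.
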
 

\subsection{The first partition lemma}
To prove the first partition lemma, we require some basic facts about $P$-sets and almost $P$-spaces, which are collected below. 

\begin{lemma} 
\label{lem: some properties of P-sets and almost P-spaces}
Let $X$ be a space and $A\subseteq X$.
\begin{enumerate}
\item $A$ is a $P$-set 
iff for each $F_\sigma$-subset $F$ of $X$, if $A\cap F = \varnothing$ then $A\cap\ccc(F)=\varnothing$. \label{lem Item: characterizing a P-set via F sigma sets}
\item If $A$ is a $P$-set in $X$ and $Y$ is a closed subspace of $X$ such that $A\subseteq Y$, then $A$ is a $P$-set in $Y$. \label{lem Item: being a P-set and contained in a closed set is a P-set in the closed subspace}
\item A finite union of $P$-sets 
is a $P$-set. 
\label{lem Item: finite union of P-sets is a P-set}
\item If $X$ is an almost $P$-space and $Y$ is an open subspace of $X$, then $Y$ is an almost $P$-space.\label{lem Item: almost P-space is an open hereditary property}
\end{enumerate}
\end{lemma}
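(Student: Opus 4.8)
The plan is to prove part (1) first, since the $F_\sigma$-characterization it supplies converts the cumbersome neighborhood-intersection definition of a $P$-set into a condition that behaves well under the operations appearing in parts (2) and (3). For the forward direction, given an $F_\sigma$-set $F=\bigcup_n F_n$ with each $F_n$ closed and $A\cap F=\varnothing$, I would note that each $X\setminus F_n$ is an open neighborhood of $A$; applying the $P$-set hypothesis to this countable family yields that $\bigcap_n(X\setminus F_n)=X\setminus F$ is a neighborhood of $A$, so $A\subseteq\ii(X\setminus F)=X\setminus\ccc(F)$, i.e.\ $A\cap\ccc(F)=\varnothing$. For the converse, given neighborhoods $N_n$ of $A$, I would pass to the open sets $U_n=\ii(N_n)\supseteq A$ and form $F=\bigcup_n(X\setminus U_n)$, an $F_\sigma$-set disjoint from $A$; the hypothesis then gives $A\subseteq X\setminus\ccc(F)=\ii\left(\bigcap_n U_n\right)\subseteq\ii\left(\bigcap_n N_n\right)$, which is exactly the assertion that $\bigcap_n N_n$ is a neighborhood of $A$. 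The only thing to handle with care is the repeated use of the identity $\ii(X\setminus E)=X\setminus\ccc(E)$ together with the reduction from arbitrary neighborhoods to their interiors.

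With (1) established, parts (2) and (3) are short deductions. For (2), any $F_\sigma$-subset $F$ of the closed subspace $Y$ is also an $F_\sigma$-subset of $X$, since closed subsets of the closed set $Y$ are closed in $X$; hence $A\cap F=\varnothing$ and $A$ being a $P$-set in $X$ give $A\cap\ccc(F)=\varnothing$ (closure computed in $X$). As the closure of $F$ in $Y$ is contained in its closure in $X$, the characterization (1) applied inside $Y$ shows $A$ is a $P$-set in $Y$. For (3), I would argue by induction and treat two $P$-sets $A,B$: if an $F_\sigma$-set $F$ is disjoint from $A\cup B$, then it is disjoint from each, so by (1) both $\ccc(F)\cap A$ and $\ccc(F)\cap B$ are empty, whence $\ccc(F)\cap(A\cup B)=\varnothing$, and (1) again yields that $A\cup B$ is a $P$-set.

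Part (4) is independent of the $P$-set machinery and I would prove it directly. Given a nonempty $G_\delta$-subset $G=\bigcap_n V_n$ of the open subspace $Y$, each $V_n$ is open in $Y$ and hence open in $X$, so $G$ is a nonempty $G_\delta$-subset of $X$. Since $X$ is an almost $P$-space, $\ii(G)\neq\varnothing$ in $X$; and because $G\subseteq Y$ with $Y$ open, this $X$-interior is an open subset of $X$ lying in $Y$, hence open in $Y$ and contained in $G$, so the interior of $G$ relative to $Y$ is nonempty. Thus $Y$ is an almost $P$-space. I do not expect a genuine obstacle in any part; the only step demanding attention is the bookkeeping of interiors and closures relative to subspaces in (1), after which all remaining parts reduce to a couple of lines.
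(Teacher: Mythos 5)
Your proposal is correct. The paper states this lemma without proof, treating its four parts as standard facts, so there is no argument to compare against; your write-up supplies exactly the expected reasoning. In particular, proving part (1) first and then deriving (2) and (3) from the $F_\sigma$-characterization mirrors how the paper itself uses the lemma (it invokes part (1) repeatedly as its working definition of a $P$-set), and your direct subspace argument for (4) is the standard one.
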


\begin{lemma}[First Partition Lemma]\label{lem: Jan 0.4-open partition of the complement of nwd closed P-set}
Assume \emph{CH} and let $A$ be a nonempty closed nowhere dense $P$-set in $\omega^*$. For each nonzero $n\in\omega$, 
there is an open partition $\{U_1,\dots,U_n\}$ of $\omega^*\setminus A$ such that $\ccc(U_i)=U_i\cup A$ for each $1\le i\le n$.
\end{lemma}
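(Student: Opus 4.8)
The plan is to first reformulate the conclusion. If $\{U_1,\dots,U_n\}$ is any partition of the open set $\omega^*\setminus A$ into open sets, then each $U_i$ is clopen in $\omega^*\setminus A$, so $\ccc(U_i)\cap(\omega^*\setminus A)=U_i$ and hence $\ccc(U_i)\subseteq U_i\cup A$ automatically. Thus $\ccc(U_i)=U_i\cup A$ is equivalent to $A\subseteq\ccc(U_i)$, and since clopen sets form a base, the latter holds iff every clopen $C$ with $C\cap A\neq\varnothing$ also meets $U_i$. So it suffices to build a partition of $\omega^*\setminus A$ into $n$ relatively clopen pieces, each of which meets every clopen set that meets $A$.

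I would argue by induction on $n$. For $n=1$ take $U_1=\omega^*\setminus A$; since $A$ is nowhere dense, $\omega^*\setminus A$ is dense, so $\ccc(U_1)=\omega^*=U_1\cup A$. For the inductive step the key is a \emph{bisection}: disjoint open sets $W_1,W_2$ with $W_1\cup W_2=\omega^*\setminus A$ and $A\subseteq\ccc(W_1)\cap\ccc(W_2)$. Granting this, set $Z=\ccc(W_1)=W_1\cup A$. Then $Z$ is a closed subspace of $\omega^*$, hence a Stone space and, being a closed subspace of a normal $F$-space, a normal $F$-space; it is crowded because $W_1$ is crowded and $A\subseteq\ccc(W_1)$; and its weight is at most $\cont=\omega_1$, while it cannot be metrizable (an infinite compact $F$-space has no nontrivial convergent sequences), so its weight is exactly $\omega_1$. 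Provided $Z$ is also an almost $P$-space, \cref{thm: 2 main tools}(\ref{thm item: PT}) gives $Z\cong\omega^*$, and $A$ is a nonempty closed nowhere dense $P$-set of $Z$ (a $P$-set by \cref{lem: some properties of P-sets and almost P-spaces}(\ref{lem Item: being a P-set and contained in a closed set is a P-set in the closed subspace}), nowhere dense because $A\subseteq\ccc(W_1)$). Applying the induction hypothesis to $Z$ and $n-1$ splits $W_1=Z\setminus A$ into pieces $U_1,\dots,U_{n-1}$ with $A\subseteq\ccc_Z(U_k)$; as $Z$ is closed these closures agree with the ones in $\omega^*$ and each $U_k$ is open in $\omega^*$, so $U_1,\dots,U_{n-1},U_n:=W_2$ is the desired partition.

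Verifying that $Z$ is an almost $P$-space is the first delicate point. Since $A\subseteq\ccc(W_1)$, no nonempty relatively open subset of $Z$ lies in $A$, so almost-$P$-ness can fail only if some nonempty $G_\delta$-subset of $Z$ were contained in $A$; points of a $G_\delta$ meeting $W_1$ cause no trouble, because $W_1$ is an open subspace of the almost $P$-space $\omega^*$ and hence is itself an almost $P$-space by \cref{lem: some properties of P-sets and almost P-spaces}(\ref{lem Item: almost P-space is an open hereditary property}). Ruling out a $G_\delta$-subset inside $A$ is exactly where the $P$-set hypothesis enters: clopen-in-$Z$ neighborhoods extend to clopen subsets of $\omega^*$, and the characterization of $P$-sets through $F_\sigma$-sets in \cref{lem: some properties of P-sets and almost P-spaces}(\ref{lem Item: characterizing a P-set via F sigma sets}) is used to trap a genuine clopen neighborhood, which contradicts $A\subseteq\ccc(W_1)$.

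The main obstacle, however, is the bisection itself, and this is where I expect the real work to lie. Using CH I would enumerate in order type $\omega_1$ all clopen sets meeting $A$ together with a clopen base of $\omega^*\setminus A$, and build $W_1,W_2$ as increasing unions of clopen sets by transfinite recursion: at each stage I assign to each of $W_1,W_2$ a fresh nonempty clopen subset of the current clopen set, disjoint from $A$ and from everything assigned so far, and I also absorb basic clopen sets so that $W_1\cup W_2=\omega^*\setminus A$ in the limit. The difficulty is keeping $W_1,W_2$ disjoint while ensuring that the portion committed before stage $\xi$ never blocks later access to $A$; this is precisely where the $P$-set property (an $F_\sigma$-set disjoint from $A$ has closure disjoint from $A$) together with the $F$-space and almost-$P$-space structure of $\omega^*$ keeps the construction alive. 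Alternatively, one may produce the bisection by transporting a model configuration via the Homeomorphism Extension Theorem (\cref{thm: 2 main tools}(\ref{thm item: HET})). In either approach, guaranteeing that \emph{both} halves accumulate at \emph{every} point of $A$ is the crux.
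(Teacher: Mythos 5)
Your reformulation (each $U_i$ is automatically relatively clopen in $\omega^*\setminus A$, so the whole content is $A\subseteq\ccc(U_i)$, i.e.\ every clopen set meeting $A$ meets every $U_i$) is correct, and your inductive scaffolding is sound: given the bisection, $Z=\ccc(W_1)=W_1\cup A$ is indeed a crowded Stone $F$-space of weight $\omega_1$; almost-$P$-ness of $Z$ is exactly \cref{lem: jan lemma 0.1}(\ref{lem item: jan lemma 0.1-compact and almost P implies almost P}) (your sketch of that verification is vaguer than the actual argument, which picks points $x_n\in V_n\cap W_1$ and uses compactness plus the $F_\sigma$-characterization of $P$-sets, but the lemma is available); Parovi\v{c}enko then gives $Z\cong\omega^*$, $A$ is a closed nowhere dense $P$-set in $Z$ by \cref{lem: some properties of P-sets and almost P-spaces}(\ref{lem Item: being a P-set and contained in a closed set is a P-set in the closed subspace}), and the pieces produced inside $Z$ are open in $\omega^*$ with the correct closures. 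The genuine gap is the bisection itself, which you explicitly defer (``this is where I expect the real work to lie''). Your plan names the right ingredients but never executes the stage-by-stage construction, and that construction \emph{is} the entire proof: at stage $\alpha$ one must take the unions $E_1,E_2$ of the previously committed clopen pieces, observe that they are open $F_\sigma$-sets disjoint from $A$, so $\ccc(E_i)\cap A=\varnothing$ by \cref{lem: some properties of P-sets and almost P-spaces}(\ref{lem Item: characterizing a P-set via F sigma sets}) and $\ccc(E_1)\cap\ccc(E_2)=\varnothing$ because $\omega^*$ is an $F$-space; then separate $A,\ccc(E_1),\ccc(E_2)$ by pairwise disjoint clopen sets, absorb the $\alpha$-th basic clopen subset of $\omega^*\setminus A$ into these clopen separators (this is the step your phrase ``absorb basic clopen sets'' glosses over: one cannot absorb $C_\alpha\setminus E_2$ directly since it need not be clopen), and finally choose two disjoint nonempty clopen pieces inside $D_\alpha\setminus(A\cup F_1\cup F_2)$, which is nonempty open precisely because $D_\alpha$ meets $A$ and $A$ is nowhere dense. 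Without these invariants and verifications the recursion is only a hope, not a proof.

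It is also worth comparing organizations: the paper runs exactly this transfinite recursion, but for all $n$ colors simultaneously, with conditions (disjointness, monotonicity, each $U^i_\alpha$ meets $D_\alpha$, the $U^i_\alpha$ jointly cover $C_\alpha$); the general-$n$ case costs nothing beyond the $n=2$ case. Consequently your induction buys nothing: it replaces the trivial passage from $2$ to $n$ colors by an appeal to Parovi\v{c}enko's Theorem and the almost-$P$-space lemma (machinery the paper needs only later, for the Second Partition Lemma), while leaving untouched the one step that actually requires work. If you complete the bisection along the lines above, you will have essentially rediscovered the paper's proof, at which point you should run it for $n$ pieces directly and drop the induction.
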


\begin{proof}
Let $\mathscr C$ be the set of all nonempty clopen subsets of $\omega^*$. It follows from \cite[Prop.~7.23]{Kop89} that $|\mathscr C|=|\wp(\omega)|=\mathfrak c$, which equals $\omega_1$ by CH. Let $\mathscr C_1=\{C\in \mathscr C \mid C\subseteq\omega^*\setminus A\}$ and $\mathscr C_2=\{D\in\mathscr C \mid D\cap A\neq \varnothing\}$.

\begin{claim}\label{new claim 1}
Both $\mathscr C_1$ and $\mathscr C_2$ are uncountable.
\end{claim}

\begin{proof}
Suppose $\mathscr C_1$ is countable and set $U=\bigcup\mathscr C_1$. Then $U=\omega^*\setminus A$ is an open $F_\sigma$-set 
disjoint from $A$, so $\ccc(U)\cap A=\varnothing$ by
\cref{lem: some properties of P-sets and almost P-spaces}(\ref{lem Item: characterizing a P-set via F sigma sets}), and hence  
$U$ is clopen. Therefore, $A$ is clopen, which contradicts that $A$ is nonempty and  nowhere dense. 
Thus, $\mathscr C_1$ is uncountable, and it follows that $\mathscr C_2$ is uncountable as well. 
\end{proof}

Let $\{C_\alpha\mid \alpha<\omega_1\}$ and $\{D_\alpha \mid \alpha<\omega_1\}$ be enumerations of $\mathscr C_1$ and $\mathscr C_2$, respectively. 
For each $\alpha<\omega_1$, we recursively define a pairwise disjoint set of nonempty clopen subsets $U_\alpha^1,\dots,U_\alpha^n$ of $\omega^*$ such that for each $1\le i\le n$ the following properties hold: 
\begin{enumerate}
\item $U_\alpha^i\subseteq\omega^*\setminus A$,
\item $\beta<\alpha$ implies $U_\beta^i\subseteq U_\alpha^i$,
\item $D_\alpha\cap U_\alpha^i\neq\varnothing$, and
\item $C_\alpha\subseteq U_\alpha^1\cup\cdots\cup U_\alpha^n$.
\end{enumerate}

Let $1\le i\le n$ and $\alpha<\omega_1$. Suppose that the sets $U_\beta^1,\dots,U_\beta^n$ have been defined for each $\beta<\alpha$. Set $E_i=\bigcup_{\beta<\alpha}U_\beta^i$. Note that $\alpha$ is countable since $\alpha<\omega_1$. Thus, because $U_\beta^i$ is a clopen subset of $\omega^*$ that is contained in $\omega^*\setminus A$, it follows that $E_i$ is a countable union of clopen subsets of $\omega^*$ which are contained in $\omega^*\setminus A$. Therefore, $E_i$ is an open $F_\sigma$-subset of $\omega^*$ that is contained in $\omega^*\setminus A$. Because $A$ is a $P$-set in $\omega^*$, 
\cref{lem: some properties of P-sets and almost P-spaces}(\ref{lem Item: characterizing a P-set via F sigma sets}) implies that $A\cap \ccc(E_i)=\varnothing$.

We now show that the family $E_1,\dots,E_n$ is pairwise disjoint. Let $x\in E_i\cap E_j$ where $1\le i,j\le n$. Then there are $\beta_1,\beta_2<\alpha$ such that $x\in U_{\beta_1}^i$ and $x\in U_{\beta_2}^j$. Without loss of generality suppose that $\beta_2\le \beta_1$. Therefore,
$x\in U_{\beta_2}^j\subseteq U_{\beta_1}^j$, and hence $U_{\beta_1}^i\cap U_{\beta_1}^j\neq\varnothing$. Since the family $U_{\beta_1}^1,\dots,U_{\beta_1}^n$ is pairwise disjoint, we have that $i=j$, and hence $E_1,\dots,E_n$ is pairwise disjoint. Because $\omega^*$ is an $F$-space, it follows that $\ccc(E_1),\dots,\ccc(E_n)$ is pairwise disjoint.

The previous considerations yield that 
$A,\ccc(E_1),\dots,\ccc(E_n)$ form a finite pairwise disjoint family of
closed subsets of $\omega^*$. Because $\omega^*$ is a Stone space, there are pairwise disjoint clopen subsets $G,F_1,\dots,F_n$ of $\omega^*$ such that $A\subseteq G$ and $\ccc(E_i)\subseteq F_i$ 
for each $1\le i\le n$. 
Also, being pairwise disjoint implies $F_i\subseteq \omega^*\setminus A$ for each $1\le i\le n$ (since $A\subseteq G$).

Let $F=F_n\cup\left(C_\alpha\setminus(F_1\cup\dots\cup F_{n-1})\right)$. Then $F$ is clopen in $\omega^*$, $\ccc(E_n)\subseteq F$,
$F$ is disjoint from $A$ and $F_i$ for each $1\le i\le n-1$, 
and $C_\alpha\subseteq F_1\cup\dots\cup F_{n-1}\cup F$.
Therefore, we may replace $F_n$ by $F$ and assume without loss of generality that $C_\alpha\subseteq F_1\cup\dots\cup F_n$.

Clearly $D_\alpha\setminus\left(F_1\cup\dots\cup F_n\right)$ is clopen in $\omega^*$. Since $A$ is disjoint from $F_1\cup\dots\cup F_n$, 
we obtain that $\varnothing \neq D_\alpha\cap A\subseteq D_\alpha\setminus\left(F_1\cup\dots\cup F_n\right)$. Because $A$ is closed and nowhere dense, 
$D_\alpha\setminus(A\cup F_1\cup\dots\cup F_n)$
is nonempty and open, so it is infinite. 
Thus, there are pairwise disjoint nonempty clopens 
$H_1,\dots,H_n$ 
such that $H_i\subseteq D_\alpha\setminus(A\cup F_1\cup\dots\cup F_n)$ for each $1\le i\le n$.

For each $1\le i\le n$, set $U_\alpha^i=F_i\cup H_i$. We show that $U_\alpha^1,\dots,U_\alpha^n$ has the desired properties. Observe that $C_\alpha\subseteq F_1\cup\dots\cup F_n\subseteq U_\alpha^1\cup\dots\cup U_\alpha^n$. Let $1\le i\le n$. Since both $F_i$ and $H_i$ are clopen, 
$U_\alpha^i$ is clopen. Moreover, $U_\alpha^i$ is nonempty because $H_i$ is nonempty. Furthermore, 
$U_\alpha^i$ is disjoint from $A$.
For each $\beta<\alpha$, we have that $U_\beta^i\subseteq E_i\subseteq\ccc(E_i)\subseteq F_i\subseteq U_\alpha^i$. Since $H_i\subseteq D_\alpha\setminus(A\cup F_1\cup\dots\cup F_n)$, it follows that $\varnothing\neq H_i = D_\alpha \cap H_i\subseteq D_\alpha\cap U_\alpha^i$. Lastly, suppose $1\le j\le n$ and $i\neq j$. Then 
\[
U_\alpha^i\cap U_\alpha ^j = (F_i\cup H_i)\cap(F_j\cup H_j) 
= (F_i\cap F_j) \cup (F_i\cap H_j) \cup (H_i\cap F_j) \cup (H_i \cap H_j) 
= \varnothing
\]  
because both families $F_1,\dots,F_n$ and $H_1\dots,H_n$ are pairwise disjoint, $$H_i\subseteq D_\alpha\setminus\left(A\cup F_0\cup\dots\cup F_n\right)\subseteq\omega^*\setminus\left(A\cup F_0\cup\dots\cup F_n\right)\subseteq\omega^*\setminus F_j,$$
and similarly $H_j\subseteq\omega^*\setminus F_i$.

For each $1\le i\le n$, put $U_i=\bigcup_{\alpha<\omega_1}U_\alpha^i$. We show that $\{U_1,\dots,U_n\}$ is an open partition of $\omega^*\setminus A$ such that $\ccc(U_i)=U_i\cup A$ for each $1\le i \le n$. To see that $\{U_1,\dots,U_n\}$ is pairwise disjoint, let $x\in U_i\cap U_j$ for some $1\le i,j\le n$. Then there are $\alpha_1,\alpha_2<\omega_1$ such that $x\in U_{\alpha_1}^i$ and $x\in U_{\alpha_2}^j$. Without loss of generality suppose that $\alpha_2\le \alpha_1$. Then $x\in U_{\alpha_2}^j\subseteq U_{\alpha_1}^j$, and hence $U_{\alpha_1}^i\cap U_{\alpha_1}^j\neq\varnothing$, which yields that $i=j$. Let $1\le i\le n$. For each $\alpha<\omega_1$ we have that $U_\alpha^i$ is a nonempty clopen set
disjoint from $A$.
Therefore, $U_i=\bigcup_{\alpha<\omega_1}U_\alpha^i$ is a nonempty open subset of $\omega^*$ that is disjoint from $A$.
Let $x\notin A$.
Since $\omega^*\setminus A$ is open, there is clopen $C$ such that $x\in C\subseteq \omega^*\setminus A$. Thus, there is $\alpha_x<\omega_1$ such that $C=C_{\alpha_x}$. Consequently, \[
x\in C_{\alpha_x} \subseteq U_{\alpha_x}^1\cup\dots\cup U_{\alpha_x}^n\subseteq U_1\cup\dots\cup U_n,
\]
and hence $\{U_1,\dots,U_n\}$ is an open partition of $\omega^*\setminus A$.

Let $1\le i\le n$. To see that $\ccc(U_i)=U_i\cup A$, 
let $x\in A$ and $C$ be a clopen neighborhood of $x$. Then $C\cap A\neq\varnothing$, so there is $\alpha_x<\omega_1$ such that $C=D_{\alpha_x}$. We have 
\[
\varnothing\neq D_{\alpha_x}\cap U_{\alpha_x}^i\subseteq D_{\alpha_x}\cap\bigcup\nolimits_{\alpha<\omega_1}U_\alpha^i=D_{\alpha_x}\cap U_i=C\cap U_i,
\]
which implies that $A\subseteq\ccc(U_i)$, and hence $\ccc(U_i)\supseteq U_i\cap A$. Conversely, because $\{U_1,\dots,U_n\}$ is an open partition of $\omega^*\setminus A$, we have that
\[
\omega^*\setminus(U_i\cup A) = (\omega^*\setminus U_i)\cap (\omega^*\setminus A) = U_1\cup\dots\cup U_{i-1}\cup U_{i+1}\cup\dots\cup U_n
\]
is an open subset of $\omega^*$. Therefore, $U_i\cup A$ is a closed subset of $\omega^*$ that contains $U_i$. Thus, $\ccc(U_i)\subseteq U_i\cup A$, giving that $\ccc(U_i)=U_i\cup A$.
\end{proof}

\subsection{The second partition lemma}\label{2nd Part lem}
To prove the second partition lemma, we utilize the following three lemmas
about $P$-sets and almost $P$-spaces. 
We recall that each nonempty $G_\delta$-subset of a regular space $X$ contains a nonempty closed $G_\delta$-subset $F$ of $X$. More precisely, there is a 
family $\{V_n\}_{n\in\omega}$ of open subsets of $X$ such that 
\[
F=\bigcap\nolimits_{n\in\omega}V_n \ \text{ and } \ \ccc(V_{n+1})\subseteq V_n.
\tag{\textrm{\dag\dag}}\label{Cond2}
\]

\begin{lemma}\label{lem: jan lemma 0.1}
Suppose that $X$ is regular and $A$ is a closed $P$-set in $X$. Let $U$ be an open subset of $X$ such that $U \cap A =\varnothing$ and $\ccc(U)=U\cup A$. 
\begin{enumerate}
\item $\ccc(U)$ is a $P$-set in $X$. \label{lem item: jan lemma 0.1-closure of U is P-set}
\item If $\ccc(U)$ is compact and $U$ is an almost $P$-space, then $\ccc(U)$ is an almost $P$-space. \label{lem item: jan lemma 0.1-compact and almost P implies almost P}
\end{enumerate}
\end{lemma}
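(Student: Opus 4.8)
The plan is to prove the two parts separately, in each case reducing everything to the $F_\sigma$-criterion for $P$-sets recorded in \cref{lem: some properties of P-sets and almost P-spaces}(\ref{lem Item: characterizing a P-set via F sigma sets}).

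For part~(1) I would verify that criterion directly for $\ccc(U)$. Let $E$ be an $F_\sigma$-subset of $X$ with $\ccc(U)\cap E=\varnothing$; since $\ccc(U)=U\cup A$, this says both $U\cap E=\varnothing$ and $A\cap E=\varnothing$. The $U$-part is free: as $U$ is open, the closed set $X\setminus U$ contains $E$, so $\ccc(E)\subseteq X\setminus U$ and hence $\ccc(E)\cap U=\varnothing$. The $A$-part is exactly where the hypothesis is used: since $A$ is a $P$-set and $A\cap E=\varnothing$, \cref{lem: some properties of P-sets and almost P-spaces}(\ref{lem Item: characterizing a P-set via F sigma sets}) gives $\ccc(E)\cap A=\varnothing$. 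Combining the two yields $\ccc(E)\cap\ccc(U)=\varnothing$, and reading the same characterization in the other direction shows $\ccc(U)$ is a $P$-set. This part presents no real difficulty.

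For part~(2), let $G$ be a nonempty $G_\delta$-subset of $\ccc(U)$; the goal is to produce a nonempty open subset of $\ccc(U)$ contained in $G$. I would first record that $U$ is dense and open in $\ccc(U)$, so that $A=\ccc(U)\setminus U$ is nowhere dense in $\ccc(U)$. The argument then splits on whether $G$ meets $U$. If $G\cap U\neq\varnothing$, then $G\cap U$ is a nonempty $G_\delta$-subset of $U$, so the hypothesis that $U$ is an almost $P$-space gives $\ii_U(G\cap U)\neq\varnothing$; since $U$ is open in $\ccc(U)$, this set is open in $\ccc(U)$ and lies inside $G$, finishing this case. Hence everything reduces to excluding the possibility $G\subseteq A$.

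The hard part is precisely this exclusion, i.e.\ showing that $A$ contains no nonempty $G_\delta$-set. Assuming $G\subseteq A$, I would first note that $A$ is a $P$-set not only in $X$ but, by \cref{lem: some properties of P-sets and almost P-spaces}(\ref{lem Item: being a P-set and contained in a closed set is a P-set in the closed subspace}), also in the closed subspace $\ccc(U)$. As $\ccc(U)$ is compact and, being a subspace of the regular space $X$, regular, the property \eqref{Cond2} supplies a nonempty closed $G_\delta$-set $F\subseteq G\subseteq A$ with $F=\bigcap_{n}V_n$ and $\ccc(V_{n+1})\subseteq V_n$, so that $\bigcap_{n}\ccc(V_n)=F$. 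Since each $V_n$ is nonempty open and $U$ is dense, I can choose $y_n\in V_n\cap U$ and set $E=\{y_n\mid n\in\omega\}$. Being a countable subset of the Hausdorff space $\ccc(U)$, $E$ is an $F_\sigma$-set, and $E\subseteq U$ forces $E\cap A=\varnothing$; thus $\ccc(E)\cap A=\varnothing$ by \cref{lem: some properties of P-sets and almost P-spaces}(\ref{lem Item: characterizing a P-set via F sigma sets}). On the other hand, compactness of $\ccc(U)$ yields a cluster point $z$ of the sequence $(y_n)$, and because the tail $\{y_n\mid n\geq k\}$ lies in $V_k$ for every $k$, we obtain $z\in\bigcap_{n}\ccc(V_n)=F\subseteq A$ together with $z\in\ccc(E)$, contradicting $\ccc(E)\cap A=\varnothing$. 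Therefore $G\cap U\neq\varnothing$, and the reduction above shows $\ccc(U)$ is an almost $P$-space. The one genuinely delicate step is this construction of a countable subset of $U$ clustering onto $F$, which is the mechanism that turns the $P$-set property of $A$ into the nonexistence of nonempty $G_\delta$-sets inside $A$.
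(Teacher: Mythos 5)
Your proposal is correct and takes essentially the same route as the paper: part (1) is the same $F_\sigma$-criterion computation, and in part (2) your contradiction argument for the case $G\subseteq A$ is precisely the paper's proof of its claim that $F\cap U\neq\varnothing$ — points of $U$ chosen in the $V_n$'s by density, a cluster point supplied by compactness, the nested closures placing it in $F$, and the $P$-set property of $A$ applied to the countable $F_\sigma$-set keeping it out of $A$ — merely packaged as a case split plus contradiction instead of a direct claim. No gaps.
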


\begin{proof}
(1) Let $F$ be an $F_\sigma$-subset of $X$ such that $\ccc(U) \cap F = \varnothing$. Then $(U\cup A) \cap F = \varnothing$, so $A \cap F = \varnothing$. Since $A$ is a $P$-set, 
Lemma~\ref{lem: some properties of P-sets and almost P-spaces}(\ref{lem Item: characterizing a P-set via F sigma sets}) implies that $A\cap\ccc(F)=\varnothing$, which yields that $A\subseteq X\setminus\ccc(F) = \ii(X\setminus F)$. Clearly $F\cap \ii(X\setminus F)=\varnothing=F\cap U$.  
Therefore, $F\cap\left(U\cup\ii(X\setminus F)\right)=
\varnothing$, and hence
$\ccc(F)\cap\left(U\cup\ii(X\setminus F)\right)=\varnothing$. Thus, 
\[
\ccc(U)\cap\ccc(F) = (U\cup A) \cap \ccc(F) \subseteq (U\cup\ii(X\setminus F)) \cap \ccc(F) = \varnothing.
\]
We conclude that $\ccc(U)$ is a $P$-set 
by Lemma~\ref{lem: some properties of P-sets and almost P-spaces}(\ref{lem Item: characterizing a P-set via F sigma sets}). 

(2) Suppose that $\ccc(U)$ is compact and $U$ is an almost $P$-space. 
Let $G$ be a nonempty $G_\delta$-subset of $\ccc(U)$. Since $\ccc(U)$ is a regular space, by (\ref{Cond2}) 
there is a nonempty $F\subseteq G$ and a decreasing family $\{V_n\}_{n\in\omega}$ of open subsets of $\ccc(U)$ such that $F=\bigcap_{n\in\omega}V_n$ and $\ccc(V_{n+1})\subseteq V_n$ for each $n\in \omega$. 
To see that $\ccc(U)$ is an almost $P$-space, it is sufficient to prove that the interior of $F$ relative to $\ccc(U)$ is nonempty.

\begin{claim}\label{clm: F cap U is not empty}
$F\cap U\neq\varnothing$.
\end{claim}

\begin{proof}
Since $U$ is an open dense subset of $\ccc(U)$, 
we have that $A=\ccc(U)\setminus U$ is a closed nowhere dense subset of $\ccc(U)$. 
Therefore, $V_n\not\subseteq A$ for each $n\in\omega$, and hence there is $x_n\in V_n\setminus A = V_n \cap U$. Set $C_n=\ccc\left(\{x_m\mid m\ge n\}\right)$ for each $n\in\omega$. Then $\{C_n\mid n\in\omega\}$ is a nested family of closed subsets of $\ccc(U)$. 
Because $\ccc(U)$ is compact, there is $x\in\bigcap_{n\in\omega}C_n$.  
Clearly $\{x_m\mid m\ge0\}$
is an $F_\sigma$-subset of $X$. Since $A$ is a $P$-set 
and $\{x_m\mid m\ge 0\}\cap A \subseteq U\cap A = \varnothing$, 
Lemma~\ref{lem: some properties of P-sets and almost P-spaces}(\ref{lem Item: characterizing a P-set via F sigma sets}) implies that 
\[
A\cap\bigcap\nolimits_{n\in\omega}C_n\subseteq A\cap C_0 = A\cap \ccc\left(\{x_m\mid m\ge0\}\right) = \varnothing.
\]
Therefore, $x\not\in A$, so
$x\in U$. Because $\{V_n\}_{n\in\omega}$ is decreasing, $\{x_m\mid m\ge n\}\subseteq V_n$, and hence $C_n=\ccc\left(\{x_m\mid m\ge n\}\right)\subseteq \ccc(V_n)$ for each $n\in \omega$. Thus, 
\[
x\in\bigcap\nolimits_{n\in\omega}C_n\subseteq \bigcap\nolimits_{n\in\omega}\ccc(V_n)=F.
\] 
Consequently, $x\in F\cap U$, showing that 
$F\cap U\neq\varnothing$.
\end{proof}

Since $F$ is a $G_\delta$-subset of $\ccc(U)$, we have that $F\cap U$ is a $G_\delta$-subset of $U$. By Claim~\ref{clm: F cap U is not empty}, $F\cap U$ is nonempty. Because $U$ is an almost $P$-space, 
the interior of $F\cap U$ relative to $U$ is nonempty. Since $U$ is open in $\ccc(U)$, 
the interior of $F\cap U$ relative to $\ccc(U)$ is nonempty, which yields that the interior of $F$ relative to $\ccc(U)$ is nonempty. 
\end{proof} 

\begin{lemma}\label{lem: jans lemma 0.2}
Let $X$ be a space, $A\subseteq X$ a closed $P$-set, and $U$ an open subset of $X\setminus A$. 
Set $B=\ccc(U)\setminus U$. If $B\subseteq A$, then $B$ is a nowhere dense closed $P$-set in $\ccc(U)$.
\end{lemma}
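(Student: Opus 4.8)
The plan is to establish the three assertions — that $B$ is closed in $\ccc(U)$, nowhere dense in $\ccc(U)$, and a $P$-set in $\ccc(U)$ — one at a time. I begin with two reductions that will be used throughout. First, since $A$ is closed, $X\setminus A$ is open, so the set $U$, being open in $X\setminus A$, is actually open in all of $X$. Second, $\ccc(U)$ is closed in $X$, so for any $S\subseteq\ccc(U)$ the closure of $S$ computed in the subspace $\ccc(U)$ agrees with $\ccc(S)$ computed in $X$; likewise, the $F_\sigma$-subsets of $\ccc(U)$ are exactly the $F_\sigma$-subsets of $X$ that lie in $\ccc(U)$. These identifications let me work uniformly with the ambient operator $\ccc$.

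For the first two assertions I note that they do not use the hypothesis $B\subseteq A$. Because $U$ is open in $X$, it is open in the subspace $\ccc(U)$, and hence $B=\ccc(U)\setminus U$ is closed in $\ccc(U)$. For nowhere density, $U$ is dense in $\ccc(U)$ by definition of the closure; if $B$ contained a nonempty open subset $W$ of $\ccc(U)$, then $W$ would be disjoint from $U$, contradicting this density. Thus $B$ has empty interior in $\ccc(U)$ and is nowhere dense.

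The substance of the lemma is that $B$ is a $P$-set in $\ccc(U)$, and it is here that $B\subseteq A$ is essential. I will verify the $F_\sigma$-criterion of \cref{lem: some properties of P-sets and almost P-spaces}(\ref{lem Item: characterizing a P-set via F sigma sets}), applied with ambient space $\ccc(U)$. So let $F$ be an $F_\sigma$-subset of $\ccc(U)$ with $B\cap F=\varnothing$; I must show $\ccc(F)\cap B=\varnothing$. The key observation is that $\ccc(U)=U\cup B$ with $U$ and $B$ disjoint, so $F\cap B=\varnothing$ forces $F\subseteq U$; since $U\subseteq X\setminus A$, this gives $F\cap A=\varnothing$. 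As $F$ is also an $F_\sigma$-subset of $X$ and $A$ is a $P$-set in $X$, \cref{lem: some properties of P-sets and almost P-spaces}(\ref{lem Item: characterizing a P-set via F sigma sets}) applied in $X$ yields $\ccc(F)\cap A=\varnothing$. Finally, because $B\subseteq A$, we obtain $\ccc(F)\cap B\subseteq\ccc(F)\cap A=\varnothing$. Appealing once more to \cref{lem: some properties of P-sets and almost P-spaces}(\ref{lem Item: characterizing a P-set via F sigma sets}), now in the space $\ccc(U)$, concludes that $B$ is a $P$-set in $\ccc(U)$.

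No step is genuinely hard; the one point that requires care is the bookkeeping between the subspace $\ccc(U)$ and the ambient $X$ — namely that closures and $F_\sigma$-membership are unambiguous because $\ccc(U)$ is closed in $X$ — together with the clean observation that an $F_\sigma$-set in $\ccc(U)$ disjoint from $B$ is trapped inside $U$ and therefore disjoint from the larger set $A$, where all the available $P$-set strength resides.
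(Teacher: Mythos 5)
Your proposal is correct and follows essentially the same route as the paper: both arguments derive closedness and nowhere density of $B$ from $U$ being open and dense in $\ccc(U)$, and both establish the $P$-set property via the $F_\sigma$-criterion of \cref{lem: some properties of P-sets and almost P-spaces}(\ref{lem Item: characterizing a P-set via F sigma sets}), using that an $F_\sigma$-set $F\subseteq\ccc(U)$ disjoint from $B$ satisfies $F\subseteq U\subseteq X\setminus A$, hence $\ccc(F)\cap A=\varnothing$ by the $P$-set property of $A$ in $X$, and then invoking $B\subseteq A$. The only difference is that you spell out the subspace-versus-ambient bookkeeping (closures and $F_\sigma$-membership in $\ccc(U)$ versus $X$) which the paper leaves implicit; this is a matter of presentation, not of substance.
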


\begin{proof}
Since $U$ is an open dense subset of $\ccc(U)$, we have that 
$B$ is a closed nowhere dense subset of $\ccc(U)$. To see that $B$ is a $P$-set in $\ccc(U)$, let $F$ be an $F_\sigma$-subset of $\ccc(U)$ such that $B \cap F = \varnothing$. Then $F\subseteq\ccc(U)\setminus B = U \subseteq X\setminus A$, so $A \cap F = \varnothing$. 
Because $\ccc(U)$ is closed in $X$, we have that $F$ is an $F_\sigma$-subset of $X$. 
Therefore, Lemma~\ref{lem: some properties of P-sets and almost P-spaces}(\ref{lem Item: characterizing a P-set via F sigma sets}) implies that $A\cap \ccc(F)=\varnothing$. 
Thus, since $B\subseteq A$, we have that $B\cap\ccc(F) = \varnothing$.
Consequently, $B$ is a $P$-set in $\ccc(U)$ by Lemma~\ref{lem: some properties of P-sets and almost P-spaces}(\ref{lem Item: characterizing a P-set via F sigma sets}).
\end{proof}

\begin{lemma}\label{lem: Jan 0.3-covered by closed almost P-spaces is an almost P-space}
Let $X$ be a space and $Y_1,\dots,Y_n$ closed subspaces of $X$ for some nonzero $n\in\omega$. If 
$\bigcup_{i=1}^{n}Y_i=X$ and each $Y_i$ is an almost $P$-space, then $X$ is an almost $P$-space.
\end{lemma}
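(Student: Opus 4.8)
The plan is to induct on $n$, reducing everything to the case of two closed almost $P$-spaces. The base case $n=1$ is immediate, since then $X=Y_1$ is an almost $P$-space by hypothesis. For the inductive step I would set $Z=Y_1\cup\dots\cup Y_{n-1}$, which is a closed subspace of $X$; applying the inductive hypothesis to the closed subspaces $Y_1,\dots,Y_{n-1}$ of $Z$ (each of which is still an almost $P$-space) shows that $Z$ is an almost $P$-space. Then $X=Z\cup Y_n$ exhibits $X$ as a union of two closed almost $P$-spaces, so it suffices to treat that case.

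So suppose $X=Y_1\cup Y_2$ with $Y_1,Y_2$ closed almost $P$-spaces, and let $G=\bigcap_{k\in\omega}O_k$ be a nonempty $G_\delta$-subset of $X$, each $O_k$ open. The goal is to produce a nonempty open subset of $X$ contained in $G$. First I would pick $x\in G$, say $x\in Y_1$. Then $G\cap Y_1=\bigcap_k(O_k\cap Y_1)$ is a nonempty $G_\delta$-subset of $Y_1$, so the almost $P$-space property of $Y_1$ gives that it has nonempty interior relative to $Y_1$; that is, there is an open $V\subseteq X$ with $\varnothing\ne V\cap Y_1\subseteq G$. The argument then splits on whether $V$ escapes $Y_2$. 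If $V\setminus Y_2\ne\varnothing$, then $W:=V\cap(X\setminus Y_2)$ is a nonempty open subset of $X$, and since $X=Y_1\cup Y_2$ forces $W\subseteq Y_1$, we get $W\subseteq V\cap Y_1\subseteq G$, so $W$ is the desired open subset of $G$.

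The main obstacle is the opposite case $V\subseteq Y_2$: here $V$ is a nonempty open subset of $X$ lying entirely in $Y_2$, so the relatively open witness $V\cap Y_1$ need not be open in $X$ and the easy argument fails. To circumvent this I would pass back into $Y_2$: the set $H:=G\cap V$ contains $V\cap Y_1$ and is thus a nonempty $G_\delta$-subset of $X$, and because $V\subseteq Y_2$ is open in $X$, each $O_k\cap V$ is open in $Y_2$, so $H$ is a nonempty $G_\delta$-subset of $Y_2$. Invoking the almost $P$-space property of $Y_2$ yields an open $V'\subseteq X$ with $\varnothing\ne V'\cap Y_2\subseteq H\subseteq G$. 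Finally $V'':=V'\cap V$ is open in $X$ and contained in $Y_2$ (as $V\subseteq Y_2$), whence $V''\subseteq V'\cap Y_2\subseteq G$; and $V''\ne\varnothing$ because any point of the nonempty set $V'\cap Y_2\subseteq H\subseteq V$ lies in both $V'$ and $V$. Thus $V''$ is a nonempty open subset of $X$ contained in $G$, completing the two-set case and, with the induction, the proof.
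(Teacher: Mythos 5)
Your proof is correct and follows essentially the same route as the paper's: induct to reduce to a union of two closed almost $P$-spaces, then apply the almost $P$-property of each piece in turn and intersect the resulting open sets. The only cosmetic difference is the case split---the paper distinguishes whether $G$ meets $A\setminus B$, meets $B\setminus A$, or lies in $A\cap B$, while you split on whether the first witnessing open set $V$ escapes $Y_2$---but both lead to the same two-step argument.
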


\begin{proof}
We proceed by induction on $n\ge1$. 
The base case is clear. 
Suppose that the result holds for some $n\ge1$. 
Let $Y_1,\dots,Y_{n+1}$ be closed subspaces of $X$ such that $\bigcup_{i=1}^{n+1}Y_i=X$ and each $Y_i$ is an almost $P$-space. Let 
$A=\bigcup_{i=1}^{n}Y_i$ 
and $B=Y_{n+1}$. 
Then $X=A\cup B$ and both $A$ and $B$ are closed in $X$. Moreover, $B$ is an almost $P$-space by assumption and $A$ is an almost $P$-space by the inductive hypothesis.

Let $G$ be a nonempty $G_\delta$-subset of $X$. 
Suppose that $G\cap(A\setminus B)\neq\varnothing$. As $B$ is closed in $X$, we have that $A\setminus B$ is open in $A$. Thus, $G\cap(A\setminus B)$ is a $G_\delta$-subset of $A$. 
Since $A$ is an almost $P$-space, there is an open subset $U$ of $X$ such that $\varnothing\neq U\cap A\subseteq G\cap(A\setminus B)$. Therefore, 
$U\cap A\subseteq U\setminus B$. To see the reverse inclusion, let $x\in U\setminus B$. Then $x\in A$ because $X=A\cup B$ and $x\notin B$. This gives that $x\in U\cap A$, so
$U\setminus B=U\cap A$, yielding that $U\setminus B$ is a nonempty open subset of $X$ that is contained in $G$. Thus, $G$ has nonempty interior. A symmetric argument shows that if $G\cap(B\setminus A)\neq\varnothing$ then $G$ has nonempty interior.

Finally, suppose that $G\subseteq A\cap B$. Then $G$
is a nonempty $G_\delta$-subset of $A$. Since $A$ is an almost $P$-space, there is an open subset $U$ of $X$ such that $\varnothing\neq U\cap A\subseteq G$. Therefore,
$G\cap U=(G\cap A)\cap U 
=U\cap A\neq\varnothing$. Because $G\cap U
\subseteq B$, we have that $G\cap U$
is a nonempty $G_\delta$-subset of $B$. Since $B$ is an almost $P$-space, there is an open subset $V$ of $X$ such that $\varnothing\neq V\cap B\subseteq G\cap U$. Thus,
$V\cap B\subseteq U$, and so 
$\varnothing\neq V\cap B\subseteq U\cap V$. Consequently, $U\cap V$ is a nonempty open subset of $X$. Let $x\in U\cap V$. If $x\in B$, then $x\in V\cap B\subseteq G\cap U\subseteq G$. If $x\not\in B$, then $x\in A$, which implies that $x\in U\cap A\subseteq G$. In either case, $x\in G$, yielding that $U\cap V\subseteq G$. Hence, $G$ has nonempty interior, and thus $X$ is an almost $P$-space.
\end{proof}

We also need the following lemma, which is a consequence of CH.

\begin{lemma}\label{lem: Jan 0.5-F closed in nwd P-set in omega* can be picked up via open}
\emph{({CH})} Let $A$ be a closed nowhere dense $P$-set in $\omega^*$ and $F$ a nonempty closed subset of $A$. Then there is an open subset $U$ of $\omega^*\setminus A$ such that $\ccc(U) = U\cup F$.
\end{lemma}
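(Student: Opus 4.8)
The plan is to reformulate the conclusion and then build $U$ by a transfinite recursion of length $\omega_1$, in the spirit of the First Partition Lemma. Writing $V=\omega^*\setminus A$, I would first record that, since $U\subseteq V$ and $F\subseteq A$, the equality $\ccc(U)=U\cup F$ says exactly two things: $\ccc(U)\cap V=U$ (so $U$ is closed, hence clopen, in $V$) and $\ccc(U)\cap A=F$. Thus the task splits into forcing $F\subseteq\ccc(U)$ and forcing $\ccc(U)\cap(A\setminus F)=\varnothing$ together with $U$ being closed in $V$. As orientation, the case where $F$ is clopen in $A$ is trivial: the disjoint compacta $F$ and $A\setminus F$ can be separated by a clopen $G$ of $\omega^*$, so that $G\cap A=F$; then $U:=G\setminus F=G\cap V$ is dense in the clopen set $G$ (because $F=G\cap A$ is nowhere dense in $G$), whence $\ccc(U)=G=U\cup F$. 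The substance of the lemma is the general case, where $F$ is neither a $P$-set in $\omega^*$ nor clopen in $A$.

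For the general case I would recurse along $\omega_1$ (available by CH, since $\omega^*$ has exactly $\omega_1$ clopen subsets), building an increasing chain of clopen sets $U_\alpha$ of $\omega^*$ with each $U_\alpha\subseteq V$, and setting $U=\bigcup_{\alpha<\omega_1}U_\alpha$. I would impose two families of requirements. The \emph{positive} requirements enumerate all clopen $D$ meeting $F$ and ask that each be met by some $U_\alpha$; since every clopen neighbourhood of a point of $F$ meets $F$, this yields $F\subseteq\ccc(U)$. The \emph{negative} requirements enumerate all clopen $C$ disjoint from $F$ — these cover $A\setminus F$, because $F$ is closed in $\omega^*$, so every point of $A\setminus F$ has a clopen neighbourhood missing $F$ — and ask that the trace $U\cap C$ stabilise after countably many stages, hence be an $F_\sigma$-set contained in $V$. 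Since $A$ is a $P$-set, \cref{lem: some properties of P-sets and almost P-spaces}(\ref{lem Item: characterizing a P-set via F sigma sets}) then gives $\ccc(U\cap C)\cap A=\varnothing$, and as $\ccc(U)\cap C=\ccc(U\cap C)$ this forces $\ccc(U)\cap(A\setminus F)=\varnothing$ and $U$ closed in $V$.

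The enabling observation is that at every countable stage $\alpha$ the partial union $U_{<\alpha}=\bigcup_{\beta<\alpha}U_\beta$ is an $F_\sigma$-subset of $V$, so by the $P$-set property of $A$ its closure misses $A$; the accumulation of $U$ onto $A$ therefore emerges only in the final $\omega_1$-union and can be steered. This lets me, whenever I commit to excluding a point $y\in A\setminus F$, choose a clopen neighbourhood $C\ni y$ disjoint from both $F$ and $U_{<\alpha}$ (possible since $y\notin F\cup\ccc(U_{<\alpha})$, a closed set), and dually lets the positive steps insert new clopen bits of $V$ near $F$ without disturbing earlier commitments.

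The main obstacle is the interaction of the two families at the boundary $F\cap\ccc(A\setminus F)$: near such points $A\setminus F$ clusters onto $F$, so the clopens used to exclude $A\setminus F$-points threaten to become dense in $V$ near $F$ and block the positive steps. This is precisely where the $F$-space structure of $\omega^*$ must enter, exactly as in the First Partition Lemma: disjoint open $F_\sigma$-sets have disjoint closures, so at each stage the countable ``$U$-so-far'' and the countable union of excluding clopens can be separated by a clopen set of $\omega^*$, and the new positive bit placed inside the resulting clopen corridor toward $F$. Dovetailing the two enumerations so that each excluding clopen is chosen after the positive bits it must avoid — legitimate because $U_{<\alpha}$ has closure disjoint from $A$ — keeps the two traces apart throughout, giving $\ccc(U)\cap A=F$ in the limit. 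I expect that verifying this boundary separation, and checking that stabilisation of the negative traces survives to the end of the recursion, will be the technical heart of the argument.
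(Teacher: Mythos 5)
Your overall architecture --- a recursion of length $\omega_1$ over CH-enumerations of clopen sets, with positive requirements securing $F\subseteq\ccc(U)$ and negative requirements using the $P$-set property of $A$ to keep $\ccc(U)$ off $A\setminus F$ --- is the same as the paper's, and your reformulation of the conclusion ($U$ clopen in $V=\omega^*\setminus A$ and $\ccc(U)\cap A=F$) is correct. The first genuine gap is that your negative requirements do not deliver the clause ``$U$ is closed in $V$''. You ask only that each trace $U\cap C$ stabilise as $U_{<\alpha}\cap C$, an open $F_\sigma$ contained in $V$, and then claim that this (plus $\ccc(U\cap C)\cap A=\varnothing$) ``forces \ldots\ $U$ closed in $V$''. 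It does not: an open $F_\sigma$ whose closure misses $A$ need not be closed in $V$. Indeed, a strictly increasing countable union of clopen subsets of $\omega^*$ is never closed --- were it closed it would be compact, and the cover by its stages would have a finite subcover --- so a trace frozen at a countable limit stage, which is exactly a countable union of the clopen bits added by your positive steps, typically has closure points in $V\setminus U$, and then $\ccc(U)\supsetneq U\cup F$. The paper has a mechanism your sketch lacks: alongside the growing $U$ (its clopen sets $Q_\alpha$) it grows open $F_\sigma$ \emph{exclusion} sets $P_\alpha$ disjoint from all the $Q_\beta$, maintaining the covering invariant $C_\alpha\subseteq P_\alpha\cup Q_\alpha$ for \emph{every} clopen $C_\alpha\subseteq\omega^*\setminus A$ (together with $K_\alpha\subseteq P_\alpha$ for compact pieces $K_\alpha$ covering $A\setminus F$); at the end $U\cup F=\omega^*\setminus\bigcup_{\alpha}P_\alpha$ is closed outright. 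Repairing your scheme needs either this covering invariant or a ``round-up'' step making every trace genuinely clopen before freezing it, and checking that such round-ups respect earlier freezes is an additional argument you have not supplied.

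The second gap is that you misidentify the property carrying the step you yourself call the technical heart. When a clopen $D$ meeting $F$ is handled, the obstruction is the countable union $P$ of previously frozen clopen sets, and what you need is a nonempty clopen subset of $D\setminus(A\cup P)$. Separating $\ccc(P)$ from $\ccc(U_{<\alpha})$ by a clopen set (the $F$-space property) does not produce this: since $F$ is not assumed to be a $P$-set, $\ccc(P)$ may contain all of $D\cap F$, and if $U_{<\alpha}\cap D=\varnothing$ the separating clopen around $\ccc(P)$ may as well be taken to contain all of $D$, so your ``clopen corridor toward $F$'' can be empty inside $D$. What actually works, and what the paper uses, is that $\omega^*$ is an \emph{almost $P$-space}: $D\setminus P$ is a $G_\delta$ containing $D\cap F\neq\varnothing$, hence has nonempty interior, and since $A$ is closed and nowhere dense this interior contains a nonempty clopen set missing $A\cup P$. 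This is exactly where the present lemma differs from the First Partition Lemma: there the exclusion sets are clopen rather than $F_\sigma$, so no almost-$P$ argument is needed, and your ``exactly as in the First Partition Lemma'' is the precise point at which the analogy fails. (The $F$-space property does occur in the paper's proof, but for a different purpose: separating the growing exclusion set from the growing $U$ so that the invariants can be coherently maintained.)
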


\begin{proof}
We recall (see the first paragraph of the proof of the First Partition Lemma)
that $\mathscr C$ is the family of all nonempty clopen subsets of $\omega^*$. Let also
$\{C_\alpha \mid \alpha<\omega_1\}$ be the family $\{C\in\mathscr C \mid C\subseteq\omega^*\setminus A\}$, 
and $\{D_\alpha \mid \alpha<\omega_1\}$ the family $\{ D\in \mathscr C \mid D\cap F\neq\varnothing \}$. 
Since $F$ is closed, for each $x\in A\setminus F$ there is $C\in \mathscr C$ such that $x\in C\subseteq \omega^*\setminus F$, and hence $C\cap A$ is compact and 
$x\in C\cap A\subseteq A\setminus F$. Therefore, there is a family $\{K_\alpha \mid \alpha<\omega_1\}$ consisting of compact sets such that $A\setminus F=\bigcup_{\alpha<\omega_1}K_\alpha$. 
(For distinct $\alpha,\beta<\omega_1$, we may have $K_\alpha=\varnothing$ or $K_\alpha=K_\beta$.)

For each $\alpha<\omega_1$, we recursively define an open $F_\sigma$-subset $P_\alpha$ of $\omega^*$ and a clopen subset $Q_\alpha$ of $\omega^*$ disjoint from $A$
satisfying the following properties:
\begin{enumerate}
\item $\beta<\alpha$ implies $P_\beta\subseteq P_\alpha$ and $Q_\beta\subseteq Q_\alpha$,
\item $P_\alpha\cap Q_\alpha = \varnothing$,
\item $C_\alpha\subseteq P_\alpha \cup Q_\alpha$,
\item $D_\alpha \cap Q_\alpha \neq\varnothing$, and
\item $K_\alpha \subseteq P_\alpha \subseteq \omega^*\setminus F$.
\end{enumerate}
Let $\alpha<\omega_1$. Suppose that $P_\beta$ and $Q_\beta$ have been defined for all $\beta<\alpha$. Set $P=\bigcup_{\beta<\alpha}P_\beta$ and $Q=\bigcup_{\beta<\alpha}Q_\beta$. Because $\alpha<\omega_1$, we have that $\alpha$ is countable. Since $P$ is a countable union of open $F_\sigma$-sets 
and $Q$ is a countable union of clopen sets, 
both $P$ and $Q$ are open $F_\sigma$-sets. 
Conditions (1) and (2) above imply that $P\cap Q=\varnothing$, from which it follows that $\ccc(P)\cap\ccc(Q)=\varnothing$ (since $\omega^*$ is a normal $F$-space). Because $Q_\beta\subseteq \omega^*\setminus A$ for each $\beta<\alpha$, we have that $Q\subseteq\omega^*\setminus A$. Thus, Lemma~\ref{lem: some properties of P-sets and almost P-spaces}(\ref{lem Item: characterizing a P-set via F sigma sets}) gives that $A\cap \ccc(Q)=\varnothing$ (since $A$ is a $P$-set in $\omega^*$). Therefore, the closed sets $A \cup \ccc(P)$ and $\ccc(Q)$ are disjoint, and hence there are disjoint clopen sets $G$ and $H'$ 
such that $A\cup\ccc(P)\subseteq G$ and $\ccc(Q)\subseteq H'$. Set $H=H'\cup (C_\alpha\setminus G)$. Then $H$ is clopen 
such that $G\cap H=\varnothing$, $\ccc(Q)\subseteq H$, and $C_\alpha\subseteq G\cup H$. 

Condition (5) implies that 
$P_\beta\subseteq\omega^*\setminus F$ for each $\beta<\alpha$, and so
$P\subseteq\omega^*\setminus F$. Moreover, 
$C_\alpha\cap G\subseteq C_\alpha\subseteq\omega^*\setminus A\subseteq \omega^*\setminus F$, yielding that
$P\cup(C_\alpha\cap G)\subseteq\omega^*\setminus F$, or equivalently, $F\subseteq\omega^*\setminus\left(P\cup(C_\alpha\cap G)\right)$. Therefore, since $P$ is an $F_\sigma$-set 
and 
$D_\alpha,C_\alpha\cap G$ are clopen, 
$D_\alpha\setminus\left(P\cup(C_\alpha\cap G)\right)$ is a $G_\delta$-set 
that is nonempty since 
$$\varnothing\neq D_\alpha \cap F\subseteq D_\alpha\setminus\left(P\cup(C_\alpha\cap G)\right).$$
Thus, the interior of $D_\alpha\setminus\left(P\cup(C_\alpha\cap G)\right)$ is nonempty because $\omega^*$ is an almost $P$-space. 
Furthermore, since $A$ is closed and nowhere dense, 
the interior of $D_\alpha\setminus\left(P\cup(C_\alpha\cap G)\right)$ is not contained in $A$, which implies that there is a nonempty clopen $R$
that is disjoint from $A$ and contained in the interior of $D_\alpha\setminus\left(P\cup(C_\alpha\cap G)\right)$. 
We now define $Q_\alpha= H\cup R$ and $P_\alpha=P\cup(C_\alpha\cap G)\cup(B\setminus Q_\alpha)$ where $B$ is clopen 
such that $K_\alpha\subseteq B\subseteq \omega^*\setminus F$.

We 
show that $P_\alpha$ and $Q_\alpha$ are as desired. Since both $H$ and $R$ are clopen 
contained in $\omega^*\setminus A$, we have that $Q_\alpha$ is clopen contained in $\omega^*\setminus A$. Because $P$ is an open $F_\sigma$-set 
and $C_\alpha\cap G,B\setminus Q_\alpha$ are clopen, 
$P_\alpha$ is an open $F_\sigma$-set. 
To see that (1) holds, observe that $P_\beta\subseteq P\subseteq P_\alpha$ and $Q_\beta\subseteq Q\subseteq\ccc(Q)\subseteq H\subseteq Q_\alpha$ for each $\beta<\alpha$. For (2), since $P \cup(C_\alpha\cap G) \subseteq G$, we have 
$\left(P\cup(C_\alpha\cap G)\right)\cap H\subseteq G\cap H=\varnothing$. Therefore, because $\left(P\cup(C_\alpha\cap G)\right)\cap R=\varnothing$, we obtain
\begin{eqnarray*}
P_\alpha\cap Q_\alpha &=& \left[(P\cup(C_\alpha\cap G)\cup(B\setminus Q_\alpha)\right]\cap Q_\alpha \\
&=& \left[\left(P\cup(C_\alpha\cap G)\right)\cap Q_\alpha\right]\cup\left[(B\setminus Q_\alpha)\cap Q_\alpha\right] \\
&=& \left[\left(P\cup(C_\alpha\cap G)\right)\cap (H\cup R)\right]\\
&=& \left[\left(P\cup(C_\alpha\cap G)\right)\cap H\right]\cup\left[\left(P\cup(C_\alpha\cap G)\right)\cap R\right] = 
\varnothing.
\end{eqnarray*}
Condition (3) is satisfied since 
\[
C_\alpha = (C_\alpha\cap G)\cup(C_\alpha\setminus G) \subseteq P_\alpha\cup H \subseteq P_\alpha\cup Q_\alpha.
\]
Because $R$ is nonempty and contained in the interior of $D_\alpha\setminus\left(P\cup(C_\alpha\cap G)\right)$, and hence 
in $D_\alpha$, we see that (4) holds since $\varnothing\neq R=D_\alpha\cap R\subseteq D_\alpha\cap Q_\alpha$.
To see 
(5), 
$B\setminus Q_\alpha\subseteq B\subseteq\omega^*\setminus F$ and $P\cup(C_\alpha\cap G)\subseteq \omega^*\setminus F$ imply that $P_\alpha \subseteq \omega^* \setminus F$. Also,
$K_\alpha\subseteq B\setminus Q_\alpha$ since $K_\alpha\subseteq B$, $K_\alpha\subseteq A\setminus F$, and $Q_\alpha\subseteq \omega^*\setminus A$, which imples that $K_\alpha\subseteq P_\alpha$. Therefore, conditions (1)--(5)
hold for $P_\alpha$ and $Q_\alpha$.

To complete the proof, set $U=\bigcup_{\alpha<\omega_1}Q_\alpha$. Because each $Q_\alpha$ is clopen disjoint from $A$, 
we have that $U$ is an open set disjoint from $A$.
Let $x\in F$ and consider an arbitrary clopen neighborhood $D$ of $x$. Then there is $\alpha_x<\omega_1$ such that $D=D_{\alpha_x}$. By condition (4), $\varnothing \neq D_{\alpha_x}\cap Q_{\alpha_x} \subseteq D\cap U$, and hence $F\subseteq\ccc(U)$, yielding that $U\cup F\subseteq\ccc(U)$. For the other inclusion it suffices to show that $U\cup F$ is closed. 

Set $V=\bigcup_{\alpha<\omega_1}P_\alpha$, an open set.
By conditions (1) and (2), $U$ and $V$ are disjoint.
By condition (5), 
$V\subseteq\omega^*\setminus F$, which implies that $F\subseteq\omega^*\setminus V$, and thus $U\cup F\subseteq\omega^*\setminus V$. Conversely, suppose that $x\not\in U\cup F$. 
If $x\in A$, then $x\in A\setminus F$, so there is $\alpha_x<\omega_1$ such that $x\in K_{\alpha_x}$. By condition (5), 
$K_{\alpha_x}\subseteq P_{\alpha_x}\subseteq
V$, which implies that $x\not\in\omega^*\setminus V$. 
If $x\notin A$,
then there is $\alpha_x<\omega_1$ such that $x\in C_{\alpha_x}$. By condition (3), 
$C_{\alpha_x}\subseteq P_{\alpha_x}\cup Q_{\alpha_x}$. Because $x\not\in U$, it follows that $x\not\in Q_{\alpha_x}$, so $x\in P_{\alpha_x}\subseteq V$, and hence 
$x\not\in\omega^*\setminus V$. Thus, $\omega^*\setminus V\subseteq U\cup F$, which proves
that $U\cup F=\omega^*\setminus V$, so $U\cup F$ is closed. 
\end{proof}

We are now ready to prove the second partition lemma. 

\begin{lemma}[Second Partition Lemma]
\label{lem: Jan 0.6-good disjoint open cover of complement of closed nwd P set}
\emph{(CH)} 
Let $A$ be a nonempty closed nowhere dense $P$-set in $\omega^*$ and $A=\bigcup\mathscr F$, where $
\mathscr F$ is a ﬁnite set of 
nonempty closed subsets of $A$. 
Then there is an open partition 
$\{U_F\mid F\in\mathscr F\}$ of $\omega^*\setminus A$ such that for each $F\in\mathscr F$, the following conditions hold:
\begin{enumerate}
\item $\ccc(U_F)=U_F\cup F$,
\item $\ccc(U_F)$ is homeomorphic to $\omega^*$, 
\item $F$ is a closed nowhere dense $P$-set in $\ccc(U_F)$.
\end{enumerate}
\end{lemma}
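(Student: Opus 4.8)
The plan is to separate the construction of the partition, which furnishes condition (1), from the verification of conditions (2) and (3), since the latter two will follow mechanically once (1) is in hand. So suppose we have produced an open partition $\{U_F\mid F\in\mathscr F\}$ of $\omega^*\setminus A$ with $\ccc(U_F)=U_F\cup F$ for each $F$, and fix $F\in\mathscr F$. Since $\ccc(U_F)\setminus U_F=F\subseteq A$, \cref{lem: jans lemma 0.2} (applied in $\omega^*$ with the closed $P$-set $A$ and the open set $U_F$) yields that $F$ is a nowhere dense closed $P$-set in $\ccc(U_F)$, which is condition (3). For condition (2) I would check the hypotheses of Parovi\v{c}enko's characterization (\cref{thm: 2 main tools}(\ref{thm item: PT})) for $X:=\ccc(U_F)$: it is a Stone space, being closed in the Stone space $\omega^*$; it is crowded, since $U_F$ is a crowded open dense subset of $X$ and every point of $F=X\setminus U_F$ is a limit of $U_F$; it has weight $\omega_1$, since as a subspace of $\omega^*$ it has weight at most $\omega_1$ (by CH), while the nonempty open set $U_F$ contains a nonempty clopen subset of $\omega^*$, which is homeomorphic to $\omega^*$ and hence has weight $\omega_1$; and it is a normal $F$-space, being closed in the normal $F$-space $\omega^*$. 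The one delicate point is that $X$ is an almost $P$-space, and here the key is to apply \cref{lem: jan lemma 0.1}(\ref{lem item: jan lemma 0.1-compact and almost P implies almost P}) in the ambient space $X$ itself: by condition (3) the set $F$ is a closed $P$-set in $X$; the set $U_F$ is an open almost $P$-subspace of $X$ (being open in the almost $P$-space $\omega^*$, by \cref{lem: some properties of P-sets and almost P-spaces}(\ref{lem Item: almost P-space is an open hereditary property})); $\ccc_X(U_F)=X$ is compact; and $X=U_F\cup F$. Thus $X$ is an almost $P$-space, and Parovi\v{c}enko's characterization gives $X\cong\omega^*$, which is condition (2).

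The hard part is therefore the construction of the partition, which I would carry out by a transfinite recursion of length $\omega_1$ that simultaneously generalizes the recursions of \cref{lem: Jan 0.4-open partition of the complement of nwd closed P-set} and \cref{lem: Jan 0.5-F closed in nwd P-set in omega* can be picked up via open}. Write $\mathscr F=\{F_1,\dots,F_n\}$. Using CH (so that there are only $\omega_1$ clopen subsets of $\omega^*$), I would fix enumerations $\{C_\alpha\mid\alpha<\omega_1\}$ of the nonempty clopen subsets of $\omega^*\setminus A$ and, for each $i$, enumerations $\{D^i_\alpha\mid\alpha<\omega_1\}$ of the clopen sets meeting $F_i$ together with $\{K^i_\alpha\mid\alpha<\omega_1\}$ of a family of compact sets whose union is $A\setminus F_i$ (such a family exists exactly as in the proof of \cref{lem: Jan 0.5-F closed in nwd P-set in omega* can be picked up via open}). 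I would then recursively construct pairwise disjoint clopen sets $U^1_\alpha,\dots,U^n_\alpha\subseteq\omega^*\setminus A$, increasing in $\alpha$, maintaining: (a) $C_\alpha\subseteq U^1_\alpha\cup\dots\cup U^n_\alpha$; (b) if $D^i_\alpha$ meets $F_i$ then $D^i_\alpha\cap U^i_\alpha\neq\varnothing$; and (c) the closure of $\bigcup_{\beta\le\alpha}U^i_\beta$ is kept disjoint from a clopen neighborhood of the compacta $K^i_\beta$ with $\beta\le\alpha$. Setting $U_{F_i}=\bigcup_{\alpha<\omega_1}U^i_\alpha$, invariant (a) makes $\{U_{F_i}\}$ cover $\omega^*\setminus A$, invariant (b) forces $F_i\subseteq\ccc(U_{F_i})$, and invariant (c) forces $\ccc(U_{F_i})\cap(A\setminus F_i)=\varnothing$. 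Since distinct $U_{F_i}$ are disjoint open sets, $\ccc(U_{F_i})\cap U_{F_j}=\varnothing$ for $j\neq i$, whence $\ccc(U_{F_i})\subseteq U_{F_i}\cup A$ and in fact $\ccc(U_{F_i})=U_{F_i}\cup F_i$, giving condition (1).

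At a successor stage $\alpha$, writing $E_i=\bigcup_{\beta<\alpha}U^i_\beta$, the $F$-space property of $\omega^*$ ensures that the closures $\ccc(E_1),\dots,\ccc(E_n)$ are pairwise disjoint (the $E_i$ being disjoint open $F_\sigma$-sets), and \cref{lem: some properties of P-sets and almost P-spaces}(\ref{lem Item: characterizing a P-set via F sigma sets}) controls how these closures meet the $P$-set $A$. Normality of the Stone space $\omega^*$ then supplies pairwise disjoint clopen separators of the sets $\ccc(E_i)$, which I would enlarge inside $\omega^*\setminus A$ so as to cover $C_\alpha$ and to meet each relevant $D^i_\alpha$ (using that $A$ is nowhere dense, so there is room near each point of $F_i$), while absorbing the new compactum $K^i_\alpha$ into the guard region of the $i$-th piece. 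I expect this simultaneous bookkeeping to be the main obstacle: at each successor step one must separate the $n$ growing closed sets $\ccc(E_i)$ from one another and from the non-target parts $A\setminus F_i$ (which are not closed, whence the need for the compacta $K^i_\alpha$), and at the same time cover the next clopen set $C_\alpha$ and hit the prescribed $D^i_\alpha$, all within a single clopen dissection of $\omega^*$. The two engines driving the argument are the same as in the preceding two lemmas, namely the $F$-space property of $\omega^*$ and the $P$-set property of $A$.
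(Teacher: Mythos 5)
Your first paragraph is correct and essentially matches the paper: given an open partition satisfying (1), condition (3) follows from \cref{lem: jans lemma 0.2}, and condition (2) follows by checking Parovi\v{c}enko's hypotheses, with the almost-$P$ property supplied by \cref{lem: jan lemma 0.1}(\ref{lem item: jan lemma 0.1-compact and almost P implies almost P}) exactly as you say. The gap is in the construction of the partition itself, which you leave as a sketch, and the sketch as written does not go through; the ``simultaneous bookkeeping'' you flag is not a technical chore but the actual mathematical crux, and your invariants (a)--(c) are insufficient for it. Concretely: for $\{U_{F_i}\}$ to \emph{cover} $\omega^*\setminus A$, every $x\notin A$ must eventually be placed in some piece $U^i_\alpha$, while piece $i$ must forever avoid the open guard region that keeps $\ccc(U_{F_i})$ away from $A\setminus F_i$. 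Hence the $n$ guard regions must have empty common intersection at every stage. But for $n\ge 3$ the guards cannot even be kept pairwise disjoint: guards $i$ and $j$ must both absorb neighborhoods of compacta filling $A\setminus(F_i\cup F_j)$, which is in general nonempty. So the recursion must control the \emph{entire intersection pattern} of the guards: when a clopen $N^j\supseteq K^j_\alpha$ is added to the $j$-th guard $R^j$ for each $j$, one must arrange $\bigcap_{j\notin S}N^j\cap\bigcap_{i\in S}R^i=\varnothing$ for every proper $S\subseteq\{1,\dots,n\}$, which requires first proving that $\ccc\bigl(\bigcap_{i\in S}R^i\bigr)$ misses $\bigcap_{j\notin S}K^j_\alpha$, and the covering step separately needs $\bigcap_i\ccc(R^i)=\varnothing$, not just $\bigcap_i R^i=\varnothing$. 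None of this is touched by your invariant (c), which only plays each guard against its own piece. (These separation facts can in fact be proved, using that each guard meets $A$ only inside its own $A\setminus F_i$, that $A=\bigcup\mathscr F$ with each $F_i$ closed, the $P$-set property of $A$, and an $F$-space argument; but that is precisely the nontrivial content a proof must contain.)

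A second, independent omission: to satisfy your invariant (b) one needs a nonempty clopen subset of $D^i_\alpha$ avoiding $A$, all current pieces, \emph{and} the current guard $R^i$. Since $R^i$ is a growing open $F_\sigma$-set whose closure is forced to contain every point of $F_i\cap\ccc(A\setminus F_i)$, nowhere density of $A$ is not enough here; this is exactly the step where the proof of \cref{lem: Jan 0.5-F closed in nwd P-set in omega* can be picked up via open} invokes the almost $P$-space property of $\omega^*$ (the complement of the guard is a $G_\delta$-set meeting $F_i$, hence has nonempty interior), and your sketch never appeals to it. For comparison, the paper avoids the multi-guard recursion altogether: it applies \cref{lem: Jan 0.4-open partition of the complement of nwd closed P-set} to get $\{U_i\}$ with $\ccc(U_i)=U_i\cup A$, then \cref{lem: Jan 0.5-F closed in nwd P-set in omega* can be picked up via open} inside each $\ccc(U_i)\cong\omega^*$ to get open $V_i\subseteq U_i$ with $\ccc(V_i)=V_i\cup F_i$; the $V_i$ do \emph{not} cover $\omega^*\setminus A$, but $Z=A\cup\bigcup_i V_i$ is homeomorphic to $\omega^*$ by Parovi\v{c}enko, and the Homeomorphism Extension Theorem (\cref{thm: 2 main tools}(\ref{thm item: HET})) yields a homeomorphism $Z\to\omega^*$ fixing $A$ pointwise, whose images of the $V_i$ are the desired partition. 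Either you carry out the missing invariants and lemmas for your direct construction, or you should fall back on this soft transfer argument.
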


\begin{proof}
Let 
$\mathscr F=\{F_1,\dots,F_n\}$. By the First Partition Lemma,
there is an open partition $\{U_1,\dots,U_n\}$ of $\omega^*\setminus A$ such that $\ccc(U_i)=U_i\cup A$ for each $1\le i\le n$. 

\begin{claim}\label{clm: closure of Ui is homeo to omega star}
The subspace $\ccc(U_i)$ is homeomorphic to $\omega^*$ for each $0\le i\le n$.
\end{claim}

\begin{proof}
Because $\ccc(U_i)$ is a closed subspace of a Stone space, it is a Stone space. Moreover, $\ccc(U_i)$ is an 
$F$-space because it is a closed subset of a normal $F$-space. 
Furthermore, since $U_i$ is a nonempty open subset of a crowded space, 
$\ccc(U_i)$ is crowded and infinite. Being an infinite closed subspace of $\omega^*$, and hence of $\beta(\omega)$, it follows from \cite[Thm.~3.6.14]{Eng89} that the weight of $\ccc(U_i)$ is $\mathfrak c$, which equals $\omega_1$ by CH. 
The open subspace $U_i$ of $\omega^*$ is an almost $P$-space by Lemma~\ref{lem: some properties of P-sets and almost P-spaces}(\ref{lem Item: almost P-space is an open hereditary property}). Therefore, we may apply \cref{lem: jan lemma 0.1}(\ref{lem item: jan lemma 0.1-compact and almost P implies almost P}) to obtain that $\ccc(U_i)$ is an almost $P$-space. Hence,
$\ccc(U_i)$ is homeomorphic to $\omega^*$ by Parovi\v{c}enko's Theorem.
\end{proof}

Let $1\le i\le n$. Because $U_i$ is an open dense subset of $\ccc(U_i)=U_i\cup A$ that is contained in $\omega^*\setminus A$, we have that $A$ is a closed nowhere dense subset of $\ccc(U_i)$. Moreover, $A$ is a $P$-set in $\ccc(U_i)$ by Lemma~\ref{lem: some properties of P-sets and almost P-spaces}(\ref{lem Item: being a P-set and contained in a closed set is a P-set in the closed subspace}). Since $F_i$ is a nonempty closed subset of $A$, it follows from Claim~\ref{clm: closure of Ui is homeo to omega star} and \cref{lem: Jan 0.5-F closed in nwd P-set in omega* can be picked up via open} that there is an open subset $V_i$ of $U_i$ such that $\ccc(V_i)=V_i\cup F_i$. Because $\ccc(U_i)$ is homeomorphic to $\omega^*$, we obtain 
that $\ccc(V_i)$ is homeomorphic to $\omega^*$ by replacing each occurrence of $U_i$ in the proof of Claim~\ref{clm: closure of Ui is homeo to omega star} by $V_i$. Since $F_i$ is closed in $A$, which is a closed $P$-set in $\ccc(U_i)$, and $V_i$ is an open subset of $\ccc(U_i)\setminus A$ such that $\ccc(V_i)=V_i\cup F_i$, Lemma~\ref{lem: jans lemma 0.2} yields that $F_i$ is a closed nowhere dense $P$-set in $\ccc(V_i)$. Set $Z=A\cup\bigcup_{i=1}^nV_i$.

\begin{claim}\label{last label}
The subspace $Z$ is homeomorphic to $\omega^*$.
\end{claim}

\begin{proof}
We have that $Z$ is a closed subspace of $\omega^*$ since
$$
Z=A\cup\bigcup\nolimits_{i=1}^nV_i=\bigcup\nolimits_{i=1}^n F_i\cup\bigcup\nolimits_{i=1}^nV_i=\bigcup\nolimits_{i=1}^n(V_i\cup F_i) = \bigcup\nolimits_{i=1}^n\ccc(V_i).
$$
Therefore, $Z$ is a Stone space that is an 
$F$-space. Since for each $1\le i\le n$ we have that $V_i$ is nonempty and open in $U_i$, which is open in $\omega^*$, it follows that $\bigcup_{i=1}^nV_i$ is a nonempty open subset of $\omega^*$. Thus, since $Z=
\ccc\left(\bigcup\nolimits_{i=1}^nV_i\right)$, we have that $Z$ is crowded and infinite. As $Z$ is an infinite closed subset of $\omega^*$, it follows from \cite[Thm.~3.6.14]{Eng89} that the weight of $Z$ is $\omega_1$. Recalling that $\ccc(V_i)$ is homeomorphic to $\omega^*$, and hence is an almost $P$-space for each $1\le i\le n$, we have that $Z$ is an almost $P$-space by \cref{lem: Jan 0.3-covered by closed almost P-spaces is an almost P-space}. Applying Parovi\v{c}enko's Theorem the yields that $Z$ is homeomorphic to $\omega^*$.
\end{proof}

We observed above that $Z=
\ccc\left(\bigcup\nolimits_{i=1}^nV_i\right)$. Thus, $\bigcup\nolimits_{i=1}^nV_i$ is an open dense subset of $Z$, giving that $A$, which is its complement 
in $Z$, is a closed nowhere dense subset of $Z$. By \cref{lem: some properties of P-sets and almost P-spaces}(\ref{lem Item: being a P-set and contained in a closed set is a P-set in the closed subspace}), $A$ is a $P$-set in $Z$. 
By \cref{last label}, there is a homeomorphism  $F:Z\to\omega^*$. 
Therefore, the image $F(A)$ of $A$ is a closed nowhere dense $P$-set in $\omega^*$ that is homeomorphic to $A$ via the restriction $f:A\to F(A)$ of $F$. Let $g:F(A)\to A$ be the inverse 
of $f$.
By the Homeomorphism Extension Theorem,
there is a homeomorphism $G:\omega^*\to\omega^*$ extending $g$; see \cref{mapping figure}. 

\begin{figure}[h]
\begin{center}
\begin{picture}(375,70)
\multiput(0,0)(125,0){3}{\multiput(0,0)(75,0){2}{\line(0,1){50}}}
\multiput(0,0)(125,0){3}{\multiput(0,0)(0,50){2}{\line(1,0){75}}}
\multiput(3,33.3)(3,0){24}{\multiput(0,0)(250,0){2}{\makebox(0,0){.}}}
\multiput(0,25)(125,0){3}{\oval(70,50)[r]}
\multiput(30,20)(125,0){2}{\vector(1,0){105}}
\multiput(50,60)(125,0){2}{\vector(1,0){97.5}}
\put(37.5,60){\makebox(0,0){$Z$}}
\multiput(162.5,60)(125,0){2}{\makebox(0,0){$\omega^*$}}
\multiput(14.5,10)(250,0){2}{\makebox(0,0){\small $A$}}
\put(139.5,10){\makebox(0,0){\small $F(A)$}}
\multiput(15,42)(250,0){2}{\makebox(0,0){\footnotesize $F_1$}}
\put(55,42){\makebox(0,0){\footnotesize $V_1$}}
\put(305,42){\makebox(0,0){\footnotesize $GF[V_1]$}}
\put(95,12.5){\makebox(0,0){\footnotesize$f$}}
\put(225,12.5){\makebox(0,0){\footnotesize$g=f^{-1}$}}
\put(95,70){\makebox(0,0){$F$}}
\put(220,70){\makebox(0,0){$G$}}
\end{picture}
\end{center}
\caption{The mapping $G:\omega^*\to\omega^*$.}
\label{mapping figure}
\end{figure}

The mapping $G\circ F:Z\to \omega^*$ is a homeomorphism such that $GF(x)=x$ for each $x\in A$. Let $F\in\mathscr F$. Then
$F=F_i$ for some $1\le i\le n$. Set $U_F=GF[V_i]$. The family $\{U_F \mid F\in\mathscr F\}$ is then as desired because
$G\circ F$ is a homeomorphism that is the identity on $A$. 
\end{proof}

\section{Mapping lemmas for $\omega^*$}\label{sec: mappings of omega star}

In this 
section we utilize two partition lemmas of the previous section 
to prove two mapping lemmas for $\omega^*$. 
As we will see in the next section, these mapping lemmas are instrumental in proving that each willow tree is an interior image of $\beta(\omega^2)$. 

\subsection{The first mapping lemma}

We will be interested in special interior maps whose domain is homeomorphic to $\omega^*$. 

\begin{definition}\label{def: ws-interior map}
Let $\mathfrak T=(W,\le)$ be a tree, $X$ a space that is homeomorphic to $\omega^*$, and $f:X\to W$ an interior mapping of $X$ onto $\mathfrak T$. We call $f$ \emph{well suited}, or simply a {\em ws-map}, provided the following two conditions hold for each $w,v\in W$:
\begin{enumerate}
\item $f^{-1}({\downarrow}w)$ is a $P$-set in $X$ homeomorphic to $\omega^*$; 
\item if 
$v<w$, then $f^{-1}({\downarrow}v)$ is a $P$-set in $f^{-1}({\downarrow}w)$. 
\end{enumerate}
\end{definition}

The following lemma reduces the work required to see that a mapping is well suited. For this we recall that if $v$ is the 
immediate predecessor of $w$ in a tree, we say that $v$ is the \emph{parent} of $w$ and
$w$ is a \emph{child} of $v$.

\begin{lemma}
\label{lem: OLD ws-condition 2}
Let $f$ be an interior mapping of a space $X$ homeomorphic to $\omega^*$ onto a tree $\mathfrak T=(W,\le)$. Then $f$ is a ws-map iff the following conditions hold for each $w,v\in W:$
\begin{enumerate}
\item 
$f^{-1}({\downarrow}w)$ is homeomorphic to $\omega^*;$ \label{10.2(1)}
\item if $w\in W\setminus\mathrm{max}(\mathfrak T)$, then $f^{-1}({\downarrow}w)$ is a $P$-set in $X;$ \label{10.2(2)} 
\item if $v<w$, then $f^{-1}({\downarrow}v)$ is a 
$P$-set in $f^{-1}({\downarrow}w).$ \label{10.2(3)}
\end{enumerate}
\end{lemma}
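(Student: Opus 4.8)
The plan is to read off the forward implication directly from \cref{def: ws-interior map} and to concentrate all the work on the converse. If $f$ is a ws-map, then the homeomorphism clause of ws-condition~(1) gives condition~(\ref{10.2(1)}), the $P$-set clause of ws-condition~(1) gives condition~(\ref{10.2(2)}) (it in fact gives this for \emph{every} $w$, not only the non-maximal ones), and ws-condition~(2) is literally condition~(\ref{10.2(3)}). For the converse, assume (\ref{10.2(1)})--(\ref{10.2(3)}) and verify the two clauses of \cref{def: ws-interior map}. Condition~(\ref{10.2(3)}) is exactly ws-condition~(2), and the homeomorphism part of ws-condition~(1) is exactly condition~(\ref{10.2(1)}). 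Hence the only thing left is to upgrade condition~(\ref{10.2(2)}) by showing that $f^{-1}({\downarrow}m)$ is a $P$-set in $X$ also when $m\in\mathrm{max}(\mathfrak T)$; this single point is the crux of the lemma.

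First I would fix $m\in\mathrm{max}(\mathfrak T)$. If $W=\{m\}$ then $f^{-1}({\downarrow}m)=X$ is trivially a $P$-set, so I may assume $m$ is not the root and let $w'$ be its parent. Because $\mathfrak T$ is a tree, $\downarrow m$ is a chain, so $\downarrow m=\downarrow w'\cup\{m\}$ (a disjoint union), whence
\[
f^{-1}({\downarrow}m)=f^{-1}({\downarrow}w')\cup f^{-1}(m).
\]
Here $f^{-1}({\downarrow}w')$ is a $P$-set in $X$ by condition~(\ref{10.2(2)}), since $w'$ is non-maximal, and $f^{-1}(m)=f^{-1}({\uparrow}m)$ is open in $X$ because $m$ is maximal (so ${\uparrow}m=\{m\}$ is an upset) and $f$ is continuous. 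I would then invoke the $F_\sigma$-characterization of $P$-sets, \cref{lem: some properties of P-sets and almost P-spaces}(\ref{lem Item: characterizing a P-set via F sigma sets}): given an $F_\sigma$-set $F$ with $F\cap f^{-1}({\downarrow}m)=\varnothing$, one has $F\cap f^{-1}({\downarrow}w')=\varnothing$, hence $\ccc(F)\cap f^{-1}({\downarrow}w')=\varnothing$ because $f^{-1}({\downarrow}w')$ is a $P$-set; moreover $F\cap f^{-1}(m)=\varnothing$ forces $\ccc(F)\cap f^{-1}(m)=\varnothing$ purely because $f^{-1}(m)$ is open (an open set disjoint from $F$ is disjoint from $\ccc(F)$). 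Combining the two yields $\ccc(F)\cap f^{-1}({\downarrow}m)=\varnothing$, so $f^{-1}({\downarrow}m)$ is a $P$-set in $X$, completing the verification of ws-condition~(1).

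The main obstacle—really the only subtle point—is recognizing that maximality of $m$ lets one peel off the single open fiber $f^{-1}(m)$ and reduce to the already-known $P$-set $f^{-1}({\downarrow}w')$, together with the observation that the open fiber needs no $P$-set hypothesis of its own: its openness alone propagates disjointness from $F$ to $\ccc(F)$. The tree hypothesis enters exactly once, to guarantee that $\downarrow m$ is a chain, so that $\downarrow m\setminus\{m\}$ is the principal downset $\downarrow w'$ of a single non-maximal point rather than a more complicated union (one could otherwise absorb such a union using the finite-union stability of $P$-sets, but the chain structure makes this unnecessary).
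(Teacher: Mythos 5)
Your proposal is correct and takes essentially the same approach as the paper: both treat the forward direction as immediate, reduce the converse to showing $f^{-1}({\downarrow}w)$ is a $P$-set in $X$ for $w\in\mathrm{max}(\mathfrak T)$, dispose of the root case, and otherwise decompose $f^{-1}({\downarrow}w)$ as the union of the $P$-set $f^{-1}({\downarrow}v)$ over the parent $v$ and the open fiber $f^{-1}(w)$. The only difference is cosmetic: the paper concludes by citing \cref{lem: jan lemma 0.1}(\ref{lem item: jan lemma 0.1-closure of U is P-set}), whereas you inline the same $F_\sigma$-argument directly via \cref{lem: some properties of P-sets and almost P-spaces}(\ref{lem Item: characterizing a P-set via F sigma sets}); both are valid.
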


\begin{proof}
The left-to-right implication is obvious. For the other implication, we only need to verify that $f^{-1}({\downarrow}w)$ is a $P$-set in $X$ whenever $w\in\mathrm{max}(\mathfrak T)$. 
If $w$ is the root of $\mathfrak T$, then $f^{-1}({\downarrow}w)=f^{-1}(W)=X$ is clearly a $P$-set in $X$. 
Otherwise $w$ has the parent $v\in 
W\setminus\mathrm{max}(\mathfrak T)$, and hence $f^{-1}({\downarrow}v)$ is a $P$-set, 
which is closed in $X$ since ${\downarrow}v$ is closed in $W$ and $f$ is an interior map. As $w\in \mathrm{max}(\mathfrak T)$, we have that $f^{-1}(w)$ is an open subset of $X$ that is contained in $X\setminus f^{-1}({\downarrow}v)$. Because $f$ is interior, we have 
\[
\ccc(f^{-1}(w)) = f^{-1}({\downarrow}w) = f^{-1}(\{w\}\cup{\downarrow}v) = f^{-1}(w)\cup f^{-1}({\downarrow}v).
\]
Thus, \cref{lem: jan lemma 0.1}(\ref{lem item: jan lemma 0.1-closure of U is P-set}) applied to $A=f^{-1}({\downarrow}v)$ and $U=f^{-1}(w)$ yields $f^{-1}({\downarrow}w)
$ is a $P$-set.
\end{proof}

\begin{lemma}\label{lem: closed P-set is a transitive property for normal spaces}
Let $X$ be a normal space, $A$ a closed $P$-set in $X$, and $B$ a closed $P$-set in $A$. Then $B$ is a closed $P$-set in $X$.
\end{lemma}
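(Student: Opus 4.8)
The plan is to verify the $P$-set property of $B$ in $X$ through the $F_\sigma$-characterization recorded in \cref{lem: some properties of P-sets and almost P-spaces}(\ref{lem Item: characterizing a P-set via F sigma sets}); that $B$ is closed in $X$ is immediate, since $B$ is closed in $A$ and $A$ is closed in $X$. So I would fix an $F_\sigma$-subset $F=\bigcup_{n}C_n$ of $X$ (each $C_n$ closed) with $B\cap F=\varnothing$ and aim to show $B\cap\ccc(F)=\varnothing$, arguing pointwise: fix $x\in B$ and prove $x\notin\ccc(F)$.

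First I would exploit that $B$ is a $P$-set in $A$. The trace $F\cap A=\bigcup_n(C_n\cap A)$ is an $F_\sigma$-subset of $A$ disjoint from $B$, so \cref{lem: some properties of P-sets and almost P-spaces}(\ref{lem Item: characterizing a P-set via F sigma sets}) applied inside $A$ gives $B\cap\ccc_A(F\cap A)=\varnothing$; since $A$ is closed in $X$, closure in $A$ and closure in $X$ agree on subsets of $A$, whence $x\notin\ccc(F\cap A)$. Using that $X$ is normal (hence regular), I would then choose an open $U\ni x$ with $\ccc(U)\cap(F\cap A)=\varnothing$.

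The key maneuver, and the only point that needs care, is to manufacture from $F$ an $F_\sigma$-set that is genuinely disjoint from all of $A$, so that the $P$-set property of $A$ in $X$ can be invoked. The naive candidate $U\cap F$ is disjoint from $A$ but need not be $F_\sigma$: intersecting each closed piece $C_n$ with the open set $U$ destroys closedness, and in a general normal space the intersection of an open set with an $F_\sigma$-set need not be $F_\sigma$. The remedy is to intersect with the \emph{closed} set $\ccc(U)$ instead: set $F'=\ccc(U)\cap F=\bigcup_n\bigl(\ccc(U)\cap C_n\bigr)$, which is now a countable union of closed sets, hence $F_\sigma$, and is disjoint from $A$ because $\ccc(U)\cap(F\cap A)=\varnothing$. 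Applying \cref{lem: some properties of P-sets and almost P-spaces}(\ref{lem Item: characterizing a P-set via F sigma sets}) to the $P$-set $A$ then yields $A\cap\ccc(F')=\varnothing$, so in particular $x\notin\ccc(F')$.

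Finally I would close the loop. If $x\in\ccc(F)$, then because $U$ is an open neighborhood of $x$ every open neighborhood of $x$ meets $U\cap F$, so $x\in\ccc(U\cap F)\subseteq\ccc(F')$, contradicting $x\notin\ccc(F')$. Hence $x\notin\ccc(F)$, and as $x\in B$ was arbitrary, $B\cap\ccc(F)=\varnothing$; \cref{lem: some properties of P-sets and almost P-spaces}(\ref{lem Item: characterizing a P-set via F sigma sets}) then gives that $B$ is a $P$-set in $X$. I expect the $F_\sigma$-ness subtlety resolved by passing to $\ccc(U)$ to be the only real obstacle; the remainder is bookkeeping with subspace closures and two applications of the $F_\sigma$-characterization.
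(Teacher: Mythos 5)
Your proposal is correct and takes essentially the same route as the paper: both note that closedness of $B$ in $X$ is immediate and then apply the $F_\sigma$-characterization of Lemma~\ref{lem: some properties of P-sets and almost P-spaces}(\ref{lem Item: characterizing a P-set via F sigma sets}) twice — first inside $A$ to obtain $B\cap\ccc(F\cap A)=\varnothing$, then in $X$ to an $F_\sigma$-set produced by cutting $F$ with a closed set missing $F\cap A$, your $F\cap\ccc(U)$ being the mirror image of the paper's $F\setminus U$ (the paper chooses $U$ by normality so that $\ccc(F\cap A)\subseteq U$ and $\ccc(U)\cap B=\varnothing$). The only cosmetic differences are that you argue pointwise at $x\in B$ and invoke regularity rather than normality; this is harmless (normal spaces are $T_1$ in the paper's reference \cite{Eng89}, so regularity follows), and could in any case be avoided by separating the disjoint closed sets $B$ and $\ccc(F\cap A)$ directly by normality, exactly as the paper does.
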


\begin{proof}
By Lemma~\ref{lem: some properties of P-sets and almost P-spaces}(\ref{lem Item: characterizing a P-set via F sigma sets}), it is sufficient to show that if $F$ is an $F_\sigma$-subset of $X$ disjoint from $B$, then $B\cap\ccc(F)=\varnothing$. Since $A$ is closed, 
$F\cap A$ is an $F_\sigma$-subset of $A$. 
Because $B$ is a $P$-set in $A$, 
Lemma~\ref{lem: some properties of P-sets and almost P-spaces}(\ref{lem Item: characterizing a P-set via F sigma sets}) implies that $B\cap\ccc(F\cap A)=\varnothing$. 
Since $X$ is normal, there are disjoint open subsets $U$ and $V$ of $X$ such that $\ccc(F\cap A)\subseteq U$ and $B\subseteq V$. Therefore,
$B\cap \ccc(U) = \varnothing$. Because $F$ is an $F_\sigma$-subset of $X$ and $U$ is open, 
$F\setminus U$ is an $F_\sigma$-subset of $X$. Moreover, 
\[
A \cap \left(F\setminus U\right) = (F\cap A)\setminus U 
= \varnothing.
\]
Thus, $A\cap \ccc\left(F\setminus U\right)=\varnothing$ by Lemma~\ref{lem: some properties of P-sets and almost P-spaces}(\ref{lem Item: characterizing a P-set via F sigma sets}), and hence  
\begin{eqnarray*}
B\cap\ccc(F) &=& B\cap\ccc\left((F\cap U)\cup(F\setminus U)\right) \\
&=& B\cap\left(\ccc(F\cap U)\cup\ccc(F\setminus U)\right) \\
&=& \left(B\cap\ccc(F\cap U)\right)\cup\left(B\cap\ccc(F\setminus U)\right) \\
&\subseteq& \left(B\cap\ccc(U)\right)\cup\left(A\cap\ccc(F\setminus U)\right) 
= \varnothing. 
\end{eqnarray*}
\end{proof}

It follows from \cite[Thm.~12]{Dow19} that already in ZFC each tree is an interior image of $\omega^*$. Under CH, the First Mapping Lemma implies that we may assume such interior maps are well suited.  

\begin{lemma}[First Mapping Lemma]\label{lem: Jans 0.7-good interior mapping of omega star onto a finite tree}
\emph{(CH)} 
Let $\mathfrak T = (W,\le)$ be a tree. Then there is a ws-mapping $f:\omega^* \to \mathfrak T$.
\end{lemma}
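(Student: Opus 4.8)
The plan is to prove the lemma by induction on the number $|W|$ of nodes of $\mathfrak T$, peeling off the maximal points at each stage and recovering their fibres by means of the two partition lemmas. The base case $|W|=1$ is trivial: the unique node $r$ is maximal, and the constant map $\omega^*\to\{r\}$ is an interior ws-map (condition (1) holds as $f^{-1}({\downarrow}r)=\omega^*$, and (2) is vacuous). The inductive step rests on one external ingredient, which I flag now as the genuinely set-theoretic input: under CH, $\omega^*$ contains a closed nowhere dense $P$-set homeomorphic to $\omega^*$. Such a set (a \emph{seed}) cannot be produced from the partition lemmas alone — those require a seed to begin with — but it is a standard Parovi\v{c}enko-type fact.

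For the inductive step ($|W|\ge 2$, so the root $r$ is not maximal), let $\mathfrak T'$ be the subframe on $W':=W\setminus\mathrm{max}(\mathfrak T)$, a smaller tree. One checks that the maximal points of $\mathfrak T'$ are exactly the parents in $\mathfrak T$ of the leaves, that each such parent $q$ has $\ell_q\ge 1$ leaf-children $m_{q,1},\dots,m_{q,\ell_q}$, and that $\bigcup_{q\in\mathrm{max}(\mathfrak T')}{\downarrow}q=W'$. Choose a seed $A\subseteq\omega^*$ and, by the induction hypothesis applied to $\mathfrak T'$ and $A\cong\omega^*$, fix a ws-map $f_0\colon A\to\mathfrak T'$; this will be the restriction $f|_A$ with $A=f^{-1}(W')$. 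Put $\mathscr F=\{F_q:=f_0^{-1}({\downarrow}q)\mid q\in\mathrm{max}(\mathfrak T')\}$, a finite set of nonempty closed subsets of $A$ with $\bigcup\mathscr F=A$. The Second Partition Lemma (\cref{lem: Jan 0.6-good disjoint open cover of complement of closed nwd P set}) yields an open partition $\{U_q\}$ of $\omega^*\setminus A$ with $\ccc(U_q)=U_q\cup F_q$, with $\ccc(U_q)\cong\omega^*$, and with $F_q$ a closed nowhere dense $P$-set in $\ccc(U_q)$. Applying the First Partition Lemma (\cref{lem: Jan 0.4-open partition of the complement of nwd closed P-set}) inside $\ccc(U_q)\cong\omega^*$ to $F_q$ with $n=\ell_q$ splits $U_q$ into opens $U_{q,1},\dots,U_{q,\ell_q}$ with $\ccc(U_{q,s})=U_{q,s}\cup F_q$. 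Finally define $f$ by $f|_A=f_0$ and $f\equiv m_{q,s}$ on $U_{q,s}$; this is a well-defined surjection, and $\ccc(f^{-1}(m_{q,s}))=U_{q,s}\cup F_q=f^{-1}({\downarrow}m_{q,s})$ since ${\downarrow}m_{q,s}=\{m_{q,s}\}\cup{\downarrow}q$.

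Interiority is then clean: for every single $w$ one has $\ccc(f^{-1}(w))=f^{-1}({\downarrow}w)$ — for a leaf this is the closure identity just noted, and for $w\in W'$ it follows from interiority of $f_0$ together with $A$ being closed in $\omega^*$ — and since $\mathfrak T$ is finite this extends to $\ccc(f^{-1}(S))=f^{-1}({\downarrow}S)=f^{-1}(\ccc S)$ for all $S$, so $f$ is interior by the closure characterization of interior maps. For the ws-conditions I use the reduction of \cref{lem: OLD ws-condition 2}. Condition (1), $f^{-1}({\downarrow}w)\cong\omega^*$, holds for $w\in W'$ by induction (as ${\downarrow}w$ agrees in $W$ and $W'$) and for leaves because $f^{-1}({\downarrow}m_{q,s})=\ccc(U_{q,s})\cong\omega^*$. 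The $P$-set conditions (2) and (3) are handled by threading three transfer results through the recursion: $A$ is a closed $P$-set in $\omega^*$ and each $\ccc(U_q)$ is a closed $P$-set in $\omega^*$ by \cref{lem: jan lemma 0.1}(\ref{lem item: jan lemma 0.1-closure of U is P-set}); a $P$-set contained in a closed subspace remains a $P$-set there by \cref{lem: some properties of P-sets and almost P-spaces}(\ref{lem Item: being a P-set and contained in a closed set is a P-set in the closed subspace}); and, crucially, closed $P$-sets compose by transitivity (\cref{lem: closed P-set is a transitive property for normal spaces}), so that a closed $P$-set of $A$ is a closed $P$-set of $\omega^*$. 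Combining these with the ws-property of $f_0$ over the nested copies $f^{-1}({\downarrow}v)\subseteq f^{-1}({\downarrow}w)$ gives (2) and (3).

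The hard part is twofold. The first and deeper difficulty is the seed itself: producing a crowded closed nowhere dense $P$-set homeomorphic to $\omega^*$ is where the full strength of CH/Parovi\v{c}enko is really spent, and it is logically prior to everything above. The second is the $P$-set bookkeeping: one must consistently move ``being a closed $P$-set'' between $\omega^*$, the intermediate copies $\ccc(U_q)$, the seeds at each recursion level, and the nested fibres $f^{-1}({\downarrow}w)$, and it is \cref{lem: closed P-set is a transitive property for normal spaces} together with \cref{lem: some properties of P-sets and almost P-spaces}(\ref{lem Item: being a P-set and contained in a closed set is a P-set in the closed subspace}) that carries this weight; by contrast, once the closure identities $\ccc(f^{-1}(w))=f^{-1}({\downarrow}w)$ are in hand, interiority and surjectivity are essentially formal.
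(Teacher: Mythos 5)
Your proposal is, in structure and in nearly every detail, the paper's own proof: the same CH-provided seed $A$ (a closed nowhere dense $P$-set homeomorphic to $\omega^*$, which the paper takes from \cite[Lem.~1.4.3]{vMi84}), the same induction on the tree with its maximal points deleted (the paper inducts on depth rather than on $|W|$, an immaterial difference), the same application of the Second Partition Lemma (\cref{lem: Jan 0.6-good disjoint open cover of complement of closed nwd P set}) followed by the First Partition Lemma (\cref{lem: Jan 0.4-open partition of the complement of nwd closed P-set}), the same extension of the inductively given map by constants on the new pieces, and the same verification scheme via \cref{lem: OLD ws-condition 2}, \cref{lem: closed P-set is a transitive property for normal spaces}, and \cref{lem: some properties of P-sets and almost P-spaces}.

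There is, however, a genuine gap --- and it is worth flagging even though the paper's own write-up commits the very same slip. Your claim that ``the maximal points of $\mathfrak T'$ are exactly the parents in $\mathfrak T$ of the leaves'' is false in one direction: a parent of a leaf need not be maximal in $\mathfrak T'$. Take $W=\{r,a,b,c\}$ with $r<a$ and $r<b<c$. Then $\max(\mathfrak T)=\{a,c\}$, $W'=\{r,b\}$, and $\max(\mathfrak T')=\{b\}$, yet the leaf $a$ has parent $r\notin\max(\mathfrak T')$. Since your construction creates pieces $U_{q,s}$ only for leaf-children of points $q\in\max(\mathfrak T')$, the leaf $a$ receives no piece: the resulting $f$ has image $\{r,b,c\}$, which is not an upset of $\mathfrak T$, so $f$ is neither onto nor open; concretely, your asserted identity $\ccc(f^{-1}(a))=f^{-1}({\downarrow}a)$ fails, the left side being empty and the right side not. (The paper's corresponding assertion, that each $w\in\max(\mathfrak T)$ has its parent ${\sf p}_w$ in $\max(\mathfrak S)$, fails on the same example.) The repair is routine: run the Second Partition Lemma over the set $P$ of parents of leaves, i.e., over $\mathscr F=\{f_0^{-1}({\downarrow}p)\mid p\in P\}$. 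This is a legitimate input because $\max(\mathfrak T')\subseteq P$ (every point strictly above a maximal point of $\mathfrak T'$ is a leaf-child of it), each ${\downarrow}p$ lies in the downset $W'$, and hence $\bigcup_{p\in P}{\downarrow}p=W'$, so $\bigcup\mathscr F=A$; then split each resulting $U_p$ by the First Partition Lemma into one piece per leaf-child of $p$. With ``the parent of $w$'' in place of ``the point of $\max(\mathfrak T')$ below $w$'', your closure identities, interiority argument, and ws-verifications go through --- though note also that the homeomorphism $\ccc(U_{p,s})\cong\omega^*$, which you assert for leaves, is not part of the First Partition Lemma's conclusion and must be proved by the Parovi\v{c}enko-style argument the paper spells out when verifying \cref{lem: OLD ws-condition 2}.
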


\begin{proof}
The proof is by strong induction on the depth of $\mathfrak T$. 

{\bf Base case:} Suppose the depth of $\mathfrak T$ is 1. Then $W$ consists of a single point, say $W=\{w\}$, and there is exactly one function $f:\omega^*\to W$, which is clearly an onto interior map that satisfies 
\cref{lem: OLD ws-condition 2}.
Thus, $f$ is a ws-map.

{\bf Inductive case:} Suppose the depth of $\mathfrak T$ is greater than $1$ and 
there is a ws-map of $\omega^*$ onto any tree of depth less than the depth of $\mathfrak T$. By \cite[Lem.~1.4.3]{vMi84}, there is a closed nowhere dense $P$-set $A$ in $\omega^*$ that is homeomorphic to $\omega^*$. Let $\mathfrak S=(V,\le)$ be the subtree of $\mathfrak T$ where $V=W\setminus\mathrm{max}(\mathfrak T)$. Because the depth of $\mathfrak S$ is less than that 
of $\mathfrak T$ and $A$ is homeomorphic to $\omega^*$, 
the inductive hypothesis yields a ws-map $g:A\to V$. 

Let $v\in\mathrm{max}(\mathfrak S)$. Since $g$ is a ws-map, $g^{-1}({\downarrow}v)$ is a closed $P$-set in $A$ homeomorphic to $\omega^*$. Because $A$ is closed nowhere dense in $\omega^*$,  so is $g^{-1}({\downarrow}v)$. Moreover, \cref{lem: closed P-set is a transitive property for normal spaces} yields that $g^{-1}({\downarrow}v)$ is a 
$P$-set in $\omega^*$. 
Since $A=\bigcup\{g^{-1}({\downarrow}v)\mid v\in\max(\G)\}$, the Second Partition Lemma delivers a pairwise disjoint open cover $\{U_v\mid v\in\mathrm{max}(\mathfrak S)\}$ of $\omega^*\setminus A$ such that $\ccc(U_v)=U_v\cup g^{-1}({\downarrow}v)$, $\ccc(U_v)$ is homeomorphic to $\omega^*$, and $g^{-1}({\downarrow}v)$ is a closed nowhere dense $P$-set in $\ccc(U_v)$ for each $v\in\mathrm{max}(\mathfrak S)$.

For $v\in\mathrm{max}(\mathfrak S)$, the set ${\uparrow}v\setminus\{v\}$ of children 
of $v$
is a nonempty finite subset of $\mathrm{max}(\mathfrak T)$. Since $\ccc(U_v)$ is homeomorphic to $\omega^*$ and $g^{-1}({\downarrow}v)$ is a nonempty closed nowhere dense $P$-set in $\ccc(U_v)$, 
the First Partition Lemma
implies that there is an open partition $\{U_{w}\mid w\in{\uparrow}v\setminus\{v\}\}$ of $\ccc(U_v)\setminus g^{-1}({\downarrow}v)=U_v$ such that $\ccc(U_{w})=U_{w}\cup g^{-1}({\downarrow}v)$ for each $w\in{\uparrow}v\setminus \{v\}$. Because each $w\in\mathrm{max}(\mathfrak T)$ has a unique parent ${\sf p}_w\in\mathrm{max}(\mathfrak S)$, we have an open partition $\{U_w\mid w\in\mathrm{max}(\mathfrak T)\}$ of $\omega^*\setminus A$ such that $\ccc(U_w)=U_w\cup g^{-1}({\downarrow}{\sf p}_w)$ for each $w\in\mathrm{max}(\mathfrak T)$. Hence, we have a well-defined onto map $f:\omega^*\to W$ given by
\[
f(x) = \left\{
\begin{array}{ll}
g(x) & \text{if }x\in A,\\
w & \text{if }x\in U_w\text{ for some }w\in\mathrm{max}(\mathfrak T).
\end{array}
\right.
\]

We show that $f$ is a ws-map. To see that $f$ is continuous, let $w\in W$. 
If $w\in\mathrm{max}(\mathfrak T)$, then 
\[
f^{-1}({\downarrow} w) = f^{-1}(\{w\}\cup{\downarrow}{\sf p}_w) = U_w \cup g^{-1}({\downarrow}{\sf p}_w) = \ccc(U_w)
\]
is closed in $\omega^*$. If $w\not\in\mathrm{max}(\mathfrak T)$, then $w\in V$ and $f^{-1}({\downarrow} w) = g^{-1}({\downarrow}w)$ is closed in $A$, 
hence closed in $\omega^*$. 

To see that $f$ is open, let $C$ be a clopen subset of $\omega^*$, $u\in f(C)$, and $u\le w$. If $u\in\mathrm{max}(\mathfrak T)$, then $w=u\in f(C)$. 
If $u\not\in\mathrm{max}(\mathfrak T)$, then $u\in V$, which implies that $u\in f(C\cap A)=g(C\cap A)$.  
Since $g$ is interior and $C\cap A$ is open in $A$, we have that $g(C\cap A)$ is an upset in $\mathfrak S$. Thus, if $w\not\in\mathrm{max}(\mathfrak T)$, then $w\in V$ yielding that $w\in g(C\cap A)\subseteq f(C)$. 
If $w\in\mathrm{max}(\mathfrak T)$, the parent ${\sf p}_w$ of $w$ is in $\mathrm{max}(\mathfrak S)$ and we must have $u\le{\sf p}_w$ because $\mathfrak T$ is a tree. Therefore, ${\sf p}_w\in g(C\cap A)$ and there is $x\in C\cap A$ such that $g(x)={\sf p}_w\in{\downarrow}{\sf p}_w$, which yields that $C\cap g^{-1}({\downarrow}{\sf p}_w)\neq\varnothing$. Since $g^{-1}({\downarrow}{\sf p}_w)\subseteq \ccc(U_w)$, we obtain that $C\cap \ccc(U_w)\neq\varnothing$. Consequently,
$C\cap U_w\neq\varnothing$, which yields that 
$w\in f(C\cap U_w)\subseteq f(C)$, showing that $f(C)$ is an upset of $\mathfrak T$. Thus, $f$ is an interior mapping. 

To verify \cref{lem: OLD ws-condition 2}(\ref{10.2(1)}), 
let $w\in W$. 
If $w\not\in\mathrm{max}(\mathfrak T)$, then $w\in V$ and so
$f^{-1}({\downarrow}w)=g^{-1}({\downarrow}w)$ is homeomorphic to $\omega^*$ since $g$ is a ws-map. 
If $w\in\mathrm{max}(\mathfrak T)$, then 
\[
f^{-1}({\downarrow}w)=\ccc(f^{-1}(w))=\ccc(U_w)=U_w\cup g^{-1}({\downarrow}{\sf p}_w),
\]
showing that $f^{-1}({\downarrow}w)$ is the closure of a nonempty open subset of $\omega^*$. 
Thus, the subspace $f^{-1}({\downarrow}w)$ is a 
crowded Stone space that is an $F$-space of 
weight $\omega_1$. Because $f^{-1}(w)$ is open in $\omega^*$, it is an almost $P$-space by \cref{lem: some properties of P-sets and almost P-spaces}(\ref{lem Item: almost P-space is an open hereditary property}). Since $g^{-1}({\sf p}_w)$ is a $P$-set in $\omega^*$, it is a $P$-set in $f^{-1}({\downarrow}w)$ by \cref{lem: some properties of P-sets and almost P-spaces}(\ref{lem Item: being a P-set and contained in a closed set is a P-set in the closed subspace}). Therefore, $f^{-1}({\downarrow}w)$ is an almost $P$-space by \cref{lem: jan lemma 0.1}(\ref{lem item: jan lemma 0.1-compact and almost P implies almost P}), 
and hence $f^{-1}({\downarrow}w)$ is homeomorphic to $\omega^*$ by Parovi\v{c}enko's Theorem.

To see that \cref{lem: OLD ws-condition 2}(\ref{10.2(2)}) holds, let $w\in W\setminus\mathrm{max}(\mathfrak T)$. Then $w\in V$ and $f^{-1}({\downarrow}w)=g^{-1}({\downarrow}w)$. Thus, it is sufficient to show that $g^{-1}({\downarrow}w)$ is a $P$-set in $\omega^*$. But his follows from \cref{lem: closed P-set is a transitive property for normal spaces} since 
$g^{-1}({\downarrow}w)$ is a closed $P$-set in $A$. 

Lastly, we show that $f$ satisfies \cref{lem: OLD ws-condition 2}(\ref{10.2(3)}). Let $w,v\in W$ be such that $v<w$. If $w\in V$, then $v\in V$
and the result follows since $g$ is a ws-map. Suppose
that $w\in\mathrm{max}(\mathfrak T)$. Recalling that ${\sf p}_w$ denotes the parent of $w$, we have that $v\le {\sf p}_w$ since $\mathfrak T$ is a tree. Because $g$ is a ws-map and ${\sf p}_w\in V$, we obtain that
$f^{-1}({\downarrow}v)=g^{-1}({\downarrow}v)$ is a closed 
$P$-set in $g^{-1}({\downarrow}{\sf p}_w)$. 
Since $f^{-1}({\downarrow}w)$ is closed in $\omega^*$ and contains $g^{-1}({\downarrow}{\sf p}_w)$, which is a $P$-set in $\omega^*$, we have that $g^{-1}({\downarrow}{\sf p}_w)$ is a $P$-set in $f^{-1}({\downarrow}w)$ by Lemma~\ref{lem: some properties of P-sets and almost P-spaces}(\ref{lem Item: being a P-set and contained in a closed set is a P-set in the closed subspace}). 
Therefore, \cref{lem: closed P-set is a transitive property for normal spaces} yields that $f^{-1}({\downarrow}v)$ is a 
$P$-set in $f^{-1}({\downarrow}w)$. Thus, $f$ is a ws-map. 
\end{proof}

\subsection{The second mapping lemma}

\begin{lemma}
\label{lem: Jan's 0.8-inverse image of downset of subset of tree is homeo to omega star}
\emph{(CH)} Let $\mathfrak T=(W,\le)$ be a 
tree, $f:\omega^*\to W$ a ws-mapping, and $V\subseteq W$ nonempty. Then 
$\bigcup_{w\in V}f^{-1}({\downarrow}w)$ is homeomorphic to $\omega^*$. \label{lem item: inv img of down is homeo omega star}
\end{lemma}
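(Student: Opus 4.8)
The plan is to set $Z := \bigcup_{w\in V}f^{-1}({\downarrow}w)$ and observe that $Z = f^{-1}({\downarrow}V)$, where ${\downarrow}V=\bigcup_{w\in V}{\downarrow}w$ is the downward closure of $V$ in $\mathfrak T$. Since ${\downarrow}V$ is a downset, it is closed in $\mathfrak T$ (viewed as a space), so continuity of the interior map $f$ gives that $Z$ is a closed subspace of $\omega^*$. I would then verify that $Z$ satisfies the hypotheses of Parovi\v{c}enko's Theorem (\cref{thm: 2 main tools}(\ref{thm item: PT})): namely, that $Z$ is a crowded Stone space of weight $\omega_1$ that is both an $F$-space and an almost $P$-space. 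Once these are in hand, $Z\cong\omega^*$ is immediate. This mirrors the verification carried out in \cref{last label} within the proof of the Second Partition Lemma.

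The routine bookkeeping proceeds as follows. As a closed subspace of the Stone space $\omega^*$, the space $Z$ is itself a Stone space; as a closed subspace of the normal $F$-space $\omega^*$, it is an $F$-space. For each $w\in V$, the set $f^{-1}({\downarrow}w)$ is homeomorphic to $\omega^*$ by the ws-condition (\cref{def: ws-interior map}(1)), hence crowded and infinite; consequently $Z$ is crowded, since for any $x\in Z$, picking $w\in V$ with $x\in f^{-1}({\downarrow}w)$ and any $Z$-neighborhood $N$ of $x$, the set $N\cap f^{-1}({\downarrow}w)$ is a nonempty open subset of the crowded space $f^{-1}({\downarrow}w)$ and therefore meets $f^{-1}({\downarrow}w)\setminus\{x\}\subseteq Z$. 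Being an infinite closed subspace of $\omega^*$, and hence of $\beta(\omega)$, the space $Z$ has weight $\cont=\omega_1$ by \cite[Thm.~3.6.14]{Eng89} and CH.

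The key step is to show that $Z$ is an almost $P$-space, and this is where I would invoke \cref{lem: Jan 0.3-covered by closed almost P-spaces is an almost P-space}. Because $W$ is a finite tree, $V$ is finite, so $Z=\bigcup_{w\in V}f^{-1}({\downarrow}w)$ is a \emph{finite} union of subspaces, each of which is closed in $\omega^*$ (as the preimage of the closed set ${\downarrow}w$ under the continuous map $f$) and homeomorphic to $\omega^*$, hence an almost $P$-space. Applying \cref{lem: Jan 0.3-covered by closed almost P-spaces is an almost P-space} to this finite closed cover yields that $Z$ is an almost $P$-space, and Parovi\v{c}enko's Theorem then finishes the proof. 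I do not anticipate a genuine obstacle here: the argument is essentially an assembly of the ws-hypotheses with the cover lemma, the only mildly delicate point being the crowdedness of the whole union $Z$, which reduces to crowdedness of each closed piece $f^{-1}({\downarrow}w)$ as indicated above.
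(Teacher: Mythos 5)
Your proposal is correct and follows essentially the same route as the paper: both verify the hypotheses of Parovi\v{c}enko's Theorem for $Z=\bigcup_{w\in V}f^{-1}({\downarrow}w)$, noting it is a nonempty closed (hence Stone, $F$-space) subspace of $\omega^*$, using the ws-condition to get that each piece $f^{-1}({\downarrow}w)$ is homeomorphic to $\omega^*$ (hence crowded, of weight $\omega_1$, and an almost $P$-space), and invoking \cref{lem: Jan 0.3-covered by closed almost P-spaces is an almost P-space} on the finite closed cover to conclude $Z$ is an almost $P$-space. Your extra details (the explicit crowdedness argument and the appeal to \cite[Thm.~3.6.14]{Eng89} for the weight) are only slightly more verbose versions of steps the paper leaves implicit.
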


\begin{proof} 
It suffices to show that $\bigcup_{w\in V}f^{-1}({\downarrow}w)$ satisfies the conditions of Parovi\v{c}enko's Theorem.
Since $V$ is a nonempty finite set and $f$ is interior, 
$\bigcup_{w\in V}f^{-1}({\downarrow}w)$ is a nonempty closed subset of $\omega^*$. Therefore, the subspace $\bigcup_{w\in V}f^{-1}({\downarrow}w)$ of $\omega^*$ is a Stone space that is an 
$F$-space. Moreover, since $f$ is a ws-map, we have for each $w\in V$ that the subspace $f^{-1}({\downarrow}w)$ of $\omega^*$ is homeomorphic to $\omega^*$, and hence $f^{-1}({\downarrow}w)$ is a crowded almost $P$-space of weight $\omega_1$. Thus, 
$\bigcup_{w\in V}f^{-1}({\downarrow}w)$ is a crowded space of weight $\omega_1$. Furthermore, since $V$ is finite and $f^{-1}({\downarrow}w)$ is closed in $\bigcup_{w\in V}f^{-1}({\downarrow}w)$ for each $w\in V$, we have that $\bigcup_{w\in V}f^{-1}({\downarrow}w)$ is an almost $P$-space by \cref{lem: Jan 0.3-covered by closed almost P-spaces is an almost P-space}.
\end{proof}

\begin{lemma}\label{lem: Jans 0.9-commuting diagram for good interior maps}
\emph{(CH)} Let $\mathfrak T=(W,\le)$ be a 
tree and $g,h:\omega^* \to \mathfrak T$ ws-maps. Then there is a homeomorphism $\varphi:\omega^* \to \omega^*$ such that $h=g\circ \varphi$.
\[
\begin{tikzcd}
\omega^* 
\arrow[rr, dashed,"\varphi"] 
\arrow[dr, "h"']&& 
\omega^*
\arrow[dl, "g"]  
\\
&  
\mathfrak T 
&
\end{tikzcd}
\]
\end{lemma}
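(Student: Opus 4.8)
The plan is to induct on the depth of $\mathfrak{T}$, paralleling the construction in the First Mapping Lemma (\cref{lem: Jans 0.7-good interior mapping of omega star onto a finite tree}). The base case (depth $1$) is trivial: then $W$ is a single point, $g=h$ is the unique map $\omega^*\to W$, and we take $\varphi=\mathrm{id}$. For the inductive step, assume the result for all trees of smaller depth and let $\mathfrak{S}=(V,\le)$ be the subtree on the downset $V=W\setminus\mathrm{max}(\mathfrak{T})$, a tree of strictly smaller depth. First I would observe that $g^{-1}(V)$ and $h^{-1}(V)$ are closed in $\omega^*$ and homeomorphic to $\omega^*$ by \cref{lem: Jan's 0.8-inverse image of downset of subset of tree is homeo to omega star}, and that the corestricted restrictions $g|\colon g^{-1}(V)\to\mathfrak{S}$ and $h|\colon h^{-1}(V)\to\mathfrak{S}$ are again ws-maps: the restriction of an interior map to the preimage of a subset (corestricted to that subset) is interior, and the ws-conditions transfer because for $w\in V$ one has $(g|)^{-1}({\downarrow}w)=g^{-1}({\downarrow}w)$, which is a $P$-set in $\omega^*$ homeomorphic to $\omega^*$, hence a $P$-set in the closed subspace $g^{-1}(V)$ by \cref{lem: some properties of P-sets and almost P-spaces}(\ref{lem Item: being a P-set and contained in a closed set is a P-set in the closed subspace}). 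Since being a ws-map and the conclusion of the lemma are invariant under homeomorphism, the inductive hypothesis (applied after identifying $g^{-1}(V)$ and $h^{-1}(V)$ with $\omega^*$) delivers a homeomorphism $\psi_0\colon h^{-1}(V)\to g^{-1}(V)$ with $g\circ\psi_0=h$ on $h^{-1}(V)$.

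It remains to extend $\psi_0$ over the leaves. For each $m\in\mathrm{max}(\mathfrak{T})$ write $p_m$ for its unique parent; since $\mathfrak{T}$ is a tree, ${\downarrow}m\setminus\{m\}={\downarrow}p_m$, so $g^{-1}(m)$ is open with $\ccc(g^{-1}(m))=g^{-1}({\downarrow}m)$ and boundary $g^{-1}({\downarrow}p_m)$, and likewise for $h$. The ws-conditions give that $\ccc(g^{-1}(m))=g^{-1}({\downarrow}m)$ is homeomorphic to $\omega^*$ and that $B_m^g:=g^{-1}({\downarrow}p_m)$ is a closed nowhere dense $P$-set therein: it is a $P$-set in $\omega^*$ by \cref{def: ws-interior map}, hence a $P$-set in the closed set $g^{-1}({\downarrow}m)$ by \cref{lem: some properties of P-sets and almost P-spaces}(\ref{lem Item: being a P-set and contained in a closed set is a P-set in the closed subspace}), and it is nowhere dense because its complement $g^{-1}(m)$ is dense. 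The same holds for $B_m^h:=h^{-1}({\downarrow}p_m)$. Because $\psi_0$ intertwines $h$ and $g$, it maps $h^{-1}(S)$ onto $g^{-1}(S)$ for every $S\subseteq V$; taking $S={\downarrow}p_m$ shows $\psi_0$ restricts to a homeomorphism $B_m^h\to B_m^g$. Transporting the two copies of $\omega^*$ to $\omega^*$ by fixed homeomorphisms (exactly as in the proof of the Second Partition Lemma) and invoking the Homeomorphism Extension Theorem (\cref{thm: 2 main tools}(\ref{thm item: HET})), I would extend this boundary homeomorphism to a homeomorphism $\varphi_m\colon\ccc(h^{-1}(m))\to\ccc(g^{-1}(m))$ agreeing with $\psi_0$ on $B_m^h$; as $\varphi_m$ carries $B_m^h$ onto $B_m^g$, it carries the complement $h^{-1}(m)$ onto $g^{-1}(m)$.

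Finally I would glue. Define $\varphi\colon\omega^*\to\omega^*$ to be $\psi_0$ on $h^{-1}(V)$ and $\varphi_m$ on $\ccc(h^{-1}(m))$ for each leaf $m$. These finitely many closed sets cover $\omega^*$, and on any overlap the relevant pieces lie in boundaries $B_m^h$, where every piece agrees with $\psi_0$; hence $\varphi$ is well defined. The images $g^{-1}(V)$ and $\{g^{-1}(m)\}_{m}$ tile $\omega^*$ just as their preimages tile the domain, so $\varphi$ is a bijection, and the pasting lemma for finitely many closed sets shows that both $\varphi$ and $\varphi^{-1}$ are continuous, making $\varphi$ a homeomorphism. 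The identity $g\circ\varphi=h$ holds on $h^{-1}(V)$ by the choice of $\psi_0$, on each $h^{-1}(m)$ because $\varphi_m$ sends it into $g^{-1}(m)$, and on each $B_m^h$ again by the choice of $\psi_0$. The \textbf{main obstacle} is the extension step: the Homeomorphism Extension Theorem is stated for self-homeomorphisms of $\omega^*$, whereas here the leaf-closures $\ccc(h^{-1}(m))$ and $\ccc(g^{-1}(m))$ are a priori distinct copies of $\omega^*$, so one must carefully recast the boundary homeomorphism as a homeomorphism between closed nowhere dense $P$-sets inside $\omega^*$ and verify that all hypotheses (closed, nowhere dense, $P$-set, homeomorphic to $\omega^*$) survive the transport.
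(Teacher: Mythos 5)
Your proof is correct and follows essentially the same route as the paper's: induction on the depth of $\mathfrak T$, restriction of both ws-maps to the preimage of $W\setminus\mathrm{max}(\mathfrak T)$ so as to invoke the inductive hypothesis, an application of the Homeomorphism Extension Theorem over each leaf, and a gluing argument. The differences are cosmetic: you spell out the transport of the boundary homeomorphism into genuine closed nowhere dense $P$-sets of $\omega^*$ before invoking the extension theorem (a step the paper leaves implicit) and check continuity via the pasting lemma over a closed cover, where the paper instead computes preimages of closed sets directly and concludes from the bijectivity.
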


\begin{proof}
The proof is by strong induction on the depth of $\mathfrak T$. 

{\bf Base case:} Suppose the depth of $\mathfrak T$ is 1. Then $W$ consists of a single point, 
so $g=h$ and we may take $\varphi$ to be the identity 
on $\omega^*$. 

{\bf Inductive case:} Suppose the depth of $\mathfrak T$ is $>1$ and the result holds for any tree of depth less than that
of $\mathfrak T$. Let $\mathfrak S=(V,\le)$ be the subtree of $\mathfrak T$ where $V=W\setminus\mathrm{max}(\mathfrak T)$. Set $A=g^{-1}(V)$ and $B=h^{-1}(V)$. 
By \cref{lem: Jan's 0.8-inverse image of downset of subset of tree is homeo to omega star}, both $A$ and $B$ are homeomorphic to $\omega^*$. Let $G:A\to V$ and $H:B\to V$ be the restrictions of $g$ and $h$, respectively. Since $g$ and $h$ are well suited, 
both $G$ and $H$ are ws-mappings onto $\mathfrak S$.
Therefore, the inductive hypothesis yields a homeomorphism $\psi:B\to A$ such that $H=G\circ\psi$.
\[
\begin{tikzcd}
B 
\arrow[rr, 
"\psi"] 
\arrow[dr, "H"']&& 
A
\arrow[dl, "G"]  
\\
&  
\mathfrak S
&
\end{tikzcd}
\] 
Let $w\in\mathrm{max}(\mathfrak T)$ and $v$ be the parent of $w$. Since $g$ 
and $h$ are ws-maps, 
$g^{-1}({\downarrow}w)=\ccc(g^{-1}(w))$ and $h^{-1}({\downarrow}w)=\ccc(h^{-1}(w))$ are closed subsets of $\omega^*$ that are both 
homeomorphic to $\omega^*$, 
$g^{-1}({\downarrow}v)$ is a closed nowhere dense $P$-set in $g^{-1}({\downarrow}w)$, 
and $h^{-1}({\downarrow}v)$ is a closed nowhere dense $P$-set in $h^{-1}({\downarrow}w)$.  
Because ${\downarrow}v\subseteq V$, we have 
\[
\psi^{-1}(g^{-1}({\downarrow}v))=\psi^{-1}(G^{-1}({\downarrow}v))=H^{-1}({\downarrow}v)=h^{-1}({\downarrow}v).
\]
Thus, the homeomorphism obtained by restricting the domain of $\psi$ to $g^{-1}({\downarrow}v)$ and codomain to $h^{-1}({\downarrow}v)$ can be extended by the Homeomorphism Extension Theorem
to a homeomorphism $\psi_w:h^{-1}({\downarrow}w)\to g^{-1}({\downarrow}w)$. 
Define $\varphi:\omega^*\to \omega^*$ by
\[
\varphi(x) = \left\{
\begin{array}{ll}
\psi(x) & \text{if }x\in B,\\
\psi_w(x) & \text{if } x\in h^{-1}(w)\text{ for some }w\in\mathrm{max}(\mathfrak T).
\end{array}
\right.
\]
Then $\varphi$ is a well-defined bijection since $\{B\}\cup\{h^{-1}(w)\mid w\in\mathrm{max}(\mathfrak T)\}$ and  $\{A\}\cup\{g^{-1}(w)\mid w\in\mathrm{max}(\mathfrak T)\}$ are partitions of $\omega^*$, 
$\psi$ is a bijection such that $\psi(B)=A$, and $\psi_w$ is a bijection such that $\psi_w(h^{-1}(w))=g^{-1}(w)$ for each $w\in\mathrm{max}(\mathfrak T)$. Moreover, if $x\in B$ then 
$\varphi(x)=\psi(x)\in A$, which yields that
\[
h(x) = H(x) = 
G(\psi(x)) = g(\psi(x)) = g(\varphi(x)). 
\]
Suppose that $x\in h^{-1}(w)$ for some $w\in\mathrm{max}(\mathfrak T)$. Then $\varphi(x)=\psi_w(x)\in g^{-1}(w)$, and so
\[
h(x)= w = g(\varphi(x)). 
\]
Thus, $h=g\circ\varphi$. 

To see that $\varphi$ is continuous, let $C$ be a closed subset of $\omega^*$. Because $\mathfrak T$ is finite, we have that 
$
\omega^*=g^{-1}(W)=g^{-1}\left(\bigcup_{w\in\mathrm{max}(\mathfrak T)}{\downarrow}w\right)
=\bigcup_{w\in\mathrm{max}(\mathfrak T)}g^{-1}\left({\downarrow}w\right)
$, which yields that 
\[
\begin{array}{rcl}
\varphi^{-1}(C) &=& \varphi^{-1}\left(C\cap \omega^*\right) \\
&=&\varphi^{-1}\left(C\cap\bigcup\nolimits_{w\in\mathrm{max}(\mathfrak T)}g^{-1}({\downarrow}w)\right)\\
&=&\varphi^{-1}\left(\bigcup\nolimits_{w\in\mathrm{max}(\mathfrak T)}(C\cap g^{-1}({\downarrow}w))\right) \\
&=& 
\bigcup\nolimits_{w\in\mathrm{max}(\mathfrak T)}\varphi^{-1}(C\cap g^{-1}({\downarrow}w)) \\
&=& 
\bigcup\nolimits_{w\in\mathrm{max}(\mathfrak T)}\psi_w^{-1}(C\cap g^{-1}({\downarrow}w)).
\end{array}
\] 
For each $w\in\mathrm{max}(\mathfrak T)$, since $\psi_w$ is a homeomorphism and  $C\cap g^{-1}({\downarrow}w)$ is closed in $g^{-1}({\downarrow}w)$, we have that  
$\psi_w^{-1}(C\cap g^{-1}({\downarrow}w))$ is closed in $h^{-1}({\downarrow}w)$. Hence, $\psi_w^{-1}(C\cap g^{-1}({\downarrow}w))$ is closed in $\omega^*$ because $h^{-1}({\downarrow}w)$ is closed in $\omega^*$. Therefore, $\varphi^{-1}(C)$ is closed in $\omega^*$ as $\varphi^{-1}(C)$ is a finite union of closed subsets of $\omega^*$. Thus, $\varphi$ is a continuous bijection, hence a homeomorphism. 
\end{proof}

\begin{lemma}
[Second Mapping Lemma]
\label{lem: Jans 0.10-extending good interior maps}
\emph{(CH)} Let $\mathfrak T=(W,\le)$ be a 
tree, $V$ a downset of $\mathfrak T$ contained in $W\setminus\mathrm{max}(\mathfrak T)$, and $\mathfrak S$ the subtree $(V,\le)$ of $\mathfrak T$. If $A$ is a closed nowhere dense $P$-set in $\omega^*$ that is homeomorphic to $\omega^*$ and $g:A\to \mathfrak S$ is a ws-map, then $g$ can be extended to a ws-map $f:\omega^*\to \mathfrak T$ such that $f^{-1}(W\setminus V) = \omega^*\setminus A$.
\end{lemma}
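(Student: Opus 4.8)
The plan is to avoid constructing the extension by hand and instead \emph{transport} a generic ws-map onto $\mathfrak T$ by a self-homeomorphism of $\omega^*$ that drags $A$ into the correct position while respecting $g$. First I would apply the First Mapping Lemma (\cref{lem: Jans 0.7-good interior mapping of omega star onto a finite tree}) to obtain \emph{some} ws-map $f_0\colon\omega^*\to\mathfrak T$, and set $B:=f_0^{-1}(V)$. The goal is then to produce a self-homeomorphism $\Phi$ of $\omega^*$ with $\Phi(A)=B$ and $(f_0\circ\Phi)|_A=g$, and to take $f:=f_0\circ\Phi$. Granting these two properties, $f$ is a ws-map (a ws-map precomposed with a homeomorphism is again a ws-map, since homeomorphisms preserve closedness, $P$-sets, and the property of being homeomorphic to $\omega^*$); it is onto $\mathfrak T$; it satisfies $f^{-1}(V)=\Phi^{-1}(B)=A$, hence $f^{-1}(W\setminus V)=\omega^*\setminus A$; and $f|_A=(f_0\circ\Phi)|_A=g$. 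Thus everything reduces to building $\Phi$.

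The technical heart is to record the properties of $B$ and of the restriction $f_0|_B$. Since $V$ is a downset, $B=f_0^{-1}(V)=\bigcup_{w\in V}f_0^{-1}({\downarrow}w)$, so $B$ is homeomorphic to $\omega^*$ by \cref{lem: Jan's 0.8-inverse image of downset of subset of tree is homeo to omega star}; it is a $P$-set as a finite union of the $P$-sets $f_0^{-1}({\downarrow}w)$ (\cref{lem: some properties of P-sets and almost P-spaces}(\ref{lem Item: finite union of P-sets is a P-set})); and it is closed and nowhere dense because $f_0^{-1}(\max(\mathfrak T))$ is a dense open set, its closure being $\ccc(f_0^{-1}(\max(\mathfrak T)))=f_0^{-1}({\downarrow}\max(\mathfrak T))=f_0^{-1}(W)=\omega^*$ (as $f_0$ is interior), while $B\subseteq\omega^*\setminus f_0^{-1}(\max(\mathfrak T))$ because $V\subseteq W\setminus\max(\mathfrak T)$. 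Next, $f_0|_B\colon B\to\mathfrak S$ is a ws-map onto $\mathfrak S$: the subspace $\mathfrak S$ of $\mathfrak T$ carries exactly the Alexandroff topology of $(V,\le)$, restricting an interior surjection to the preimage of a subspace is again interior, and for $w\in V$ one has ${\downarrow}_{\mathfrak S}w={\downarrow}_{\mathfrak T}w\subseteq V$, so $(f_0|_B)^{-1}({\downarrow}w)=f_0^{-1}({\downarrow}w)$; the ws-conditions for $f_0|_B$ then follow verbatim from those for $f_0$, using that a $P$-set in $\omega^*$ contained in the closed set $B$ remains a $P$-set in $B$ (\cref{lem: some properties of P-sets and almost P-spaces}(\ref{lem Item: being a P-set and contained in a closed set is a P-set in the closed subspace})).

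With these in hand, $g\colon A\to\mathfrak S$ and $f_0|_B\colon B\to\mathfrak S$ are both ws-maps onto the tree $\mathfrak S$ whose domains are homeomorphic to $\omega^*$. Fixing homeomorphisms $\alpha\colon\omega^*\to A$ and $\beta\colon\omega^*\to B$, the maps $g\circ\alpha$ and $(f_0|_B)\circ\beta$ are ws-maps $\omega^*\to\mathfrak S$, so \cref{lem: Jans 0.9-commuting diagram for good interior maps} supplies a homeomorphism $\varphi\colon\omega^*\to\omega^*$ with $g\circ\alpha=(f_0|_B)\circ\beta\circ\varphi$; setting $\psi:=\beta\circ\varphi\circ\alpha^{-1}$ yields a homeomorphism $\psi\colon A\to B$ with $(f_0|_B)\circ\psi=g$. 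Finally, $A$ and $B$ are both closed nowhere dense $P$-sets in $\omega^*$, so the Homeomorphism Extension Theorem (\cref{thm: 2 main tools}(\ref{thm item: HET})) extends $\psi$ to a self-homeomorphism $\Phi$ of $\omega^*$, and $f:=f_0\circ\Phi$ is the required map by the verification in the first paragraph. I expect the main obstacle to be the bookkeeping of the second paragraph—establishing that $B$ is nowhere dense (which is exactly the hypothesis needed to invoke HET) and that $f_0|_B$ is genuinely a ws-map onto the \emph{subtree} $\mathfrak S$ rather than onto $\mathfrak T$—whereas the closing gluing step is essentially forced once $\psi$ has been produced.
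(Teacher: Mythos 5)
Your proposal is correct and follows essentially the same route as the paper's proof: obtain a generic ws-map onto $\mathfrak T$ via the First Mapping Lemma, show $B=f_0^{-1}(V)$ is a closed nowhere dense $P$-set homeomorphic to $\omega^*$ whose restriction is a ws-map onto $\mathfrak S$, use \cref{lem: Jans 0.9-commuting diagram for good interior maps} to get $\psi\colon A\to B$ intertwining $g$ with that restriction, extend $\psi$ by the Homeomorphism Extension Theorem, and compose. Your only deviation is cosmetic: you conjugate by explicit homeomorphisms $\alpha,\beta$ to apply \cref{lem: Jans 0.9-commuting diagram for good interior maps} literally on $\omega^*$, and you spell out the nowhere-density of $B$ and the ws-property of $f_0|_B$, details the paper leaves implicit.
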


\begin{proof}
The First Mapping Lemma
yields a ws-map $h:\omega^*\to \mathfrak T$. Set $B=h^{-1}(V)$. Because $h$ is an onto interior mapping and $V$ is a downset of $\mathfrak T$ that is disjoint from $\mathrm{max}(\mathfrak T)$, we have that $B$ is a closed nowhere dense subset of $\omega^*$. Since $V\subseteq W\setminus\mathrm{max}(\mathfrak T)$ and $h$ is a ws-map, 
$h^{-1}({\downarrow}v)$ is a $P$-set in $\omega^*$ for each $v\in V$. Thus, $B=\bigcup_{v\in V}h^{-1}({\downarrow}v)$ is a $P$-set in $\omega^*$ by Lemma~\ref{lem: some properties of P-sets and almost P-spaces}(\ref{lem Item: finite union of P-sets is a P-set}). Moreover, $B$ is homeomorphic to $\omega^*$ by \cref{lem: Jan's 0.8-inverse image of downset of subset of tree is homeo to omega star}.
 
Let $H:B\to V$ be the restriction of $h$. Then $H$ is a ws-mapping of $B$ onto $\mathfrak S$.  
Because $A$ and $B$ are homeomorphic to $\omega^*$, 
\cref{lem: Jans 0.9-commuting diagram for good interior maps} yields
a homeomorphism $\psi:A\to B$ such that $g=H\circ\psi$. Since $A$ and $B$ are closed nowhere dense $P$-sets in $\omega^*$, the Homeomorphism Extension Theorem
produces a homeomorphism $\varphi:\omega^*\to\omega^*$ that extends $\psi$. 

Set $f=h\circ\varphi$. Because $\varphi$ is a homeomorphism and $h$ is an onto interior mapping, 
$f$ is an interior mapping of $\omega^*$ onto $\mathfrak T$. For each $x\in A$, we have 
\[
f(x) = 
h(\varphi(x)) = h(\psi(x)) =H(\psi(x)) = 
g(x),
\]
showing that $f$ extends $g$. Furthermore,
\begin{eqnarray*}
f^{-1}(W\setminus V) &=& (h\circ \varphi)^{-1}(W\setminus V) = \varphi^{-1}(h^{-1}(W\setminus V)) =\varphi^{-1}(\omega^*\setminus h^{-1}(V)) \\
&=& \varphi^{-1}(\omega^*\setminus B) = \omega^*\setminus\varphi^{-1}(B) = \omega^*\setminus\psi^{-1}(B) = \omega^*\setminus A.
\end{eqnarray*}
Finally, since $h$ is a ws-map and $\varphi$ is a homeomorphism, their composition $f$ is a ws-map.  
\end{proof}

\section{The logic of $\beta(\omega^2)$}\label{logic of beta omega squared}

We are finally ready to show that each willow tree is an interior image of $\beta(\omega^2)$.
We first show that each partially ordered willow tree is an interior image of $\beta(\omega^2)$, and then 
generalize this result to arbitrary willow trees.

\subsection{Partially ordered willow trees}

We start by recalling that for each open subset $U$ of a Tychnoff space $X$, there is the greatest open subset $\mathrm{Ex}(U)$ of $\beta(X)$ whose intersection with $X$ is equal to $U$, namely 
$
\mathrm{Ex}(U)=\beta(X)\setminus\ccc(X\setminus U)
$ 
(see, e.g., \cite[p.~388]{Eng89}). 

\begin{lemma}\label{lem: properties of Ex}
\cite[Lem.~7.1.13]{Eng89}
Let $U,V$ be open subsets of a Tychonoff space $X$.
\begin{enumerate}
\item $\mathrm{Ex}(U\cap V)=\mathrm{Ex}(U)\cap\mathrm{Ex}(V)$.
\item If $X$ is normal, then $\mathrm{Ex}(U\cup V)=\mathrm{Ex}(U)\cup\mathrm{Ex}(V)$.
\end{enumerate}
\end{lemma}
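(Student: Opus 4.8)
The plan is to work directly from the explicit description $\mathrm{Ex}(U)=\beta(X)\setminus\ccc(X\setminus U)$ recalled just before the statement, where all closures are taken in $\beta(X)$. Both identities then become statements about how the closure operator of $\beta(X)$ interacts with the complemented sets $X\setminus U$ and $X\setminus V$, and part (1) will require no separation hypothesis at all. First I would record the two elementary facts that drive everything: De Morgan's laws, together with the fact that in any topological space closure distributes over finite unions, i.e.\ $\ccc(A\cup B)=\ccc(A)\cup\ccc(B)$.

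For part (1), I would simply compute
\[
\mathrm{Ex}(U\cap V)=\beta(X)\setminus\ccc\big((X\setminus U)\cup(X\setminus V)\big)=\beta(X)\setminus\big(\ccc(X\setminus U)\cup\ccc(X\setminus V)\big),
\]
and then apply De Morgan to rewrite the right-hand side as $\big(\beta(X)\setminus\ccc(X\setminus U)\big)\cap\big(\beta(X)\setminus\ccc(X\setminus V)\big)=\mathrm{Ex}(U)\cap\mathrm{Ex}(V)$. This uses only distributivity of closure over the union $(X\setminus U)\cup(X\setminus V)=X\setminus(U\cap V)$, so normality plays no role here. For part (2), writing $F_1=X\setminus U$ and $F_2=X\setminus V$ (closed in $X$), the same manipulation reduces the desired equality $\mathrm{Ex}(U\cup V)=\mathrm{Ex}(U)\cup\mathrm{Ex}(V)$ to the single closure identity
\[
\ccc(F_1\cap F_2)=\ccc(F_1)\cap\ccc(F_2).
\]
The inclusion $\ccc(F_1\cap F_2)\subseteq\ccc(F_1)\cap\ccc(F_2)$ is automatic from monotonicity (equivalently, $\mathrm{Ex}(U\cup V)\supseteq\mathrm{Ex}(U)\cup\mathrm{Ex}(V)$ holds for free), so the whole content of part (2) is the reverse inclusion $\ccc(F_1)\cap\ccc(F_2)\subseteq\ccc(F_1\cap F_2)$, which is where normality of $X$ must enter.

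This reverse inclusion is the main obstacle, and I would prove it by a localization argument that reduces to the disjoint case already available via \cite[Cor.~3.6.4]{Eng89} (disjoint closed subsets of a normal space have disjoint closures in $\beta(X)$). Suppose $p\in\ccc(F_1)\cap\ccc(F_2)$ but $p\notin\ccc(F_1\cap F_2)$. Since $\beta(X)$ is compact Hausdorff, choose an open $O\ni p$ in $\beta(X)$ with $\ccc(O)\cap\ccc(F_1\cap F_2)=\varnothing$, and set $W=\ccc_X(O\cap X)$, a closed subset of $X$ disjoint from $F_1\cap F_2$. Then $W\cap F_1$ and $W\cap F_2$ are disjoint closed subsets of the normal space $X$, so \cite[Cor.~3.6.4]{Eng89} gives $\ccc(W\cap F_1)\cap\ccc(W\cap F_2)=\varnothing$. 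On the other hand, because $p\in O$ and $p\in\ccc(F_i)$, every neighborhood of $p$ contained in $O$ meets $F_i$, whence $p\in\ccc(O\cap F_i)\subseteq\ccc(W\cap F_i)$ for $i=1,2$ (using $O\cap F_i\subseteq W\cap F_i$). This forces $p\in\ccc(W\cap F_1)\cap\ccc(W\cap F_2)=\varnothing$, a contradiction, completing the proof. I expect the only delicate points to be verifying $O\cap F_i\subseteq W\cap F_i$ and that $W\cap F_1\cap F_2=\varnothing$, both of which are immediate from the definitions, so once the localization is set up correctly the argument closes cleanly.
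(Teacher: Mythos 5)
Your proof is correct. Note that for this lemma the paper supplies no argument of its own---it is quoted directly from Engelking \cite[Lem.~7.1.13]{Eng89}---so the comparison is with the standard textbook proof, and your argument is essentially that proof, reconstructed from the formula $\mathrm{Ex}(U)=\beta(X)\setminus\ccc(X\setminus U)$. Part (1) is indeed nothing but De Morgan plus finite additivity of closure, and you are right that it needs no separation hypothesis beyond what makes $\beta(X)$ exist. Your reduction of part (2) to the closure identity $\ccc(F_1)\cap\ccc(F_2)\subseteq\ccc(F_1\cap F_2)$ in $\beta(X)$ for closed $F_1,F_2\subseteq X$ is exactly where the content lies, and the localization argument is sound: regularity of the compact Hausdorff space $\beta(X)$ provides the open set $O$ with $\ccc(O)\cap\ccc(F_1\cap F_2)=\varnothing$; the containment $W=\ccc_X(O\cap X)\subseteq\ccc_{\beta(X)}(O\cap X)\subseteq\ccc(O)$ gives $W\cap F_1\cap F_2=\varnothing$ (this is the one step worth writing out, since $W$ is a closure in $X$ while the disjointness hypothesis lives in $\beta(X)$); the sets $W\cap F_1$ and $W\cap F_2$ are then disjoint closed subsets of the normal space $X$, so \cite[Cor.~3.6.4]{Eng89} applies; and $p\in\ccc(O\cap F_i)\subseteq\ccc(W\cap F_i)$ follows because every neighborhood $N$ of $p$ can be shrunk to $N\cap O$, which meets $F_i$ as $p\in\ccc(F_i)$. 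This is the same disjoint-closures mechanism the paper invokes repeatedly elsewhere (e.g.\ in the proofs concerning $\mathfrak T_n$ and $\mathfrak F_3$), so your route fits the paper's toolkit; what it buys over the bare citation is a self-contained verification, at the cost of re-proving a special case of the general fact that in a normal space $\ccc_{\beta(X)}(A\cap B)=\ccc_{\beta(X)}(A)\cap\ccc_{\beta(X)}(B)$ for closed $A,B$.
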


As in \cref{sec: beta omega squared and 3 forbid frames}, 
we identify $\omega^2$ with the product space $(\omega+1)\times\omega$. We also recall that $A$ is the set of isolated points, 
$B$ is the set of limit points, 
$B^*=\ccc(B)\setminus B$, and $V_n=\{n\}\times\omega$ (see \cref{fig: notations for subsets of omega squared}).

\begin{definition}\label{def: partitions Q and P}
Let $n\in\omega$ be nonzero. For each $1\le i\le n$, define
\begin{equation*}
A_i = \bigcup \{V_k\mid k\in\omega \text{ and } k \equiv i \bmod n\}\text{, }U_i=\mathrm{Ex}(A_i)\text{, and }
W_i = U_i\setminus A_i.
\end{equation*}
Set $\mathcal Q=\{A_1,\dots,A_n,B\}$ and $\mathcal P=\{U_1,\dots,U_n,\ccc(B)\}$ (see \cref{fig: partitions of X and beta X} for $n=2$). 
\begin{figure}[h]
\begin{center}
\begin{picture}(212.5,105)(0,10)
\multiput(127.5,22.5)(77.5,0){2}{{\line(0,1){77.5}}}
\multiput(127.5,22.5)(0,77.5){2}{{\line(1,0){77.5}}}
\multiput(122.5,22.5)(-15,0){2}{{\line(0,1){92.5}}}
\multiput(122.5,22.5)(0,92.5){2}{{\line(-1,0){15}}}
\put(127.5,40){\line(1,0){77.5}}
\put(127.5,82.50){\line(1,0){77.5}}
\put(166,60){\makebox(0,0){\small$B^*=\ccc (B)\setminus B$}}
\put(166,90.5){\makebox(0,0){\small$W_1=U_1\setminus A_1$}}
\put(166,31){\makebox(0,0){\small$W_2=U_2\setminus A_2$}}
\put(25,15){\makebox(0,0){$A_2$}}
\put(10,107.5){\makebox(0,0){$A_1$}}
\put(35,15){\line(1,0){47.5}}
\put(20,107.5){\line(1,0){47.5}}
\multiput(45,15)(30,0){2}{\vector(0,1){10}}
\multiput(30,107.5)(30,0){2}{\vector(0,-1){10}}
\multiput(115,30)(0,15){4}{{\makebox(0,0){$\bullet$}}}
\put(115,92.5){\makebox(0,0){$\vdots$}}
\put(115, 107.5){\makebox(0,0){$B$}}
\multiput(30,30)(15,0){4}{\multiput(0,0)(0,15){4}{\makebox(0,0){$\bullet$}}}
\put(96,52.5){\makebox(0,0){$\cdots$}}
\put(96,15){\makebox(0,0){$\cdots$}}
\put(81,107.5){\makebox(0,0){$\cdots$}}
\multiput(30,92.5)(15,0){4}{\makebox(0,0){$\vdots$}}
\end{picture}
\end{center}
\caption{The partitions $\mathcal Q$ and $\mathcal P$ for $n=2$.}
\label{fig: partitions of X and beta X}
\end{figure}
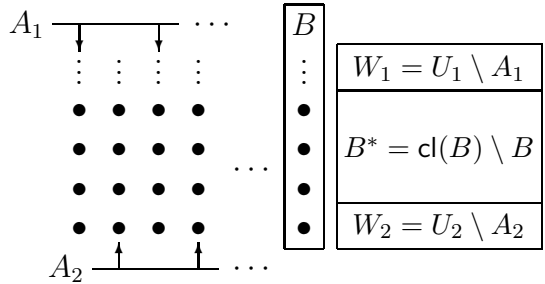
\end{definition}

\begin{lemma}
For each nonzero $n\in\omega$, we have that $\mathcal Q$ is a partition of $\omega^2$ and $\mathcal P$ 
is a partition of $\beta(\omega^2)$.
\end{lemma}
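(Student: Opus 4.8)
The plan is to treat $\mathcal Q$ and $\mathcal P$ separately, reducing each to the basic decomposition $\omega^2 = A\sqcup B$ together with the algebraic properties of $\mathrm{Ex}$ recorded in \cref{lem: properties of Ex}. Throughout I would keep in mind the identifications established earlier: under $\omega^2=(\omega+1)\times\omega$ we have $A=\omega\times\omega=\bigcup_{k\in\omega}V_k$ (the isolated points) and $B=\{\omega\}\times\omega$ (the limit points), and these two sets partition $\omega^2$.

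First I would dispatch $\mathcal Q$. Each $k\in\omega$ lies in exactly one residue class modulo $n$, and since the index set $\{1,\dots,n\}$ represents every residue class (with $n\equiv 0\bmod n$), the sets $A_1,\dots,A_n$ are pairwise disjoint with $\bigcup_{i=1}^n A_i=\bigcup_{k\in\omega}V_k=A$; moreover each $A_i$ is nonempty because every residue class is infinite. Adjoining $B$ and using $A\sqcup B=\omega^2$ then shows that $\mathcal Q=\{A_1,\dots,A_n,B\}$ is a partition of $\omega^2$.

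For $\mathcal P$ I would first record two enabling facts: each $V_k=\{k\}\times\omega$ is open in $\omega^2$ (as $\{k\}$ is open in $\omega+1$ and $\omega$ is discrete), so each $A_i$ is open and hence $U_i=\mathrm{Ex}(A_i)$ satisfies $U_i\cap\omega^2=A_i$; and $\omega^2$ is normal. Disjointness then splits into two cases. For $i\ne j$, \cref{lem: properties of Ex}(1) gives $U_i\cap U_j=\mathrm{Ex}(A_i)\cap\mathrm{Ex}(A_j)=\mathrm{Ex}(A_i\cap A_j)=\mathrm{Ex}(\varnothing)=\varnothing$, the last equality holding because $\omega^2$ is dense in $\beta(\omega^2)$. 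To separate $U_i$ from $\ccc(B)$, I would use that $B\subseteq\omega^2\setminus A_i$, whence $\ccc(B)\subseteq\ccc(\omega^2\setminus A_i)=\beta(\omega^2)\setminus\mathrm{Ex}(A_i)=\beta(\omega^2)\setminus U_i$.

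Coverage is the only step with any content, and the idea is to telescope the finitely many $U_i$: since $\omega^2$ is normal and $n$ is finite, repeated application of \cref{lem: properties of Ex}(2) yields $\bigcup_{i=1}^n U_i=\bigcup_{i=1}^n\mathrm{Ex}(A_i)=\mathrm{Ex}\bigl(\bigcup_{i=1}^n A_i\bigr)=\mathrm{Ex}(A)$. Directly from the definition of $\mathrm{Ex}$ and the identity $\omega^2\setminus A=B$ we get $\mathrm{Ex}(A)=\beta(\omega^2)\setminus\ccc(\omega^2\setminus A)=\beta(\omega^2)\setminus\ccc(B)$, so $\bigl(\bigcup_{i=1}^n U_i\bigr)\cup\ccc(B)=\beta(\omega^2)$. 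Combined with the disjointness above and the nonemptiness of each block (each $U_i\supseteq A_i\ne\varnothing$ and $\ccc(B)\ne\varnothing$), this shows $\mathcal P$ is a partition of $\beta(\omega^2)$. The nearest thing to an obstacle is simply verifying the hypotheses of the $\mathrm{Ex}$-identities, namely the openness of the $A_i$ and the normality of $\omega^2$; both are immediate, so I do not expect any genuine difficulty.
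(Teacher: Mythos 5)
Your proposal is correct and follows essentially the same route as the paper: both reduce everything to the decomposition $\omega^2=A\cup B$ and invoke \cref{lem: properties of Ex} (with normality of $\omega^2$) to get $\bigcup_{i=1}^n U_i=\mathrm{Ex}(A)=\beta(\omega^2)\setminus\ccc(B)$ and $U_i\cap U_j=\mathrm{Ex}(\varnothing)=\varnothing$. You merely spell out details the paper labels straightforward (the residue-class argument for $\mathcal Q$, openness of the $A_i$, and the identity $\beta(\omega^2)\setminus\mathrm{Ex}(A_i)=\ccc(\omega^2\setminus A_i)$, the last of which is in fact redundant given your coverage computation).
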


\begin{proof}
It is straightforward to see
that $\mathcal Q$ is a partition of $\omega^2$ such that $A_i$ is open in $\omega^2$ and $\ccc(A_i)\cap\omega^2=A_i\cup B$ for each $i=1,\dots,n$. 
Because $\omega^2$ is normal, 
\cref{lem: properties of Ex} implies that  
\[
\bigcup_{i=1}^nU_i = \bigcup_{i=1}^n \mathrm{Ex}(A_i) = \mathrm{Ex}\left(\bigcup_{i=1}^nA_i\right) 
= \mathrm{Ex}(\omega^2\setminus B) = \beta(\omega^2)\setminus \ccc (B)
\] 
and
\[
U_i\cap U_j = \mathrm{Ex}(A_i)\cap \mathrm{Ex}(A_j) = \mathrm{Ex}(A_i\cap A_j) = \mathrm{Ex}(\varnothing)=
\varnothing
\]
for distinct $i,j$. Thus, 
$\mathcal P$ is a partition of $\beta(\omega^2)$. 
\end{proof}

\begin{lemma}\label{lem: closure of Ui and Wi}
\emph{(CH)} Let $n\in\omega$ be nonzero and $\mathcal P$ as in \cref{def: partitions Q and P}. For each $i=1,\dots,n$ we have: 
\begin{enumerate}
\item $\ccc (U_i) = U_i\cup \ccc (B)$, \label{lem item: closure of Ui}
\item $\ccc(W_i)=W_i\cup B^*$,
\item $\ccc(W_i)$ is homeomorphic to $\omega^*$, 
\item $B^*$ is a closed nowhere dense $P$-set in $\ccc(W_i)$.
\end{enumerate}
\end{lemma}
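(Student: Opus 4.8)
The plan is to prove the four statements in order, reducing the last two to structural facts about the remainder of $\omega^2$. Throughout I identify $A=\omega\times\omega$ with the set of isolated points of $\beta(\omega^2)$ (they remain isolated since $\omega^2$ is open), $B=\{\omega\}\times\omega$ with the limit points, and I use that each $H_m=(\omega+1)\times\{m\}$ is a compact clopen subset of $\omega^2$, so that $\omega^2$ is the topological sum $\bigoplus_{m\in\omega}H_m$ of compacta. For (1), since $U_i$ is open with $U_i\cap\omega^2=A_i$ and $\omega^2$ is dense, \cref{lem: closure of open intersect dense is closure of open} gives $\ccc(U_i)=\ccc(A_i)$. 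The inclusion $U_i\cup\ccc(B)\subseteq\ccc(U_i)$ is clear because $U_i\subseteq\ccc(U_i)$ and each $(\omega,m)\in B$ is the limit of $\{(k,m)\mid k\equiv i\pmod n\}\subseteq A_i$, so $\ccc(B)\subseteq\ccc(A_i)$. For the reverse inclusion I use the identity $\beta(\omega^2)\setminus\ccc(B)=\bigcup_j U_j$ established above: if $x\in\ccc(A_i)\setminus\ccc(B)$ then $x\in U_j$ for some $j$, and $j\neq i$ is impossible, since $U_j$ would then be a neighbourhood of $x$ disjoint from $A_i\subseteq U_i$ (the $U_j$ being pairwise disjoint), contradicting $x\in\ccc(A_i)$.

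For (2), write $W_i=U_i\setminus A_i=U_i\setminus\omega^2$. Since the points of $A_i$ are isolated in $\beta(\omega^2)$, none of them is a limit of $W_i$, so $\ccc(W_i)\cap A_i=\varnothing$ and hence $\ccc(W_i)\cap U_i=W_i$; combined with (1) this gives $\ccc(W_i)\subseteq W_i\cup\ccc(B)$. As each $H_m$ is clopen in $\beta(\omega^2)$ and disjoint from $W_i\subseteq\beta(\omega^2)\setminus\omega^2$, no point of $B$ lies in $\ccc(W_i)$, so in fact $\ccc(W_i)\subseteq W_i\cup B^*$. The interesting point is the reverse inclusion $B^*\subseteq\ccc(W_i)$: given $x\in B^*$ and a clopen neighbourhood $C$, the point $x$ is non-isolated in $\ccc(B)\cong\beta(\omega)$, so $C\cap B$ is infinite, yielding infinitely many rows $m$ with $(\omega,m)\in C$; picking one point $(k_m,m)\in C\cap A_i$ from each such row produces a set $S$ meeting each $H_m$ in at most one point, hence closed and discrete in $\omega^2$ and disjoint from $B$. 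By normality of $\omega^2$ together with \cite[Cor.~3.6.4]{Eng89}, $\ccc(S)\cap\ccc(B)=\varnothing$, so the nonempty remainder $\ccc(S)\setminus S$ lies in $\ccc(A_i)\setminus\ccc(B)=U_i$ and in $\beta(\omega^2)\setminus\omega^2$, i.e.\ in $W_i$, and is contained in $C$; thus $C\cap W_i\neq\varnothing$.

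For (3) the strategy is to recognise $\ccc(W_i)=W_i\cup B^*$ as a closed subspace of the remainder $(\omega^2)^{*}=\beta(\omega^2)\setminus\omega^2$ and to verify the hypotheses of Parovi\v{c}enko's Theorem, \cref{thm: 2 main tools}(\ref{thm item: PT}). It is a Stone space, being closed in $\beta(\omega^2)$; its weight equals $\mathfrak c=\omega_1$ because it contains the copy $B^*\cong\omega^*$ of weight $\mathfrak c$ while sitting inside $\ccc(A_i)$, the closure of a countable set, which has weight at most $\mathfrak c$; it is an $F$-space because $\omega^2$ is locally compact and $\sigma$-compact, so $(\omega^2)^{*}$ is a compact (hence normal) $F$-space and closed subspaces of normal $F$-spaces are $F$-spaces (see, e.g., \cite{vMi84}); and it is crowded, which is checked directly. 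The closed nowhere dense part of (4) is immediate from (2), since $W_i$ is open and dense in $\ccc(W_i)$. Granting the $P$-set assertion of (4), the remaining Parovi\v{c}enko hypothesis — that $\ccc(W_i)$ is an almost $P$-space — then follows from \cref{lem: jan lemma 0.1}(\ref{lem item: jan lemma 0.1-compact and almost P implies almost P}) applied to $U=W_i$ and $A=B^*$, once $W_i$ is seen to be an almost $P$-space (it is open in the almost $P$-space $(\omega^2)^{*}$, by \cref{lem: some properties of P-sets and almost P-spaces}(\ref{lem Item: almost P-space is an open hereditary property})).

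The heart of the matter, and the step I expect to be the main obstacle, is the $P$-set assertion in (4): that $B^*$ is a $P$-set in $\ccc(W_i)$. The natural route is \cref{lem: jans lemma 0.2}, taking as ambient the closed set $X=W_i\cup\ccc(B)$, the candidate $P$-set $A=\ccc(B)$, and $U=W_i$ (so that $\ccc(U)\setminus U=B^*\subseteq A$); this reduces (4) to showing that $\ccc(B)$ is a $P$-set in $X$, equivalently, by the $F_\sigma$ criterion \cref{lem: some properties of P-sets and almost P-spaces}(\ref{lem Item: characterizing a P-set via F sigma sets}), that no $F_\sigma$-subset of $W_i$ clusters at $\ccc(B)$. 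The difficulty is genuine, since $\ccc(B)$ is \emph{not} a $P$-set in $\beta(\omega^2)$ (the dense $F_\sigma$-set $A_i$ clusters on $B$); the argument must exploit that inside $X$ the isolated points $A_i$ have been discarded. I plan to use the sum decomposition $\omega^2=\bigoplus_m H_m$: a compact $K\subseteq W_i$ disjoint from $\ccc(B)$ can be separated by a clopen $D$ with $D\cap\ccc(B)=\varnothing$, and $E:=D\cap\omega^2$ is then clopen in $\omega^2$, meets every $H_m$ in a \emph{finite} set, and is therefore closed and discrete with $\ccc(E)\cap\ccc(B)=\varnothing$. The remaining work is to control a countable union of such pieces and show its closure still avoids $B^*$, for which I would track everything through the projection $\Sigma\colon\beta(\omega^2)\to\beta(\omega)$ induced by $(k,m)\mapsto m$, under which $\Sigma^{-1}(m)=H_m$ and $B^*$ maps homeomorphically onto $\omega^*$. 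Establishing this absorption of $\sigma$-compact subsets of $W_i$ by $B^*$ is the crux; once it is in hand, \cref{lem: jans lemma 0.2} delivers (4), and (3) follows as described above.
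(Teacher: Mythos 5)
Your parts (1) and (2) are correct and essentially identical to the paper's argument (your use of normality plus \cite[Cor.~3.6.4]{Eng89} to see $\ccc(S)\cap\ccc(B)=\varnothing$ is a minor cosmetic variant of the paper's partition-based location of a point of $\ccc(Z)\setminus Z$). The problem is that parts (3) and (4) are not actually proved: you correctly identify the $P$-set assertion as the crux, reduce it via \cref{lem: jans lemma 0.2} (with ambient space $W_i\cup\ccc(B)$) to showing that $\ccc(B)$ is a $P$-set there, and then explicitly leave that reduction's hypothesis unestablished ("the remaining work is to control a countable union of such pieces\dots\ Establishing this absorption\dots\ is the crux"). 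Since (3) also depends on (4) (through \cref{lem: jan lemma 0.1}(\ref{lem item: jan lemma 0.1-compact and almost P implies almost P})), the proposal proves only half of the lemma. The paper closes exactly this hole by importing a known result: by \cite[Lem.~5.1]{vMM80b}, $B^*$ is a closed $P$-set in $\beta(\omega^2)\setminus\omega^2$, after which \cref{lem: jans lemma 0.2} applied with $X=\beta(\omega^2)\setminus\omega^2$, $A=B^*$, $U=W_i$ gives (4) immediately; your formulation with $A=\ccc(B)$ inside $W_i\cup\ccc(B)$ is equivalent to this, so the missing fact is precisely the van Mill--Mills lemma.

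Two further points. First, your sketched route to the missing fact is viable and can be completed: given compact $F_n\subseteq(\beta(\omega^2)\setminus\omega^2)\setminus B^*$, take clopen $D_n\supseteq F_n$ with $D_n\cap\ccc(B)=\varnothing$; then $E_n=D_n\cap\omega^2$ meets each row $H_m$ in a finite set, and diagonalizing (set $g(m)$ above $\max\{k\mid (k,m)\in E_1\cup\dots\cup E_m\}$) produces the open set $\mathrm{Ex}\bigl(\{(k,m)\mid k>g(m)\}\cup B\bigr)$, a neighborhood of $\ccc(B)$ meeting each $\ccc(E_n)$, and hence each $F_n$, in a finite subset of $\omega^2$ --- so $\ccc\bigl(\bigcup_n F_n\bigr)\cap B^*=\varnothing$. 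But this argument appears nowhere in your proposal; as written it is a plan, not a proof. Second, your verification of Parovi\v{c}enko's hypotheses silently uses that $\beta(\omega^2)\setminus\omega^2$ is an almost $P$-space (to conclude $W_i$ is one) and that $\ccc(W_i)$ is crowded ("checked directly"); neither is justified. The paper obtains both, together with the weight computation, from the CH fact \cite[Thm.~1.2.6]{vMi84} that $\beta(\omega^2)\setminus\omega^2$ is homeomorphic to $\omega^*$, which is also where the lemma's CH hypothesis enters; your write-up should either cite that or supply ZFC proofs of these properties of the remainder.
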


\begin{proof}
(1) Since $A_i\subseteq U_i$, we have that $B\subseteq\ccc(A_i)\cap \omega^2\subseteq \ccc (A_i) \subseteq \ccc (U_i)$. This yields that $\ccc (B)\subseteq\ccc (U_i)$, so
$U_i\cup \ccc(B)\subseteq \ccc (U_i)$. For the other inclusion, because $\mathcal P$ is a partition of $\beta(\omega^2)$, we have that
\[
\beta(\omega^2)\setminus (U_i\cup \ccc (B)) = U_1\cup\cdots\cup U_{i-1}\cup U_{i+1} \cup\cdots\cup U_n
\]
is open in $\beta(\omega^2)$. Therefore, $U_i\cup \ccc (B)$ is a closed subset of $\beta(\omega^2)$ containing $U_i$. Thus, $\ccc (U_i)\subseteq U_i\cup \ccc (B)$.

(2) We first show
that $W_i\cup B^*$ is closed in $\beta(\omega^2)$. 
Since $\omega^2$ is open in $\beta(\omega^2)$, 
$\mathcal P$ is a partition of $\beta(\omega^2)$, $\{A_i,W_i\}$ is a partition of $U_i$, and $\{B,B^*\}$ is a partition of $\ccc(B)$, we have that
\begin{eqnarray*}
\beta(\omega^2)\setminus \left(W_i\cup B^*\right) &=& U_1\cup\cdots\cup U_{i-1}\cup A_i\cup U_{i+1}\cup\cdots\cup U_n\cup B \\
&=& \omega^2\cup U_1\cup\cdots\cup U_{i-1}\cup U_{i+1} \cup\cdots\cup U_n
\end{eqnarray*} 
is open in $\beta(\omega^2)$. Therefore, $W_i\cup B^*$ is a closed subset of $\beta(\omega^2)$ that contains $W_i$, and hence $\ccc(W_i)\subseteq W_i\cup B^*$. To see the other inclusion, it suffices to show that $B^*\subseteq\ccc(W_i)$.

Let $x\in B^*$ 
and $U$ be a clopen neighborhood of $x$ 
in $\beta(\omega^2)$. 
Since $x\in\ccc(B)\setminus B$ and $\beta(\omega^2)$ is normal, and hence $T_1$, we have that $U\cap B$ is infinite (see, e.g., \cite[Thm.~17.9]{Mun75}). Thus, there is an infinite subset $S$ of $\omega$ such that $U\cap B=\{\langle\omega,m\rangle\mid m\in S\}$. For each $m\in S$ we have that $\langle\omega,m\rangle\in U$, and hence there is $n_m\in\omega$ such that $\langle n_m,m\rangle\in A_i\cap U$ since $B\subseteq\ccc(A_i)$ and $U$ is open. Set $Z=\{\langle n_m,m\rangle \mid \in S\}$. Then $Z$ is an infinite discrete set that is clopen relative to $\omega^2$.
Therefore, $\ccc(Z)$ is clopen in $\beta(\omega^2)$. Since $Z$ is not compact, and hence not closed in $\beta(\omega^2)$, there is $y\in \ccc(Z)\setminus Z$.
Because $B\subseteq \omega^2$, we obtain that 
\[
\ccc(Z)\cap B = \ccc(Z) \cap \omega^2 \cap B = Z\cap B \subseteq A_i\cap B = \varnothing.
\]
Since $\ccc(Z)$ is an open neighborhood of $y$, we have that $y\not\in\ccc(B)$. Because $\mathcal P$ is a partition of $\beta(\omega^2)$, we see that $y\in U_j$ for some $j=1,\dots,n$. Since $U_j$ is open in $\beta(\omega^2)$ and $y\in\ccc(Z)$, we have that $\varnothing\neq U_j\cap Z\subseteq U_j\cap A_i\subseteq U_j\cap U_i$. Therefore, 
$i=j$ and $y\in U_i=A_i\cup W_i$. If $y\in A_i$, then $y\in\ccc(Z)\cap A_i\subseteq
Z$, contradicting $y\not\in Z$. Thus, 
$y\not\in A_i$, and so 
$y\in W_i$. Because $Z\subseteq U$, we have that $y\in \ccc(Z)\subseteq \ccc(U)=U$. Hence, $U\cap W_i\neq\varnothing$, which proves that $B^*\subseteq\ccc(W_i)$. 

(3) To see that $\ccc(W_i)$ is homeomorphic to $\omega^*$, we verify the conditions of Parovi\v{c}enko's Theorem.
Because $\ccc(W_i)$ is a closed subspace of 
$\beta(\omega^2)$, we see that $\ccc(W_i)$ is a Stone space. 
Utilizing CH, it follows from \cite[Thm.~1.2.6]{vMi84} that $\beta(\omega^2)\setminus\omega^2$ is homeomorphic to $\omega^*$. Thus, $W_i$ is crowded since $W_i=\left[\beta(\omega^2)\setminus\omega^2\right]\cap U_i$ is open in $\beta(\omega^2)\setminus\omega^2$. 
Therefore, $\ccc(W_i)$ is also crowded, and it is an $F$-space because it is homeomorphic to a closed subspace of $\omega^*$. As we pointed out before \cref{fig: notations for subsets of omega squared}, $B^*$ and $\omega^*$ are homeomorphic. Since $\ccc(W_i)$ contains $B^*$, 
the weight of $\ccc(W_i)$ is at least $\omega_1$. On the other hand, $\ccc(W_i)$ is a subspace of $\beta(\omega^2)\setminus\omega^2$, whose weight is $\omega_1$.  
Thus, the weight of $\ccc(W_i)$ is exactly $\omega_1$. 

To complete the proof, we show that $\ccc(W_i)$ is an almost $P$-space. Since $\beta(\omega^2)\setminus\omega^2$ is an almost $P$-space and $W_i$ is its open subspace, $W_i$ is an almost $P$-space by \cref{lem: some properties of P-sets and almost P-spaces}(\ref{lem Item: almost P-space is an open hereditary property}).   
By \cite[Lem.~5.1]{vMM80b}, 
$B^*$ is a closed $P$-set in $\beta(\omega^2)\setminus\omega^2$. 
Thus, 
$\ccc(W_i)=W_i\cup B^*$ is an almost $P$-space by \cref{lem: jan lemma 0.1}(\ref{lem item: jan lemma 0.1-compact and almost P implies almost P}). 

(4) This follows from Lemma~\ref{lem: jans lemma 0.2} since $B^*$ is a closed $P$-set in $\beta(\omega^2)\setminus\omega^2$, $W_i$ is an open subset of $\beta(\omega^2)\setminus\omega^2$ that is contained in $\left(\beta(\omega^2)\setminus\omega^2\right)\setminus B^*$ and $\ccc(W_i)=W_i\cup B^*$.
\end{proof}

\begin{lemma}\label{lem: PO unrav WT is image of beta omega squared}
\emph{(CH)} Each partially ordered willow tree is an interior image of $\beta(\omega^2)$.
\end{lemma}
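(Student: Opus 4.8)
The plan is to construct, for a given partially ordered willow tree $\mathfrak F$ with splitting point $s$, $\mathrm{max}(\mathfrak F)=\{m_1,\dots,m_n\}$, body $T_s={\downarrow}s\setminus\{s\}$, and branches $T_{i,j}$, one interior surjection $f:\beta(\omega^2)\to\mathfrak F$; this is exactly what the statement asks for. Note first that $T_s$ is a tree, being a downset of the tree $W\setminus{\uparrow}s$. I would take the partitions $\mathcal Q,\mathcal P$ of \cref{def: partitions Q and P} with $n=|\mathrm{max}(\mathfrak F)|$, and lean on the structural facts that $\ccc(W_i)=W_i\cup B^*$ is homeomorphic to $\omega^*$ with $B^*$ a closed nowhere dense $P$-set in it (\cref{lem: closure of Ui and Wi}), together with $\ccc(A_i)=\ccc(U_i)=U_i\cup\ccc(B)$ (by \cref{lem: closure of open intersect dense is closure of open}, since $A_i$ is dense in the open set $U_i$, and \cref{lem: closure of Ui and Wi}(\ref{lem item: closure of Ui})). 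The guiding principle is that isolated points are forced onto maximal points, so $A_i$ must go to $m_i$; since $W_i\subseteq\ccc(A_i)$ while $\ccc(A_i)\cap\ccc(A_{i'})=\ccc(B)$ for $i\neq i'$, every point of $W_i$ is pushed into ${\downarrow}m_i\setminus\bigcup_{i'\neq i}{\downarrow}m_{i'}=\{m_i\}\cup\bigcup_j T_{i,j}$. Thus each petal $\ccc(W_i)$ realizes precisely the antenna $m_i$ together with its branches, whereas the whole body ${\downarrow}s$ is realized over $\ccc(B)$.

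First I would realize the body. Using the First Mapping Lemma (\cref{lem: Jans 0.7-good interior mapping of omega star onto a finite tree}) I obtain a ws-map $f_0:B^*\to T_s$, and I set $f$ to be constantly $s$ on $B$. Because $B$ is the dense set of isolated points of $\ccc(B)\cong\beta(\omega)$ and $B^*$ is its remainder, a direct check (using that $B^*$ is closed and $B$ dense) shows the resulting $f{\restriction}_{\ccc(B)}:\ccc(B)\to{\downarrow}s$ is an interior surjection with $f^{-1}(s)=B$ and $f^{-1}({\downarrow}s)=\ccc(B)$.

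Next, for each petal I would extend $f_0$ across $\ccc(W_i)$. Each branch $T_{i,j}$ is attached to $T_s$ at a point $p_{i,j}$, and I would realize it over a piece $V_{i,j}\subseteq W_i$ whose closure meets $B^*$ exactly in $f_0^{-1}({\downarrow}p_{i,j})$, sending all remaining points of $W_i$ (an open dense subset of $\ccc(W_i)$) to $m_i$. The key structural observation is that inside the tree ${\downarrow}p_{i,j}\cup T_{i,j}$ the downset ${\downarrow}p_{i,j}$ is capped by the branch, hence avoids the maximal points; this is exactly the hypothesis needed to apply the Second Mapping Lemma (\cref{lem: Jans 0.10-extending good interior maps}) inside $\ccc(W_i)\cong\omega^*$ relative to the nowhere dense $P$-set $f_0^{-1}({\downarrow}p_{i,j})$, legitimately extending the relevant restriction of $f_0$ to a ws-map that realizes the branch. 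The partition lemmas for $\omega^*$ are used to carve out the pairwise disjoint pieces $V_{i,j}$ and to send everything else to $m_i$, and finally $A_i$ is sent to $m_i$ as well.

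The verification that the glued $f$ is interior is then a closure computation: $\ccc(f^{-1}(m_i))=\ccc(A_i)=U_i\cup\ccc(B)=f^{-1}({\downarrow}m_i)$ and $\ccc(f^{-1}(s))=\ccc(B)=f^{-1}({\downarrow}s)$, while the body tops—which may well be maximal in $T_s$—are correctly dominated by $m_i$ through $B^*\subseteq\ccc(A_i)$ and by $s$ through $B^*\subseteq\ccc(B)$. The hardest part will be exactly this petal construction: one must arrange the branch pieces $V_{i,j}$ so that their closures touch $B^*$ only along the prescribed $P$-sets $f_0^{-1}({\downarrow}p_{i,j})$ and nowhere near the branch-free body tops, so that $f^{-1}({\uparrow}w)$ remains open for every $w$. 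This is where the fine control provided by the partition and mapping lemmas for $\omega^*$ (and hence CH) is indispensable, and it is the reason a single ws-map onto a tree cannot finish the job: the non-tree frame ${\downarrow}m_i$ has to be assembled by hand, with its cap $m_i$ and the splitting point $s$ supplied by the dense sets $A_i$ and $B$ rather than by the remainder.
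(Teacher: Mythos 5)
Your proposal follows essentially the same route as the paper's proof: the same partitions $\mathcal Q,\mathcal P$ of \cref{def: partitions Q and P}, a ws-map of $B^*$ onto the body $T_s$ via the First Mapping Lemma with $B\mapsto s$, the Second Partition Lemma to carve each $W_i$ into pieces whose closures meet $B^*$ exactly in the prescribed sets $h^{-1}({\downarrow}w)$, the Second Mapping Lemma to realize each branch $T_{i,j}\cup{\downarrow}w_{i,j}$ over the corresponding piece, the rest of $U_i$ (including $A_i$) sent to $m_i$, and interiority verified by the closure computations $\ccc(f^{-1}(w))=f^{-1}({\downarrow}w)$. The only omission is the degenerate case $T_s=\varnothing$ (i.e., $W={\uparrow}s$), where there is no body for the First Mapping Lemma to target; the paper disposes of this case (and of $\max(\mathfrak F)=\{s\}$) separately, using the interior map $\omega^2\to\mathfrak F$ given by $A_i\mapsto m_i$, $B\mapsto s$ (resp.\ quoting \cite{BH09}) together with \cref{lem: F image of beta iff F image of open sub of beta}.
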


\begin{proof}
Let $\mathfrak F=(W,\le)$ be a partially ordered willow tree with splitting point $s$. 
If $\mathrm{max}(\mathfrak F)=\{s\}$, then it follows from \cite{BH09} (see Cor.~4.10 and Lem.~5.1) that $\F$ is an interior image of $\beta(\omega)$. Thus, since $\beta(\omega)$ is homeomorphic to a clopen subset of $\beta(\omega^2)$, 
we may apply \cref{lem: F image of beta iff F image of open sub of beta} to conclude that $\F$ is an interior image of $\beta(\omega^2)$.

Suppose that $\mathrm{max}(\mathfrak F)\neq\{s\}$. Let $\max(F)=\{m_1,\dots,m_n\}$ and $\mathcal Q,\mathcal P$ be the respective partitions of $\omega^2$ and $\beta(\omega^2)$ (see \cref{def: partitions Q and P}). If $W={\uparrow}s$, then $\mathfrak F$ is a tree of depth 2.
Therefore, it is straightforward to see
that mapping $A_i$ to $m_i$ and $B$ to $s$ yields an interior mapping of $\omega^2$ onto $\F$. Because $\omega^2$ is open in $\beta(\omega^2)$, applying \cref{lem: F image of beta iff F image of open sub of beta} then yields that $\F$ is an interior image of $\beta(\omega^2)$.

If $W\neq{\uparrow}s$, then $T_s={\downarrow}s\setminus\{s\}\neq\varnothing$. Because $B^*$ is homeomorphic to $\omega^*$, 
The First Mapping Lemma implies that there is a ws-mapping $h$ of $B^*$ onto $(T_s,\le)$. We extend $h$ to an interior mapping $f$ of $\beta(\omega^2)$ onto $\mathfrak F$. To do so, we  
recall 
that $\{T_s,T_{i,j}\mid1\le i\le n\text{ and }1\le j\le k_i\}$ is a partition of $W\setminus{\uparrow}s$ 
(see \cref{fig4}). Let $M=\min(W\setminus({\uparrow}s\cup{\downarrow}s))$, and for each $i$, let ${\downarrow}m_i\cap M = \{r_{i,1},\dots,r_{i,k_i}\}$.
Then $T_{i,j}={\uparrow}r_{i,j}\setminus\{m_i\}$,  $\{{\uparrow}s,T_s,T_{i,j}\mid 1\le i\le n\text{ and }1\le j\le k_i\}$ is a partition of $W$, and $w_{i,j}\in T_s$ where $w_{i,j}$ is the parent of $r_{i,j}$. 

By Lemma~\ref{lem: closure of Ui and Wi}, for each $i$ we have that $\ccc(W_i)=W_i\cup B^*$ is homeomorphic to $\omega^*$ and $B^*$ is a closed nowhere dense $P$-set in $W_i\cup B^*$. Because $B^*=\bigcup\{h^{-1}({\downarrow}w)\mid w\in T_s\}$, 
it follows from the Second Partition Lemma
that there is an open partition $\{V_{i,w}\mid  w\in T_s\}$ of $W_i$ such that $\ccc(V_{i,w})=V_{i,w}\cup h^{-1}({\downarrow}w)$ is homeomorphic to $\omega^*$ and $h^{-1}({\downarrow}w)$ is a closed nowhere dense $P$-set in $\ccc(V_{i,w})$ for each $w\in T_s$. Suppose that $k_i\neq 0$ and let $j=1,\dots,k_i$. 
Then ${\downarrow}w_{i,j}$ is a downset in the tree $T_{i,j}\cup{\downarrow}w_{i,j}$ which, since $k_i\neq0$, is disjoint from $\mathrm{max}\left(T_{i,j}\cup{\downarrow}w_{i,j}\right)$. 
Restricting the domain and codomain of $h$ to $h^{-1}({\downarrow}w_{i,j})$ and ${\downarrow}w_{i,j}$ yields a ws-mapping which, by employing the Second Mapping Lemma,
can be extended to a ws-mapping $h_{i,j}:V_{i,w_{i,j}}\cup h^{-1}({\downarrow}w_{i,j}) \to T_{i,j}\cup{\downarrow}w_{i,j}$ such that $h_{i,j}\left(V_{i,w_{i,j}}\right)=T_{i,j}$.
Define $f:\beta(\omega^2)\to \mathfrak F$ by setting
\[
f(x) = \left\{
\begin{array}{ll}
h(x) & \text{if } x\in B^*,\\
s & \text{if } x\in B,\\
h_{i,j}(x) & \text{if } x\in V_{i,w_{i,j}} \text{ for some }i=1,\dots,n \text{ with } k_i\neq0 \text{ and } j=1,\dots,k_i,\\
m_i & \text{if } x\in U_i\setminus\bigcup\nolimits_{j=1}^{k_i}V_{i,w_{i,j}} \text{ for some }i=1,\dots,n.
\end{array}
\right.
\]

To help visualize $f$, \cref{pic of main f} illustrates the sets $V_{i,w}$ and the partition $\{V_{i,w}\mid w\in T_s\}$ of $W_i$ for each $i=1,\dots,n$ (depicted for $i=n$). We have that $h_{i,j}$ exists for each $w_{i,j}$. The case where $v\ne w_{i,j}$ is drawn for $n=1$. The magenta labelling represents $f$.

\begin{figure}[H]
\begin{center}
\begin{picture}(280,130)(-90,5)
\setlength{\unitlength}{.26mm}
\multiput(-90,0)(90,0){2}{\multiput(0,0)(0,125){2}{\line(0,1){65}}}
\multiput(-90,0)(0,65){2}{\multiput(0,0)(0,125){2}{\line(1,0){90}}}
\multiput(-65,20)(0,10){3}{\multiput(0,0)(15,0){2}{\multiput(0,0)(0,125){2}{\makebox(0,0){\tiny$\bullet$}}}}
\multiput(-65,57.5)(15,0){2}{\multiput(0,0)(0,125){2}{\makebox(0,0){\tiny $\vdots$}}}
\multiput(92,100)(-142,0){2}{\makebox(0,0){\tiny $\vdots$}}
\multiput(5,0)(0,190){2}{\line(1,0){25}}
\multiput(5,0)(25,0){2}{\line(0,1){190}}
\multiput(17.5,35)(0,25){5}{\makebox(0,0){\tiny $\bullet$}}
\multiput(17.5,160)(0,125){1}{\makebox(0,0){\tiny $\vdots$}}
\multiput(-30,37.5)(0,125){2}{\makebox(0,0){\tiny$\cdots$}}
\multiput(35,0)(150,0){2}{\line(0,1){190}}
\put(50,131.5){\tiny $V_{1,w_{1,j}}\longmapsto \color{magenta}T_{1,j}$}
\put(50,155.5){\tiny $V_{1,v}\longmapsto \color{magenta}m_1$}
\put(47.5,181){\makebox(0,0){\small $W_1$}}
\put(47.5,55){\makebox(0,0){\small$W_n$}}
\put(87,141){\tiny \color{magenta} $h_{1,j}$}
\put(157,102.5){\scriptsize \color{magenta} $h$}
\put(92,185){\makebox(0,0){\tiny $\vdots$}}
\multiput(92,59)(0,-45){2}{\makebox(0,0){\tiny $\vdots$}}
\put(50,28.5){\tiny $V_{n,w}$}
\put(185,95){\oval(100,190)[l]}
\put(137.5,95){\tiny $B^*\mapsto \color{magenta} T_s$}
\put(138.5,28.5){\tiny $h^{-1}({\downarrow}w)$}
\qbezier(135,20)(160,5)(185,5)
\qbezier(135,45)(160,60)(185,60)
\linethickness{0.6mm}
\multiput(35,65)(0,60){2}{\multiput(0,0)(0,125){1}{\line(1,0){100}}}
\multiput(35,0)(0,190){2}{\multiput(0,0)(0,125){1}{\line(1,0){150}}}
\multiput(35,0)(0,125){2}{\line(0,1){65}}
\linethickness{0.33mm}
\multiput(35,172.5)(0,-140){1}{\line(1,0){100}}
\multiput(35,150)(0,-95){1}{\line(1,0){100}}
\put(35,20){\line(1,0){100}}
\put(35,45){\line(1,0){100}}
\put(-80,55){\makebox(0,0){\small $A_n$}}
\put(-80,180){\makebox(0,0){\small $A_1$}}
\put(12.5,180){\makebox(0,0){\small $B$}}
\put(-65,8){\makebox(0,0){\tiny $V_n$}}
\put(-47,8){\makebox(0,0){\tiny $V_{2n}$}}
\put(-65,133){\makebox(0,0){\tiny $V_1$}}
\put(-45,133){\makebox(0,0){\tiny $V_{n+1}$}}
\color{magenta}
\put(-12,7.5){\makebox(0,0){\small $m_n$}}
\put(-12,132.5){\makebox(0,0){\small $m_1$}}
\put(21,10){\makebox(0,0){\small $s$}}
\end{picture}
\end{center}
\caption{The extension $f:\beta(\omega^2)\to\F$ of $h:B^*\to T_s$.}
\label{pic of main f}
\end{figure}

It is straightforward to see that $f$ is a well-defined onto map. 
To see that $f$ is interior, we show that $f^{-1}({\downarrow}w)=\ccc(f^{-1}(w))$ for each $w\in W$. 
We proceed by cases.
 
 \emph{Case 1.} Suppose that $w\in T_s$. Since $B^*$ is closed in $\beta(\omega^2)$ and $h$ is an interior mapping of $B^*$ onto $T_s$, we have 
 \[
f^{-1}({\downarrow}w) = h^{-1}({\downarrow}w) = \ccc(h^{-1}(w)) = \ccc(f^{-1}(w)).
\]

\emph{Case 2.} Suppose that $w=s$. Then ${\downarrow}w={\downarrow}s = \{s\}\cup T_s$, implying that
\[
f^{-1}({\downarrow}w) = f^{-1}(s)\cup f^{-1}(T_s) = B \cup h^{-1}(T_s) = B\cup B^* = \ccc (B) = \ccc(f^{-1}(s)) = \ccc(f^{-1}(w)).
\]

\emph{Case 3.} Suppose that $w\in T_{i,j}$ for some $i=1,\dots,n$ (with $k_i\neq0$) and $j=1,\dots, k_i$. Then ${\downarrow}w = \left(T_{i,j}\cap{\downarrow}w\right)\cup {\downarrow}w_{i,j}$. Because $V_{i,w_{i,j}}\cup h^{-1}({\downarrow}w_{i,j})=\ccc(V_{i,w_{i,j}})$ is closed in $\beta(\omega^2)$ and $h_{i,j}$ is an interior mapping of $V_{i,w_{i,j}}\cup h^{-1}({\downarrow}w_{i,j})$ onto $T_{i,j}\cup {\downarrow}w_{i,j}$ such that $h_{i,j}(x)=h(x)$ for each $x\in h^{-1}({\downarrow}w_{i,j})$, we have 
\begin{eqnarray*}
f^{-1}({\downarrow}w) &=& h_{i,j}^{-1}\left(T_{i,j}\cap{\downarrow}w\right)\cup h^{-1}({\downarrow}w_{i,j}) = h_{i,j}^{-1}\left(T_{i,j}\cap{\downarrow}w\right)\cup h_{i,j}^{-1}({\downarrow}w_{i,j}) \\
&=& h_{i,j}^{-1}({\downarrow}w) = \ccc(h_{i,j}^{-1}(w)) = \ccc(f^{-1}(w)).
\end{eqnarray*}

\emph{Case 4.} Suppose that $w=m_i$ for some $i=1,\dots,n$. Then ${\downarrow}w = \{m_i\}
\cup\left(\bigcup_{j=1}^{k_i}T_{i,j}\right)\cup {\downarrow}s$. It follows from the definition of $f$ that $$f^{-1}(m_i) = U_i\setminus\bigcup\nolimits_{j=1}^{k_i}V_{i,w_{i,j}}$$
and 
\[
f^{-1}\left(\bigcup\nolimits_{j=1}^{k_i}T_{i,j}\right)=\bigcup\nolimits_{j=1}^{k_i}f^{-1}(T_{i,j})=\bigcup\nolimits_{j=1}^{k_i}h_{i,j}^{-1}(T_{i,j})=\bigcup\nolimits_{j=1}^{k_i}V_{i,w_{i,j}}.
\]
(Note that both $\bigcup_{j=1}^{k_i}T_{i,j}$ and $\bigcup_{j=1}^{k_i}V_{i,w_{i,j}}$ are empty if $k_i=0$.) By Lemma~\ref{lem: closure of Ui and Wi}(\ref{lem item: closure of Ui}), 
\begin{eqnarray*}
f^{-1}({\downarrow}w) &=& f^{-1}(m_i)\cup f^{-1}\left(\bigcup\nolimits_{j=1}^{k_i}T_{i,j}\right)\cup f^{-1}({\downarrow}s) \\
&=& \left(U_i\setminus\bigcup\nolimits_{j=1}^{k_i}V_{i,w_{i,j}}\right) \cup \left(\bigcup\nolimits_{j=1}^{k_i}V_{i,w_{i,j}}\right)\cup \ccc(B) = U_i\cup\ccc(B) =\ccc(U_i).
\end{eqnarray*}
To complete the proof, we show that $\ccc(f^{-1}(w))=\ccc(U_i)$; that is, $\ccc(f^{-1}(m_i))=\ccc(U_i)$. Since $A_i\subseteq f^{-1}(m_i)\subseteq U_i$ implies that $\ccc(A_i)\subseteq
\ccc(U_i)$, it suffices to prove that $\ccc(U_i)\subseteq\ccc(A_i)$. 
Let $x\in \ccc(U_i)$. By Lemma~\ref{lem: closure of Ui and Wi}(\ref{lem item: closure of Ui}), 
$x\in \ccc(B)$ or $x\in U_i$. If $x\in\ccc(B)$, then $x\in\ccc(A_i)$ because $B\subseteq\ccc(A_i)$. 
Suppose 
that $x\in U_i$. Let $U$ be an open neighborhood of $x$.
Since $U\cap U_i$ is a nonempty open subset of $\beta(\omega^2)$ and $U_i=\mathrm{Ex}(A_i)$, we have 
\[ 
\varnothing \neq (U\cap U_i)\cap \omega^2 = U\cap (U_i\cap \omega^2) = U\cap A_i.
\]
Thus, $x\in\ccc(A_i)$, completing the proof that $f$ is interior. 
\end{proof}

\subsection{Arbitrary willow trees}

Our next goal is to generalize \cref{lem: PO unrav WT is image of beta omega squared} by proving that an arbitrary willow tree is an interior image of $\beta(\omega^2)$. 
To do so, we recall the well-known topological analogue of a cluster in an {\sf S4}-frame (see \cite[Lem.~5.9]{BBBM17}). Let $X$ be a topological space. A partition of $X$ is {\em dense} if it consists of dense subsets of $X$. The next concept 
is well studied in the literature (see, e.g., \cite{Hew43,Ced64,CG96,Eck97}). For a cardinal $\kappa$, 
call $X$ \emph{$\kappa$-resolvable} provided there is a dense partition of $X$ of cardinality $\kappa$. 

We next recall the standard construction of the skeleton of an {\sf S4}-frame 
(see, e.g., \cite[p.~68]{CZ97}). Let $\mathfrak F=(W,\le)$ be an {\sf S4}-frame. The \emph{skeleton} of $\mathfrak F$ is the partially ordered {\sf S4}-frame $\mathfrak F^*=(V,\preceq)$ where $V:=\{C_w\mid w\in W\}$ is the set of clusters of $\mathfrak F$, 
and for each $w,v\in W$ we have $C_w\preceq C_v$ iff $w\le v$. Then
$\mathfrak F^*$ is a p-morphic image of $\mathfrak F$ via the mapping $\pi:W\to V$ given by $\pi(w)=C_w$. 

\begin{lemma}\label{lem: extending intr map onto PO frame to QO frame}
Let $X$ be a space, $\mathfrak F=(W,\le)$ an {\sf S4}-frame, $\mathfrak F^*=(V,\preceq)$ the skeleton of $\mathfrak F$, 
and $\pi:W\to V$ the p-morphism of $\mathfrak F$ onto $\mathfrak F^*$. 
If $f:X\to\mathfrak F^*$ is an onto interior mapping 
and $f^{-1}(C)$ is $|\pi^{-1}(C)|$-resolvable for each $C\in V$, then there is an onto interior mapping $g:X \to \mathfrak F$ such that $f=\pi\circ g$.
\[
\begin{tikzcd}
&X
\arrow[dl, dashed, "g"']
\arrow[dr, "f"]
&\\
\mathfrak F 
\arrow[rr, "\pi"'] 
&& 
\mathfrak F^*  
&  
\end{tikzcd}
\] 
\end{lemma}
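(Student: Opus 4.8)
The plan is to build $g$ by splitting each fiber $f^{-1}(C)$ over a cluster $C\in V$ into $|C|$ dense pieces, one piece per point of the cluster, and sending each piece to its designated point. First I note that $\pi^{-1}(C)$ is exactly the cluster $C$ regarded as a subset of $W$, so $|\pi^{-1}(C)|=|C|$. By hypothesis $f^{-1}(C)$ is $|C|$-resolvable, so I may fix a dense partition $\{D_{C,w}\mid w\in C\}$ of $f^{-1}(C)$ indexed by the points of $C$, with each $D_{C,w}$ dense in $f^{-1}(C)$. Since $f$ is onto, $f^{-1}(C)\neq\varnothing$, whence each $D_{C,w}\neq\varnothing$. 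I then define $g\colon X\to W$ by setting $g(x)=w$ whenever $x\in D_{C,w}$ with $C=f(x)$. As the fibers $f^{-1}(C)$ partition $X$ and each is partitioned by the $D_{C,w}$, the map $g$ is well defined, and it is onto because every $D_{C,w}$ is nonempty.

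The factorization $f=\pi\circ g$ and continuity will come for free. For the first, if $x\in D_{C,w}$ then $g(x)=w\in C$, so $\pi(g(x))=C_w=C=f(x)$. For continuity, I recall that in the Alexandroff topology the open sets of $\mathfrak F$ are precisely its upsets, and that every upset $U$ of $\mathfrak F$ is a union of clusters (if $w\in U$ and $v\in C_w$ then $w\le v$ forces $v\in U$). Writing $U^\ast=\{C\in V\mid C\subseteq U\}$, which is an upset of $\mathfrak F^\ast$, I will check directly that $g^{-1}(U)=f^{-1}(U^\ast)$; since $f$ is continuous and $U^\ast$ is open in $\mathfrak F^\ast$, the set $g^{-1}(U)$ is open. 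Thus $g$ is continuous.

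The substantive step is openness, and this is exactly where $\kappa$-resolvability is used. I will let $O$ be open in $X$, take $w\in g(O)$ with $w\le w'$, and produce a point of $O$ mapping to $w'$. Picking $x\in O$ with $g(x)=w$ and setting $C=f(x)=C_w$, I observe that $C\preceq C_{w'}$; since $f$ is open, $f(O)$ is an upset of $\mathfrak F^\ast$ containing $C$, hence contains $C_{w'}$, so $O\cap f^{-1}(C_{w'})$ is a nonempty open subset of the subspace $f^{-1}(C_{w'})$. Because $D_{C_{w'},w'}$ is dense in $f^{-1}(C_{w'})$, it meets this nonempty relatively open set, yielding $y\in O\cap D_{C_{w'},w'}$ with $g(y)=w'\in g(O)$. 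Hence $g(O)$ is an upset, so $g$ is interior. (When $w'\in C$ this degenerates to $C_{w'}=C$, where density of $D_{C,w'}$ inside $O\cap f^{-1}(C)$ does the work directly.)

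The only genuine difficulty is this last paragraph: openness of $f$ alone lets us climb to the correct cluster, but landing on the prescribed point $w'$ \emph{within} a cluster requires the piece $D_{C_{w'},w'}$ to be dense, which is precisely the content of the resolvability hypothesis. Everything else --- well-definedness, surjectivity, the factorization $f=\pi\circ g$, and continuity --- is routine once one observes that upsets of $\mathfrak F$ are unions of clusters.
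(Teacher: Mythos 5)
Your proposal is correct and follows essentially the same route as the paper's proof: both split each fiber $f^{-1}(C)$ into a dense partition indexed by the points of the cluster $C$, define $g$ piecewise, obtain the factorization and surjectivity immediately, and use density of the pieces against a nonempty relatively open set to establish openness of $g$. The only differences are cosmetic (you index the partition directly by elements of $C$ rather than by an enumeration, and you verify continuity on arbitrary upsets via $U^\ast$ rather than on principal upsets ${\uparrow}w$ via the identity $g^{-1}({\uparrow}w)=f^{-1}(\pi({\uparrow}w))$), so no further comparison is needed.
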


\begin{proof}
Let $C\in V$ and $\kappa_C=|C|$. 
Then $\kappa_C=|\pi^{-1}(C)|$ since $C=\pi^{-1}(C)$. There are an enumeration $\{w_{C,\alpha}\mid \alpha<\kappa_C\}$ of $C$ and a dense partition $\{A_{C,\alpha}\mid \alpha<\kappa_C\}$ of $f^{-1}(C)$. Define $g:X\to W$ by setting $g(x)=w_{C,\alpha}$ whenever $x\in A_{C,\alpha}$. Clearly $g$ is well defined. 
Let $x\in X$. There are unique $C\in V$ and $\alpha<\kappa_C$ such that $x\in A_{C,\alpha}$. Since $A_{C,\alpha}\subseteq f^{-1}(C)$ and $w_{C,\alpha}\in C=\pi^{-1}(C)$,  
we have 
\[ 
f(x) = C =  \pi(w_{C,\alpha}) =\pi(g(x)). 
\]
Therefore, $f=\pi\circ g$.
To see that $g$ is onto, let $w\in W$. Then $w=w_{\pi(w),\alpha}$ for some $\alpha<\kappa_{\pi(w)}$ and there is $x\in A_{\pi(w),\alpha}$ since $A_{\pi(w),\alpha}$ is nonempty. Thus, 
$g(x) = w_{\pi(w),\alpha}=w$, and so $g$ is onto. 

It remains to show that $g$ is interior. To see that $g$ is continuous, let $w\in W$. It follows from the definition of $\pi$ that $\pi({\uparrow}w)$ is an upset of $\mathfrak F^*$ and
${\uparrow}w=\pi^{-1}\pi({\uparrow}w)$. 
Therefore, 
\[
g^{-1}({\uparrow}w) = g^{-1}(\pi^{-1}\pi({\uparrow}w)) = (\pi\circ g)^{-1}(\pi({\uparrow}w)) = f^{-1}(\pi({\uparrow}w))
\]
is an open subset of $X$ since $f$ is continuous. Thus, $g$ is continuous.

To see that $g$ is open, let $U$ be a nonempty open subset of $X$. Since $f$ is open, 
$f(U)$ is an upset of $\mathfrak F^*$. 
Therefore, $\pi^{-1}(f(U))$ is an upset of $\mathfrak F$. Since $f=\pi\circ g$, we have 
\[
g(U) \subseteq \pi^{-1}\pi(g(U)) = \pi^{-1}(f(U)).
\]
To see the other inclusion, let $w\in \pi^{-1}(f(U))$. Then $\pi(w)\in f(U)$, and there is $x\in U$ such that $f(x)=\pi(w)$, which implies that $U\cap f^{-1}(\pi(w))\neq\varnothing$. Because $\{A_{\pi(w),\alpha} \mid \alpha<\kappa_{\pi(w)}\}$ is a dense partition of $f^{-1}(\pi(w))$, we have that $U\cap A_{\pi(w),\alpha}\neq\varnothing$ for each $\alpha<\kappa_{\pi(w)}$. Moreover, there is $\alpha<\kappa_{\pi(w)}$ such that $w=w_{\pi(w),\alpha}$. Taking $y\in U\cap A_{\pi(w),\alpha}$, we have  
\[w=w_{\pi(w),\alpha}=g(y)\in g(U).\]
Thus, $\pi^{-1}(f(U))\subseteq g(U)$, which shows that $g(U)=\pi^{-1}(f(U))$ is an upset of $\mathfrak F$. Consequently, $g$ is open.
\end{proof}

We are now ready to prove that each willow tree is an interior image of $\beta(\omega^2)$.

\begin{lemma}\label{lem: each unrav WT is image of beta omega squared}
\emph{(CH)} Each willow tree is an interior image of $\beta(\omega^2)$.
\end{lemma}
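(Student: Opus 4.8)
The plan is to reduce the case of an arbitrary willow tree to the partially ordered case already handled in \cref{lem: PO unrav WT is image of beta omega squared}, by passing through the skeleton and then applying \cref{lem: extending intr map onto PO frame to QO frame}. Given a willow tree $\mathfrak F=(W,\le)$, first I would form its skeleton $\mathfrak F^*=(V,\preceq)$ together with the canonical p-morphism $\pi:W\to V$. Since the skeleton of any {\sf S4}-frame is partially ordered, and since $\mathfrak F^*$ is a p-morphic image of the willow tree $\mathfrak F$, \cref{lem: willow trees closed under p-mor im and pt gen subs} guarantees that $\mathfrak F^*$ is again a $2$-roach; being partially ordered, it is in fact a partially ordered willow tree (one checks that the quasi-tree condition on $W\setminus{\uparrow}s$ descends to the skeleton, as collapsing clusters in a quasi-tree yields a tree). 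Thus \cref{lem: PO unrav WT is image of beta omega squared} supplies an onto interior map $f:\beta(\omega^2)\to\mathfrak F^*$.

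Next I would invoke \cref{lem: extending intr map onto PO frame to QO frame} with $X=\beta(\omega^2)$, $\mathfrak F$ the willow tree, and $f$ the map just constructed. The hypothesis to verify is that for each cluster $C\in V$, the fiber $f^{-1}(C)$ is $|\pi^{-1}(C)|=|C|$-resolvable. Since every cluster of a finite frame is finite, it suffices to show that each such fiber admits a dense partition into finitely many pieces, equivalently that $f^{-1}(C)$ is $k$-resolvable for every finite $k$. Once this is established, \cref{lem: extending intr map onto PO frame to QO frame} produces an onto interior map $g:\beta(\omega^2)\to\mathfrak F$ with $f=\pi\circ g$, which is exactly the desired conclusion that the willow tree $\mathfrak F$ is an interior image of $\beta(\omega^2)$.

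The main obstacle is precisely the resolvability requirement: I must show that each fiber $f^{-1}(C)$ is sufficiently resolvable. The fibers come in two flavors. For a maximal cluster $C$, the fiber $f^{-1}(C)$ is an open subset of $\beta(\omega^2)$, and its isolated points (inherited from $A=\omega\times\omega$) are dense in it; a countable dense-in-itself or infinite discrete-plus-remainder structure of this form is $k$-resolvable for every finite $k$ because one can split a dense infinite discrete set into $k$ infinite pieces, each still dense. For a non-maximal cluster the fiber lies inside the remainder and is, by the construction in \cref{lem: PO unrav WT is image of beta omega squared}, homeomorphic to (a $P$-set in) $\omega^*$ or a closure of one of the $W_i$; here I would use that $\omega^*$ and its relevant closed subspaces are crowded and that crowded Stone spaces (more generally crowded spaces satisfying mild conditions, e.g.\ via \cite{Eck97}) are $\omega$-resolvable, hence $k$-resolvable for each finite $k$.

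In carrying this out I would isolate a short lemma asserting that every nonempty crowded subspace of $\beta(\omega^2)$ arising as a fiber $f^{-1}(C)$ is $k$-resolvable for all finite $k$, proving it separately for the open (isolated-point-dense) fibers and for the crowded remainder fibers. The clusters of a willow tree are all finite, so finite resolvability is all that is needed, which keeps the set-theoretic demands minimal and avoids any appeal beyond the crowdedness already verified in \cref{lem: closure of Ui and Wi} and the properties of $\omega^*$ recorded in \cref{sec: partition lemmas for omega star}. With the resolvability lemma in hand, the proof is then a direct citation of \cref{lem: PO unrav WT is image of beta omega squared} followed by \cref{lem: extending intr map onto PO frame to QO frame}.
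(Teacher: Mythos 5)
Your overall route---pass to the skeleton $\mathfrak F^*$ with its p-morphism $\pi$, obtain an interior map $f\colon\beta(\omega^2)\to\mathfrak F^*$ from \cref{lem: PO unrav WT is image of beta omega squared}, and lift $f$ to $\mathfrak F$ via the resolvability criterion of \cref{lem: extending intr map onto PO frame to QO frame}---is exactly the paper's strategy, and your skeleton step is fine. The gap is in the verification of the resolvability hypothesis, where two of your claims are false. First, a space with an isolated point is never $2$-resolvable: every dense set must contain every isolated point, so two disjoint dense sets cannot exist. Hence the fibers over the maximal points, which are open subsets of $\beta(\omega^2)$ in which the isolated points coming from $A=\omega\times\omega$ are dense, are \emph{not} $k$-resolvable for any $k\ge 2$, and the ``short lemma'' you propose to isolate is unprovable. (Your splitting mechanism also fails on its own terms: partitioning a dense discrete set into $k$ infinite pieces does not produce $k$ dense sets---in $\beta(\omega)$ the set of even numbers is not dense.) Second, your claim that every fiber over a non-maximal cluster lies in the remainder and is crowded fails for the splitting point: in the construction of \cref{lem: PO unrav WT is image of beta omega squared} one has $f^{-1}(s)=B=\{\omega\}\times\omega$, a countable discrete subspace of $\omega^2$, which is neither contained in the remainder nor crowded nor $2$-resolvable.

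What rescues the strategy---and what your proposal misses---is a structural fact about willow trees: ${\uparrow}s=\{s\}\cup\mathrm{max}(\mathfrak F)$ is partially ordered and ${\uparrow}m=\{m\}$ for each $m\in\mathrm{max}(\mathfrak F)$, so every cluster meeting ${\uparrow}s$ (in particular $C_s$ and all maximal clusters) is a singleton. For those clusters the hypothesis of \cref{lem: extending intr map onto PO frame to QO frame} is just $1$-resolvability, i.e.\ nonemptiness of the fiber, so no resolvability argument is needed there at all; the problematic fibers you tried to handle never have to be resolvable. Nontrivial clusters can occur only in $V\setminus{\uparrow}s$, and for such a cluster $C$ the fiber $f^{-1}(C)$ is open in $f^{-1}({\downarrow}C)$, which is homeomorphic to $\omega^*$ because $h$ and the $h_{i,j}$ used to build $f$ are ws-maps; thus $f^{-1}(C)$ is a nonempty crowded locally compact Hausdorff space and is $|\pi^{-1}(C)|$-resolvable by \cite[Lem.~3.6(2)]{BBBM21c}. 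With the fiber analysis corrected in this way, your argument becomes the paper's proof.
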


\begin{proof}
Let $\mathfrak F=(W,\le)$ be 
a willow tree with splitting point $s$, 
$\mathfrak F^*=(V,\preceq)$ the skeleton of $\mathfrak F$, and $\pi$ the $p$-morphism of $\mathfrak F$ onto $\mathfrak F^*$ given by $\pi(w)=C_w$. 
Then $\mathfrak F^*$ is a partially ordered 
willow tree. Because the subset ${\uparrow}s$ in $\mathfrak F$ is partially ordered by $\le$, we identify ${\uparrow}s$ with its $\pi$-image 
in $\mathfrak F^*$. 

If $\mathrm{max}(\mathfrak F)=\{s\}$, then $\F$ is an interior image of $\beta(\omega)$, 
and hence 
an interior image of $\beta(\omega^2)$ by \cref{lem: F image of beta iff F image of open sub of beta}.
Suppose that $\mathrm{max}(\mathfrak F)\neq\{s\}$. Then $\mathrm{max}(\mathfrak F^*)\neq\{s\}$ and we may depict $\mathfrak F^*$ as in \cref{fig4}  
where $s,m_i,T_s,T_{i,j}$ are relative to $\mathfrak F^*$, $1\le i\le n$, and $1\le j\le k_i$. Let $f:\beta(\omega^2)\to\mathfrak F^*$ be constructed from the ws-mappings $h:B^*\to V\setminus\{s\}$ and 
\[
h_{i,j}:V_{i,w_{i,j}}\cup h^{-1}({\downarrow}w_{i,j})\to T_{i,j}\cup{\downarrow}w_{i,j}
\]
as in 
the proof of Lemma~\ref{lem: PO unrav WT is image of beta omega squared}. Each of $f^{-1}(s),f^{-1}(m_1),\dots,f^{-1}(m_n)$ is nonempty, and hence $1$-resolvable. Let $C\in V\setminus{\uparrow}s$. Then $f^{-1}(C)$ is open relative to $f^{-1}({\downarrow}C)$. If $C\in T_s$, then 
$f^{-1}({\downarrow}C)=h^{-1}({\downarrow}C)$ is homeomorphic to $\omega^*$ since $h$ is a ws-map. 
If $C\in T_{i,j}$ for some $i=1,\dots,n$ and $j=1,\dots,k_i$, then $f^{-1}({\downarrow}C)=h_{i,j}^{-1}({\downarrow}C)$ is homeomorphic to $\omega^*$ because $h_{i,j}$ is a ws-map. Therefore, $f^{-1}(C)$ is homeomorphic to a nonempty open subspace of $\omega^*$, which implies that $f^{-1}(C)$ is a nonempty crowded locally compact Hausdorff space, and hence $f^{-1}(C)$ is $|\pi^{-1}(C)|$-resolvable by \cite[Lem.~3.6(2)]{BBBM21c}. Thus, $\mathfrak F$ is an interior image of $\beta(\omega^2)$ by Lemma~\ref{lem: extending intr map onto PO frame to QO frame}.
\end{proof}

We have finally arrived at the main result of this paper:

\begin{theorem} \label{thm: P1 for n=2}
The logic of $\beta(\omega^2)$ is the logic of 2-roaches and is the extension of 
${\sf S4.1}$ axiomatized by 
$\chi_1,\chi_2,\chi_3$. 
Therefore, it is finitely axiomatizable, has the finite model property, and is decidable.
\end{theorem}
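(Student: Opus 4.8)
The plan is to assemble the theorem from the pieces already in place, since the genuine labor resides in the lemmas rather than in the final statement. Writing ${\sf L}_2 = {\sf L}(\beta(\omega^2))$, I must establish the two inclusions ${\sf L}(\mathcal R_2)\subseteq{\sf L}_2$ and ${\sf L}_2\subseteq{\sf L}(\mathcal R_2)$. The first is exactly \cref{L(R_2) contained in L(beta(omega_squared))}, which was obtained by noting that $\F_1,\F_2,\F_3$ are forbidden for $\beta(\omega^2)$ and applying \cref{lem: top vers of Fine}(\ref{lem item: FINE gen Top setting}) together with the axiomatization of $\cref{thm: the logic of 2-roaches}$. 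So nothing remains to be done there.

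For the reverse inclusion, I would route through willow trees. The key input is \cref{lem: each unrav WT is image of beta omega squared}, which (using CH) shows that each willow tree is an interior image of $\beta(\omega^2)$. By \cref{lem: truth preserving ops}(\ref{lem item: interior image}), this gives ${\sf L}_2={\sf L}(\beta(\omega^2))\subseteq{\sf L}(\mathfrak H)$ for every willow tree $\mathfrak H$. Intersecting over the whole class $\mathcal W$ yields ${\sf L}_2\subseteq\bigcap_{\mathfrak H\in\mathcal W}{\sf L}(\mathfrak H)={\sf L}(\mathcal W)$. I would then invoke \cref{thm: logic of unravelled willow trees is L(W)}, which identifies ${\sf L}(\mathcal W)={\sf L}(\mathcal R_2)$ (this rests on \cref{prop: each WT is p-mor image of unravelled WT}, that every $2$-roach is a p-morphic image of a willow tree). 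Combining the two inclusions gives ${\sf L}_2={\sf L}(\mathcal R_2)$, i.e.\ the logic of $\beta(\omega^2)$ is precisely the logic of $2$-roaches.

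It then remains only to transfer the syntactic and algorithmic consequences. By \cref{thm: the logic of 2-roaches}, ${\sf L}(\mathcal R_2)$ is the extension of ${\sf S4.1}$ obtained by postulating the Fine--Jankov formulas $\chi_1,\chi_2,\chi_3$, so by the equality just established the same finite axiomatization presents ${\sf L}(\beta(\omega^2))$. Finite axiomatizability is then immediate; the finite model property is the one already proved in \cref{thm: the logic of 2-roaches} via \cref{thm: Zak} (each $\F_i$ being one-generated, so each $\chi_i$ and also ${\sf ma}$ can be written in a single variable); and decidability follows from \cref{R2 is decidable}, or equivalently from the general fact that a finitely axiomatizable logic with the fmp is decidable.

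The main obstacle is not in this final assembly, which is essentially formal, but in the ingredient it consumes: \cref{lem: each unrav WT is image of beta omega squared}. That lemma is where the topological content concentrates, as it depends on the First and Second Mapping Lemmas for $\omega^*$ (\cref{lem: Jans 0.7-good interior mapping of omega star onto a finite tree,lem: Jans 0.10-extending good interior maps}), on the partition lemmas of \cref{sec: partition lemmas for omega star}, and ultimately on Parovi\v{c}enko's characterization of $\omega^*$ and the Homeomorphism Extension Theorem (\cref{thm: 2 main tools})---hence on CH. Thus the delicate verification that the piecewise-defined map $f$ of \cref{lem: PO unrav WT is image of beta omega squared} is interior, and its extension to arbitrary (non-partially-ordered) willow trees via the resolvability argument of \cref{lem: extending intr map onto PO frame to QO frame}, is the substantive work that this theorem packages; the theorem itself is a short corollary.
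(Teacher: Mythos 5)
Your proposal is correct and follows essentially the same route as the paper: both establish ${\sf L}(\mathcal R_2)\subseteq{\sf L}_2\subseteq{\sf L}(\mathcal W)={\sf L}(\mathcal R_2)$ using \cref{L(R_2) contained in L(beta(omega_squared))}, \cref{lem: each unrav WT is image of beta omega squared} with \cref{lem: truth preserving ops}(\ref{lem item: interior image}), and \cref{thm: logic of unravelled willow trees is L(W)}, then invoke \cref{thm: the logic of 2-roaches} for the axiomatization, fmp, and decidability. Your closing remarks about where the real work lies (the mapping and partition lemmas under CH) accurately reflect the paper's structure as well.
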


\begin{proof}
We recall that ${\sf L}_2$ is the logic of $\beta(\omega^2)$, ${\sf L}(\mathcal R_2)$ is the logic of 2-roaches, and ${\sf L}(\mathcal W)$ is the logic of willow trees. By \cref{lem: truth preserving ops}(\ref{lem item: interior image}) and \cref{lem: each unrav WT is image of beta omega squared},
${\sf L_2}\subseteq {\sf L}(\mathcal W)$. Therefore,  
\cref{L(R_2) contained in L(beta(omega_squared))} 
and \cref{thm: logic of unravelled willow trees is L(W)} yield 
\[
{\sf L}(\mathcal R_2) \subseteq {\sf L}_2 \subseteq {\sf L}(\mathcal W) = {\sf L}(\mathcal R_2).
\]
Thus, \cref{thm: the logic of 2-roaches} implies that ${\sf L}_2$ is the extension of ${\sf S4.1}$ axiomatized by $\chi_1,\chi_2,\chi_3$.
\end{proof}

\section{Conclusions}\label{sec: conclusions}

We summarize the current state of knowledge of Shehtman's two problems. \cref{thm: P1 for n=2} provides a solution of {\bf P1} for $n=2$. Our proof uses CH, and it remains open whether CH is necessary. A solution of {\bf P1} for $n\ge 3$ remains a major open problem. We have the following conjecture: 

\begin{conjecture}\label{conj: big conj for L_n}
For $n\ge 3$, ${\sf L}_n$ is the logic ${\sf L}(\mathcal R_n)$ of the class of $n$-roaches.
\end{conjecture}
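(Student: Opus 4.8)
The plan is to establish the conjectured identity ${\sf L}_n={\sf L}(\mathcal R_n)$ by proving the two inclusions separately, following the template that succeeds for $n=2$. For the easy inclusion ${\sf L}(\mathcal R_n)\subseteq{\sf L}_n$, I would first give a finite axiomatization of ${\sf L}(\mathcal R_n)$ by generalizing \cref{thm: the logic of 2-roaches}: identify a finite list $\F_1^{(n)},\dots,\F_{k_n}^{(n)}$ of minimal forbidden configurations for $\mathcal R_n$, verify that each is one-generated so that its Fine--Jankov formula can be written in one variable, and invoke \cref{thm: Zak} to conclude that ${\sf S4.1}$ together with these formulas has the finite model property and equals ${\sf L}(\mathcal R_n)$. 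Two ingredients feed into this: that $\mathcal R_n$ is closed under p-morphic images and generated subframes (the paper already records closure under point-generated subframes and, via \cref{lem: willow trees closed under p-mor im and pt gen subs}, closure under p-morphic images in the case $n=2$), and a structural dichotomy generalizing \cref{simple char of 2-roaches} and \cref{thm: char R_2 via F's} showing that a frame fails to be an $n$-roach exactly when one of the $\F_i^{(n)}$ arises as a p-morphic image of a point-generated subframe. I would then close the inclusion by showing each $\F_i^{(n)}$ is a forbidden configuration for $\beta(\omega^n)$, so that its Fine--Jankov formula is valid on $\beta(\omega^n)$ by \cref{lem: top vers of Fine}. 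The configurations $\mathfrak T_n$ and $\mathfrak G$ are already handled for every $n$ by \cref{clm: frame sepating n and n=1,clm: NEVER an image}; the remaining ``crisscross'' configurations require arguments in the spirit of \cref{lem: F3 is not an interior image of beta X}.

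For the hard inclusion ${\sf L}_n\subseteq{\sf L}(\mathcal R_n)$, it suffices by \cref{lem: truth preserving ops}(\ref{lem item: interior image}) to realize each $n$-roach as an interior image of $\beta(\omega^n)$. I would first introduce an $n$-dimensional analogue of willow trees --- $n$-roaches whose body below the splitting point is a tree --- and prove, by unravelling only the portion $W\setminus{\uparrow}s$ of the frame as in \cref{prop: each WT is p-mor image of unravelled WT}, that every $n$-roach is a p-morphic image of such a frame. It then remains to obtain each of these tree-bodied $n$-roaches as an interior image of $\beta(\omega^n)$. The geometric intuition recorded in \cref{remark about n-roaches} dictates the map: the partially ordered antennae ${\uparrow}s$, which has depth at most $n$, should be realized directly from the open dense copy of $\omega^n$ inside $\beta(\omega^n)$ (whose finite interior images are exactly posets of depth $\le n$), with the $n$ Cantor--Bendixson layers of $\omega^n$ feeding the $n$ depth-levels of ${\uparrow}s$, while the body and side-trees hanging below $s$ should be realized from the remainder $\beta(\omega^n)\setminus\omega^n$. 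Clusters are then restored exactly as in \cref{lem: each unrav WT is image of beta omega squared} by passing to the skeleton and applying the resolvability argument of \cref{lem: extending intr map onto PO frame to QO frame}.

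The crux --- and the step I expect to be the main obstacle --- is the topological analysis of $\beta(\omega^n)$ and its remainder for $n\ge 3$, which is far more delicate than the $n=2$ case where the single relevant remainder $B^*=\ccc(B)\setminus B$ is homeomorphic to $\omega^*$. For $\omega^n$ one must decompose $\beta(\omega^n)$ according to the Cantor--Bendixson stratification, and at each level identify the closure of that level together with the new remainder it contributes. I would aim to show, under CH via Parovi\v{c}enko's characterization (\cref{thm: 2 main tools}(\ref{thm item: PT})), that the remainders arising at each stage are again homeomorphic to $\omega^*$, and to prove recursive, level-indexed versions of the partition lemmas (\cref{lem: Jan 0.4-open partition of the complement of nwd closed P-set,lem: Jan 0.6-good disjoint open cover of complement of closed nwd P set}) and mapping lemmas (\cref{lem: Jans 0.7-good interior mapping of omega star onto a finite tree,lem: Jans 0.10-extending good interior maps}) that splice these layers together. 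The essential difficulty is that the splitting point $s$ may now sit at any depth $1\le d(s)\le n$, so the interface between the finitely many $\omega^*$-like remainders and the depth-$\le n$ antennae built from $\omega^n$ must be managed simultaneously across all intermediate levels; controlling the $P$-set and almost-$P$-space properties through this nested construction, rather than through any single partition lemma, is where the real work lies.

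The other, more combinatorial obstacle is the classification of the minimal forbidden configurations $\F_i^{(n)}$ together with the proof that $\mathcal R_n$ is closed under p-morphic images, both of which the paper flags as requiring a careful analysis of the combinatorics of $n$-roaches. Without a finite and explicit such list, the appeal to \cref{thm: Zak} --- and hence the finite axiomatizability implicit in the conjecture --- cannot be carried out, so I would treat the combinatorial classification and the topological layering as the two independent hurdles that must both be cleared to settle $\cref{conj: big conj for L_n}$.
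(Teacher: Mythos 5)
You should first be clear about the ground truth here: the statement you are addressing is a \emph{conjecture}, and the paper contains no proof of it — it is explicitly flagged as a major open problem in \cref{sec: conclusions}. So there is no paper proof to compare against, and your proposal, judged as a proof attempt, has a genuine gap: it is a research plan whose load-bearing steps are precisely the points the paper itself identifies as open. To your credit, your outline tracks the paper's own conjectured route closely — the finite list of forbidden configurations matches the spirit of \cref{def: forbidden frames for 3-roaches} (nine conjectured frames for $n=3$), the body-versus-antennae intuition matches \cref{remark about n-roaches}, and the unravelling/skeleton/resolvability scaffolding is the right generalization of \cref{prop: each WT is p-mor image of unravelled WT,lem: extending intr map onto PO frame to QO frame}. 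But alignment with a conjectured strategy is not a proof.

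Concretely, three steps in your plan are asserted rather than established, and each is an open problem. First, the finite list $\F_1^{(n)},\dots,\F_{k_n}^{(n)}$ of minimal forbidden configurations is never produced; without an explicit list you cannot check one-generatedness, cannot invoke \cref{thm: Zak}, and cannot prove the structural dichotomy generalizing \cref{lem: suf condition via F123 to be a WT} — note that already for $n=2$ that dichotomy required the bespoke mapping arguments of \cref{lem: tall two-fork condition,lem: crisscross and fat bottom fork condition}, which do not obviously scale. Second, closure of $\mathcal R_n$ under p-morphic images, which your axiomatization route needs, is only conjectured by the paper (end of \cref{roaches section}); the $n=2$ proof in \cref{lem: willow trees closed under p-mor im and pt gen subs} leans on the recursive characterization \cref{simple char of 2-roaches}, whose $n$-ary analogue is exactly the unresolved combinatorics. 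Third, on the topological side, your claim that the Cantor--Bendixson level remainders of $\beta(\omega^n)$ are homeomorphic to $\omega^*$ and that ``level-indexed'' versions of \cref{lem: Jan 0.4-open partition of the complement of nwd closed P-set,lem: Jan 0.6-good disjoint open cover of complement of closed nwd P set,lem: Jans 0.7-good interior mapping of omega star onto a finite tree,lem: Jans 0.10-extending good interior maps} can be spliced together is stated without argument; for $n=2$ the single interface between $B^*$ and the rest of the remainder consumes all of \cref{sec: partition lemmas for omega star,sec: mappings of omega star,logic of beta omega squared}, including delicate $P$-set and almost-$P$-space bookkeeping, and nothing in your outline shows the nested version — with the splitting point at an arbitrary depth $d(s)\le n$ and multiple interacting remainders — goes through. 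You correctly name these hurdles yourself, which is honest, but it means the proposal settles nothing beyond what the paper already records; it should be presented as a program, not a proof.
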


In particular, we have the following conjecture towards axiomatization of ${\sf L}_3$:

\begin{conjecture}\label{def: forbidden frames for 3-roaches}
The logic ${\sf L}_3$ is 
axiomatizable over {\sf S4.1} 
by the Fine-Jankov formulas of the following nine frames: 
\begin{figure}[H]
\begin{center}

\begin{picture}(50,50)(-25,0)
\multiput(-5,5)(10,0){2}{\makebox(0,0){$\bullet$}}
\put(0,5){\oval(20,10)}
\put(-25,25){\line(1,-1){16}}
\put(25,25){\line(-1,-1){16}}
\multiput(-25,25)(50,0){2}{\makebox(0,0){$\bullet$}}
\end{picture}
\hspace{1cm}
\begin{picture}(50,50)(-25,0)
\put(0,0){\makebox(0,0){$\bullet$}}
\multiput(-30,25)(10,0){2}{\makebox(0,0){$\bullet$}}
\put(-25,25){\oval(20,10)}
\put(0,0){\line(1,1){25}}
\put(0,0){\line(-1,1){20}}
\multiput(-25,50)(50,-25){2}{\makebox(0,0){$\bullet$}}
\put(-25,30){\line(0,1){20}}
\end{picture}
\hspace{1cm}
\begin{picture}(50,50)(-25,0)
\multiput(-5,5)(10,0){2}{\makebox(0,0){$\bullet$}}
\put(0,5){\oval(20,10)}
\put(-25,25){\line(1,-1){16}}
\put(25,25){\line(-1,-1){16}}
\multiput(-25,25)(50,0){2}{\multiput(0,0)(0,25){2}{\makebox(0,0){$\bullet$}}}
\multiput(-25,25)(50,0){2}{\line(0,1){25}}
\put(-25,25){\line(2,1){50}}
\put(25,25){\line(-2,1){50}}
\end{picture}
\hspace{1cm}
\begin{picture}(50,50)(-25,0)
\multiput(20,25)(10,0){2}{\makebox(0,0){$\bullet$}}
\put(25,25){\oval(20,10)}
\put(0,0){\line(1,1){20}}
\put(0,0){\line(-1,1){25}}
\put(0,0){\line(0,1){50}}
\put(0,0){\makebox(0,0){$\bullet$}}
\multiput(-25,25)(25,0){2}{\multiput(0,0)(0,25){2}{\makebox(0,0){$\bullet$}}}
\put(0,0){\line(0,1){50}}
\put(-25,25){\line(0,1){25}}
\put(-25,25){\line(1,1){25}}
\put(20,30){\line(-1,1){20}}
\put(0,25){\line(-1,1){25}}
\end{picture}
\vspace{1.25cm}

\hspace{.55in}
\begin{picture}(50,75)(0,0)
\put(0,0){\makebox(0,0){$\bullet$}}
\put(0,0){\line(1,1){25}}
\put(0,0){\line(-1,1){25}}
\multiput(-25,25)(0,25){3}{\makebox(0,0){$\bullet$}}
\put(-25,25){\line(0,1){50}}
\multiput(25,25)(0,25){1}{\makebox(0,0){$\bullet$}}
\end{picture}
\hspace{1cm}
\begin{picture}(50,75)(0,0)
\put(0,0){\makebox(0,0){$\bullet$}}
\put(0,0){\line(1,2){25}}
\put(0,0){\line(-1,1){25}}
\multiput(-25,25)(0,25){3}{\makebox(0,0){$\bullet$}}
\put(-25,25){\line(0,1){50}}
\multiput(25,50)(0,25){2}{\makebox(0,0){$\bullet$}}
\put(25,50){\line(0,1){25}}
\put(-25,50){\line(2,1){50}}
\put(25,50){\line(-2,1){50}}
\end{picture}
\hspace{1cm}
\begin{picture}(50,75)(0,0)
\put(0,0){\makebox(0,0){$\bullet$}}
\put(0,0){\line(1,2){25}}
\put(0,0){\line(-1,1){25}}
\multiput(-25,25)(0,25){3}{\makebox(0,0){$\bullet$}}
\put(-25,25){\line(0,1){50}}
\multiput(25,50)(0,25){2}{\makebox(0,0){$\bullet$}}
\put(25,50){\line(0,1){25}}
\put(-25,25){\line(1,1){50}}
\put(25,50){\line(-2,1){50}}
\end{picture}
\hspace{1cm}
\begin{picture}(50,75)(0,0)
\put(0,0){\makebox(0,0){$\bullet$}}
\put(0,0){\line(1,1){25}}
\put(0,0){\line(-1,1){25}}
\multiput(-25,25)(0,25){3}{\makebox(0,0){$\bullet$}}
\put(-25,25){\line(0,1){50}}
\multiput(25,25)(0,50){2}{\makebox(0,0){$\bullet$}}
\put(25,25){\line(0,1){50}}
\put(-25,25){\line(1,1){50}}
\put(25,25){\line(-2,1){50}}
\end{picture}
\hspace{1cm}
\begin{picture}(50,75)(0,0)
\put(0,0){\makebox(0,0){$\bullet$}}
\put(0,0){\line(1,1){25}}
\put(0,0){\line(-1,1){25}}
\multiput(-25,25)(0,25){3}{\makebox(0,0){$\bullet$}}
\multiput(-25,25)(50,0){2}{\line(0,1){50}}
\multiput(25,25)(0,25){3}{\makebox(0,0){$\bullet$}}
\multiput(25,25)(0,25){2}{\line(-2,1){50}}
\multiput(-25,25)(0,25){2}{\line(2,1){50}}
\end{picture}
\end{center}
\end{figure}
\end{conjecture}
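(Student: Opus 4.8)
The plan is to establish the conjecture by lifting the $n=2$ development of \cref{sec: beta omega squared and 3 forbid frames,sec: logic of 2 roaches,willow trees,logic of beta omega squared} to $n=3$. Write $\F^{(1)},\dots,\F^{(9)}$ for the nine depicted frames and $\chi^{(1)},\dots,\chi^{(9)}$ for their Fine-Jankov formulas. The target reduces to two claims: (a) the frame-theoretic identity that $\mathcal R_3$ is exactly the class of {\sf S4.1}-frames for which $\F^{(1)},\dots,\F^{(9)}$ are the minimal forbidden configurations, together with finite axiomatizability of ${\sf L}(\mathcal R_3)$; and (b) the topological identity ${\sf L}_3={\sf L}(\mathcal R_3)$, which is \cref{conj: big conj for L_n} for $n=3$. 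Granting both, one concludes that ${\sf L}_3={\sf L}(\mathcal R_3)$ is the extension of {\sf S4.1} by $\chi^{(1)},\dots,\chi^{(9)}$ exactly as in the proof of \cref{thm: the logic of 2-roaches}.

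For (a), I would first verify that each $\F^{(i)}$ is forbidden for $\mathcal R_3$, by direct inspection that none is a $3$-roach (no splitting point $s$ with $d(s)\le 3$, with ${\uparrow}s$ a poset and the pullback condition ${\uparrow}w\cap{\uparrow}s={\uparrow}t_w$ holding for every $w$), combined with closure of $\mathcal R_3$ under point-generated subframes and p-morphic images. Point-generated closure is already noted for every $\mathcal R_n$; the p-morphic closure is the content of the conjecture recorded after \cref{lem: willow trees closed under p-mor im and pt gen subs}, so this ingredient is not yet available and must be proved by an $n=3$ analysis of how the partially ordered upset of a splitting point survives a p-morphism. The harder half of (a) is minimality: a case analysis mirroring \cref{lem: crisscross and fat bottom fork condition,lem: tall two-fork condition,lem: suf condition via F123 to be a WT}, showing that any {\sf S4.1}-frame that is \emph{not} a $3$-roach admits one of the nine frames as a p-morphic image of a point-generated subframe. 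With the nine frames shown to be one-generated, \cref{thm: Zak} then yields that the extension of {\sf S4.1} by $\chi^{(1)},\dots,\chi^{(9)}$ has the finite model property, while \cref{lem: top vers of Fine}(\ref{lem item: FINE}) identifies its finite rooted frames with $\mathcal R_3$.

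For (b), the inclusion ${\sf L}(\mathcal R_3)\subseteq{\sf L}_3$ is the routine half: once each $\F^{(i)}$ is known to be a forbidden configuration for $\beta(\omega^3)$, \cref{lem: top vers of Fine}(\ref{lem item: FINE gen Top setting}) gives $\beta(\omega^3)\vDash\chi^{(i)}$. Establishing that the $\F^{(i)}$ are forbidden for $\beta(\omega^3)$ is itself an $n=3$ analogue of \cref{lem: F3 is not an interior image of beta X} and \cref{clm: frame sepating n and n=1,clm: NEVER an image}, via the clopen-partition and closure arguments on $\beta(\omega^3)$ and its remainder. The reverse inclusion ${\sf L}_3\subseteq{\sf L}(\mathcal R_3)$ demands showing every $3$-roach is an interior image of $\beta(\omega^3)$. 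I would introduce a three-tiered ``willow-tree'' analogue, unravel each $3$-roach into it (an analogue of \cref{prop: each WT is p-mor image of unravelled WT}, again requiring p-morphic closure of $\mathcal R_3$), and then realize these willow trees as interior images of $\beta(\omega^3)$ through partition and mapping lemmas for the iterated remainder.

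The main obstacle is exactly this last step. Writing $\omega^3=(\omega^2+1)\times\omega$, the relevant pieces of the remainder $\beta(\omega^3)\setminus\omega^3$ are no longer copies of $\omega^*$ but spaces fibered over a $\beta(\omega^2)$-type base, so Parovi\v{c}enko's characterization (\cref{thm: 2 main tools}(\ref{thm item: PT})) and the Homeomorphism Extension Theorem (\cref{thm: 2 main tools}(\ref{thm item: HET})) — which drive every partition and mapping lemma of \cref{sec: partition lemmas for omega star,sec: mappings of omega star} — no longer apply verbatim. A genuinely new input is needed to partition and map the two-level remainder while controlling $P$-sets and the almost $P$-space structure across both tiers. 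This difficulty, together with the still-open p-morphic closure of $\mathcal R_3$, is precisely why the statement is recorded as a conjecture rather than a theorem.
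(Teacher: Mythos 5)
The statement you set out to prove is recorded in the paper as \cref{def: forbidden frames for 3-roaches}, i.e.\ as a \emph{conjecture}: the paper contains no proof of it, so there is no paper argument to compare yours against, and what you have written is in any case not a proof but a roadmap. To your credit, the roadmap faithfully mirrors the authors' $n=2$ development and is almost certainly the intended route: show the nine frames are the minimal forbidden configurations for $\mathcal R_3$ (analogues of \cref{Cor: forbid for beta are forbid for R_2,lem: tall two-fork condition,lem: crisscross and fat bottom fork condition,lem: suf condition via F123 to be a WT}), get the fmp from one-generatedness of the nine frames via \cref{thm: Zak}, prove the nine frames forbidden for $\beta(\omega^3)$ (analogues of \cref{clm: frame sepating n and n=1,clm: NEVER an image,lem: F3 is not an interior image of beta X}), and realize every $3$-roach as an interior image of $\beta(\omega^3)$ through a tiered willow-tree unravelling supported by partition and mapping lemmas. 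You are also candid about which ingredients are missing, and your list is accurate: p-morphic closure of $\mathcal R_3$ is itself only conjectured in the paper (end of \cref{roaches section}); the minimality case analysis for nine frames is nowhere carried out; none of the nine forbidden-configuration arguments for $\beta(\omega^3)$ is supplied (the $\F_3$ argument in \cref{lem: F3 is not an interior image of beta X} leans heavily on the concrete two-level stratification of $\omega^2$, and its three-level analogue is genuinely harder); and the interior-image half is exactly \cref{conj: big conj for L_n} for $n=3$, which is open. Since every substantive step is either deferred or open, the proposal does not establish the statement --- it explains, correctly, why it is a conjecture.

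One factual correction to your diagnosis of the topological obstacle. Under CH the full remainder $\beta(\omega^3)\setminus\omega^3$ \emph{is} still homeomorphic to $\omega^*$: the same \cite[Thm.~1.2.6]{vMi84} the paper invokes for $\omega^2$ applies, since $\omega^3$ is countable, locally compact, zero-dimensional, and non-compact. Likewise the strata are copies of $\omega^*$: the limit points of $\omega^3$ form a copy of $\omega^2$ whose trace on the remainder is again an $\omega^*$, which in turn contains the $\omega^*$ arising from the rank-$2$ points. So the pieces are not ``fibered over a $\beta(\omega^2)$-type base'' escaping Parovi\v{c}enko's characterization; rather, the genuine new difficulty is that the partition and mapping lemmas of \cref{sec: partition lemmas for omega star,sec: mappings of omega star} must be proved \emph{relative to a nested pair} of closed nowhere dense $P$-sets (an $\omega^*$ inside an $\omega^*$ inside an $\omega^*$), with ws-maps and homeomorphism extensions respecting both levels simultaneously; \cref{thm: 2 main tools}(\ref{thm item: HET}) extends a homeomorphism across one $P$-set but gives no control over a second, deeper one. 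Identifying that relative extension problem, rather than the failure of Parovi\v{c}enko's theorem, as the missing input would sharpen your assessment of what a proof of the conjecture actually requires.
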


\cref{thm: partial soln of P2} provides a solution of {\bf P2} for 
ordinals whose Cantor normal form has a special form.
A solution of 
{\bf P2} for an arbitrary ordinal remains open. We have the following conjecture: 

\vspace{2.5mm}

\begin{conjecture} \label{conj}
\begin{enumerate}
\item[]
\item If $\gamma$ has $\alpha_1\ge\omega$ in its Cantor normal form, then ${\sf L}(\beta(\gamma))={\sf L}_\infty$, which is the logic ${\sf L}(\mathcal R_\infty)$ of the class of all roaches.
\item The list of logics of the form ${\sf L}(\beta(\gamma))$ for some ordinal $\gamma$ is obtained by adding
${\sf L}_\infty$ 
to the list in Corollary~\ref{thm: partial soln of P2} $($see \cref{fig: logics of beta gamma}$)$.
\end{enumerate}
\end{conjecture}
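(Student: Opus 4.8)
The plan is first to dispose of the structural bookkeeping and isolate the genuine content. Let $\gamma$ have Cantor normal form with $\alpha_1\geq\omega$. By \cref{thm: strctr of CS noncompact ordinal}, $\beta(\gamma)$ is homeomorphic either to $\beta(\omega^{\alpha_1})$ (when $\gamma'=0$) or to $(\gamma'+1)\oplus\beta(\omega^{\alpha_1})$. In the latter case a short ordinal computation shows $\gamma'\geq\omega^\omega$ (if $n_1>1$ then $\gamma'\geq\omega^{\alpha_1}\geq\omega^\omega$, and if $n_1=1,\,k>1$ then $\gamma'\geq\omega^{\alpha_2}>\omega^\omega$), so \cref{thm: logic of ord} gives ${\sf L}(\gamma'+1)=\Grz$, whence \cref{lem: truth preserving ops} yields ${\sf L}(\beta(\gamma))=\Grz\cap{\sf L}(\beta(\omega^{\alpha_1}))$. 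I would next establish the auxiliary fact ${\sf L}_\infty\subseteq\Grz$: if $\varphi\notin\Grz$ then some finite rooted poset $P$ of depth $n$ refutes $\varphi$, and by the metrizability argument underlying \cref{clm: frame sepating n and n=1} such a $P$ is an interior image of $\omega^n$, hence of $\beta(\omega^n)$ by \cref{lem: F image of beta iff F image of open sub of beta}, so $\varphi\notin{\sf L}_n\supseteq{\sf L}_\infty$. Granting the core claim ${\sf L}(\beta(\omega^{\alpha}))={\sf L}_\infty$ for every $\alpha\geq\omega$, the $\Grz$ factor is then absorbed, since $\Grz\cap{\sf L}_\infty={\sf L}_\infty$, and Part~1 follows. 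Part~2 is then pure bookkeeping: every nonzero $\gamma$ has $\alpha_1$ either finite, which is covered by \cref{thm: partial soln of P2}, or infinite, covered by Part~1; so the complete list of logics of the form ${\sf L}(\beta(\gamma))$ is obtained by appending ${\sf L}_\infty$ to the earlier list, with ${\sf L}_\infty$ realized by $\gamma=\omega^\omega$ and separated from the other entries by the chain of \cref{lem: L_n's chain} together with ${\sf L}_\infty\subseteq\Grz$.

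\textbf{The core claim.} Everything therefore rests on proving, for $\alpha\geq\omega$, the two equalities ${\sf L}(\beta(\omega^{\alpha}))={\sf L}(\mathcal R_\infty)$ and ${\sf L}(\mathcal R_\infty)={\sf L}_\infty$. For the first I would follow the template of the $n=2$ argument in two halves. For the inclusion ${\sf L}(\beta(\omega^\alpha))\subseteq{\sf L}(\mathcal R_\infty)$ it suffices, by \cref{lem: truth preserving ops}, to show that \emph{every} roach is an interior image of $\beta(\omega^\alpha)$; this is the analogue of \cref{lem: each unrav WT is image of beta omega squared} and is where the bulk of the topology lives. For the reverse inclusion I would show that every permissible configuration for $\beta(\omega^\alpha)$ is a roach and that ${\sf L}(\mathcal R_\infty)$ is axiomatized over ${\sf S4.1}$ by the Fine-Jankov formulas of the minimal non-roaches and has the finite model property, so that a finite refuting permissible configuration of a would-be counterexample is forced to be a roach and hence cannot refute a theorem of ${\sf L}(\mathcal R_\infty)$. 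The equality ${\sf L}(\mathcal R_\infty)={\sf L}_\infty=\bigcap_n{\sf L}_n$ I would obtain by proving both directions of \cref{conj: big conj for L_n} at the level of the intersections rather than termwise: the easy direction ${\sf L}(\mathcal R_n)\subseteq{\sf L}_n$ (the ``forbidden frames are forbidden for $\beta(\omega^n)$'' direction, generalizing \cref{L(R_2) contained in L(beta(omega_squared))}) yields ${\sf L}(\mathcal R_\infty)\subseteq{\sf L}_\infty$, while ``each roach is an interior image of some $\beta(\omega^n)$'' yields ${\sf L}_\infty\subseteq{\sf L}(\mathcal R_\infty)$.

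\textbf{Main obstacle.} The genuinely hard part is twofold and is precisely what keeps the statement a conjecture. First, one must extend the CH-based partition and mapping lemmas from depth two to arbitrary finite depth: the remainder $\beta(\omega^\alpha)\setminus\omega^\alpha$ now stratifies into infinitely many nested layers mirroring every finite Cantor-Bendixson rank at once---for $\beta(\omega^2)$ there was a single layer $B^*\cong\omega^*$---and one needs a recursive supply of nowhere dense $P$-sets and almost-$P$-space decompositions realizing the body and the successive antennae of an arbitrary roach. Second, one must identify \emph{all} minimal forbidden configurations and prove each is forbidden for $\beta(\omega^\alpha)$; already for ${\sf L}_3$ this list is conjectured to consist of the nine frames of \cref{def: forbidden frames for 3-roaches}, and controlling it uniformly in $n$ requires a structural understanding of how the partially ordered upset ${\uparrow}s$ of a splitting point may sit inside a permissible frame. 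A secondary subtlety is the passage from finite exponents $n$ to the infinite exponent $\alpha$: I expect the permissible configurations of $\beta(\omega^\alpha)$ to stabilize and be independent of $\alpha$ as soon as $\alpha\geq\omega$, and making this stabilization precise---presumably by exhibiting the layered remainder structure of $\beta(\omega^\omega)$ already inside $\beta(\omega^\alpha)$---is itself a nontrivial step that has no counterpart in the finitely-indexed case treated by \cref{thm: P1 for n=2}.
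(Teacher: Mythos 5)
You have not proved the statement, and neither does the paper: this is \cref{conj}, stated in the Conclusions precisely because it is open, so there is no proof of record to compare against. Your reduction is sound and faithfully reconstructs the bookkeeping the paper itself sketches: the appeal to \cref{thm: strctr of CS noncompact ordinal} to write $\beta(\gamma)\cong(\gamma'+1)\oplus\beta(\omega^{\alpha_1})$, the computation that $\gamma'\ge\omega^\omega$ when $\gamma'\ne 0$ and $\alpha_1\ge\omega$ (so that \cref{thm: logic of ord} gives ${\sf L}(\gamma'+1)=\Grz$), and the auxiliary fact ${\sf L}_\infty\subseteq\Grz$ (which is correct, since by \cref{remark about n-roaches} finite interior images of $\omega^n$ are exactly the posets of depth at most $n$, so a depth-$n$ poset refuting a non-theorem of $\Grz$ witnesses failure in ${\sf L}_n$ via \cref{lem: F image of beta iff F image of open sub of beta}) are all in order and match the paper's own remarks at the end of \cref{sec: partial soln P2}. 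But the entire mathematical content is concentrated in what you label the core claim, ${\sf L}(\beta(\omega^\alpha))={\sf L}_\infty={\sf L}(\mathcal R_\infty)$ for $\alpha\ge\omega$, and you explicitly grant it rather than prove it. That claim subsumes \cref{conj: big conj for L_n} (itself open for $n\ge 3$, with even the list of minimal forbidden configurations for $n=3$ only conjectured in \cref{def: forbidden frames for 3-roaches}), plus the stabilization of permissible configurations at infinite exponents, plus the extension of the CH-based partition and mapping machinery of \cref{sec: partition lemmas for omega star,sec: mappings of omega star} beyond the single-layer remainder available for $\beta(\omega^2)$. You identify these obstacles accurately, but identifying them is a research plan, not a proof.

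One secondary gap in the part you do claim to settle: for Part~2 it is not enough that ${\sf L}_\infty$ is realized and lies below the old list; one must also verify that ${\sf L}_\infty$ is genuinely new, i.e., distinct from every entry of \cref{thm: partial soln of P2}. Your appeal to the chain of \cref{lem: L_n's chain} together with ${\sf L}_\infty\subseteq\Grz$ handles ${\sf L}_\infty\ne{\sf L}_m$ and ${\sf L}_\infty\ne\Grz_n$, but it does not separate ${\sf L}_\infty$ from $\Grz\cap{\sf L}_m$: the natural witness $\chi_{\mathfrak T_m}$ fails here because $\mathfrak T_m$ is a poset, so $\chi_{\mathfrak T_m}\notin\Grz$, and a witness in $(\Grz\cap{\sf L}_m)\setminus{\sf L}_{m+1}$ (say, the Fine--Jankov formula of a clustered variant of $\mathfrak T_m$) would require a fresh forbidden-configuration argument for $\beta(\omega^m)$ and a fresh mapping argument for $\beta(\omega^{m+1})$, neither of which appears in the paper or in your proposal.
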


\section*{Acknowledgements}
We would like to thank Ilya Shapirovsky for useful discussions regarding axiomatization issues for the logic of $n$-roaches.

\bibliographystyle{amsplain}
\def\cprime{$'$}
\providecommand{\bysame}{\leavevmode\hbox to3em{\hrulefill}\thinspace}
\providecommand{\MR}{\relax\ifhmode\unskip\space\fi MR }

\providecommand{\MRhref}[2]{
  \href{http://www.ams.org/mathscinet-getitem?mr=#1}{#2}
}
\providecommand{\href}[2]{#2}

\end{document}